\newtheorem{theorem}{Theorem}[section]
\newtheorem{proposition}[theorem]{Proposition}
\newtheorem{claim}[theorem]{Claim}
\newtheorem{lemma}[theorem]{Lemma}
\newtheorem{corollary}[theorem]{Corollary}
\theoremstyle{remark}
\theoremstyle{definition}
\newtheorem{definition}[theorem]{Definition}
\newtheorem{remark}[theorem]{Remark}
\newtheorem{example}[theorem]{Example}
\numberwithin{equation}{section}
\numberwithin{figure}{section}
\newcommand{\dC}{\mathbb{C}}
\newcommand{\dD}{\mathbb{D}}
\newcommand{\dN}{\mathbb{N}}
\newcommand{\dR}{\mathbb{R}}
\newcommand{\dT}{\mathbb{T}}
\newcommand{\dZ}{\mathbb{Z}}
\newcommand{\TT}{\mathbb{T}^2}
\def \C {\mathbb C}
\def \Z {\mathbb{Z}}
\def \O {\mathcal O}
\def \p {\partial}
\def \bp {\bar\partial}
\DeclareMathOperator{\ALH}{ALH}
\DeclareMathOperator{\Area}{Area}
\DeclareMathOperator{\dl}{\underline{\delta}_1}
\DeclareMathOperator{\dr}{\underline{\delta}_2}
\DeclareMathOperator{\diam}{Diam}
\DeclareMathOperator{\dvol}{dvol}
\DeclareMathOperator{\el}{\underline{\epsilon}_1}
\DeclareMathOperator{\er}{\underline{\epsilon}_2}
\DeclareMathOperator{\HK}{HK}
\DeclareMathOperator{\Id}{Id}
\DeclareMathOperator{\I}{I}
\DeclareMathOperator{\II}{II}
\DeclareMathOperator{\III}{III}
\DeclareMathOperator{\IV}{IV}
\DeclareMathOperator{\V}{V}
\DeclareMathOperator{\VI}{VI}
\DeclareMathOperator{\Image}{Image}
\DeclareMathOperator{\InjRad}{InjRad}
\DeclareMathOperator{\Isom}{Isom}
\DeclareMathOperator{\K3}{K3}
\DeclareMathOperator{\Nil}{Nil}
\DeclareMathOperator{\pr}{pr}
\DeclareMathOperator{\Pic}{Pic}
\DeclareMathOperator{\rank}{rank}
\DeclareMathOperator{\Ric}{Ric}
\DeclareMathOperator{\Rm}{Rm}
\DeclareMathOperator{\SL}{SL}
\DeclareMathOperator{\Sp}{Sp}
\DeclareMathOperator{\SU}{SU}
\DeclareMathOperator{\TF}{tf}
\DeclareMathOperator{\Tr}{Tr}
\DeclareMathOperator{\Vol}{Vol}
\newcommand{\RR}{\mathbb{R}}
\begin{document}
\title[Nilpotent structures and collapsing Ricci-flat metrics on K3 surfaces]{Nilpotent structures and collapsing Ricci-flat \\ metrics on K3 surfaces}
\date{}

\author{Hans-Joachim Hein}
\address{Department of Mathematics, Fordham University, Bronx, NY 10458} 
\email{hhein@fordham.edu}
\author{Song Sun}
\address{Department of Mathematics, University of California, Berkeley, CA 94720 \newline \hspace*{9pt} Department of Mathematics, Stony Brook University, Stony Brook, NY~11790} 
\email{sosun@berkeley.edu}
\author{Jeff Viaclovsky}
\address{Department of Mathematics, University of California, Irvine, CA 92697} 
\email{jviaclov@uci.edu}
\author{Ruobing Zhang}
\address{Department of Mathematics, Stony Brook University, Stony Brook, NY 11790}
\email{ruobing.zhang@stonybrook.edu}
\thanks{The first author was partially supported by NSF Grant DMS-1745517. The second author acknowledges the support by NSF Grant DMS-1708420, an Alfred P. Sloan Fellowship, and a grant from the Simons Foundation ($\sharp$488633, S.S.). The third author was partially supported by NSF Grant DMS-1405725.}
\begin{abstract}
We exhibit families of Ricci-flat K\"ahler metrics on K3 surfaces which collapse to an interval, with Tian-Yau and Taub-NUT metrics occurring as bubbles. There is a corresponding continuous surjective map from the $\K3$ surface to the interval, with regular fibers diffeomorphic to either $3$-tori or Heisenberg nilmanifolds.
\end{abstract}
\maketitle
\setcounter{tocdepth}{1}
\tableofcontents

\section{Introduction}

The main purpose of this paper is to describe a new mechanism by which Ricci-flat  metrics on $\K3$ surfaces can degenerate. It also suggests a new general phenomenon that could possibly occur also in other contexts of canonical metrics.

Recall in 1976, Yau's solution to the Calabi conjecture \cite{Yau} proved the existence of K\"ahler metrics with vanishing Ricci curvature, which are governed by the vacuum Einstein equation, on a compact K\"ahler manifold with zero first Chern class. These \emph{Calabi-Yau metrics} led to the first known construction of compact Ricci-flat Riemannian manifolds which are not flat. Examples of such manifolds exist in abundance, and these metrics often appear in natural families parametrized by certain complex geometric data, namely, their \emph{K\"ahler class} and their \emph{complex structure}.  As the complex geometric data degenerates, it is  a natural question to understand the process of singularity formation of the corresponding Ricci-flat  metrics.

From a geometric analytic point of view, the Einstein equation 
\begin{equation}
\Ric_g = \lambda g,\ \lambda \in \dR,
\end{equation}
can be made into an elliptic system in natural harmonic coordinates, so it inherits certain general properties of elliptic equations. However, the Einstein equation is strongly non-linear and might be highly degenerate. 
 One distinguished feature is that, except in dimension $4$, a satisfactory regularity theory has only been developed under an extra \emph{local volume non-collapsing} assumption, i.e., that there exists a uniform positive constant $v>0$ such that 
 \begin{equation}
 \Vol_g(B_1(x)) \geq v > 0. \label{e:non-collapsed}
 \end{equation}
It is known that a sequence of non-collapsed Einstein manifolds $(M_j^n, g_j, p_j)$ with uniformly bounded Ricci curvature  will converge in the Gromov-Hausdorff sense to a metric space with controlled singularities. More precisely, after passing to a subsequence,
\begin{equation}
(M_j^n, g_j, p_j) \xrightarrow{GH} (X_{\infty}^n, d_{\infty}, p_{\infty}),
\end{equation}
and there is a small singular set $\mathcal{S}\subset X_{\infty}^n$ such that 
$X_{\infty}^n\setminus \mathcal{S}$ is a smooth manifold endowed with an Einstein metric \cite{Anderson-harmonic, BKN,  Tian1990, ChC1, ChCT, CD, ChNa-quantitative}. By Cheeger and Colding's fundamental work in \cite{ChC}, each {\it tangent cone} for every point $x\in X_{\infty}^n$
is a {\it metric cone}. 
Recently, it was proved by Cheeger and Naber in \cite{ChNa-codim-4} that $\dim_{Haus}(\mathcal{S})\leq n-4$, which is optimal. The same estimate in the K\"ahler case was previously  proved in \cite{Cheeger-elliptic}, and also by Tian. The case of special holonomy was proved in \cite{Cheeger-Tian-special}.

Let $(M_j^n,g_j, p_j)$ be a sequence of Einstein manifolds
satisfying $|{\Ric_{g_j}}|\leq n-1$ but not necessarily the non-collapsing condition \eqref{e:non-collapsed} such that
$(M_j^n,g_j, p_j)$ $\xrightarrow{GH}$ $(X_{\infty}, d_{\infty}, p_{\infty})$.
To understand the degeneration of the metrics $g_j$ more precisely one studies them at an infinitesimal scale: assuming that curvature blows up around the points $p_j$, we choose rescaling factors $\lambda_j\to\infty$ such that 
\begin{equation}
  \label{ghlim}
 (M_j^n, \lambda_j^2 g_j, p_j)\xrightarrow{GH}(Y_{\infty}, \tilde{d}_{\infty}, \tilde{p}_{\infty})
 \end{equation}
 as $j \to \infty$ after passing to a subsequence. The limit in \eqref{ghlim} can depend upon the choice of rescaling factors $\lambda_j$, and we will call any such limit a {\textit{bubble limit}}. Note that in the non-collapsing case,  any bubble limit has Euclidean volume growth.

 Without the non-collapsing assumption \eqref{e:non-collapsed}, it is much more difficult to understand the degeneration of Einstein manifolds. 
In the general context of collapsed Einstein manifolds, there is no available metric cone structure at infinitesimal scales, and bubble limits will not necessarily have Euclidean volume growth. 
Moreover, due to the non-existence of a uniform Sobolev constant, classical regularity analysis cannot be applied directly to collapsed manifolds. 
However, in the case of Einstein $4$-manifolds, in a pioneering work, 
Cheeger and Tian proved the first $\epsilon$-regularity theorem without any non-collapsing assumption \cite{CheegerTian}. Their result implies in particular that, given a sequence of Ricci-flat  metrics $g_j$ on a fixed compact $4$-manifold $M^4$ with $(M^4, g_j, p_j)\xrightarrow{GH}(X_{\infty}^k, d_{\infty}, p_{\infty})$, there exists a finite singular set $\mathcal{S}\equiv\{q_{\beta}\}_{\beta=1}^N\subset X_{\infty}^k$ in the following sense: for any compact subset $\Omega_{\infty}\subset X_{\infty}^k\setminus\mathcal{S}$ there are regular regions $\Omega_j\subset M^4$ converging to $\Omega_{\infty}$ with uniformly bounded curvatures, a phenomenon which has been deeply studied in Riemannian geometry,  see for instance \cite{ChGr1, ChGr2, Fu1, Fu2,  ChFuG, Rong}.

 In this paper we focus on the case of K\"ahler-Einstein metrics in complex dimension $2$. In fact we will only consider Ricci-flat K\"ahler metrics  on \emph{$\K3$ surfaces}, i.e., simply-connected compact complex surfaces with vanishing first Chern class. The Ricci-flat K\"ahler metrics in this case have holonomy $\SU(2)\cong\Sp(1)$ so they are in fact \emph{hyperk\"ahler}. The main result of this paper presents a new gluing construction of metrics of this type, relying crucially on their hyperk\"ahler property.

\subsection{Gluing constructions of hyperk\"ahler $\K3$ surfaces}

In this section we recall the known gluing constructions of hyperk\"ahler metrics on $\K3$ surfaces in the literature.

\subsubsection{Kummer construction} We start with a flat  orbifold $\mathbb{T}^4/\Z_2$ given by the quotient of a  flat $4$-torus by the  involution $x\mapsto -x$. It has 16 orbifold singularities.  One can resolve these singularities by gluing 16 \emph{Eguchi-Hanson spaces} onto $X$, which are complete hyperk\"ahler ALE metrics defined on the cotangent bundle of $S^2$. By varying the flat structure on $\mathbb{T}^4/\Z_2$ and the gluing parameters,  one obtains an open set in the moduli space of all hyperk\"ahler metrics on the $\K3$ surface where the areas of the exceptional curves are small. As these areas go to zero the corresponding hyperk\"ahler metrics naturally converge back to the flat orbifold $\mathbb{T}^4/\Z_2$, and the Eguchi-Hanson spaces appear as bubbles under rescaling. For a rigorous proof we refer readers to \cite{LeBrun-Singer},  \cite{Donaldson-kummer} and the references therein. 
This is a typical example of singularity formation in the non-collapsing situation. In general the Gromov-Hausdorff limit will be an orbifold hyperk\"ahler $\K3$ surface, and the bubbles are ALE gravitational instantons, classified by Kronheimer in \cite{Kronheimer}.

\subsubsection{Codimension-$1$ collapse}

 In \cite{Foscolo} Foscolo constructed a family of hyperk\"ahler metrics on a $\K3$ surface that collapses to the flat orbifold $\mathbb{T}^3/\Z_2$. 
 The collapse has bounded curvature away from finitely many points, and is given by shrinking the fibers of an $S^1$-fibration. In the simplest case, curvature blow-up occurs at the 8 singular points of $\mathbb{T}^3/\Z_2$, where the bubbles are given by complete hyperk\"ahler spaces with cubic volume growth, which in this case are ALF-$D_2$ spaces. See \cite{CCII, minerbe} for a partial classification of hyperk\"ahler ALF spaces.
Let us also point out that the results of \cite{Foscolo} have motivated the study of codimension-$1$ collapse of $G_2$-manifolds to $3$-dimensional Calabi-Yau manifolds in \cite{FHN}.

\subsubsection{Codimension-$2$ collapse}

In  \cite{GW}, Gross and Wilson constructed a family of hyperk\"ahler 
metrics on the $\K3$ surface which collapse to a singular metric $d_{\infty}$ on a topological sphere $X_{\infty}^2\approx S^2$.
One starts from an elliptic $\K3$ surface, i.e., a $\K3$ surface that admits a holomorphic fibration over $\dC P^1$ with the general fibers being smooth elliptic curves. Moreover we assume the generic situation when there are exactly 24 singular fibers of type $I_1$. Using a combination of a gluing construction and Yau's estimates, \cite{GW} gave a fairly satisfactory picture describing the metric asymptotic behavior when the area of the fibers goes to zero. Away from the singular fibers, the metric is modeled on the Green-Shapere-Vafa-Yau hyperk\"ahler \emph{semi-flat   metrics} \cite{GSVY}, whose restrictions to the fibers are exactly flat;   in a neighborhood of each singular fiber the metric is modeled on the \emph{Ooguri-Vafa metric} (see \cite{GW} and \cite{OV}). The latter is an incomplete hyperk\"ahler metric constructed using the Gibbons-Hawking ansatz which we will recall in Section~\ref{s:model-space}.  When we rescale near the singular point of any singular fiber, the complete bubble that we obtain is $\dC^2$ endowed with the \emph{Taub-NUT metric}, which is K\"ahler with respect to the standard complex structure on $\dC^2$ and has cubic volume growth (see \cite{LeBrun, NUT, Taub}).

Notice that the limit metric $d_{\infty}$ on the topological sphere $X_{\infty}^2$ is non-smooth at the $24$ points corresponding to the singular fibers, but every tangent cone at $X_{\infty}^2$
is in fact isometric to $\dR^2$.  
Away from the singular points, $d_{\infty}$ gives a Riemannian metric on $X_{\infty}^2$ which satisfies a real Monge-Amp\`ere equation, an adiabatic limit of the Calabi-Yau equation. By hyperk\"ahler rotation, this family of hyperk\"ahler metrics also describes the geometry of the Calabi-Yau metrics on a polarized family of $\K3$ surfaces approaching a \emph{large complex structure limit}.

\subsubsection{Codimension-$3$ collapse with torus fibers} 
Here we start with two complete noncompact hyperk\"ahler $4$-manifolds with cylindrical ends. These were constructed by Tian and Yau \cite{TianYau} by removing smooth fibers from rational elliptic surfaces, and were proved in \cite{Hein} to converge to their $\mathbb{R} \times \mathbb{T}^3$ flat asymptotic models at an exponential rate. Such spaces are known as {\it $\ALH$-spaces} or \emph{half-$\K3$ surfaces} in the literature. It is then possible to glue together two $\ALH$ spaces to obtain a family of hyperk\"ahler metrics on $\K3$ which degenerates by developing a long neck modeled on $\mathbb{T}^3$ times an interval (see \cite{CCIII} for a rigorous proof). 
If we rescale  these metrics so that the rescaled diameter equals $1$, then the Gromov-Hausdorff limit is the unit interval and the bubbles are the Tian-Yau asymptotically cylindrical metrics at each endpoint. Gluing of asymptotically cylindrical geometric structures is a very familiar construction in geometry, see for example  \cite{Floer, KS} for anti-self-dual metrics in dimension $4$, and \cite{Kovalev} for holonomy $G_2$ metrics in dimension $7$.

\subsection{Main results}
\label{ss:main-results}

The main result of this paper gives a new gluing construction in which a family of hyperk\"ahler metrics on a $\K3$ surface collapses to a unit interval and generically the collapse happens along a $3$-dimensional \emph{Heisenberg nilmanifold} (i.e., a nontrivial $S^1$-bundle over $\mathbb{T}^2$). Part of our motivation was an attempt to understand the hyperk\"ahler metric degenerations corresponding to Type II complex structure degenerations of polarized $\K3$ surfaces.
A guiding example is when we have a family of quartic $\K3$ surfaces $Z_t$ in $\dC P^3$ defined by the equation $tq+f_1f_2=0$, where $q$ is a general quartic and $f_1$ and $f_2$ are general quadrics. So the general fiber is a smooth $\K3$ surface while the central fiber is a union of two quadric surfaces $X_1$ and $X_2$, intersecting transversally along an elliptic curve defined by $f_1=f_2=0$. We would like to understand the behavior of the Ricci-flat metrics on $Z_t$ in the cohomology class of $2\pi c_1(\O(1)|_{Z_t})$ as $t$ tends to zero. 

In general, given a del Pezzo surface $M$ and a smooth anti-canonical curve $D\subset M$, Tian-Yau  proved in \cite{TianYau} the existence of a hyperk\"ahler metric on $M\setminus D$, with interesting asymptotic geometry at infinity. 
Namely, outside a compact set the manifold is diffeomorphic to $N\times [0, \infty)$, where $N$ is an $S^1$-bundle over $D$ of degree $d=c_1(X)^2$, and the metric is modeled on a doubly-warped product so that as we move towards infinity the $S^1$-fibers shrink in size while the base torus $D$ expands. The volume growth rate of the hyperk\"ahler metric is $4/3$ and the curvature decays quadratically.  We call such a hyperk\"ahler metric  a \emph{Tian-Yau metric} throughout this paper, and for more precise details we refer to Section \ref{s:Tian-Yau}. The proof in \cite{TianYau} uses the Calabi ansatz in a neighborhood of infinity and then solves a Monge-Amp\`ere equation, so it is \emph{not} a priori clear whether these metrics are unique or canonical in a suitable sense. Nevertheless they provide candidates for the bubble limits of the degeneration that we would like to understand. 

Motivated by the above Type II degeneration picture, our basic idea was to glue together two Tian-Yau metrics to obtain hyperk\"ahler $\K3$ surfaces. 
However, an easy topological consideration shows that one cannot naively glue the ends of the two Tian-Yau metrics together to even match the topology of a $\K3$ surface. Geometrically, even though the end of a Tian-Yau metric is toplogically cylindrical, the metric itself is not. So we need to construct a \emph{neck region} that approximates the Tian-Yau ends on both sides.

A key novel ingredient of this paper is exactly to construct such a transition region. It is an incomplete hyperk\"ahler $4$-manifold that can be viewed as a doubly-periodic cousin of the \emph{Ooguri-Vafa metric}. Recall that the Ooguri-Vafa metric is the metric arising from the Gibbons-Hawking ansatz applied to a harmonic function on $S^1\times \mathbb{R}^2$ with a pole on $S^1\times \{0\}$; or equivalently, a harmonic function on $\dR\times \mathbb{R}^2$, periodic in the first variable, and with poles on $\Z\times \{0\}\subset \dR\times \{0\}$. Our neck metric is instead constructed by applying the Gibbons-Hawking ansatz to a harmonic function on the flat cylinder $\TT\times \mathbb{R}$  with finitely many poles (which we call the \emph{monopole points}) in $\TT \times \mathbb{R}$. This is equivalent to a harmonic function on $\dR^2\times \mathbb{R}$, doubly periodic in the first and second variables, and with poles on lattices. For more details of this construction we refer to Section \ref{s:model-space}. Here we point out that in analogy with the Ooguri-Vafa case, the resulting metric is \emph{incomplete} because $\TT \times \mathbb{R}$ is parabolic, hence admits no globally positive harmonic functions; moreover, the two ends of this neck metric indeed match up closely with the ends of Tian-Yau metrics. 

Our main theorem says that it is in fact possible to ``glue together'' two Tian-Yau metrics with a suitable neck region as above to construct families of Ricci-flat   metrics on $\K3$ with non-trivial nilpotent collapsing structure.
\begin{theorem}\label{t:codim-3}
  Let $b_+$, $b_-$ and $m$ be positive integers satisfying 
\begin{align}
1 \leq b_{\pm} \leq 9, \ 1 \leq m \leq b_+ + b_-.
\end{align}
Then there exists a family of hyperk\"ahler metrics $\hat{h}_{\beta}$ on a $\K3$ surface which collapse to the standard metric on the closed interval $[0,1]$, i.e.,
\begin{equation}
(\K3, \hat{h}_{\beta}) \xrightarrow{GH} ([0,1], dt^2),\ \beta \to \infty.
\end{equation}
Moreover, for each sufficiently large $\beta\gg 1$, there exist a finite set $\mathcal{S}\equiv\{0,t_1,\ldots, t_m,1\} \subset [0,1]$ and a continuous surjective map
\begin{equation}
F_{\beta} : \K3 \to [0,1]
\end{equation}
which is almost distance-preserving, i.e., for some constant $C_0>0$ independent of $\beta$
\begin{equation}
\Big||F_{\beta}(p)-F_{\beta}(q)|-d_{\hat{h}_{\beta}}(p,q)\Big|\leq \frac{C_0}{\beta} , \ \forall p,q\in \K3,
\end{equation}
 such that the following properties hold. 

\begin{enumerate}
\item (Regular collapsing regions)
Denote by $T_{\epsilon}(\mathcal{S})$ the $\epsilon$-tubular neighborhood of $\mathcal{S}$ and $\mathcal{R}_{\epsilon}\equiv [0,1] \setminus T_{\epsilon}(\mathcal{S})$. Then for every $\epsilon\in(0,10^{-2})$ and $k\in\dN$, there exists $C_{k,\epsilon}>0$
 such that \begin{equation}
\sup\limits_{F_{\beta}^{-1}(\mathcal{R}_{\epsilon})}|\nabla^k {\Rm_{\hat{h}_{\beta}}}|\leq C_{k,\epsilon},
\end{equation}
and for each $t \in \mathcal{R}_{\epsilon}$, $F_{\beta}^{-1}(t)$ is diffeomorphic to an $S^1$-fiber bundle over $\TT$. 
Furthermore, 
 \begin{align}
C_0^{-1} \beta^{-1}\leq \diam_{\hat{h}_{\beta}}(F_{\beta}^{-1}(t))\leq C_0 \beta^{-1},\quad  C_0^{-1} \beta^{-2}\leq \diam_{\hat{h}_{\beta}}(S^1)\leq C_0 \beta^{-2}.
\end{align}

\item (Bubbling regions) 
Denote by $F_{\beta}^{-1}(T_{\epsilon}(\mathcal{S}))\equiv\mathcal{S}_{\epsilon}^{-}\cup \bigcup\limits_{j=1}^m\mathcal{S}_{\epsilon}^j\cup\mathcal{S}_{\epsilon}^{+}$ the components of the singular pre-image. Then the following spaces occur as bubble limits:
\begin{enumerate}
\item 
For each $1\leq j\leq m$,
there exists an $x_{\beta,j}\in\mathcal{S}_{\epsilon}^j$  such that $F_{\beta}(x_{\beta,j}) \rightarrow t_j$, $|{\Rm_{\hat{h}_{\beta}}}|(x_{\beta,j}) \rightarrow \infty$ as $\beta \rightarrow \infty$,
and rescalings of the metrics near $x_{\beta,j}$ converge to Taub-NUT metrics. In fact, it is possible to have several distinct Taub-NUT bubbles coming out of the same component $\mathcal{S}_\epsilon^j$; see Theorem \ref{t:domain-wall-crossing} for a more precise statement.
\item 
There exist $x_{\beta,\pm}\in\mathcal{S}_{\epsilon}^{\pm}$ such that $F_\beta(x_{\beta,-})\to0$, $F_\beta(x_{\beta,+})\to1$,
 $|{\Rm_{\hat{h}_{\beta}}}|(x_{\beta,\pm}) \rightarrow \infty$ as $\beta \rightarrow \infty$,
and rescalings of the metrics near $x_{\beta,\pm}$ converge to Tian-Yau metrics on a del Pezzo surface of degree $b_{\pm}$, minus a smooth anti-canonical curve. 
\end{enumerate}

\end{enumerate}

\end{theorem}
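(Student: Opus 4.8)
The plan is to produce $\hat h_\beta$ by an explicit gluing of three families of hyperkähler pieces and then to perturb to an exact solution, using the flexibility of hyperkähler \emph{triples} rather than a fixed complex structure. First I would fix the two Tian--Yau metrics $g_\pm$ on $M_\pm\setminus D_\pm$, with $M_\pm$ a del Pezzo surface of degree $b_\pm$ and $D_\pm$ a smooth anticanonical elliptic curve, and record their asymptotics from \cite{TianYau}, \cite{Hein}: a single end diffeomorphic to $N_\pm\times[0,\infty)$, with $N_\pm$ the degree-$b_\pm$ circle bundle over $D_\pm\cong\TT$, along which $g_\pm$ is exponentially close (after a diffeomorphism) to the Gibbons--Hawking metric over $\TT\times\dR_+$ with an asymptotically linear potential. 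The key new object is the \emph{neck}: a potential $V_\beta$, harmonic and positive on $\TT\times(-L_\beta,L_\beta)$ with $L_\beta\sim\beta$, carrying exactly $b_++b_-$ simple poles (the monopole points, grouped over the values $t_1,\dots,t_m$) and with prescribed affine behavior near $t\to\pm L_\beta$ chosen to match the two Tian--Yau potentials. Applying the Gibbons--Hawking ansatz and rescaling by $\lambda_\beta\sim\beta^{-3/2}$ (so that $V_\beta\sim\beta$, the $\TT$-directions have size $\sim\beta^{-1}$, and the Gibbons--Hawking circle has size $\sim\beta^{-2}$) yields an incomplete hyperkähler neck whose two ends match the Tian--Yau ends and which, zoomed in by a factor $\sim\beta$ near any monopole point, looks like $\dR^3$ with potential $1+\tfrac1{2\rho}$, i.e.\ Taub--NUT. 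The parabolicity of $\TT\times\dR$ only obstructs extending $V_\beta$ over the whole infinite cylinder; on the finite cylinder there is no obstruction, and it is precisely this that forces the collapse to be to an \emph{interval} with incomplete ends.

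Next I would glue $M_-\setminus D_-$, the neck, and $M_+\setminus D_+$ along their matching ends using cutoff functions supported in the transition annuli, obtaining a closed $4$-manifold $X_\beta$ with a closed, approximately hyperkähler triple $\underline\omega=(\underline\omega_1,\underline\omega_2,\underline\omega_3)$; the error $\underline\omega_i\wedge\underline\omega_j-\delta_{ij}\,\dvol$ is supported in the annuli and is exponentially small in $\beta$ in the natural weighted norms. A Kummer-type Euler-characteristic count — the Tian--Yau pieces contribute $\chi=12-b_\pm$ each and the neck contributes $\chi=b_++b_-$ (one per monopole point), for a total of $24$ — together with simple connectedness and triviality of the canonical bundle, identifies $X_\beta$ with a $\K3$ surface.

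Then I would perturb $\underline\omega$ to an exact hyperkähler triple $\omega=\underline\omega+d a$ solving $\omega_i\wedge\omega_j=\delta_{ij}\,\dvol$. Linearizing at the approximate solution gives, after gauge fixing, an elliptic system modeled on the Hodge Laplacian on $1$-forms; the crucial input — and the main obstacle — is a bound on its inverse \emph{uniform in $\beta$}, in weighted Hölder spaces adapted to the several scales present: the neck scale $\beta^{-1}$, the circle-fiber scale $\beta^{-2}$, the Taub--NUT bubble scales $\sim\beta^{-1}$, and the asymptotically cylindrical Tian--Yau scales. Because the metrics collapse there is no uniform Sobolev inequality, so this must be proved by a blow-up/contradiction argument: a hypothetical sequence of unit-norm almost-solutions with eigenvalue $\to0$ would, after passing to the appropriate rescalings, converge to a nonzero bounded solution of the limiting linear equation on one of the model spaces — Taub--NUT, a Tian--Yau space, or a flat $\dR\times$(Heisenberg nilmanifold or $\dT^3$) — contradicting the relevant Liouville/decay theorem on that model. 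With the uniform estimate in hand, a contraction-mapping argument solves the nonlinear equation for all $\beta\gg1$; the $3$-dimensional cokernel of self-dual harmonic $2$-forms, i.e.\ the periods of the triple, is matched by varying the finitely many gluing parameters (monopole positions, flat structures on the $\TT$'s, and the twisting of the circle bundles).

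Finally, from the uniform closeness of $\hat h_\beta=\lambda_\beta^2 g$ to the glued model I would read off the statement. The map $F_\beta$ is the rescaling to $[0,1]$ of the cylinder coordinate $t$, which is constant on the Gibbons--Hawking and doubly-warped fibers and is interpolated across the annuli; its almost-distance-preserving property and the Gromov--Hausdorff convergence to $([0,1],dt^2)$ follow from the warped-product form of the model, as do the fiber-diameter bounds ($O(\beta^{-1})$ for the $3$-dimensional fiber, $O(\beta^{-2})$ for its circle). On $\mathcal R_\epsilon$ the model has bounded geometry at the collapsing scale, which passes to $\hat h_\beta$ with all derivatives, and the regular fiber there is the circle bundle over $\TT$ whose degree equals the local Gibbons--Hawking flux: nonzero over a generic segment (a Heisenberg nilmanifold), and possibly zero over some segments ($\dT^3$), with the two extreme segments necessarily of degrees $b_-$ and $b_+$. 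Rescaling $\hat h_\beta$ near $x_{\beta,j}$ by $|\Rm_{\hat h_\beta}|^{1/2}(x_{\beta,j})$ produces Taub--NUT; rescaling near $x_{\beta,\pm}$ by $\lambda_\beta^{-1}$ recovers the Tian--Yau metric of degree $b_\pm$; and several Taub--NUT bubbles over one $t_j$ arise from monopole points that coalesce to $t_j$ at mutually diverging rates, which is the finer multi-scale bubbling analysis carried out in Theorem~\ref{t:domain-wall-crossing}.
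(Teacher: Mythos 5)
Your overall route is essentially the paper's: a Gibbons--Hawking neck over $\TT\times\dR$ built from a (necessarily sign-changing) Green's function with $b_-+b_+$ monopole points and slopes tuned to the two Tian--Yau potentials, gluing of the three hyperk\"ahler triples with exponentially small error, identification of the glued manifold with $\K3$ via $\chi=24$, a weighted H\"older blow-up/contradiction argument for a $\beta$-uniform inverse of the gauge-fixed linearization, and then reading off $F_\beta$, the diameter bounds, the nilmanifold fibers, and the Taub--NUT and Tian--Yau bubbles from the model geometry. Two points, however, deserve flagging. First, the cokernel: the paper does not match the periods of the triple by varying monopole positions, flat structures, or bundle twistings; instead it enlarges the domain of the linearized operator by $\mathcal{H}^+(\mathcal{M})\otimes\dR^3$ (the triple of self-dual harmonic forms, a $9$-dimensional space since $b_2^+=3$), solving $d^+\bm{\eta}+\bm{\xi}=\mathfrak{F}_0(\cdots)$ with $\bm{\xi}$ as an extra unknown, so the resulting hyperk\"ahler triple simply lands in a nearby cohomology class. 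Your parameter-matching scheme is not wrong in principle, but it would require an additional surjectivity statement for the period map in the gluing parameters, which you do not sketch and which is not needed.

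Second, and more seriously as a gap in the argument as written: the ``relevant Liouville/decay theorem'' on the Tian--Yau model is not an off-the-shelf input, and the blow-up limit there is not a \emph{bounded} solution. Because the weight function grows exponentially toward one of the Tian--Yau ends, the limiting $1$-form $\gamma$ produced by the contradiction argument only satisfies $d^+\gamma=d^*\gamma=0$ together with a growth bound $|\gamma|=O(e^{\delta z})$; the Bochner/maximum-principle Liouville argument (which does dispose of the Taub--NUT and decaying Tian--Yau cases) fails here. The paper must prove a new Liouville theorem for such half-harmonic $1$-forms of slow exponential growth on Tian--Yau spaces, by rewriting the equations as $\bp\gamma^{0,1}=\bp^*\gamma^{0,1}=0$, solving $\bp f=\gamma^{0,1}$ on the del Pezzo compactification via Kodaira vanishing, and then controlling $f$ through a separation-of-variables analysis (Hermite-function estimates) of harmonic functions and of the Poisson equation on the Calabi model space, exploiting that the complex structure converges to the model at rate $O(e^{-(\frac12-\epsilon)z^2})$, much faster than the metric's $O(e^{-\delta z})$. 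This is the technical heart of the paper (its Sections 4--5), and your proposal assumes it rather than supplies it; without it the contradiction argument for the uniform inverse does not close.
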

\begin{remark}
 Figure \ref{mainfig} schematically shows the collapsing process when $b_-=b_+=7$, $m=1$, and $t_1=\frac{1}{2}$. In this case we have the maximal possible number $b_- + b_+ = 14$ of Taub-NUT bubbles coming out of one single singular pre-image component $\mathcal{S}^1_\epsilon = F_\beta^{-1}(t_1 - \epsilon, t_1 + \epsilon)$. 
\end{remark}

\begin{figure}
\begin{tikzpicture}[scale = 1.3]
\draw (0,0) to [out = 90, in = 180] (1,.5) to (2,.5);
\draw (2,.5) to [out = 0, in = 235] (4,2);
\draw (4,2) to [out = 55, in = 180] (6,3);
\draw (6,3) to [out = 0, in = 125] (8,2);
\draw (8,2) to [out = -55, in = 180] (10,.5);
\draw (10,.5) to (11,.5);
\draw (11,.5) to [out = 0, in = 90] (12,0);
\draw (0,0) to [out = 270, in = 180] (1,-.5) to (2,-.5);
\draw (2,-.5) to [out = 0, in = 125] (4,-2);
\draw (4,-2) to [out = -55, in = 180] (6,-3);
\draw (6,-3) to [out = 0, in = -125] (8,-2);
\draw (8,-2) to [out = 55, in = 180] (10,-.5);
\draw (10,-.5) to (11,-.5);
\draw (11,-.5) to [out = 0, in = -90] (12,0);
\draw[blue](2,0) ellipse (.2 and .5);
\draw[red](2,0) ellipse (.2 and .05);
\draw[blue, fill=gray] (3,0) ellipse (.1 and .82);
\draw[red](3,0) ellipse (.1 and .05);
\draw[blue](4,0) ellipse (.05 and 2);
\draw[red ](4,0) ellipse (.05 and .02);
\draw[blue](10,0) ellipse (.2 and .5);
\draw[red](10,0) ellipse (.2 and .05);
\draw[blue, fill=gray](9,0) ellipse (.1 and .82);
\draw[red](9,0) ellipse (.1 and .05);
\draw[blue](8,0) ellipse (.05 and 2);
\draw[red ](8,0) ellipse (.05 and .02);
\draw[red](5,0) ellipse (.03 and .02);
\draw[blue](5,0) ellipse (.03 and 2.82);
\draw[red](7,0) ellipse (.03 and .02);
\draw[blue](7,0) ellipse (.03 and 2.82);
\draw (.8,-.05) arc [ radius = .5, start angle = 45, end angle = 135];
\draw (1.2,-.05) arc [ radius = .5, start angle = 45, end angle = 135];
\draw (1.6,-.05) arc [ radius = .5, start angle = 45, end angle = 135];
\draw (11.1,-.05) arc [ radius = .5, start angle = 45, end angle = 135];
\draw (11.5,-.05) arc [ radius = .5, start angle = 45, end angle = 135];
\draw (11.9,-.05) arc [ radius = .5, start angle = 45, end angle = 135];
\node at (5.5,2) {$\times$};
\node at (5.5,-2) {$\times$};
\node at (5.5,1) {$\times$};
\node at (5.5,0) {$\times$};
\node at (5.5,-1) {$\times$};
\node at (6.5,2) {$\times$};
\node at (6.5,-2) {$\times$};
\node at (6.5,1) {$\times$};
\node at (6.5,0) {$\times$};
\node at (6.5,-1) {$\times$};
\node at (6,2) {$\times$}; 
\node at (6,-2) {$\times$}; 
\node at (6,1) {$\times$}; 
\node at (6,-1) {$\times$};
\draw (0,-5) -- (12,-5);
\draw (0,-5.1)   -- (0,-4.9);
\draw (12,-5.1)   -- (12,-4.9);
\draw (6,-5.1)   -- (6,-4.9);
\draw[->, very thin] (0,-.3) -- (0,-4.5);
\draw[->, very thin] (1.25,-.6) -- (.2,-4.5);
\draw[->, very thin] (3,-.9) -- (3,-4.5);
\draw[->, very thin] (6,-3.1) -- (6,-4.5);
\draw[->, very thin] (5,-2.9) -- (5.8,-4.5);
\draw[->, very thin] (12,-.3) -- (12,-4.5);
\draw[->, very thin] (10.75,-.6) -- (11.8,-4.5);
\draw[->, very thin] (9,-.9) -- (9,-4.5);
\draw[->, very thin] (6,-3.1) -- (6,-4.5);
\draw[->, very thin] (7,-2.9) -- (6.2,-4.5);
\node at (1,.8) {$X_{b_-}$};
\node at (11,.8) {$X_{b_+}$};
\node at (6,3.3) {$\mathcal{N}$};
\node at (0,-5.5) {$0$};
\node at (6,-5.5) {$\frac12$};
\node at (12,-5.5) {$1$};
\draw[|->] (0,4) -- (4,4);
\node[align = left, below] at (4,4) {$z_-$};
\node[align = left, below] at (2.7,4) {$T_-$};
\draw[fill] (3,4) circle [radius=1pt];
\draw[<->] (2,5) -- (10,5);
\node[align = right, below] at (2,5) {$z$};
\node[align = right, below] at (10,5) {$z$};
\draw[dashed, very thin] (3,5) -- (3,.8);
\node[above] at (3,5) {$-T_-$};
\draw[fill] (3,5) circle [radius=1pt];
\draw[|->] (12,4) -- (8,4);
\node[align = left, below] at (8,4) {$z_+$};
\draw[dashed, very thin] (9,5) -- (9,.8);
\node[above] at (9,5) {$T_+$};
\draw[fill] (9,5) circle [radius=1pt];
\node[align = left, below] at (9.3,4) {$T_+$};
\draw[fill] (9,4) circle [radius=1pt];
\draw (6,5-.1) -- (6,5+.1);
\node[below] at (6,4.9) {$0$};
\end{tikzpicture}
\caption{The vertical arrows represent collapsing to a one-dimensional interval. The red circles represent the $S^1$ fibers and the blue curves represent the base $\TT$s of the nilmanifolds. The $\times$s are the monopole points in the neck region $\mathcal{N}$. The gray regions are in the ``damage zones''.}
\label{mainfig}
\end{figure}
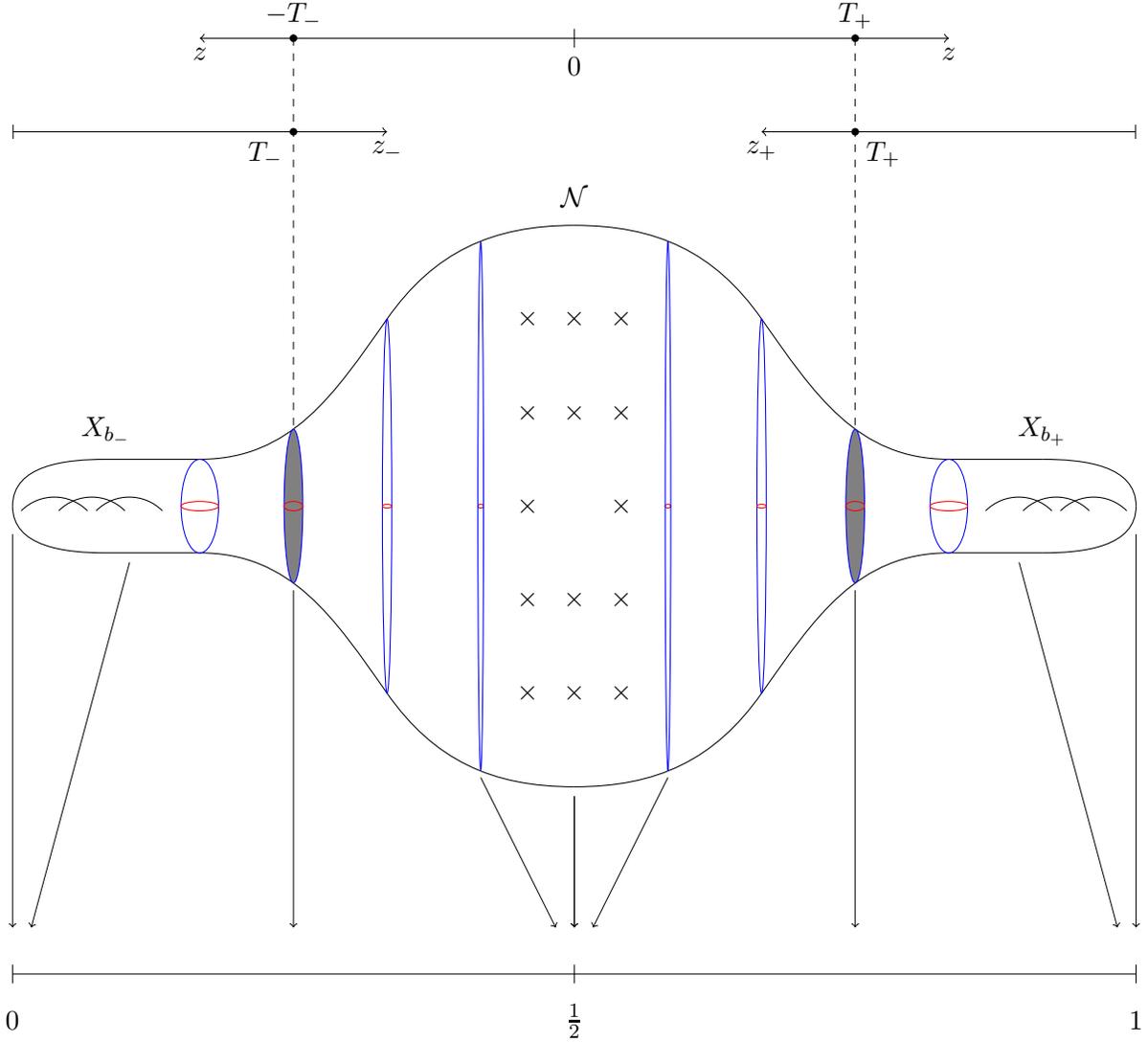

\begin{remark}
The Riemannian geometry of the regular collapsing regions is actually completely understood.  For each $t \in \mathcal{R}_{\epsilon}$, $F_{\beta}^{-1}(t)$  is a $3$-dimensional Heisenberg nilmanifold if the $S^1$-bundle is nontrivial, and is diffeomorphic to $\mathbb{T}^3$ otherwise. Furthermore, the universal cover of a regular preimage $F_{\beta}^{-1}(t_j+ \epsilon ,t_{j+1} - \epsilon)$ converges to a hyperk\"ahler manifold $(\widetilde{U}_{\infty},\tilde{g}_{\infty})$ with a Heisenberg or Euclidean group of isometries according to whether the $S^1$-bundle is nontrivial or trivial. An explicit expression for $\tilde{g}_{\infty}$ in the Heisenberg case may be found in Section \ref{ex:model-space}.  
In particular, our construction gives a concrete example of Lott's recent work classifying the regular regions in collapsing $4$-manifolds with almost Ricci-flat  metrics (see \cite{lott} for more details).  
\end{remark}

\begin{remark} The volume of the hyperk\"ahler metrics in Theorem~\ref{t:codim-3} is comparable to $\beta^{-4}$. If one scales these metrics to have unit volume instead of unit diameter, it is not hard to see that the possible pointed Gromov-Hausdorff limits are either $\TT \times \mathbb{R}$ or $\TT \times [0, \infty)$, depending on the basepoint (see Section \ref{ss:rescaled-limits}). Thus under this scaling, the nilmanifolds disappear and one sees only a simple $S^1$-collapse. However, the bubbles remain the same.\end{remark}
 
For the precise definition of a $3$-dimensional 
Heisenberg nilmanifold, see Section \ref{heisnil}. These are $S^1$-bundles over $\TT$, and thus they have a degree which is only well-defined up to sign. However, if one specifies a projection to an oriented $\TT$, then the degree is a well-defined integer. We denote by $\Nil_b^3$ a $3$-dimensional nilmanifold of {\textit{degree}} $b$, where  we will always have a certain projection to an oriented $\TT$ in mind.
Let $t_0 = 0$, $t_{m +1} = 1$, and let 
$d_j$ be the degree of a nilpotent fiber $\Nil_{d_j}^3$ on the  interval
$(t_j + \epsilon ,t_{j+1} - \epsilon)$, $j = 0, \dots, m$, with $d_0 = b_-$ and  $d_{m} = - b_+$.
The following {\it Domain Wall Crossing Theorem} describes the possible jumps of the degrees of the nilmanifolds
upon crossing the singular regions.

\begin{theorem} \label{t:domain-wall-crossing}
Given any $m$-tuple of positive integers $(w_1,\ldots, w_{m})$ satisfying 
\begin{equation}
\sum\limits_{j=1}^{m} w_j=b_-+b_+,
\end{equation}
 there exist examples in Theorem \ref{t:codim-3} with $d_{j} - d_{j+1} = w_{j+1}$, $j = 0, \dots, m-1$.  
Furthermore, near each singular point $t_j$,  exactly $w_j$ Taub-NUT bubbles occur. 
\end{theorem}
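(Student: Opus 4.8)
The proof runs the gluing construction of Theorem~\ref{t:codim-3} with a specifically chosen configuration of monopole points and then tracks two pieces of bookkeeping — the degrees of the nilmanifold fibers and the bubble limits at the monopoles. \emph{Choice of data.} Fix del Pezzo surfaces $M_-$, $M_+$ with $c_1(M_\pm)^2 = b_\pm$ and smooth anti-canonical curves $D_\pm \subset M_\pm$, and identify each elliptic curve $D_\pm$ with a fixed flat $\TT$. Fix heights $0 < t_1 < \cdots < t_m < 1$. In the rescaled neck coordinates on $\TT \times \mathbb{R}$ from Section~\ref{s:model-space}, place, for each $j$, exactly $w_j$ \emph{distinct} monopole points $q_{j,1}, \dots, q_{j,w_j}$ clustered near a level $\TT \times \{z_j\}$ whose normalized neck coordinate is $t_j$. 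The total number of monopole points is then $\sum_{j=1}^m w_j = b_- + b_+$, and this number is forced: it is exactly the net charge that must be absorbed in passing from the degree-$b_-$ Tian-Yau end on the left to the orientation-reversed degree-$(-b_+)$ Tian-Yau end on the right, so the hypothesis $\sum w_j = b_- + b_+$ is precisely the topological compatibility needed to close up a $\K3$ surface. Thus one is free to distribute the charge into clusters of the prescribed sizes $w_j$.

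\emph{The neck and the degree count.} Following Section~\ref{s:model-space}, form the Green-type harmonic function $V$ on $\TT \times \mathbb{R}$ with a unit-charge pole at each $q_{j,k}$ and with the linear growth at the two ends dictated by the Calabi-ansatz asymptotics of the Tian-Yau metrics on $M_\pm \setminus D_\pm$; parabolicity of $\TT \times \mathbb{R}$ forces incompleteness, and the linear growth keeps $V > 0$ on the relevant slab, so the Gibbons-Hawking ansatz applies. For $t$ away from the $t_j$, the fiber $F_\beta^{-1}(t)$ is the total space of the $S^1$-bundle over the slice $\TT \times \{z(t)\}$ whose first Chern number is $\tfrac{1}{2\pi}\int_{\TT \times \{z(t)\}} d\theta$ for the Gibbons-Hawking connection $\theta$; since $d\theta = \star\, dV$ on $\TT\times\mathbb{R}$, this integer drops by exactly the total charge crossed as $z$ passes a cluster, that is, by $w_j$ at the level $z_j$. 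With the normalization $d_0 = b_-$ inherited from the left Tian-Yau end this gives $d_j = b_- - (w_1 + \cdots + w_j)$, hence $d_j - d_{j+1} = w_{j+1}$ for $j = 0, \dots, m-1$ and $d_m = b_- - (b_- + b_+) = -b_+$, matching the right Tian-Yau end. The delicate point here is the orientation convention for the projection to $\TT$, which is the source of the sign in $d_m = -b_+$ as opposed to $d_0 = b_-$.

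\emph{Bubble limits.} Near a monopole point $q_{j,k}$ the potential is $V = \tfrac{1}{2\rho} + \varphi$, with $\rho$ the distance to $q_{j,k}$ and $\varphi$ harmonic and smooth near $q_{j,k}$ with $\varphi(q_{j,k}) = c_{j,k} > 0$. Rescaling $\hat{h}_\beta$ about $q_{j,k}$ by the dilation that exhibits the NUT geometry (equivalently, that restores the collapsed Gibbons-Hawking $S^1$-fiber to fixed length), the potential converges to $c_{j,k} + \tfrac{1}{2\rho}$ on flat $\mathbb{R}^3$, whose Gibbons-Hawking metric is a Taub-NUT metric. Since the $q_{j,1}, \dots, q_{j,w_j}$ are distinct and their mutual distances remain bounded below after this rescaling, one obtains $w_j$ distinct Taub-NUT bubbles with centers $x_{\beta,j,k}$ satisfying $F_\beta(x_{\beta,j,k}) \to t_j$ and $|\Rm_{\hat h_\beta}|(x_{\beta,j,k}) \to \infty$. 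That these are the only bubbles near $t_j$ follows from the uniform curvature bounds on the regular regions in Theorem~\ref{t:codim-3} together with the analogous bounds inside $\mathcal{S}_\epsilon^j$ away from the monopole points, both established in the proof of Theorem~\ref{t:codim-3}. Combining the degree count with the bubble count gives the statement.

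\emph{Main obstacle.} The analytic core — the gluing, the weighted estimates, and the perturbation from the approximate to an exact hyperk\"ahler metric — is entirely subsumed in Theorem~\ref{t:codim-3}, so the genuinely new work is combinatorial and geometric. I expect two points to be the most delicate: (i) checking that the chosen monopole configuration really does assemble into a \emph{globally} consistent $S^1$-bundle matching both Tian-Yau ends, which is exactly the force of $\sum w_j = b_- + b_+$ together with the orientation matching; and (ii) ensuring that the $w_j$ monopoles clustered at a common neck level genuinely separate under the bubble rescaling, so that $\mathcal{S}_\epsilon^j$ yields $w_j$ distinct Taub-NUT bubbles rather than one more complicated bubble — keeping their rescaled mutual separations bounded below is the subtlest part.
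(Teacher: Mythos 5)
Your proposal is correct and follows essentially the same route as the paper's proof: group the $b_-+b_+$ monopoles into clusters of sizes $w_j$ at $m$ separated levels, read the nilmanifold degree off from the flux of $\ast dV$ over torus slices (equivalently the slope of the averaged Green's function, Corollary \ref{coro:gf}), so that it jumps by exactly $w_j$ across the $j$-th cluster, and obtain one Taub-NUT bubble per monopole under the $\beta$-rescaling, with the curvature bounds elsewhere ruling out other bubbles. The only organizational difference is that the paper realizes the clustered configuration by chaining $m$ neck regions over finite cylinders with slope-corrected Green's functions (this is built into the proof of Theorem \ref{t:codim-3}, so each neck's monopole separations are $\beta$-independent), rather than your single Green's function on $\TT\times\dR$ with $\beta$-dependent cluster separations, but the degree and bubble bookkeeping is identical.
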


\begin{remark} This domain wall crossing phenomenon has been studied in the physics literature, namely, it arises in Type IIA massive superstring theory, see \cite{Hull}. 
\end{remark}

\begin{remark} It is possible to generalize our construction to obtain bubble-trees of ALF-$A_k$ metrics at the interior points with several levels of scaling (by taking clusters of monopole points in the neck region which conglomerate in the limit at various rates). Similarly, one could take some monopole points to have higher multiplicity, in which case one would obtain collapsing sequences of Ricci-flat  metrics on {\textit{orbifold}} K3 surfaces with $A_k$-type singularities. However, for simplicity we do not list all of these numerous possibilities here in this paper, but see Remark \ref{rem:ALF} below. 
\end{remark}

\begin{remark} 
This paper is primarily concerned with the construction of the hyperk\"ahler metrics. To actually identify the collapsing limits as polarized degenerations of complex structures in various cases takes more work, and will be discussed in a forthcoming paper \cite{HSVZ2}.
\end{remark}

\begin{remark}
In 1987 R.~Kobayashi proposed a conjectural mechanism for Ricci-flat metrics on $\K3$ surfaces to degenerate into unions of Tian-Yau and Taub-NUT spaces. See Cases (i) and (ii) on p.223 in \cite{Koba}. Our main result in this paper verifies Kobayashi's expectation at the level of hyperk\"ahler structures. However, at least in certain cases, Kobayashi also proposed an identification of these limits with type II polarized degenerations of complex structures on $\K3$ surfaces.
\end{remark}

\subsection{Gluing hyperk\"ahler triples}
\label{ss:outline}
We will adopt the general description of a hyperk\"ahler metric in terms of a triple of three symplectic forms due to Donaldson \cite{Donaldson}, a description which was also used, for example, in \cite{FineLotaySinger, Foscolo}.  
Namely, we will not directly construct a Ricci-flat   metric on $\mathcal{M}$. Instead we will glue together triples of symplectic forms, which we will then perturb to obtain a hyperk\"ahler triple, which will then yield in particular a Ricci-flat K\"ahler metric.

Let $M^4$ be an oriented $4$-manifold with a volume form $\dvol_0$. A triple of $2$-forms $\bm{\omega}=(\omega_1,\omega_2,\omega_3)$ is called a definite triple if the matrix $Q=(Q_{ij})$ defined by 
 \begin{equation}\frac{1}{2}\omega_i\wedge\omega_j=Q_{ij}\dvol_0\end{equation} is positive.
Given a definite triple $\bm{\omega}$, the associated volume form is defined as
\begin{equation}
\dvol_{\bm{\omega}} = (\det(Q))^{\frac{1}{3}} \dvol_0,
\end{equation}
which is independent of the choice of volume form $\dvol_0$. We denote by $Q_{\bm{\omega}}\equiv (\det(Q))^{-\frac{1}{3}}Q$
the renormalized matrix with unit determinant.
 Furthermore, a definite triple $\bm{\omega}=(\omega_1,\omega_2,\omega_3)$ is called a hyperk\"ahler triple if $d\omega_1=d\omega_2=d\omega_3=0$ and the renormalized coefficient matrix satisfies
\begin{align}
Q_{\bm{\omega}}=\Id.\label{e:id-eq}\end{align}
Note that equation \eqref{e:id-eq} is equivalent to
\begin{equation}
\frac{1}{2}\omega_i\wedge\omega_j =\frac{1}{6}\delta_{ij}(\omega_1^2+\omega_2^2+\omega_3^2),
\end{equation}
for every $1\leq i\leq j\leq 3$.

 Each definite triple $\bm{\omega}$ defines a Riemannian metric $g_{\bm{\omega}}$ such that each $\omega_j$ is self-dual with respect to $g_{\bm{\omega}}$. If moreover
$\bm{\omega}=(\omega_1,\omega_2,\omega_3)$ is a hyperk\"ahler triple, then $g_{\bm{\omega}}$ is a hyperk\"ahler metric. Furthermore, $\omega_2 + i\omega_3$  is a holomorphic $2$-form with respect to the complex structure determined by $\omega_1$. See Section \ref{s:model-space} for more concrete examples of such triples. 
 
Since in our construction each piece of the manifold $\mathcal{M}$ is hyperk\"ahler, hence carries a hyperk\"ahler triple,  we will first show that there exists a glued triple of closed $2$-forms $\bm{\omega}$ on $\mathcal{M}$ which is very close to being a hyperk\"ahler triple, i.e.,
\begin{equation}
\|Q_{\bm{\omega}}-\Id\|_{C^0(\mathcal{M})} <\epsilon
\end{equation}
for some sufficiently small constant $\epsilon>0$. A precise statement will be proved in Section \ref{s:approx-triple}.
Starting from such a glued approximately hyperk\"ahler triple $\bm{\omega}$, 
the goal of the perturbation procedure is to find a triple of closed $2$-forms $\bm{\theta}=(\theta_1,\theta_2,\theta_3)$ such that $
\underline{\bm{\omega}}\equiv\bm{\omega}+\bm{\theta}$ is an actual hyperk\"ahler triple on $\mathcal{M}$. Precisely, we will solve the system
\begin{equation}
\frac{1}{2}(\omega_i+\theta_i)\wedge(\omega_j+\theta_j)=\delta_{ij} \dvol_{\bm{\omega}+\bm{\theta}}, \label{e:perturbation}
\end{equation}
which is equivalent to
\begin{equation}
\frac{1}{2}(\omega_i + \theta_i)\wedge (\omega_j+ \theta_j)=\frac{1}{6}\delta_{ij}\sum\limits_{j=1}^3(\omega_j+\theta_j)^2. \label{e:perturbed-hkt-eq}
\end{equation}
We expand equation \eqref{e:perturbed-hkt-eq}:
\begin{equation}
\frac{1}{2}(\omega_i\wedge\omega_j + \omega_i\wedge\theta_j + \omega_j\wedge\theta_i + \theta_i\wedge \theta_j) =\frac{1}{6}\delta_{ij}\sum\limits_{j=1}^3\Big(\omega_j^2 + \theta_j^2+ 2\omega_j\wedge \theta_j\Big).\label{e:expansion}
\end{equation}
Now split $\bm{\theta}$ into its self-dual and anti-self-dual parts with respect to $g_{\bm{\omega}}$, 
$\bm{\theta}=\bm{\theta}^++\bm{\theta}^-$. We define a matrix $A=(A_{ij})$ by $\theta^+_i = \sum\limits_{j=1}^3 A_{ij} \omega_j$ and also define the matrix $S_{\bm{\theta}^-}=(S_{ij})$ by 
\begin{equation}
\frac{1}{2}\theta_i^{-}\wedge\theta_j^{-}=S_{ij}\dvol_{\bm{\omega}},\ 1\leq i\leq j\leq 3.
\end{equation}
 Then in terms of the volume form $\dvol_{\bm{\omega}}$ given by the approximately hyperk\"ahler triple $\bm{\omega}$, the expansion \eqref{e:expansion} can be rewritten as the matrix equation
\begin{align}
\begin{split}
&(Q_{\bm{\omega}}+Q_{\bm{\omega}}A^T+AQ_{\bm{\omega}}+AQ_{\bm{\omega}}A^T)+S_{\bm{\theta}^-} \\
&=\frac{1}{3}\Id\cdot\Big(\Tr(Q_{\bm{\omega}})+\Tr(AQ_{\bm{\omega}}A^T)+\Tr(S_{\bm{\theta}^-})+\Tr(AQ_{\bm{\omega}})+\Tr(Q_{\bm{\omega}}A^T)\Big).
\end{split}
\end{align}
For any $3\times 3$ real matrix $B$ denote by \begin{equation}\TF(B) = B - \frac{1}{3}\Tr(B)\Id\end{equation} the trace-free part of $B$. Then we get

\begin{equation}
\TF(Q_{\bm{\omega}}A^T + Q_{\bm{\omega}}A + AQ_{\bm{\omega}}A^T) = \TF(-Q_{\bm{\omega}}-S_{\bm{\theta}^-}).\label{e:lftf}
\end{equation}
For simplicity, in our context, we always identify a $3\times 3$-matrix with a triple of self-dual $2$-forms.
Then observe that a solution of following gauge-fixed system
\begin{align}
d^+ \bm{\eta} + \bm{\xi} = \mathfrak{F}_0\Big(\TF(-Q_{\bm{\omega}}-S_{d^{-}\bm{\eta}})\Big), \
d^* \bm{\eta} = 0,
\label{e:elliptic-system}
\end{align} 
is also a solution of \eqref{e:lftf}. Here $\mathfrak{F}_0$
denotes the local inverse near zero to the local diffeomorphism $\mathfrak{G}_0: \mathscr{S}_0(\dR^3) \to \mathscr{S}_0(\dR^3)$ on the space of trace-free symmetric $3\times 3$-matrices defined by 
\begin{align}\mathfrak{G}_0(A) = \TF(Q_{\bm{\omega}}A^T + AQ_{\bm{\omega}}+ AQ_{\bm{\omega}}A^T).               \end{align} Moreover, 
$\bm{\eta}=(\eta_1,\eta_2,\eta_3)$ with $\eta_j\in\Omega^1(\mathcal{M})$, 
$\bm{\xi}= (\xi_1, \xi_2, \xi_3)$ with 
$\xi_i\in\mathcal{H}^+(\mathcal{M})$ (the space of self-dual harmonic $2$-forms with respect to $g_{\bm{\omega}}$),  and $d^{\pm}\bm{\eta}$ is the self-dual or anti-self-dual part of  $d \bm{\eta} = \bm{\theta} - \bm{\xi}$ with respect to $g_{\bm{\omega}}$, respectively.

The linearization of the elliptic system \eqref{e:elliptic-system} at $\bm{\eta} = 0$ is 
\begin{equation}
\mathscr{L} = (\mathscr{D} \oplus \Id)\otimes \dR^3 : (\Omega^1(\mathcal{M}) \oplus \mathcal{H}^+(\mathcal{M}) )\otimes \dR^3 \longrightarrow ( \Omega^0(\mathcal{M}) \oplus \Omega^2_+(\mathcal{M}))\otimes \dR^3,
\end{equation}
where
\begin{equation}
\mathscr{D}\equiv d^* + d^+: \Omega^1(\mathcal{M}) \longrightarrow (\Omega^0(\mathcal{M}) \oplus \Omega^2_+(\mathcal{M}))
\end{equation}
is a Dirac-type operator.

In order to solve the elliptic system \eqref{e:elliptic-system} we will use the implicit function theorem (see Lemma~\ref{l:implicit-function}). This requires finding a bounded right inverse to the linearized operator $\mathscr{L}$. 

The first step is to define certain weighted Banach norms whose setup requires a careful understanding of the collapsing geometries in our construction. The construction of a suitable weight function will be more complicated than in almost all known gluing constructions because our neck region is topologically nontrivial. Geometrically, a crucial step is to assign to each $p\in\mathcal{M}$ an appropriate scale $r_p$ such that the geodesic ball $B_{r_p}(p)$ captures enough geometric information and curvature remains uniformly bounded for most points in $B_{r_p}(p)$. This is equivalent to finding a canonical rescaling factor for each $p\in\mathcal{M}$ which reflects the collapsing behavior of the glued metric near $p$. This leads to a decomposition of $\mathcal{M}$ (see Section \ref{ss:notations} and \ref{ss:rescaled-limits})
into 9 different regions, labeled $\I$, $\II$, $\III$, $\IV_{\pm}$, $\V_{\pm}$, $\VI_{\pm}$, 
with each of these regions exhibiting different regularity and convergence behaviors.

Second, in order to prove uniform boundedness of the right inverse by contradiction (Proposition \ref{p:injectivity-of-D}), we then need to analyze the kernel of the linearized operator in suitable weighted spaces on the building blocks of the gluing construction. Here a crucial ingredient is a new Liouville theorem for half-harmonic 1-forms, i.e., $1$-forms $\phi$ satisfying
\begin{equation} 
d^*\phi= 0, \ \ d^+\phi=0,\label{e:half-harmonic}
\end{equation}
on a Tian-Yau space, see Theorem \ref{t:liouville-1-form}.

\subsection{Outline of the paper}

In Section \ref{s:model-space}, we give some background on the Gibbons-Hawking ansatz. We also define the Heisenberg nilmanifolds, and describe the {\textit{Calabi model space}} used in the Tian-Yau construction
from the Gibbons-Hawking perspective. Lastly, we construct a harmonic function whose associated Gibbons-Hawking ansatz defines the neck region $\mathcal{N}$. 

The Calabi model space will be described in Section \ref{s:Tian-Yau} from a complex geometric perspective, which will be used to obtain the precise asymptotic behavior of the complete hyperk\"ahler Tian-Yau spaces. The main result is Proposition~\ref{p:TY-asym} which roughly states that a complete Tian-Yau space is exponentially asymptotic to a Calabi model space, up to any arbitrary order of derivatives.

In Section \ref{s:liouville-functions}, we will establish a Liouville type theorem for harmonic functions 
which says that any harmonic function of sufficiently small exponential growth on a complete hyperk\"ahler Tian-Yau space
 has to be a constant.  To prove this, the asymptotics proved in Section \ref{s:Tian-Yau} will be crucial. These will allow us to reduce our problem to a question about harmonic functions on the Calabi model space, which will be solved using separation of variables. Specifically, the Laplace equation on the model space will be reduced to certain linear ODEs, and the quantitative analysis of the harmonic functions reduces to some delicate estimates for \emph{Hermite functions}. The 
estimates in Section \ref{s:liouville-functions} do not require any advanced theory in hypergeometric functions. Instead the ODE solutions are defined by exponential integrals, which has an advantage that all the calculations in Section \ref{s:liouville-functions} are in fact self-contained.
 This step is already quite involved because it requires us to develop some new elliptic theory on a model space which is a doubly warped product rather than just a cylinder.

Using this work, we will then prove  the aforementioned Liouville theorem for half-harmonic 1-forms on a complete hyperk\"ahler Tian-Yau space in Section \ref{s:liouville-1-form} (see Theorem \ref{t:liouville-1-form}). This will later be crucial in the proof of the key uniform estimate (Proposition \ref{p:injectivity-of-D}) for our gluing construction. 
An important observation here is that equation \eqref{e:half-harmonic} is equivalent to the $(0,1)$-component of $\phi$ satisfying $\bp^*\phi^{0, 1}=\bp\phi^{0, 1}=0$ for any choice of compatible integrable complex structure, which will allow us to invoke tools from complex geometry.  
More precisely, since the equation $\bp\phi^{0, 1}=0$ depends only on the complex structure, we may study it using a smooth K\"ahler metric on the compactified Fano manifold. The Kodaira vanishing theorem then ensures that $\phi^{0, 1}=\bp f$ for some function $f$.  Such an $f$ is unique only modulo holomorphic functions, so a priori there is no growth estimate on $f$. However,  using the construction of the Tian-Yau metrics and basic elliptic estimates we may find a solution $f$ satisfying {\textit{some}} growth estimates.  The equation $\bp^*\phi^{0, 1}=0$ then implies that $f$ is harmonic with respect to the Tian-Yau metric. However at this point we are not yet able to conclude using the Liouville theorem for harmonic functions  that $f$ must be constant because our growth estimate for $f$ is too weak. We overcome this problem by using separation of variables on the Calabi model space. This final step hinges on the fact that the asymptotic decay rate of the complex structure of the Tian-Yau manifold relative to the Calabi model is much faster than the asymptotic decay rate of the hyperk\"ahler metric.

In Section \ref{s:approx-triple} we will complete the construction of the neck region and construct
a closed almost hyperk\"ahler triple on a manifold $\mathcal{M}$. We will also describe some topological invariants of $\mathcal{M}$. Note that at this stage it would be difficult to show directly that $\mathcal{M}$ is actually {\em{diffeomorphic}} to $\K3$, a fact which will follow easily once we have shown it admits a hyperk\"ahler metric.

Section \ref{s:gluing-space} will focus on the geometry the manifold $\mathcal{M}$. 
We provide two different methods for analyzing the curvature of the approximate metric. One way is to apply some new types of $\epsilon$-regularity theorems for collapsed Einstein manifolds due to Naber and the fourth author \cite{NaberZhang}, which will yield curvature control once a simple topological condition is verified (see theorem \ref{t:collapsed-eps-reg} in Section \ref{s:gluing-space}). On the other hand, in Lemma~\ref{l:curvature-estimates} we will also give a direct curvature estimate in the collapsed regions. 
The remainder of this section will then describe all of the rescaled Gromov-Hausdorff limits for basepoints in each of the various regions.

In Sections \ref{s:weight-analysis} and \ref{s:existence}, we will set up the weight analysis package and prove the main technical theorems.
The first step is to define weighted H\"older spaces which are consistent with the different collapsing behaviors in different regions of $\mathcal{M}$. The remainder of Section \ref{s:weight-analysis} will then consist of proving the weighted Schauder estimate in Proposition \ref{p:weighted-Schauder-estimate}. Existence of a bounded right inverse to the linearized operator will be proved in Section \ref{s:existence} (see Proposition~\ref{p:injectivity-for-L}), where we will then complete the proofs of Theorems \ref{t:codim-3} and \ref{t:domain-wall-crossing}.

\subsection{Acknowledgements} We would like to thank Olivier Biquard, Simon Donaldson, Lorenzo Foscolo, Mark Gross, Mark Haskins, and Xiaochun Rong for various helpful comments and discussions.
We would also like to thank Bobby Acharya and David Morrison for pointing out to us some interpretations of our work from the physical perspective. 

\section{The Gibbons-Hawking ansatz and the model space}
\label{s:model-space}

We first recall the Gibbons-Hawking construction of $4$-dimensional hyperk\"ahler structures with an $S^1$ symmetry. Let $(U, g_U)$ be a $3$-dimensional parallelizable flat manifold. Then $h_U=e_1^2+ e_2^2+dz^2$, where $e_1, e_2, dz$ are global parallel $1$-forms. Let $V$ be a positive harmonic function on $U$,  so $*dV$ is a closed 2-form. We assume further that the de Rham class $[\frac{1}{2\pi}*dV]\in H^2(U, \Z)$ so $*dV$ is the curvature form of a unitary connection $-i\theta$ on a circle bundle $\pi: \mathfrak{M}\rightarrow U$. 
Then
\begin{equation}g=V\pi^*h_U+V^{-1}\theta\otimes \theta\end{equation}
is a hyperk\"ahler metric on $\mathfrak{M}$ invariant under the natural $S^1$ action. This is called the \emph{Gibbons-Hawking ansatz}. The corresponding triple of symplectic forms $\bm{\omega}=(\omega_1,\omega_2,\omega_3)$ is given by
\begin{equation}
\label{hktd}
\begin{split}
\omega_1=dz\wedge\theta+Ve_1\wedge e_2,\\
\omega_2=e_1\wedge\theta+Ve_2\wedge dz,\\
\omega_3=e_2\wedge \theta+Vdz\wedge e_1,\\
\end{split}
\end{equation}
and satisfies
\begin{equation}
\frac{1}{2}\omega_i\wedge\omega_j=\delta_{ij}\dvol_{\bm{\omega}}.
\end{equation} 

By definition the metric $g$ depends not only on the harmonic function $V$ but also on the choice of a connection $\theta$ with curvature form $\ast dV$. One can check that gauge equivalent connections lead to isometric metrics; so in essence $g$ depends only on the gauge equivalence class of $\theta$. Different gauge equivalence classes differ by tensoring with a flat connection, and the set of isomorphism classes of flat connections is given by $H^1(U, \dR)/H^1(U, \dZ)$.

Conversely, any $4$-dimensional hyperk\"ahler metric admitting a tri-holomorphic Killing symmetry is locally given by the Gibbons-Hawking ansatz. To see this we notice that in the formula above, we can intrinsically interpret $V^{-1}$ as the norm-squared of the Killing field, and the projection map $\pi$ as the hyperk\"ahler moment map.

To get more interesting examples one often allows $V$ to have isolated poles $\mathcal{P}_k \equiv \{p_1, \ldots, p_k\}$ such that near each $p_j$, $V$ can be written as $\frac{1}{2r_j}+h_j$ where $r_j$ is the distance function to $p_j$ and $h_j$ is a smooth harmonic function. Then the corresponding metric $g$ is defined on a manifold $\mathfrak{M}$ admitting a projection $\pi: \mathfrak{M}\rightarrow U$,  such that  $\pi$ is a circle bundle over $U\setminus \mathcal{P}_k$ and near each point $p_j$, $\pi$ is modeled on the Hopf fibration 
\begin{equation}\pi: \dC^2\rightarrow \dR^3,\ (z_1, z_2)\mapsto \Big(\frac{|z_1|^2-|z_2|^2}{2}, Re(z_1z_2), Im(z_1 z_2)\Big). \end{equation}

\begin{example}\label{ex:taub-nut}
Let $U=\dR^3$. If $V=\sigma$ (a positive constant), then $(\mathfrak{M},g)$ is a flat product $\dR^3\times S^1$. If $V=\frac{1}{2r}$, then $(\mathfrak{M},g)$ is flat Euclidean space $\dR^4$ and the map $\pi$ is exactly the Hopf fibration. If $V_{\sigma}=\sigma+\frac{1}{2r}$ then $(\mathfrak{M},g)$ is the \emph{Taub-NUT space}. This is again diffeomorphic to $\RR^4$ but has cubic volume growth and is asymptotic to an  $S^1$ fibration over $\dR^3\setminus K$ at infinity where the length of the $S^1$ fibers approaches a positive constant. Notice that as $\sigma$ varies, these  metrics are isometric up to dilation.  This is most easily seen using the above intrinsic description. We take the metric $g_1$ constructed using $V_1=1+\frac{1}{2r}$ and rescale $g_{\sigma}=\sigma^{-1}g_1$. Then the length of the $S^1$ orbits becomes $(\sigma V_1)^{-1/2}$ and the hyperk\"ahler moment map becomes $\pi_{\sigma}=\sigma^{-1}\pi$. Thus, $g_\sigma$ can be written in Gibbons-Hawking form with potential $\sigma V_1 = \sigma +\frac{1}{2r_{\sigma}}$. See Lemma \ref{l:rescaled-Taub-NUT} for more details. 

By taking multiple poles, we similarly obtain other hyperk\"ahler manifolds which are asymptotic to quotients of either $\dR^4$ or Taub-NUT space by cyclic groups. These are usually referred to in the literature as ALE and ALF spaces of $A_{k-1}$ type. In particular, see \cite{minerbe} for a complete theory of ALF-$A_{k-1}$ spaces. 
\end{example}

\begin{example}\label{ex:ooguri-vafa}
Let $U=S^1\times \mathbb{R}^2$ and let $V$ be a Green's function with exactly one pole on $S^1\times \{0\}$. In \cite{GW} $V$ is constructed by passing to the universal cover $\widetilde{U}=\dR^3$, where the lifted function $\widetilde{V}$ is a periodic Green's function constructed using a Weierstrass series. $V$ is only positive in a certain bounded open set in $U$. The corresponding hyperk\"ahler metric on this bounded open set is called the \emph{Ooguri-Vafa metric}. With one particular choice of a compatible complex structure, $\mathfrak{M}$ becomes a holomorphic elliptic fibration over a disc $\dD\subset \dR^2=\dC$, and the singular fiber has monodromy of type $I_1$.  The Ooguri-Vafa metric plays a crucial role in the work of Gross-Wilson  \cite{GW} on collapsing Calabi-Yau metrics on elliptic $\K3$ surfaces with exactly 24 singular fibers of type $I_1$.
\end{example}

In the following subsections, we will consider Gibbons-Hawking spaces $(\mathfrak{M},g)$ whose base is a large open subset of a flat cylinder $U\equiv \mathbb{T}^2_{xy} \times\dR_z$. In Section \ref{ex:model-space}, we take $V$ to be linear in $z$. This yields the model space at infinity of the Tian-Yau metrics \cite{TianYau} as well as of the gravitational instantons with $r^{4/3}$ volume growth and $r^{-2}$ curvature decay from \cite{Hein}. In this situation, $\mathfrak{M}$ is diffeomorphic to the product of a line and a $3$-dimensional Heisenberg nilmanifold. We begin by collecting together some useful basic facts about Heisenberg nilmanifolds in Section \ref{heisnil}. In Section \ref{s:greens-flat-cylinder} we then consider a doubly-periodic analog of Example \ref{ex:ooguri-vafa} over $\mathbb{T}^2$ times a bounded interval, and show that this is asymptotic to the model space of Section \ref{ex:model-space} near the ends of the interval. Ultimately this new metric will serve as the neck region in our gluing construction.

\subsection{The Heisenberg nilmanifolds}
\label{heisnil}

In this subsection, we will define $3$-dimensional Heisenberg nilmanifolds.
Recall the $3$-dimensional Heisenberg group is 
\begin{equation}
H(1,\dR) \equiv\left\{\begin{bmatrix}
1 & x &t \\
0 & 1 &  y\\
0 & 0 & 1
\end{bmatrix}: \ x,y,t\in\dR\right\}.
\end{equation}
To define 
a Heisenberg nilmanifold, let us define a co-compact group action on $H(1,\dR)$.
First, we define a lattice $\Lambda \equiv \epsilon \Z\langle 1, \tau\rangle\subset \dR^2_{x, y}=\dC$ by 
choosing  
\begin{equation}
\tau_1=Re(\tau), \ \tau_2=Im(\tau),
\end{equation}
which is generated by 
\begin{align}
\begin{bmatrix}1 & \epsilon & 0 \\
0 & 1 & 0 \\
0 & 0 & 1 \\
\end{bmatrix}
,\  
\begin{bmatrix}
1 & \epsilon \tau_1 & 0   \\
0 & 1 & \epsilon \tau_2 \\
0 & 0 & 1 \\
\end{bmatrix}\in H(1,\dR).
\end{align}
 Then immediately $A =\Area(\dR_{x,y}^2/\Lambda)= \epsilon^2 \tau_2$.
For $b \in \dZ_+$,  
the Heisenberg nilmanifold $\Nil^3_b(\epsilon, \tau)$ of {\it degree} $b$ 
is the quotient of $H(1,\dR)$ by the left action  generated by
\begin{align}
\begin{bmatrix}1 & \epsilon & 0 \\
0 & 1 & 0 \\
0 & 0 & 1 \\
\end{bmatrix}
,
\begin{bmatrix}
1 & \epsilon \tau_1 & 0   \\
0 & 1 & \epsilon \tau_2 \\
0 & 0 & 1 \\
\end{bmatrix}
,
\begin{bmatrix}
1 & 0 & \frac{A}{b} \\
0 & 1 & 0 \\
0 & 0 & 1 \\
\end{bmatrix}
.
\end{align}
Note that 
these transformations are
\begin{align}
(x,y,t) &\mapsto (x + \epsilon, y, t + \epsilon y),\\
(x,y,t) &\mapsto (x + \epsilon \tau_1, y+ \epsilon \tau_2, t + \epsilon \tau_1 y ),\\
(x,y,t) &\mapsto \Big( x, y, t + \frac{A}{b} \Big).
\end{align}
The forms 
\begin{align}\label{e:wtf}
dx, dy, \theta_b \equiv \frac{2 \pi b}{A} (dt - x dy)
\end{align}
are a basis of left-invariant $1$-forms. 

It is clear that $\Nil^3_b$ is the total space of a  degree $b$ circle fibration
\begin{align}
\label{cirfib}
  S^1 \longrightarrow \Nil^3_b \xrightarrow{\ \pi \ } \TT.
\end{align}
The following result will be needed later in Proposition \ref{p:topological-invariants} to determine the Betti numbers of $\mathcal{M}$.
 \begin{proposition}
   \label{nilbn} For $\Nil^3_b$, we have  $b_1(\Nil^3_b) = b_2(\Nil^3_b) = 2$,
   and the de Rham cohomology group $H^1(\Nil^3_b)$ is generated by $\pi^* dx$ and $\pi^* dy$.
\end{proposition}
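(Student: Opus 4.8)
The plan is to compute the de Rham cohomology of $\Nil^3_b$ directly from the left-invariant forms, invoking Nomizu's theorem so that it suffices to work on the Lie algebra level. First I would record the structure equations for the coframe $dx, dy, \theta_b = \frac{2\pi b}{A}(dt - x\,dy)$: one checks $d(dx) = d(dy) = 0$ and $d\theta_b = -\frac{2\pi b}{A}\,dx \wedge dy$, which is a nonzero multiple of $dx \wedge dy$. Since $\Nil^3_b$ is a compact nilmanifold, the inclusion of the complex of left-invariant forms into the full de Rham complex is a quasi-isomorphism (Nomizu), so the cohomology is computed by $(\Lambda^\bullet \mathfrak{h}^*, d)$ where $\mathfrak{h}$ is the Heisenberg Lie algebra.

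The computation itself is then elementary linear algebra in a $3$-dimensional exterior algebra. In degree $1$: closed invariant $1$-forms are spanned by $dx$ and $dy$ (not $\theta_b$, since $d\theta_b \neq 0$), and there are no exact invariant $1$-forms, so $H^1 = \langle dx, dy\rangle$, giving $b_1 = 2$; this also identifies the generators as the pullbacks $\pi^* dx, \pi^* dy$ under the fibration \eqref{cirfib}, since $dx, dy$ are pulled back from $\TT$. In degree $2$: the space of invariant $2$-forms is spanned by $dx\wedge dy$, $dx \wedge \theta_b$, $dy \wedge \theta_b$; all three are closed (the Heisenberg algebra is $2$-step nilpotent so $d$ vanishes on $\Lambda^2$), while the image of $d$ from degree $1$ is spanned by $d\theta_b = -\frac{2\pi b}{A}dx\wedge dy$, a $1$-dimensional subspace. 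Hence $H^2$ is $3 - 1 = 2$ dimensional, so $b_2 = 2$. (Poincaré duality for the closed oriented $3$-manifold $\Nil^3_b$ also gives $b_2 = b_1$ directly once $b_1 = 2$ is known, which can serve as a consistency check.)

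I do not expect any serious obstacle here — the only non-formal input is Nomizu's theorem identifying de Rham cohomology of a nilmanifold with Lie algebra cohomology, and everything else is a direct exterior-algebra computation. If one prefers to avoid citing Nomizu, an alternative is to use the Gysin sequence of the circle bundle $S^1 \to \Nil^3_b \to \TT$: the Euler class is $b$ times a generator of $H^2(\TT;\ZZ) \cong \ZZ$, so in the Gysin sequence $H^0(\TT) \xrightarrow{\cup e} H^2(\TT) \to H^2(\Nil^3_b) \to H^1(\TT) \xrightarrow{\cup e} H^3(\TT)$ the first map is multiplication by $b$ (injective over $\ZZ$, hence the cup product with $e$ on $H^1(\TT;\RR) \to H^3(\TT;\RR)$ is zero for degree reasons anyway), yielding $H^1(\Nil^3_b;\RR) \cong H^1(\TT;\RR) \cong \RR^2$ and $H^2(\Nil^3_b;\RR) \cong H^1(\TT;\RR) \cong \RR^2$, with the degree-one classes manifestly pulled back from the base. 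Either route gives $b_1 = b_2 = 2$ with $H^1$ generated by $\pi^* dx, \pi^* dy$, as claimed.
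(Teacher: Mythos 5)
Your proposal is correct, but your primary route differs from the paper's. The paper argues purely topologically: it writes down the Gysin sequence of the circle fibration $S^1 \to \Nil^3_b \xrightarrow{\pi} \TT$, observes that cup product with the Euler class $e = b\cdot(\text{generator})$ is an isomorphism $H^0(\TT) \to H^2(\TT)$ over $\RR$, concludes that $\pi^*: H^1(\TT) \to H^1(\Nil^3_b)$ is an isomorphism (so $b_1 = 2$ with generators $\pi^*dx, \pi^*dy$), and then gets $b_2 = b_1$ from Poincar\'e duality on the closed oriented $3$-manifold. Your main argument instead computes Lie-algebra cohomology of the Heisenberg algebra via the structure equation $d\theta_b = -\tfrac{2\pi b}{A}\,dx\wedge dy$ and invokes Nomizu's theorem to identify this with the de Rham cohomology of the compact nilmanifold; the exterior-algebra bookkeeping you give (closed invariant $1$-forms $= \langle dx, dy\rangle$, all invariant $2$-forms closed, exact ones $= \langle dx\wedge dy\rangle$) is correct and yields $b_1 = b_2 = 2$ with the stated $H^1$ generators. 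What Nomizu buys you is explicit invariant representatives in every degree (e.g.\ $H^2$ is spanned by $[dx\wedge\theta_b], [dy\wedge\theta_b]$), at the cost of citing a nontrivial theorem about nilmanifolds; the paper's route uses only the Gysin sequence and Poincar\'e duality, which is all that is needed since only the degree-one statement (that $H^1$ is pulled back from the base) is used later, in the Mayer--Vietoris computation of $b_1(\mathcal{M})$. Your fallback Gysin argument coincides with the paper's except that you continue the sequence one more step to get $H^2(\Nil^3_b) \cong H^1(\TT)$ directly instead of appealing to Poincar\'e duality; both are fine.
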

\begin{proof}
  The Gysin sequence associated to \eqref{cirfib} yields 
\begin{align}
    0 \rightarrow H^1(\TT) \xrightarrow{\pi^*} H^1(\Nil^3_b) \rightarrow
    H^0(\TT) \xrightarrow{\cup e} H^2(\TT) \rightarrow \cdots
\end{align}
Since the Euler class $e$ of the bundle is $b$ times a generator of $H^2(\TT)$, the mapping
$\cup e : \RR \cong H^0(\TT) \rightarrow H^2(\TT) \cong \RR$ is just multiplication by $b$,
so this mapping is an isomorphism. Consequently, $\pi^* : H^1(\TT) \rightarrow H^1(\Nil^3_b)$ is also an isomorphism. 
Since $\Nil^3_b$ is a compact orientable $3$-manifold, Poincar\'e duality implies that $b_1 = b_2$. 
\end{proof}

For $b \in \dZ_+$, we define the Heisenberg nilmanifold $\Nil^3_{-b}$ 
to be the quotient of $H(1,\dR)$ by the action generated by 
\begin{align}
(x,y,t) &\mapsto (x +  \epsilon, y, t - \epsilon y),\\
(x,y,t) &\mapsto (x + \epsilon \tau_1, y+ \epsilon \tau_2, t - \epsilon \tau_1 y ),\\
(x,y,t) &\mapsto \Big(x, y, t -  \frac{A}{b}\Big).
\end{align}
Note that the generated action is conjugate to the previous action by the mapping $(x,y,t) \mapsto (-x,-y,-t)$.  The forms 
\begin{align}
dx, dy, \theta_{-b} \equiv \frac{2 \pi b}{A} (dt + x dy)
\end{align}
are a basis of left-invariant $1$-forms.

\subsection{The model space}
\label{ex:model-space}  

Consider a $2$-torus $\TT$ with a flat  metric of area $A$ and let $U=\TT_{x, y}\times \mathbb{R}_{z>0}$, where we have fixed a choice of an orthogonal frame $\{e_1, e_2\}$ on $\TT$ such that $g_{\TT}=A(e_1^2+e_2^2)$.
Let $V(z)=\frac{2\pi bz}{A}$ for a positive integer $b>0$. Fixing a connection form $\theta$ such that $d\theta = \frac{2\pi b}{A}  {\rm dvol}_{\TT}$, the corresponding hyperk\"ahler Gibbons-Hawking metric $g$ is given by
\begin{align}\label{e:wtf666}
g = \frac{2 \pi bz}{A} ( g_{\TT} +dz^2) + \frac{A}{2\pi b z} \theta^2.
\end{align}
The Gibbons-Hawking space $(\mathfrak{M},g)$ has one complete end as $z\rightarrow\infty$ and one incomplete end as $z\rightarrow 0$. Moreover, for each $z_0>0$, the level set $\{z = z_0\}$ is a Heisenberg nilmanifold $\Nil^3_b(\epsilon, \tau)$ with a $z_0$-dependent left-invariant metric, where $\tau$ denotes the modulus of our flat $2$-torus in the upper half-plane and $A = \epsilon^2 \tau_2$ as in Section \ref{heisnil}.
Making the substitution $z = (3/2) s^{2/3}$, and then scaling appropriately, the Gibbons-Hawking metric $g$ takes the form 
\begin{align}
ds^2 + s^{2/3} g_{\TT} + s^{-2/3} \Big( \frac{A}{3\pi b }\theta \Big)^2.
\end{align}
In this form, it is easy to see that the volume growth is $\sim s^{4/3}$ 
and that $|{\Rm}| \sim s^{-2}$ as $s \rightarrow \infty$. One can also show using the Chern-Gauss-Bonnet formula that the $L^2$ norm of $\Rm$  is  finite. 

Note that if we had instead taken $b = 0$, the Gibbons-Hawking metric would be the product of $\RR$ with a flat $3$-torus, i.e., the type of geometry known as ALH geometry in the literature.

View the flat torus $\mathbb{T}^2$ as an elliptic curve $E$ of modulus $\tau$ with respect to the complex structure $J$ defined by $Je_1 = e_2$. Then there is exactly one $g$-parallel complex structure $J_0$ on the total space $\mathfrak{M}$ that makes the projection map to $E$ holomorphic. This choice of complex structure realizes $\mathfrak{M}$ as an open subset of the total space of a degree $b$ holomorphic line bundle $L$ over $E$ (more precisely, as a tubular neighborhood of the zero section of $L$ with the zero section removed). The Ricci-flat K\"ahler form with respect to $J_0$ is then given by  (see Proposition~\ref{Calabihyperkahler})
\begin{equation}\omega_0=\frac{2}{3}i\partial\bar\partial (-{\log |\xi|}_{h}^2)^{3/2},
\end{equation}
where $h$ is a hermitian metric on $L$ whose curvature form is a multiple of the flat K\"ahler form on $E$, and where the tautological section $\xi$ cuts out the zero section of $L$. This is an example of the \emph{Calabi construction}, and we call the corresponding metric a \emph{Calabi model metric} of degree~$b$. We will discuss this construction (in all dimensions) in more detail in Section \ref{s:Tian-Yau}. In complex dimension $2$, the Gibbons-Hawking ansatz actually recovers the Calabi ansatz for \emph{all} degree $b$ holomorphic line bundles over $E$. Indeed, any two such line bundles differ by a degree 0 line bundle and $\Pic^0(E)=H^1(E, \O_E)/H^1(E, \dZ)$; on the other hand, the gauge equivalence classes of flat connections are parametrized by $H^1(E, \dR)/H^1(E, \dZ)$. 

A very convenient property for our purposes is that if we do change the connection $1$-form $\theta$ by a flat connection, then the associated Gibbons-Hawking metric \eqref{e:wtf666} actually only changes by a diffeomorphism (although this diffeomorphism necessarily breaks the $S^1$-bundle structure on $\mathfrak{M}$). To see this, fix any choice of $\theta$ such that $d\theta=\frac{2\pi b}{A} \dvol_E$. Note that we can always arrange by parallel transport in the $z$-direction that the $dz$-component of $\theta$ vanishes.  Then we only need to consider the case that $\theta$ gets changed by the pullback (under the projection $\mathfrak{M} \stackrel{\pi}{\to} U \to E$) of a parallel $1$-form $\eta$ on $E$. Write $\eta = v \,\lrcorner\,\dvol_E$, where $v$ is a parallel vector field on $E$. Let $\hat v$ be the $\theta$-horizontal lift of $v$ to $\mathfrak{M}$ and let $\hat f_s$ be the $1$-parameter group of diffeomorphisms generated by $\hat v$, which covers a $1$-parameter group of translations on $E$. Then
\begin{align}\begin{split}\frac{d}{ds}(\hat f_s^*\theta)&=\hat v\,\lrcorner\, \hat{f}_s^*(d\theta)+d(\hat v\,\lrcorner\,\hat{f}_s^*\theta)=\frac{2\pi b}{A} (\hat v\,\lrcorner\, \pi^*({\rm dvol}_E)) = \frac{2\pi b}{A} \pi^*\eta,
\end{split}
\end{align}
using the fact that $d\hat{f}_s|_x (\hat{v}|_x) = \hat{v}|_{\hat{f}_s(x)}$ for all $x \in \mathfrak{M}$. Thus,
\begin{equation}\hat f_s^* \theta=\theta+\frac{2\pi bs}{A} \pi^*\eta.\end{equation}
 This shows that any two possible choices of $\theta$ with vanishing $dz$-component differ by the translation action of $E$ on itself, lifted to $\mathfrak{M}$, and then the corresponding hyperk\"ahler metrics \eqref{e:wtf666} are clearly isometric as well. We note that with the particular choice $\theta = \theta_b$ from \eqref{e:wtf}, these diffeomorphisms can be written down explicitly as follows: for all $p, q \in \RR$, the mapping 
\begin{align}
\varphi(x,y,t,z) = (x-q, y +p, t + p x, z)
\end{align}
descends to a diffeomorphism of $ {\rm Nil}^3_b(\epsilon,\tau)_{x,y,t} \times \RR_z$ which satisfies 
\begin{align}
\varphi^* \theta_b = \theta_b + \frac{2 \pi b}{A}( p dx + q dy). 
\end{align}

\begin{remark}\label{boahey}
The above observations reflect the fact that $E$ acts transitively by pullback on its own ${\rm Pic}^b$ for $b > 0$ (for instance, a holomorphic line bundle of degree $1$ on $E$ is uniquely isomorphic to $\mathcal{O}_E(x)$ for some point $x \in E$). Moreover, the total spaces of all degree $b > 0$ holomorphic line bundles on $E$ are actually biholomorphic as complex manifolds. All of this is false in the classical ALH case $b = 0$. In particular, for $b = 0$ the above parameters $p,q$ are actual moduli of the metric, corresponding to flat  metrics on $\mathbb{T}^3$ that do not split isometrically as $S^1 \times \mathbb{T}^2$.  
\end{remark}

The Calabi construction provides the model at infinity of the Tian-Yau metrics in our context. These metrics will also be studied in more detail in Section \ref{s:Tian-Yau}. Note that every Tian-Yau metric comes with its own preferred choice of a connection form $\theta$ determined by the Chern connection of the normal bundle of the compactifying divisor. The above gauge fixing construction then allows us to choose a new coordinate system at infinity that identifies this $\theta$ with the standard connection form $\theta_b$ (see the proof of Proposition \ref{Calabihyperkahler} and also equation \eqref{thetaass}).

\begin{remark}
We can choose a different $g$-parallel complex structure $J_1$ on $\mathfrak{M}$  such that $J_1\theta= zdx$. With respect to $J_1$ we can view $\mathfrak{M}$ as a holomorphic elliptic fibration over a punctured disc in $\mathbb{C}$. The monodromy of this fibration is given by the matrix \begin{equation}\begin{bmatrix}1 & b\\ 0 & 1\end{bmatrix}\in \SL(2,\dZ).\end{equation}
See \cite{Scott} for more details. 
This gives a different compactified model for $\mathfrak{M}$ where the compactifying divisor is a singular fiber of Kodaira type $I_b$. The $J_1$-K\"ahler form of our hyperk\"ahler model metric is then given by an appropriate \emph{semi-flat ansatz} \cite{GSVY, GW}, and provides the model at infinity for the gravitational instantons with volume growth $\sim r^{4/3}$ and curvature decay $\sim r^{-2}$ constructed in \cite{Hein}. We will pursue this observation further in \cite{HSVZ2}, connecting the complete hyperk\"ahler metrics of \cite{TianYau} and of \cite{Hein} by global hyperk\"ahler rotations.
\end{remark}

\subsection{Green's function on a flat cylinder}\label{s:greens-flat-cylinder} The neck region in our gluing construction is given by a doubly-periodic analog of the Ooguri-Vafa metric. To construct this metric using the Gibbons-Hawking ansatz, we first need to construct a Green's function $V_{\infty}$ on $(\TT \times \mathbb{R}, g_0)$, where $\TT$ is any flat $2$-torus. We also need to determine the asymptotics of $V_\infty$ as $|z| \to \infty$ in order to ensure that the neck matches the Calabi model space from Section \ref{ex:model-space} on both ends.

It is known that the flat cylinder $\TT\times\dR$ is {\it parabolic}, namely, it does not admit a positive Green's function. In fact,  Cheng and Yau
proved that 
any complete non-compact 
Riemannian manifold with $\Vol(B_R(p))\leq CR^2$ must be parabolic. See theorem 1 and corollary 1 in \cite{Cheng-Yau}.

We now construct a particular sign-changing Green's function $V_{\infty}$.
Fix a point $p$ on $\TT \times \mathbb{R}$ with $z(p)=0$. For $R>0$ let $V_R$ denote the unique function on $\TT \times [-R, R]$ satisfying
 \begin{align}
\begin{split}
-\Delta_{g_0} V_R &= 2\pi \delta_p, \ z\in(-R,R),\\
V_R&=0, \ \hspace{16pt}  z=\pm R.
\end{split}
\end{align}
The normalization of the right-hand side is precisely chosen in such a way that $V_R = \frac{1}{2r} + O(1)$ near $p$, where $r$ denotes $g_0$-distance to $p$. Thus, the $4$-dimensional Gibbons-Hawking metric associated with $V_R$ (or $V_R + C$ for any constant $C$) extends smoothly across $p$; cf. Example \ref{ex:taub-nut}.

By the maximum principle, one can see that $V_R$ is an increasing family as $R\to\infty$. Define 
\begin{equation}
C_R\equiv\sup\limits_{\partial B_1(p)}V_R.
\end{equation} 
Then it follows from the parabolicity of $\mathbb{T}^2 \times \mathbb{R}$ that $C_R \to \infty$ as $R \to \infty$. 
By a result of Li and Tam (see theorem 1 in \cite{Li-Tam}), $V_R(x,y,z)-C_R$ converges to a function $V_\infty(x,y,z)$ uniformly on compact subsets of $(\TT\times\dR)\setminus \{p\}$. Moreover, $V_\infty \leq 0$  on the complement of $B_1(p)$ and
\begin{equation}
-\Delta_{g_0} V_\infty=2\pi \delta_p\ \text{on}\ \TT\times\dR.
\end{equation}
Notice also that $V_{\infty}$ is symmetric in $z$ by construction.

\begin{theorem}\label{t:harmonic-function-cylinder}
There are constants $\beta_-,\beta_+\in\dR$ and $k_-, k_+ \in \dR$ 
with 
\begin{equation}k_- = -k_+= \frac{\pi}{\Area_{g_0}(\TT)} > 0\label{e:slope-relation}
\end{equation}   such that   for all $k\in \dN$,
\begin{align}
\begin{split}
|\nabla^k_{g_0}(V_\infty(z)-(k_{-}z+\beta_-))|&=O(e^{\sqrt{\lambda_1} z}),\ z\to-\infty, \\
|\nabla^k_{g_0}(V_\infty(z)-(k_+z + \beta_+))|&=O(e^{-\sqrt{\lambda_1} z}),\ z\to+\infty, \\
\end{split}
\end{align}
where $\lambda_1>0$ is the smallest eigenvalue of $-\Delta_{\TT}$. 
\end{theorem}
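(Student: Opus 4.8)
The plan is to analyze $V_\infty$ by separation of variables on the cylinder $\TT \times \dR$, splitting off the part of the solution that comes from the zero mode on $\TT$. First I would decompose the Laplacian as $\Delta_{g_0} = \Delta_{\TT} + \partial_z^2$, and write $V_\infty(x,y,z) = \bar V(z) + W(x,y,z)$, where $\bar V(z)$ is the average of $V_\infty$ over $\TT$ at height $z$ and $W$ has zero average over each slice $\{z = z_0\}$. Away from the pole $p$, both pieces are harmonic. For the fiberwise average: integrating $-\Delta_{g_0} V_\infty = 2\pi\delta_p$ over $\TT \times \{z\}$ for $z \neq 0$ gives $\bar V''(z) = 0$, so $\bar V$ is affine, $\bar V(z) = a_\pm z + b_\pm$ on $\{\pm z > 0\}$; the jump in slope is determined by integrating over a thin slab $\TT \times (-\delta, \delta)$ containing $p$, using that $\int_{\TT \times \dR} 2\pi\delta_p = 2\pi$. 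This yields $a_- - a_+ = 2\pi / \Area_{g_0}(\TT)$. Combined with the reflection symmetry $V_\infty(z) = V_\infty(-z)$ noted in the construction — which forces $a_- = -a_+$ and $b_- = b_+$ — we get $a_\pm = \mp \pi/\Area_{g_0}(\TT)$, i.e., exactly the claimed values $k_\pm$ with $\beta_\pm := b_\pm$.

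Next I would control the nonzero-mode part $W$. Fix $z_1 > 0$ with $p \notin \TT \times \{|z| \le z_1\}$; on $\TT \times \{z \ge z_1\}$, expand $W$ in the $L^2$-eigenbasis $\{\phi_\lambda\}$ of $-\Delta_{\TT}$ with eigenvalues $0 < \lambda_1 \le \lambda_2 \le \cdots$, writing $W(x,y,z) = \sum_{\lambda} c_\lambda(z)\phi_\lambda(x,y)$. Harmonicity gives $c_\lambda''(z) = \lambda\, c_\lambda(z)$, so $c_\lambda(z) = A_\lambda e^{\sqrt{\lambda} z} + B_\lambda e^{-\sqrt{\lambda} z}$. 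The key point is that $V_\infty$ is bounded above (indeed $V_\infty \le 0$ outside $B_1(p)$) and, more importantly, $V_\infty - \bar V(z) = W$ must stay bounded as $z \to +\infty$: the affine part $\bar V(z) = k_+ z + \beta_+$ already accounts for the linear growth, and one can see from the maximum principle / Harnack-type control (or from the fact that $V_\infty$ is a limit of the $V_R - C_R$, each of which is controlled) that $V_\infty(\cdot, z) - \bar V(z)$ does not grow. Boundedness of $c_\lambda$ on $[z_1, \infty)$ forces $A_\lambda = 0$ for all $\lambda$, so $W(x,y,z) = \sum_\lambda B_\lambda e^{-\sqrt\lambda z}\phi_\lambda$, which decays like $e^{-\sqrt{\lambda_1} z}$; a standard elliptic bootstrap (or direct differentiation of the rapidly converging series, estimating Weyl growth of $\|\phi_\lambda\|_{C^k}$ against the exponential decay) upgrades this to all derivatives $\nabla^k_{g_0}$. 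The case $z \to -\infty$ is identical by the symmetry $z \mapsto -z$.

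The main obstacle, I expect, is justifying rigorously that $W = V_\infty - \bar V$ is \emph{bounded} (not merely bounded above) as $|z| \to \infty$ — this is what kills the growing exponentials $A_\lambda e^{\sqrt\lambda z}$. One clean way: since $V_\infty \le 0$ outside $B_1(p)$, for large $z$ we have $W(x,y,z) = V_\infty(x,y,z) - k_+ z - \beta_+ \le -k_+ z - \beta_+$, which is a \emph{lower}-order bound on $\sup_x W$, not an upper one — so this alone is not enough, and one must instead run the argument symmetrically on the average. Concretely, $\bar V$ is by definition the exact fiberwise mean of $V_\infty$, so $\int_\TT W(\cdot, z) = 0$ for every $z$; then a Poincaré inequality on $\TT$ combined with the eigenfunction expansion shows that if any $A_\lambda \neq 0$ the $L^2(\TT)$-norm of $W(\cdot, z)$ would blow up exponentially, while on the other hand an a priori gradient estimate — e.g. applying the Cheng--Yau gradient estimate or interior elliptic estimates on unit balls to the \emph{harmonic} function $V_\infty - k_+ z - \beta_+$ (harmonic on $\TT\times\{z>z_1\}$), whose oscillation on each slab is controlled because $V_\infty \le 0$ pins it from above while the mean-zero condition pins the fluctuation — bounds $\|W(\cdot,z)\|_{L^2(\TT)}$ uniformly. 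Making this last a priori bound airtight, tracking that the constants are independent of $z$, is the technical heart of the argument; everything else is bookkeeping with the separation of variables and Weyl's law.
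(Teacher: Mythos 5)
Your overall route is the same as the paper's: show the fiberwise average is exactly affine on each half-cylinder, expand the mean-zero remainder $W=V_\infty-\bar V$ in eigenfunctions of $-\Delta_{\TT}$, kill the growing exponentials by a sub-exponential growth bound on slices, upgrade to derivative estimates by elliptic regularity, and obtain the slope values from the distributional Green's identity together with the $z\mapsto -z$ symmetry. The one place where you genuinely diverge --- and which you correctly flag as the heart of the matter --- is how to get a two-sided, at-most-polynomial bound on $W$. The paper does this by applying the Harnack inequality to the negative harmonic function $V_\infty$ itself (lifted to the universal cover $\dR^3$ and chained across a slice), which together with the linearity of the slice average gives $-C_2 z\le V_\infty\le -C_1 z$ for $z\gg 1$, hence $\|W(\cdot,z)\|_{L^2(\TT)}\le Cz$, and this kills the $e^{\sqrt{\lambda_j}z}$ modes.

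Your proposed mechanism needs repair as literally stated: the Cheng--Yau gradient estimate applies to harmonic functions of one sign, so it cannot be applied to $W=V_\infty-k_+z-\beta_+$, which changes sign; and interior elliptic estimates on unit balls only bound $W$ by norms of $W$ on slightly larger balls, which is circular until some slice-norm bound is in hand. The other half of your sentence, however, does contain a correct fix, and it is worth making explicit: since $\bar V(z)=k_+z+\beta_+$ \emph{exactly} for $z>0$ (because $\bar V''=0$ there), the one-sided bound $V_\infty\le 0$ gives $W\le -k_+z-\beta_+\le C(1+z)$ pointwise, and the mean-zero property $\int_{\TT}W(\cdot,z)=0$ then forces $\int_{\TT}|W(\cdot,z)|=2\int_{\TT}W^+(\cdot,z)\le C(1+z)\Area_{g_0}(\TT)$; the local $L^1$-to-$L^\infty$ estimate for harmonic functions (mean value property on balls of definite size, lifted to $\dR^3$) upgrades this to $\sup_{\TT\times\{z\}}|W|\le C(1+z)$, which is sub-exponential and annihilates every growing coefficient $A_\lambda$. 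With that substitution your argument closes and is a genuine, slightly more elementary alternative to the paper's Harnack step; the remaining ingredients (eigenfunction expansion, Weyl's law plus elliptic bootstrap for $\nabla^k$, the jump-of-slope computation and symmetry giving $k_-=-k_+=\pi/\Area_{g_0}(\TT)$) coincide with the paper's proof.
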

\begin{proof}
Consider the fiberwise average
\begin{equation}
\mathcal{V}_{\infty}(z) \equiv \frac{1}{{\rm Area}_{g_0}(\TT)}\int_{\TT \times \{z\}} V_\infty(x,y,z) \dvol_{g_0}(x,y).
\end{equation}
This is well-defined, smooth in $z$ for $z \neq 0$, and continuous at $z = 0$.
 For $z\neq 0$ we have 
 \begin{equation}\mathcal{V}_{\infty}''(z)=\int_{\TT \times \{z\}} \frac{d^2}{dz^2}V_{\infty}=-\int_{\TT\times\{z\}} \Delta_{\TT} V_{\infty}=0.\end{equation} 
 This implies that $\mathcal{V}_\infty(z)$ is a piecewise linear function. Since $V_R(x,y,z) \leq -C_R$ for $|z| \geq R$ with $C_R \to \infty$ as $R \to \infty$, it follows that $\lim\limits_{z\to +\infty}V_\infty(x,y,z) = -\infty$, and hence for all $z > 0$ that 
 \begin{equation}\label{666} \mathcal{V}_{\infty}'(z) = const \equiv k_+  < 0.\end{equation} 
Denote $D_0\equiv \diam_{g_0}(\TT)$. Choose $R_0 > 10D_0$ large enough so that $V_{\infty}(x,y,z) \leq -1$ in $(\TT\times\dR)\setminus B_{R_0}(p)$. 
Then
for any fixed $q\in (\TT\times\dR)\setminus B_{2R_0}(p)$ and $r\in(2D_0,4D_0)$, we can apply the Harnack inequality to the harmonic function $V_{\infty}$, which is negative in the geodesic ball 
$B_{r}(q)\subset (\TT\times\dR)\setminus B_{R_0}(p)$. More precisely, passing to the universal cover and applying the standard Harnack inequality for positive harmonic functions on a fixed ball in $\mathbb{R}^3$, we see that  there is a uniform constant $C_0>0$ depending only on $D_0>0$ such that for all $w_1,w_2\in B_r(q)$, 
\begin{equation}
\frac{1}{C_0}\leq \frac{V_{\infty}(w_1)}{V_{\infty}(w_2)} \leq C_0.\label{e:harnack}
\end{equation}
Since the fiber average $\mathcal{V}_{\infty}(z)$ of $V_\infty$ is linear in $z$ with slope $k_+ < 0$, \eqref{e:harnack} yields that  
 \begin{equation}
-C_2 z\leq V_\infty(x,y,z) \leq - C_1 z  \label{e:V-asymp}
 \end{equation}
for $z\gg 1$, where the constants $C_1$ and $C_2$ depend only on the constants $C_0$ and $k_+$.

We denote by $\Lambda_{\TT}=\{\lambda_j\}_{j=1}^{\infty}$ the positive spectrum of $-\Delta_{\TT}$ and expand $V_\infty$ according to the eigenfunctions of $\Delta_{\TT}$ along the torus fiber  $\TT\times \{z\}$ for each fixed $z>0$. This yields
 \begin{equation}V_\infty(x,y,z)=(k_+ z + \beta_+) + \sum_{j=1}^{\infty} f_j (z)h_j(x, y),\end{equation}
 where $k_+ < 0$ is the constant of \eqref{666} and where
 \begin{align}
 f_j''(z)=\lambda_j f_j(z), \
 -\Delta_{\TT} h_j= \lambda_j h_j, \ 
 \int_{\TT} |h_{j}|^2=1.
 \end{align}
Immediately, 
 \begin{equation}f_j(z)=c_j e^{-\sqrt{\lambda_j} z}+c_j^*e^{\sqrt{\lambda_j} z}. \end{equation}
 Notice that
 \begin{equation}\int_{\TT\times\{z\}}|V_\infty|^2=(k_+ z + \beta_+)^2 +\sum_{j=1}^{\infty} |f_j(z)|^2.\end{equation}
By the linear growth property \eqref{e:V-asymp}, we obtain that $c_j^*=0$ for all $j\in\dZ_+$. Therefore, 
 \begin{equation}\int_{\TT\times \{z\}} |V_\infty-(k_+z + \beta_+)|^2=\sum_{j=1}^{\infty} |c_j|^2 e^{-2\sqrt{\lambda_j} z}=O(e^{-2\sqrt{\lambda_1} z})\ \text{as}\ z \to +\infty,\end{equation}
where $\lambda_1>0$ is the minimum of $\Lambda_{\TT}$. To see the $O(e^{-2\sqrt{\lambda_1} z})$ estimate, note that the series converges for $z = 1$. Applying elliptic regularity to the harmonic function $\widehat{V}_{\infty}\equiv V_{\infty}-(k_+ z + \beta_+)$, 
\begin{equation}
\|\widehat{V}_{\infty}\|_{W^{2,2}(B_{r/2}(q))} \leq C \|\widehat{V}_{\infty}\|_{L^{2}(B_{r}(q))} \leq Ce^{-2\sqrt{\lambda_1} z}
\end{equation}
for all balls $B_r(q)$ as above, 
where $C$ depends only on the diameter and on the injectivity radius of $\TT$. By the $3$-dimensional Sobolev embedding $W^{2,2}\hookrightarrow C^{0,\frac{1}{2}}$, 
\begin{equation}
|V_\infty(x,y,z)-(k_+ z + \beta_+)|=O(e^{-\sqrt{\lambda_1} z})\ \text{as}\  z\to+\infty. \end{equation}
Standard elliptic regularity then shows that for any $k\in\mathbb{N}$,  
\begin{equation}
|\nabla_{g_0}^k (V_\infty(x,y,z)-(k_+ z + \beta_+))|=O(e^{-\sqrt{\lambda_1} z})\ \text{as}\  z\to+\infty. 
\end{equation}
The same argument applies in the case $z\rightarrow-\infty$. 

Now we prove the slope relation \eqref{e:slope-relation}.
Fix $R>0$. Then by Green's formula,
\begin{equation}\mathcal{V}_{\infty}'(R)-\mathcal{V}_{\infty}'(-R)=\int_{\TT \times [-R, R]}\Delta V_\infty.\end{equation}
Thus, by the definition of $k_+$ and the analogous definition of $k_-$, 
\begin{equation}
k_+\Area_{g_0}(\TT) - k_-\Area_{g_0}(\TT) =-2\pi.
\end{equation}
It follows that
\begin{equation}
k_-  - k_+ = \frac{2\pi}{\Area_{g_0}(\TT)}.
\end{equation}
Since $V_{\infty}$ is symmetric in $z$, it holds that $k_- = -k_+$ and the claim follows.
\end{proof}
 
It is straightforward to use superposition to extend the above construction to the case of multiple poles. Precisely, we have the following corollary.
\begin{corollary} 
\label{coro:gf}
Let $(\TT\times\dR, g_0)$ be a flat cylinder with a flat product metric $g_0$. Given a finite set $\mathcal{P}_{m_0}\equiv\{p_1, \ldots, p_{m_0}\}\subset\TT\times\dR$, there is a sign changing Green's function $V_{\infty}$ with
\begin{equation}
-\Delta_{g_0} V_{\infty} = 2\pi \sum\limits_{k=1}^{m_0} \delta_{p_k},
\end{equation}
and there are linear functions $L_{\pm}(z) = k_{\pm} z + \beta_{\pm}$ with
\begin{equation}
k_- = - k_+ = \frac{\pi m_0}{\Area_{g_0}(\TT)} > 0
\end{equation}
 such that for all $k \in \mathbb{N}$,
\begin{align}
\begin{split}
|\nabla^k_{g_0}(V_\infty(z)-L_-(z))|&=O(e^{\sqrt{\lambda_1} z}),\ z\to-\infty, \\
|\nabla^k_{g_0}(V_\infty(z)-L_+(z))|&=O(e^{-\sqrt{\lambda_1} z}),\ z\to+\infty, \\
\end{split}
\end{align}
where $\lambda_1>0$ is the smallest eigenvalue of $-\Delta_{\TT}$. 
\end{corollary}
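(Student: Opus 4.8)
The plan is to obtain $V_\infty$ by linear superposition of $m_0$ single-pole Green's functions supplied by Theorem~\ref{t:harmonic-function-cylinder}. For each $k\in\{1,\dots,m_0\}$ I would use the fact that $(\TT\times\dR,g_0)$ is invariant under translation in $z$ (and under the isometries of $\TT$) to move the pole $p_k$ onto the slice $\{z=0\}$, apply the construction preceding Theorem~\ref{t:harmonic-function-cylinder}, and translate back. This produces a function $V_\infty^{(k)}$ on $(\TT\times\dR)\setminus\{p_k\}$, harmonic away from $p_k$, with $-\Delta_{g_0}V_\infty^{(k)}=2\pi\delta_{p_k}$ and $V_\infty^{(k)}=\tfrac{1}{2r_k}+O(1)$ near $p_k$ (where $r_k$ is $g_0$-distance to $p_k$), symmetric under reflection across $\{z=z(p_k)\}$, with linear asymptotes $L_\pm^{(k)}(z)=k_\pm^{(k)}z+\beta_\pm^{(k)}$ satisfying $k_+^{(k)}=-k_-^{(k)}=-\pi/\Area_{g_0}(\TT)$ and, for every $\ell\in\dN$,
\[
\bigl|\nabla_{g_0}^\ell\bigl(V_\infty^{(k)}(z)-L_\pm^{(k)}(z)\bigr)\bigr|=O\bigl(e^{\mp\sqrt{\lambda_1}\,z}\bigr),\qquad z\to\pm\infty.
\]
A point worth noting here is that the slope relation $k_+^{(k)}=-k_-^{(k)}$ does not depend on where $p_k$ lies along the cylinder: its proof in Theorem~\ref{t:harmonic-function-cylinder} used only the total flux $-2\pi$ of $\Delta V_\infty^{(k)}$ through a large cylinder together with the reflection symmetry across the pole, both of which survive the translation.

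Next I would set $V_\infty\equiv\sum_{k=1}^{m_0}V_\infty^{(k)}$ on $(\TT\times\dR)\setminus\mathcal{P}_{m_0}$. By linearity this satisfies $-\Delta_{g_0}V_\infty=2\pi\sum_{k=1}^{m_0}\delta_{p_k}$, and near each $p_j$ the singular part comes only from the term $V_\infty^{(j)}$ (the others being smooth there), so $V_\infty=\tfrac{1}{2r_j}+O(1)$ near $p_j$, which is exactly the normalization making the associated Gibbons-Hawking $4$-metric extend smoothly across each $p_j$. I would then define $k_\pm\equiv\sum_{k=1}^{m_0} k_\pm^{(k)}$, so that $k_+=-k_-=-\pi m_0/\Area_{g_0}(\TT)$, together with $\beta_\pm\equiv\sum_{k=1}^{m_0}\beta_\pm^{(k)}$ and $L_\pm(z)\equiv k_\pm z+\beta_\pm$. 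Since
\[
V_\infty(z)-L_\pm(z)=\sum_{k=1}^{m_0}\bigl(V_\infty^{(k)}(z)-L_\pm^{(k)}(z)\bigr)
\]
is a finite sum of functions each decaying like $e^{\mp\sqrt{\lambda_1}\,z}$ together with all $g_0$-derivatives, the same decay holds for $V_\infty(z)-L_\pm(z)$, which gives the stated estimates. Finally, $k_+<0$ forces $V_\infty\to-\infty$ as $z\to\pm\infty$ while $V_\infty\to+\infty$ at each pole, so $V_\infty$ indeed changes sign.

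I do not expect any genuine obstacle here; this is a bookkeeping exercise once Theorem~\ref{t:harmonic-function-cylinder} is in hand. The only two things requiring a word of justification are the location-independence of the single-pole slope relation (addressed above) and the elementary observation that a finite sum of functions each asymptotic to a linear function to all orders of derivatives is asymptotic to the summed linear function to all orders. As an alternative to superposition, one could rerun the exhaustion argument directly with right-hand side $2\pi\sum_k\delta_{p_k}$: the fiber average $\mathcal{V}_\infty(z)$ is then piecewise linear with breakpoints at the distinct values $z(p_k)$ but still affine near $\pm\infty$, Green's formula over a large cylinder gives total slope jump $-2\pi m_0$, and the exponential decay of the fiberwise-nonconstant part follows verbatim from the eigenfunction expansion along $\TT$ and elliptic regularity. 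I would, however, present the superposition route, as it is cleaner.
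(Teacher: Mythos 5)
Your proposal is correct and follows essentially the same route as the paper, which obtains Corollary~\ref{coro:gf} from Theorem~\ref{t:harmonic-function-cylinder} precisely by superposition of translated single-pole Green's functions, with the slopes adding and the exponential error estimates holding termwise. Your added remarks (location-independence of the slope relation and the sign-change observation) are accurate but do not change the argument.
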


\begin{remark}
The asymptotics in Theorem \ref{t:harmonic-function-cylinder} have an intuitive electro-magnetic interpretation. Namely, for $|z|$ large, the electric potential determined by the union of point charges on $\Lambda\times \{z_0\}$ is approximated by an evenly distributed charge on the corresponding plane $\dC\times \{z_0\}$ in the universal cover. The electric potential of this uniformly charged plate corresponds to the linear term in $V_{\infty}$.
Further, for any number of charged plates parallel to the $xy$-plane located at 
$z = z_i$, $i = 1, \ldots, m_0$, we can add together 
the corresponding $V_i$ to get the potential $V=V_1 + \dots + V_{m_0}$, and the potential at large distances looks like the potential due to $m_0$ uniformly charged plates. 
\end{remark}

In Section \ref{s:approx-triple}, we will use $V_{\infty}$ to construct a potential $V_{\beta}$ which is positive in a large region, and such that $[\frac{1}{2\pi}* dV_{\beta}]\in H^2(U,\dZ)$ is a {\textit{integral}} class,  which will then be used to define our neck metric. 

\section{The asymptotic geometry of Tian-Yau spaces} \label{s:Tian-Yau}

In this section we review the Tian-Yau construction \cite{TianYau} of complete Ricci-flat K\"ahler metrics on the complement of a smooth anti-canonical divisor in a smooth Fano manifold. In complex dimension $2$ these metrics are hyperk\"ahler and will be used in our gluing construction in Section \ref{s:approx-triple}.

The Ricci-flat   metrics  constructed in \cite{TianYau} are asymptotic to the \emph{Calabi model space} at infinity. We first give the definition of the latter. Let $D$ be an $(n-1)$-dimensional compact K\"ahler manifold with trivial canonical bundle and let $L\rightarrow D$ be an ample line bundle. We fix a nowhere vanishing holomorphic $(n-1)$-form $\Omega_D$ on $D$ with 
\begin{equation}\label{lalala}
\frac{1}{2}\int_{D} i^{(n-1)^2}\Omega_D\wedge\overline{\Omega}_D=(2\pi c_1(L))^{n-1}.\end{equation} By Yau's resolution of the Calabi conjecture \cite{Yau}, there exists  a unique Ricci-flat K\"ahler metric $\omega_D\in 2\pi c_1(L)$ satisfying the equation 
\begin{equation}
\omega_D^{n-1}=\frac{1}{2}i^{(n-1)^2} \Omega_D\wedge\overline{\Omega}_D. 
\end{equation}
 Up to scaling there exists a unique hermitian metric $h$ on $L$ whose curvature form is $-i\omega_D$. We now fix a choice of $h$. Then the Calabi model space is the subset $\mathcal C$ of the total space of $L$ consisting of all elements $\xi$ with $0<|\xi|_h < 1$, endowed with a nowhere vanishing holomorphic volume form $\Omega_\mathcal C$ and a Ricci-flat K\"ahler metric $\omega_\mathcal C$ which is incomplete as $|\xi|_h \to 1$ and complete as $|\xi|_h \to 0$. The holomorphic volume form $\Omega_{\mathcal C}$ is uniquely determined by the equation 
 \begin{equation}Z \lrcorner\ \Omega_{\mathcal C}=p^*\Omega_D\end{equation}
 where $p: \mathcal C\rightarrow D$ is the bundle projection and  $Z$ is the holomorphic vector field generating the natural $\dC^*$-action on the fibers of $p$. The metric  $\omega_{\mathcal{C}}$ is given by the \emph{Calabi ansatz}
 \begin{equation}\label{calabiansatz}\omega_{\mathcal C}=\frac{n}{n+1} i\p\bp (-{\log |\xi|_h^2})^{\frac{n+1}{n}} \end{equation}
and satisfies the Monge-Amp\`ere equation
 \begin{equation}\omega_{\mathcal C}^n=\frac{1}{2}i^{n^2} \Omega_{\mathcal C}\wedge\overline\Omega_{\mathcal C},\end{equation}
hence is Ricci-flat. Define $z=(-{\log |\xi|_h^2})^{1/n}$. It is easy to check that $z$ is the $\omega_{\mathcal{C}}$-moment map for the natural $S^1$-action on $L$. Then the $\omega_{\mathcal{C}}$-distance function $r$ to a fixed point in $\mathcal C$ satisfies
 \begin{equation}
C^{-1}z^{\frac{n+1}{2}}\leq r\leq C z^{\frac{n+1}{2}}
 \end{equation}
uniformly for all $z \geq 1$. 

If $n=2$ then $D=E$ is an elliptic curve and the Calabi model space is hyperk\"ahler and agrees with the Gibbons-Hawking construction from Section \ref{ex:model-space}. To see this,  we choose $A=2\pi \deg(L)$ and a flat K\"ahler form $\omega_{E}=Ae_1\wedge e_2$ and a holomorphic 1-form $\Omega_E=A^{1/2}(e_1+ie_2)$ on $E$ such that $\omega_E=\frac{i}{2}\Omega_E\wedge\overline{\Omega}_E$. Then the Calabi ansatz produces a holomorphic $2$-form $\Omega_{\mathcal C}$ and a Ricci-flat K\"ahler form $\omega_{\mathcal{C}}$ on $\mathcal C$ such that $\omega_{\mathcal C}^2=\frac{1}{2}\Omega_{\mathcal C}\wedge\overline{\Omega}_{\mathcal C}$. In particular, 
\begin{equation}\label{e:calabitriple}
\bm{\omega}_{\mathcal{C}}\equiv(\omega_{\mathcal C}, Re(\Omega_{\mathcal C}), Im(\Omega_{\mathcal C}))
\end{equation}
 is a hyperk\"ahler triple. 

 \begin{proposition} \label{Calabihyperkahler}
The hyperk\"ahler structure $\bm{\omega}_{\mathcal{C}}$ is diffeomorphism equivalent to the one given by the  Gibbons-Hawking ansatz with $V = z$ on ${\rm Nil}^3_b(\epsilon,\tau) \times (0, \infty)$ as in Section \ref{ex:model-space}, where $b = \deg(L)$, $\tau$ is the modulus of $E$ in the upper half-plane, $A = 2\pi b = \epsilon^2 Im(\tau)$, and $\theta = \theta_b$. 
\end{proposition}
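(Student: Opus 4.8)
The plan is to show that the two hyperk\"ahler structures on $\mathrm{Nil}^3_b(\epsilon,\tau)\times(0,\infty)$ — the one coming from the Calabi ansatz in complex dimension $2$ and the one coming from the Gibbons-Hawking ansatz with $V=z$ — are intertwined by an explicit diffeomorphism. First I would use the $S^1$-action present on both sides: the Calabi model carries the natural fiberwise $S^1$-action on $L$, whose moment map is $z=(-\log|\xi|_h^2)^{1/2}$, while the Gibbons-Hawking model carries the $S^1$-action on the circle bundle $\mathfrak{M}\to U=E\times\dR_{z>0}$, whose moment map is the $z$-coordinate. Since both quotients are $E\times\dR_{z>0}$ and the $S^1$-actions are free, it suffices to identify (i) the induced data on the base and (ii) the connection $1$-forms on the total spaces, up to gauge. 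By the discussion following \eqref{e:wtf666}, changing $\theta$ within its gauge class only changes the Gibbons-Hawking metric by a diffeomorphism, so it is harmless to normalize $\theta=\theta_b$.

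The key computation is to rewrite the Calabi triple $\bm{\omega}_{\mathcal C}=(\omega_{\mathcal C},\Rea(\Omega_{\mathcal C}),\Ima(\Omega_{\mathcal C}))$ in Gibbons-Hawking form. For this I would: (a) verify that $z=(-\log|\xi|_h^2)^{1/2}$ is the moment map for the $S^1$-action with respect to $\omega_{\mathcal C}$, which is asserted in the text and follows from $\omega_{\mathcal C}=\frac{2}{3}i\partial\bar\partial z^3$ together with $i\partial\bar\partial(-\log|\xi|_h^2)$ being the pullback of a constant multiple of $\omega_E$; (b) let $\theta$ be the $1$-form dual to the generator of the $S^1$-action rescaled so that $g_{\mathcal C}=V\pi^*h_U+V^{-1}\theta\otimes\theta$ with $V=z$ and $h_U=g_E+dz^2$, reading off $V^{-1}=|Z|^2_{g_{\mathcal C}}$ from the intrinsic characterization in Section~\ref{s:model-space}; (c) identify $d\theta$ with $*\,dV$ on $U$, which amounts to checking the curvature of the Chern connection of $L$ equals $\frac{2\pi b}{A}\dvol_E$ — precisely the normalization $A=2\pi b$ — so that $V=z$ is harmonic and $*dV=\frac{2\pi b}{A}\dvol_E=d\theta_b$; (d) match the holomorphic $2$-form: the defining equation $Z\lrcorner\,\Omega_{\mathcal C}=p^*\Omega_E$ with $\Omega_E=A^{1/2}(e_1+ie_2)$ should reproduce exactly $\omega_2+i\omega_3$ from \eqref{hktd} once $e_1,e_2$ are taken as the orthonormal coframe with $g_E=A(e_1^2+e_2^2)$. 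Then \eqref{hktd} gives the three Calabi symplectic forms on the nose, and the complex structure $J_0$ determined by $\omega_1$ is the one making $p$ holomorphic, matching the Calabi side.

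The one genuinely delicate point is \textbf{the normalization and the identification of periods}: one must check that the integrality/normalization conditions line up, i.e.\ that $A=2\pi\deg(L)=\epsilon^2\Ima(\tau)$ is forced and consistent with $[\tfrac{1}{2\pi}*dV]\in H^2(U,\dZ)$ being the first Chern class $b=\deg(L)$ of the line bundle, and that the lattice $\Lambda=\epsilon\dZ\langle 1,\tau\rangle$ used to define $\mathrm{Nil}^3_b(\epsilon,\tau)$ is the same as the period lattice of $E$. This is where \eqref{lalala} enters: it fixes the scale of $\Omega_E$ (hence of $\Omega_{\mathcal C}$) relative to $\omega_E$, which is what makes $\omega_{\mathcal C}^2=\tfrac12\Omega_{\mathcal C}\wedge\overline\Omega_{\mathcal C}$ hold with no stray constant, and in turn forces $V=z$ rather than $V=cz$. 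Once these bookkeeping constraints are pinned down, the diffeomorphism is just the composition of the $S^1$-equivariant identification of total spaces over $E\times\dR_{z>0}$ with the gauge change bringing $\theta$ to $\theta_b$, and the equality of the two hyperk\"ahler triples is immediate from \eqref{hktd}. I expect the proof to be short modulo this careful tracking of constants, since in complex dimension $2$ the Calabi ansatz and the Gibbons-Hawking ansatz are two descriptions of the same $U(1)$-invariant hyperk\"ahler germ.
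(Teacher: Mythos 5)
Your proposal is correct and follows essentially the same route as the paper: both arguments use the tri-holomorphic $S^1$-symmetry to force Gibbons--Hawking form, identify the moment map coordinate $z$ and read off $V=z$ from the norm-squared of the Killing field, and then invoke the gauge-fixing/translation discussion of Section~\ref{ex:model-space} (before Remark~\ref{boahey}) to bring the Chern connection of $(L,h)$ to $\theta_b$ up to diffeomorphism. Your step (d), matching $\Omega_{\mathcal C}$ via $Z\lrcorner\,\Omega_{\mathcal C}=p^*\Omega_E$, is the same content as the paper's identification of the $\Omega_{\mathcal C}$-moment map $\pi_2+i\pi_3$ with the projection followed by the Abel--Jacobi map (up to the rotation $\Omega_{\mathcal C}\mapsto e^{i\alpha}\Omega_{\mathcal C}$ the paper allows).
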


\begin{proof}
The natural $S^1$-action on $\mathcal C$ obviously preserves ${\bm \omega}_{\mathcal C}$, so ${\bm \omega}_{\mathcal C}$ must be given by the Gibbons-Hawking construction. It suffices to determine the hyperk\"ahler moment map $\pi =(\pi_1, \pi_2, \pi_3)$ and the potential $V$. We already know that the $\omega_{\mathcal{C}}$-moment map is given by $\pi_1 = (-{\log |\xi|_h^2})^{1/2} = z$, and it is easy to check that the $\Omega_{\mathcal{C}}$-moment map $\pi_2 + i\pi_3$ equals the bundle projection $p: \mathcal{C} \to E$ 
followed by the Abel-Jacobi isomorphism $E \to \mathbb{C}/\text{\emph{periods}}$, $x \mapsto \int_{x_0}^x i\Omega_E$, for an arbitrary but fixed basepoint $x_0 \in E$. Also, $V^{-1}$ is the norm-squared of the Killing field, so that
\begin{equation*}
V^{-1}=(-{\log |\xi|^2_h})^{-1/2}=z^{-1}. 
\end{equation*}
The Calabi construction provides us with an explicit realization $\mathfrak{M} = \mathcal{C}$ of the total space of the $S^1$-bundle and with a specific choice of connection form $\theta$ given by the Chern connection of $L$ with respect to $h$. The diffeomorphism equivalence to the model ${\rm Nil}_b^3(\epsilon,\tau) \times (0,\infty)$ with connection form $\theta_b$ (after rotating $\Omega_{\mathcal C}$ to $e^{i\alpha}\Omega_{\mathcal C}$ if necessary) follows from the discussion before Remark \ref{boahey}.
\end{proof}

\begin{remark}
In \cite{TianYau} the volume growth and curvature decay rates of the $n$-dimensional Calabi metric $\omega_{\mathcal{C}}$ are estimated as $O(r^{\frac{2n}{n+1}})$ and $O(r^{-\frac{2}{n+1}})$, respectively. When $n = 2$, this suggests that $|{\rm Rm}|$ is borderline \emph{not} in $L^2$. However, while the volume estimate is sharp for all $n$, the curvature estimate is sharp only for $n \geq 3$. For $n = 2$, the leading term in the asymptotic expansion of the curvature vanishes because the Calabi-Yau metric on an elliptic curve is flat, and the true curvature decay rate of $\omega_{\mathcal{C}}$ for $n = 2$ is $O(r^{-2})$. This was also pointed out by R.~Kobayashi in \cite{Koba} but is perhaps most easily seen in the Gibbons-Hawking picture.
\end{remark}

We now explain the Tian-Yau construction \cite{TianYau} of complete Ricci-flat K\"ahler metrics asymptotic to a Calabi ansatz at infinity. Let $M$ be a smooth Fano manifold of complex dimension $n$, let $D\in |K_M^{-1}|$ be a smooth divisor, and let $L$ denote the holomorphic normal bundle to $D$ in $M$. Then $D$ has trivial canonical bundle and $L$ is ample, so in particular we can choose a holomorphic volume form $\Omega_D$ on $D$ which satisfies \eqref{lalala}. We fix a defining section $S$ of $D$, so that $S^{-1}$ can be viewed as a holomorphic $n$-form $\Omega_X$ on $X=M\setminus D$ with a simple pole along $D$. After scaling $S$ by a nonzero complex constant, we may assume that $\Omega_D$ is the residue of $\Omega_X$ along $D$. (In practice this means that $\Omega_X$ is asymptotic to $\Omega_{\mathcal{C}}$ near $D$ with respect to a suitable diffeomorphism between tubular neighborhoods of $D$ in $M$ and of the zero section in $L$.) Lastly, we fix a hermitian metric $h_M$ on $K_M^{-1}$ whose curvature form is strictly positive on $M$ and restricts to the unique Ricci-flat K\"ahler form $\omega_D \in 2\pi c_1(L)$ on $D$. Then
 \begin{equation}\omega_X \equiv\frac{n}{n+1}i\p\bp (-{\log |S|_{h_M}^2})^{\frac{n+1}{n}}  \end{equation}
defines a K\"ahler form on a neighborhood of infinity in $X$. In fact, by multiplying $h_M$ by a sufficiently small positive constant, we can arrange that $\omega_X$ is defined and strictly positive on all of $X$. As expected, $\omega_X$ is then complete and asymptotic to $\omega_{\mathcal{C}}$, where the hermitian metric $h$ used in \eqref{calabiansatz} is simply the restriction of $h_M$ to $K_M^{-1}|_D = L$. In particular, $\omega_X$ is asymptotically Ricci-flat and the $\omega_X$-distance function $r_X$ to any fixed basepoint in $X$ can be uniformly estimated by
 \begin{equation}C^{-1}(-{\log |S|^2_{h_M}})^{\frac{n+1}{2n}}\leq  r_X \leq C(-{\log |S|^2_{h_M}})^{\frac{n+1}{2n}}\ \text{as}\ |S|_{h_M} \to 0. \end{equation}
The following theorem is proved in \cite{TianYau} by solving a Monge-Amp\`ere equation with reference metric $\omega_X$. The exponential decay statement follows from Proposition 2.9 in \cite{Hein}. 

\begin{theorem}[\cite{TianYau, Hein}] \label{t:hein}
There is a smooth function $\phi$ on $X$ such that  $\omega_{TY}\equiv \omega_X+i\p\bp\phi$ is a complete Ricci-flat K\"ahler metric on $X$ solving the Monge-Amp\`ere equation
 \begin{equation}\omega_{TY}^n=\frac{1}{2}i^{n^2}\Omega_X\wedge\overline\Omega_X.  \end{equation}
Moreover,  there is a constant $\delta_0 = \delta_0(M,D) >0$ such that for all integers $k\geq 0$,
 \begin{equation}\label{lalilu}|\nabla_{g_X}^k \phi|_{g_X} = O(e^{-\delta_0r_X^{\frac{n}{n+1}}})\ \text{as} \ r_X \to \infty.
\end{equation}
\end{theorem}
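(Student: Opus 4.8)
The plan is to follow the two stages implicit in the attribution: first produce $\phi$ by solving a complex Monge-Amp\`ere equation on the complete noncompact manifold $(X,\omega_X)$ in the manner of Tian--Yau \cite{TianYau}, and then upgrade the decay of the solution to the stated exponential rate following Hein \cite{Hein}. Writing $\omega_{TY}=\omega_X+i\p\bp\phi$ and defining $F$ by $\frac{1}{2}i^{n^2}\Omega_X\wedge\overline\Omega_X=e^F\omega_X^n$, the target equation becomes $(\omega_X+i\p\bp\phi)^n=e^F\omega_X^n$. The first observation is that $F$ decays rapidly: by construction $\omega_X$ is asymptotic to $\omega_{\mathcal C}$ and $\Omega_X$ to $\Omega_{\mathcal C}$ with respect to a diffeomorphism between neighborhoods of $D$ and of the zero section of $L$, and this convergence is in fact exponentially fast in $r_X^{n/(n+1)}$ because the two sides are built from the same Calabi-type formula out of data $(S,h_M)$ versus $(\xi,h)$ that agree along $D$ by the normalization of $h_M$ and differ off $D$ only by quantities of size $O(|S|_{h_M})=O(e^{-c\,r_X^{2n/(n+1)}})$, where $z=(-\log|S|_{h_M}^2)^{1/n}\sim r_X^{2/(n+1)}$; combined with $\omega_{\mathcal C}^n=\frac{1}{2}i^{n^2}\Omega_{\mathcal C}\wedge\overline\Omega_{\mathcal C}$ this yields $|\nabla^k_{g_X}F|_{g_X}=O(e^{-\delta_1 r_X^{n/(n+1)}})$ for some $\delta_1>0$ and every $k$.

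For existence I would run the continuity method on $(\omega_X+i\p\bp\phi_t)^n=e^{tF}\omega_X^n$, $t\in[0,1]$, in a weighted H\"older space on $X$ modeled on the Calabi asymptotics, so that $\phi_t$ and its derivatives are required to decay at infinity. Openness follows from the fact that the linearized operator, which near infinity is an exponentially small perturbation of the Calabi-model Laplacian $\Delta_{\omega_{\mathcal C}}$, is an isomorphism between the relevant weighted spaces; this uses that the model admits no harmonic functions of suitably slow growth, which one verifies by separation of variables on $\mathcal C$ as in Section \ref{ex:model-space}. Closedness requires a priori estimates: a $C^0$ bound, obtained by a barrier/comparison argument against the model (or a weighted integral estimate) using the control on $F$ and the geometry of $\omega_X$ at infinity, followed by the standard Yau $C^2$ and Calabi $C^3$ estimates, which are local and apply verbatim because $\Ric_{\omega_X}$ is bounded. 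This produces $\phi$ in the weighted space; in particular $i\p\bp\phi\to0$, so $\omega_{TY}$ is uniformly equivalent to the complete metric $\omega_X$ and is therefore itself complete and Ricci-flat.

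To obtain the exponential decay \eqref{lalilu} I would invoke Hein's Proposition 2.9 \cite{Hein}, whose mechanism I would reproduce as follows. Near infinity the solution satisfies $\Delta_{\omega_X}\phi = F + (\text{a term quadratic in }i\p\bp\phi)$, and $\Delta_{\omega_X}$ agrees with $\Delta_{\omega_{\mathcal C}}$ up to exponentially small error. On the Calabi model, a Fourier decomposition along the nilmanifold (resp.\ torus) fibers reduces the Laplace equation to ODEs in $z$: every nonconstant fiber mode decays like $e^{-c\,z^{n/2}}$, with $c>0$ fixed by the lowest nonzero eigenvalue of the fiber Laplacian relative to the warping, while the radial mode is handled by direct ODE integration. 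Since $z^{n/2}\sim r_X^{n/(n+1)}$ and both $F$ and the quadratic remainder decay strictly faster than $e^{-\delta_0 r_X^{n/(n+1)}}$ once $\delta_0$ is chosen below this spectral threshold, a contraction-mapping argument in the H\"older space weighted by $e^{\delta_0 r_X^{n/(n+1)}}$ yields a solution with exactly that decay; uniqueness of decaying solutions identifies it with the $\phi$ from the first stage. Differentiating the equation and applying interior Schauder estimates on unit balls of the rescaled metrics then gives the bound for all $k\geq0$.

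The main obstacle is the analysis at infinity. In the existence step, the absence of a maximum principle on noncompact $X$ forces both the $C^0$ estimate and the invertibility of $\Delta_{\omega_X}$ to rest on a genuine understanding of the doubly-warped, collapsing Calabi end: a suitable weighted Sobolev-type inequality and sharp indicial-root information for the model Laplacian. In the decay step the corresponding point is the precise location of the spectral threshold, i.e.\ verifying that $\Delta_{\omega_{\mathcal C}}$ has no harmonic functions decaying slower than $e^{-\delta_0 r^{n/(n+1)}}$; this is exactly the type of separation-of-variables and Hermite-function analysis carried out for the related Liouville theorems in Section \ref{s:liouville-functions} below, and checking that the radial mode produces no secular (non-decaying) term is the one place where the argument is delicate rather than routine.
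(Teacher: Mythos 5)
The paper does not actually prove Theorem \ref{t:hein}: it is quoted, with the existence statement attributed to \cite{TianYau} (solving the Monge--Amp\`ere equation with reference metric $\omega_X$) and the exponential decay to Proposition 2.9 of \cite{Hein}, plus a remark that \cite{KK} only gives polynomial decay. Your outline reconstructs exactly that route --- solve $(\omega_X+i\p\bp\phi)^n=e^F\omega_X^n$ with $F$ the rapidly decaying Ricci potential, then upgrade the decay of $\phi$ --- and your quantitative bookkeeping is right: $z\sim r_X^{2/(n+1)}$, $|S|_{h_M}\sim e^{-z^n/2}$, so $F$ in fact decays like $e^{-c\,r_X^{2n/(n+1)}}$, faster than the rate $e^{-\delta_0 r_X^{n/(n+1)}}$ claimed for $\phi$. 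The one substantive divergence is in the decay step: Hein's Proposition 2.9 is not proved by separation of variables on the Calabi model or by a contraction in exponentially weighted H\"older spaces; it is a softer iteration/comparison argument (weighted sup and Moser-type estimates under SOB-type geometric hypotheses) that converts boundedness of $\phi$ together with decay of the Ricci potential directly into exponential decay, with no spectral threshold for $\Delta_{\omega_{\mathcal C}}$ needed. Your fiberwise-spectral route is closer in spirit to the paper's own Section \ref{s:liouville-functions}, where the $O(e^{-\delta z^{n/2}})$ scale for model harmonic functions is indeed established (for $n=2$), and it would plausibly work in general $n$, but it requires precisely the delicate model ODE analysis and a uniqueness statement in the weighted space that Hein's argument avoids --- which is presumably why the paper cites \cite{Hein} rather than redoing the estimate. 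Likewise, Tian--Yau's existence proof proceeds by exhaustion with uniform estimates rather than a global weighted continuity method, but your variant is a legitimate alternative at the level of a sketch, provided the weighted Fredholm theory for the collapsing Calabi end is actually supplied (the paper never needs it, since it only uses the cited statement).
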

We remark that polynomial decay estimates were obtained in \cite{KK}.

To make this result useful for our gluing construction in this paper, we need to replace $\omega_X$ by the Calabi model metric $\omega_{\mathcal C}$ in \eqref{lalilu}. This amounts to estimating the convergence of $\omega_X$ to $\omega_{\mathcal C}$. In the following, the big-O notation will mean with respect to the limit $z \rightarrow \infty$, unless otherwise indicated. 
\begin{proposition}\label{p:TY-asym}
There is a diffeomorphism $\Phi: \mathcal C\setminus K'\rightarrow X\setminus K$, where $K \subset X$ is compact and $K' = \{|\xi|_h \geq \frac{1}{2}\}$, such that the following hold uniformly for all large enough values of $z$.
\begin{enumerate}
\item[(a)]  We have the complex structure asymptotics
\begin{equation}
|\nabla_{g_{\mathcal C}}^k(\Phi^*J_{X}-J_{\mathcal C})|_{g_\mathcal C}=O(e^{-(\frac{1}{2}-\epsilon)z^n})\ \text{for all} \ k \geq 0, \epsilon > 0.
\end{equation}

\item[(b)] We have the holomorphic $n$-form asymptotics
\begin{equation}\label{e:n-form-asympt}
|\nabla_{g_{\mathcal{C}}}^k(\Phi^*\Omega_X-\Omega_{\mathcal C})|_{g_{\mathcal C}}=O(e^{-(\frac{1}{2}-\epsilon)z^n})\ \text{for all} \ k \geq 0, \epsilon > 0.
\end{equation} 

\item[(c)] There is some positive constant \begin{equation}\label{tritratrullala}\underline{\delta}>0\end{equation} such that for all $k\geq 0$ the Ricci-flat K\"ahler metric $\omega_{TY}$ satisfies the asymptotics
\begin{equation}|\nabla_{g_\mathcal C}^k(\Phi^*\omega_{TY}-\omega_{\mathcal C})|_{g_{\mathcal C}}=O(e^{-\underline{\delta} z^{n/2}}).\end{equation} 
\end{enumerate}
\end{proposition}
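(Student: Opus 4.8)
The plan is to construct the diffeomorphism $\Phi$ explicitly and then propagate the known decay estimates from Theorem~\ref{t:hein} together with the defining equations of the Calabi ansatz. First I would recall that, by the setup of the Tian--Yau construction, there is already a natural diffeomorphism $\Phi_0$ between a neighborhood of infinity in $X = M \setminus D$ and the punctured tubular neighborhood $\{0 < |\xi|_h < \delta\}$ of the zero section in the total space of $L$: this is the diffeomorphism implicit in the statement that $\Omega_X$ is asymptotic to $\Omega_{\mathcal C}$ and that $\omega_X$ is asymptotic to $\omega_{\mathcal C}$. Concretely, one can take $\Phi_0$ to identify the defining section $S$ of $D$ with the tautological section $\xi$ of $L$ via a choice of holomorphic tubular neighborhood (e.g. using a local holomorphic trivialization near $D$ and the exponential map of some connection), so that $|S|_{h_M} \circ \Phi_0 = |\xi|_h$ up to higher-order corrections. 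The key point is that $\Phi_0$ can be chosen so that $\Phi_0^* J_X - J_{\mathcal C}$ decays like $e^{-c z^n} = |\xi|_h^{2c}$: this is because the complex structure of the total space of $L$ near its zero section differs from that of $M$ near $D$ only by terms controlled by the failure of the tubular neighborhood to be holomorphic, which by a standard argument (Grauert, or a direct power-series comparison of the two complex structures along $D$) is $O(|S|_{h_M})$, and $|S|_{h_M} = e^{-z^n/2}$. This gives part (a) after absorbing the loss of a factor $e^{\epsilon z^n}$ into the derivative estimates via interpolation on the shrinking annuli $\{z \sim z_0\}$ (where the injectivity radius degenerates, so one must rescale to unit-size balls before applying elliptic/Cauchy estimates for $\bar\partial$-type operators).

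\smallskip

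For part (b), I would use that both $\Omega_X$ and $\Omega_{\mathcal C}$ are characterized by residue conditions along $D$: $\Omega_X = S^{-1}$ (up to the normalizing constant) and $\Omega_{\mathcal C}$ is determined by $Z \lrcorner\, \Omega_{\mathcal C} = p^* \Omega_D$, with $\Omega_D$ the common residue. Since $\Phi_0$ identifies $S$ with $\xi$ to leading order and both forms are holomorphic with the same residue, $\Phi_0^* \Omega_X - \Omega_{\mathcal C}$ is a holomorphic $n$-form on the punctured neighborhood with no pole along $D$, hence extends holomorphically across the zero section and vanishes there; quantitatively its size is governed by $|\xi|_h$ together with the complex-structure discrepancy from (a), giving the same $O(e^{-(\frac12 - \epsilon)z^n})$ bound. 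Again one passes to rescaled unit balls to control higher derivatives in the collapsing geometry.

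\smallskip

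For part (c), I would combine (a), (b) and Theorem~\ref{t:hein}. The strategy is: $\Phi^* \omega_{TY} - \omega_{\mathcal C} = \Phi^*(\omega_X - \omega_{TY}) \cdot(-1) + (\Phi^* \omega_X - \omega_{\mathcal C})$, wait — more precisely $\Phi^*\omega_{TY} - \omega_{\mathcal C} = (\Phi^* \omega_X - \omega_{\mathcal C}) + \Phi^*(i\partial\bar\partial\phi)$. The second term is $O(e^{-\delta_0 r_X^{n/(n+1)}})$ by \eqref{lalilu}, and since $r_X \sim z^{(n+1)/2}$ we have $r_X^{n/(n+1)} \sim z^{n/2}$, giving exactly the $e^{-\underline\delta z^{n/2}}$ form of the bound — here one must be slightly careful that $\partial\bar\partial$ is taken with respect to $J_X$, so converting to $\nabla_{g_{\mathcal C}}$-derivatives costs the complex-structure comparison from (a), which is much smaller. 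The first term $\Phi^* \omega_X - \omega_{\mathcal C}$ is where I expect the \textbf{main obstacle}: unlike the holomorphic $n$-form, the difference of the two K\"ahler forms does \emph{not} decay like $e^{-cz^n}$. Indeed $\omega_X$ is built from $h_M$ restricted near $D$ while $\omega_{\mathcal C}$ is built from the genuine Ricci-flat $\omega_D$ on $D$, and although both curvature forms restrict to cohomologous forms on $D$, the subleading terms in the Calabi ansatz $\frac{n}{n+1} i\partial\bar\partial(-\log|\cdot|^2)^{(n+1)/n}$ pick up the difference $\omega_{h_M}|_D - \omega_D$, which is $i\partial\bar\partial(\text{potential})$ but only decays \emph{polynomially} in $z$ — or, if one is more careful and chooses $h_M$ so that its curvature restricts \emph{exactly} to $\omega_D$ on $D$ (as the hypotheses permit), then the discrepancy comes only from the normal derivative of the metric along $D$ and decays like $e^{-\text{(something)}\, z^n}$ after all, or at worst like $e^{-\underline\delta z^{n/2}}$ for a possibly smaller exponent coming from the spectral gap of $\Delta_{\mathcal C}$ governing how fast a harmonic correction on the Calabi space can decay. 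The honest resolution is: write $\omega_X = \omega_{\mathcal C} + i\partial\bar\partial \psi$ near infinity for an explicit $\psi$ coming from the difference of the two hermitian metrics, show $\psi$ and its derivatives are controlled using the above choice of $h_M$, and then observe that the Monge--Amp\`ere argument of \cite{TianYau, Hein} can equally be run with reference metric $\omega_{\mathcal C}$, so that the combined potential $\psi + \phi$ decays at the rate dictated by the linearized operator $\Delta_{g_{\mathcal C}}$ on the Calabi model, whose decaying solutions are $O(e^{-\underline\delta z^{n/2}})$ by separation of variables on $\mathrm{Nil}^3_b \times (0,\infty)$ (this is exactly the ODE analysis flagged for Section~\ref{s:liouville-functions}). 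So the proof of (c) ultimately reduces part of the metric asymptotics to the same Hermite-function / ODE estimates on the model space, with $\underline\delta$ determined by the lowest relevant eigenvalue, and this linking of the two decay rates — complex structure decaying at rate $z^n$ but metric only at rate $z^{n/2}$ — is precisely the subtlety the later Liouville argument will exploit.
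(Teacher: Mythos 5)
Your outline of (a) and (b) leans on a ``holomorphic tubular neighborhood'' of $D$, and that is where the first genuine gap lies. For an \emph{ample} normal bundle (here $D$ is the anticanonical elliptic curve in a del Pezzo surface, $L=N_D$ ample) there is in general no biholomorphism between a neighborhood of $D$ in $M$ and a neighborhood of the zero section of $L$; the obstructions in $H^1(D,N_D^{-k})\cong H^0(D,N_D^{k})^*$ do not vanish, so Grauert-type arguments are unavailable. The diffeomorphism one can actually produce (in the paper: identify $L$ with $(T^{1,0}D)^\perp$ by orthogonal projection and compose with the normal exponential map) is only complex-linear \emph{along} $D$. Consequently $\Phi^*\Omega_X$ is not $J_{\mathcal C}$-holomorphic, and your key step for (b) --- ``the difference is a holomorphic $n$-form with no pole, hence extends across the zero section and vanishes there'' --- does not apply; in fact the difference contains $(d\bar w)$-components and factors of $\bar w/w$, so it is not even smooth across the divisor. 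What replaces this is an explicit Taylor expansion of $\Phi$ in the fiber coordinate $w$ together with the weighted bounds $|w|=O(e^{-\frac12 z^n})$, $|\nabla^k_{g_{\mathcal C}}w|=O(e^{-(\frac12-\epsilon)z^n})$, $|\nabla^k_{g_{\mathcal C}}(\bar w/w)|=O(e^{\epsilon z^n})$; the $\epsilon$-loss is forced by the fact that the sup of $|w|$ over a unit $g_{\mathcal C}$-ball is only $O(|w|^{1-\epsilon})$, not by an interpolation argument. (The paper then obtains (a) \emph{from} (b), since $\Omega_X$ determines $J_X$; your direct route to (a) would need the same explicit comparison.)

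For (c), your very first decomposition is already the complete proof, and your subsequent ``main obstacle'' is a false alarm that leads you to an unnecessary and unsubstantiated detour. In the setup, $h_M$ is fixed so that its curvature restricts to the Ricci-flat $\omega_D$ on $D$ and the Calabi metric is built from $h=h_M|_D$; hence $\Phi^*|S|^2_{h_M}=(1+G)|\xi|_h^2$ with $G=Fw+\overline{Fw}$ vanishing to order $|w|$ along $D$, so the two K\"ahler potentials differ by $-\tfrac{n}{n+1}\bigl((-\log|\xi|_h^2-\log(1+G))^{\frac{n+1}{n}}-(-\log|\xi|_h^2)^{\frac{n+1}{n}}\bigr)$ and $\Phi^*\omega_X-\omega_{\mathcal C}=O(e^{-(\frac12-\epsilon)z^n})$, far better than needed (using (a) to pass from $\Phi^*J_X$- to $J_{\mathcal C}$-derivatives). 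The rate $e^{-\underline\delta z^{n/2}}$ then comes \emph{solely} from Theorem \ref{t:hein}, i.e. from \eqref{lalilu} together with $r_X\sim z^{(n+1)/2}$, and $\underline\delta$ is just a multiple of $\delta_0$ there; no re-run of the Monge--Amp\`ere argument with reference $\omega_{\mathcal C}$ and no separation-of-variables/spectral-gap input is needed for this proposition (that analysis only enters later, in the Liouville theorems). Moreover, your proposed fix is not literally available as stated: $\omega_{\mathcal C}$ is not a $J_X$-K\"ahler form on $X$, so running the Tian--Yau/Hein scheme with it as reference metric would itself require the comparison estimates you are trying to avoid.
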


\begin{proof}
One can prove using an argument due to Donaldson that items (a) and (b) are equivalent. The point is that $\Omega_X$ uniquely determines $J_X$ because $J_X$ is determined by knowing the subspace $\Lambda^{1,0}_{J_X} \subset \Lambda^1_{\mathbb{C}}X$, and we have $\Lambda^{1,0}_{J_X}X = {\rm ker}\,T$, where $T: \Lambda^1_{\mathbb{C}} X \to \Lambda^{n+1}_{\mathbb{C}} X$ is the $\C$-linear map defined by $T\alpha = \Omega_X \wedge \alpha$. See Lemma 2.14 in \cite{CH1} for details.

Item (b) can be proved by following the steps of a similar estimate in the asymptotically conical case in Section 2.2 of \cite{CH2}. Fix any background hermitian metric $g$ on $M$. Via $g$-orthogonal projection, the holomorphic normal bundle $L = N_D = T^{1,0}M|_D/T^{1,0}D$ is naturally isomorphic to the $g$-orthogonal complement $(T^{1,0}D)^\perp \subset T^{1,0}M$ as a $C^\infty$ complex line bundle, and the $g$-normal exponential map defines a diffeomorphism from a  neighborhood of the zero section in $(T^{1,0}D)^\perp$ to a neighborhood of $D$ in $M$. Let $\Phi$ be the composition of these two maps. Then $\Phi$ is a diffeomorphism from a neighborhood of the zero section in $L$ to a neighborhood of $D$ in $M$, and the restriction of $\Phi$ to the zero section is ${\rm Id}_D$. Note that $\Phi$ is almost never holomorphic, but in generic situations $\Phi$ will be one of  the ``most holomorphic'' diffeomorphisms between tubular neighborhoods of $D$ in $L$ and in $M$. In any case,  $\Phi$ turns out to be good enough to obtain the asymptotics \eqref{e:n-form-asympt}.

Fix a point on $D$. Let $(z_1,\ldots,z_{n-1},w)$ be local holomorphic coordinates on $M$ centered at this point such that $D$ is locally cut out by $w = 0$. Then $(z_1, \ldots, z_{n-1}, w)$ may also be viewed as local holomorphic coordinates on $L$ corresponding to the normal vector $w(\frac{\partial}{\partial w} + T^{1,0}D) \in L$ based at the point $(z_1, \ldots, z_{n-1},0) \in D$. In these coordinates  we may write 
\begin{align}
\Omega_X &= \biggl(\frac{f(z)}{w} + g(z,w)\biggr) dz_1 \wedge \ldots \wedge dz_{n-1} \wedge dw,\label{e:form-on-X}\\
\Omega_{\mathcal{C}} &= \frac{f(z)}{w}dz_1 \wedge \ldots \wedge dz_{n-1} \wedge dw,
\end{align}
where $f,g$ are local holomorphic functions with $f(z) \neq 0$ for all $z$. In order to compare $\Phi^*\Omega_X$ to $\Omega_{\mathcal{C}}$, we define new $C^\infty$ complex coordinates $(z_1', \ldots, z_{n-1}', w)$ on $M$ by 
\begin{align}\label{e:new-coord}
z_i'(z,w)= z_i - a_i(z) w, \ \text{where}\ a_i(z) =  \frac{\partial(z_i \circ \Phi)}{\partial w}\biggr|_{(z,0)}.
\end{align}
Using the fact that $d\Phi$ is complex linear at $w = 0$ and that $\Phi_*(\frac{\partial}{\partial w}) = \frac{\partial}{\partial w} + \sum a_i(z) \frac{\partial}{\partial z^i}$ at $w = 0$, it is easy to check that these new coordinates satisfy
\begin{align}
\Phi^*dz_i' = dz_i \ \text{and} \ \Phi^*dw = dw \ \text{at} \ w = 0.
\end{align}
By Taylor expansion, it follows directly from this that 
\begin{align}\label{e:coord-exp}
\Phi^*z_i' = z_i + A_{i}w^2 + B_i w\overline{w} + C_i \overline{w}^2  \ \text{and} \ \Phi^*w = w + A w^2 + B w\overline{w} + C \overline{w}^2
\end{align}
with smooth functions $A_i, B_i, C_i$ and $A,B,C$. We now express the coordinates $(z,w)$ in \eqref{e:form-on-X} in terms of $(z', w)$ using \eqref{e:new-coord}, and then use \eqref{e:coord-exp} to compare $\Phi^*\Omega_X$ to $\Omega_{\mathcal{C}}$. The first step yields
\begin{align}
\Omega_X = \frac{f(z')}{w} dz_1' \wedge \ldots \wedge dz_{n-1}' \wedge dw + \Upsilon \wedge dw,
\end{align}
where $\Upsilon$ extends to a smooth complex $(n-1)$-form on a neighborhood of $D$ in $M$. Then
\begin{align}\label{e:yetanotherequn}
\begin{split}
&\Phi^*\Omega_X - \Omega_{\mathcal{C}} \\  
=&\frac{(A' + B'\frac{\overline{w}}{w} + C'\frac{\overline{w}^2}{w^2})(\Upsilon' \wedge dw)+ (A'' + B'' \frac{\overline{w}}{w} + C''\frac{\overline{w}^2}{w^2})( \Upsilon'' \wedge d\overline{w}) + (w\Theta' + \overline{w}\Theta'' + \frac{\overline{w}^2}{w}\Theta''')}{1 + Aw + B\overline{w} + C \frac{\overline{w}^2}{w}}\\
&+ \Phi^*\Upsilon \wedge (dw + w \phi' + \overline{w} \phi''),
\end{split}
\end{align}
where $A',B',C', A'', B'', C'',\Upsilon', \Upsilon'',\Theta', \Theta'', \Theta''',\phi', \phi''$ extend to smooth complex functions, $(n-1)$-forms, $n$-forms, and $1$-forms on a neighborhood of $D$ in $L$, respectively. The reason for writing the right-hand side of \eqref{e:yetanotherequn} in this way is that a smooth complex $n$-form is small with respect to $g_{\mathcal{C}}$ if it either contains an explicit factor of $w$ or $\overline{w}$ in front, or if it splits off a wedge factor of $dw$ or $d\overline{w}$. Unfortunately the right-hand side of \eqref{e:yetanotherequn} is not smooth at the divisor but all non-smooth terms are due to factors of $\overline{w}/w$, which satisfy the same estimates as smooth functions.

It remains to prove appropriate estimates on $|\nabla^k_{g_{\mathcal C}} F|_{g_{\mathcal C}}$ for all $k \geq 0$, where $F$ is either a smooth function on a neighborhood of $D$ in $L$, or $F = \overline{w}/w$. To begin, note that
\begin{align}
|\nabla_{g_{\mathcal C}}^k z_i|_{g_{\mathcal{C}}} &= O(1)\ \text{for all} \ k \geq 0,\label{hatschi1}\\
|w| &= O(e^{- \frac{1}{2}z^n}), \ |\nabla_{g_{\mathcal C}}^k w|_{g_{\mathcal{C}}} = O(e^{-(\frac{1}{2}-\epsilon)z^n})\ \text{for all}\  k \geq 1, \epsilon > 0, \label{hatschi2}\\
|w^{-1}| &= O(e^{\frac{1}{2}z^n}), \ |\nabla_{g_{\mathcal C}}^k w^{-1}|_{g_{\mathcal{C}}} = O(e^{(\frac{1}{2}+\epsilon)z^n})\ \text{for all}\  k \geq 1, \epsilon > 0.\label{hatschi10}
\end{align}
Here the bound $|z_i| = O(1)$ is clear, and the bounds $|w^{\pm 1}| = O(e^{\mp \frac{1}{2}z^n})$ follow from the definition of the moment map $z = (-{\log |\xi|_h^2})^{1/n}$ together with the fact that $|\xi|_h^2 = |w|^2 e^{-\phi}$ with $\phi$ independent of $w,\overline{w}$. The higher derivative bounds in \eqref{hatschi1}--\eqref{hatschi10} follow from these pointwise bounds by using elliptic estimates for holomorphic functions on a K\"ahler manifold of $C^\infty$ bounded geometry (these estimates apply here because $g_{\mathcal{C}}$ is Ricci-flat K\"ahler of bounded curvature). Note that the $\epsilon$-terms in \eqref{hatschi2}--\eqref{hatschi10} are necessary because the sup of $|w|$ over a $g_{\mathcal{C}}$-ball of radius 1 is $O(|w|^{1-\epsilon})$ for every $\epsilon > 0$ but is not $O(|w|)$, unlike on a cylinder $\C^*_w \times D$ with model metric $|d \log w|^2 + g_D$.

We now prove by induction that for all smooth functions $F$ on a neighborhood of $D$ in $L$,
\begin{equation}\label{hatschi3}
|F| = O(1), \ |\nabla_{g_{\mathcal C}}^k F|_{g_{\mathcal{C}}} = O(e^{\epsilon z^n}) \ \text{for all} \ k \geq 1, \epsilon > 0.
\end{equation}
Indeed, the pointwise bound is clear, and for $k \geq 1$ we apply $\nabla_{g_{\mathcal{C}}}^{k-1}$ to the expansion
\begin{equation}
dF = \frac{\partial F}{\partial w} dw  + \frac{\partial F}{\partial \overline{w}}d\overline{w} + 
\sum_{i=1}^{n-1} \frac{\partial F}{\partial z_i} dz_i + \sum_{i=1}^{n-1} \frac{\partial F}{\partial \overline{z}_i}d\overline{z}_i,
\end{equation}
using the inductive hypothesis to control $\nabla_{g_{\mathcal{C}}}^{k-1}$ of the partials of $F$ on the right-hand side and using \eqref{hatschi1}--\eqref{hatschi2} to control $\nabla_{g_{\mathcal{C}}}^{k-1}$ of $dz_i, d\overline{z}_i, dw, d\overline{w}$. This proves \eqref{hatschi3}. By using \eqref{hatschi2}--\eqref{hatschi10} we can then prove in a similar manner that 
\begin{align}\label{hatschi4}
\left|\frac{\overline{w}}{w}\right| = O(1), \ \left|\nabla^{k}_{g_{\mathcal{C}}} \left(\frac{\overline{w}}{w}\right) \right|_{g_{\mathcal{C}}} = O(e^{\epsilon z^n})  \ \text{for all} \ k \geq 1, \epsilon > 0.
\end{align}
Taken together, \eqref{hatschi3} and \eqref{hatschi4} allow us to estimate all contributions to \eqref{e:yetanotherequn} in all $C^k$ norms with respect to $g_{\mathcal{C}}$, proving item (b). 

To prove item (c), notice that in local coordinates $\xi = (z_1, \ldots, z_{n-1}, w) \in L$ as above,
 \begin{equation}
\Phi^*|S|^2_{h_M}=(1+G)|\xi|^2_{h} = (1 + G)|w|^2 e^{-\phi}
\end{equation}
with smooth real-valued locally defined functions $G$ and $\phi$, where $G$ vanishes at $w = 0$ and $\phi$ does not depend on $w, \overline{w}$. Notice that $G =  F w + \overline{Fw} $ for some smooth complex-valued locally defined function $F$. This structure of the $\Phi^*J_X$-K\"ahler potential of $\Phi^*\omega_X$, together with \eqref{hatschi2}, \eqref{hatschi3}, and item (a), makes it possible to prove that for all $k \geq 0, \epsilon > 0$, 
\begin{equation}
|\nabla^k_{g_{\mathcal{C}}}(\Phi^*\omega_X-\omega_{\mathcal C})|_{g_\mathcal C}=O(e^{-(\frac{1}{2}-\epsilon)z^n}).
\end{equation}
Similarly by Theorem \ref{t:hein}  we get for some $\underline{\delta}>0$ depending on $\delta_0$ that for all $k \geq 0$,
 \begin{equation}
|\nabla^k_{g_{\mathcal{C}}} (\Phi^*\omega_{TY} - \Phi^*\omega_X)|_{g_\mathcal C}=O(e^{-\underline{\delta} z^{n/2}}). \end{equation}
This completes the proof of item (c).
\end{proof}

\begin{remark}
The decay of the Ricci-flat K\"ahler form $\omega_{TY}$ to its asymptotic model $\omega_{\mathcal{C}}$ is weaker than the decay of the complex structure and of the holomorphic volume form. The reason is that the latter is obtained by an explicit computation where the errors admit an expansion in terms of  $|w| \sim e^{-z^n/2}$. On the other hand, the decay rate of $\omega_{TY}$ depends on an analysis of the Tian-Yau solution of the Monge-Amp\`ere equation, which is related to the fact that the decay rate of harmonic (not necessarily holomorphic) functions on the Calabi model space (see Section \ref{s:liouville-functions}) is in general only $O(e^{-\delta z^{n/2}})$.  It is an interesting question if $O(e^{-\delta z^{n/2}})$ decay of the K\"ahler form is indeed optimal. This is a global question  because one can easily construct Tian-Yau solutions outside a compact set with leading term equal to any given decaying harmonic function which is not pluriharmonic.
\end{remark}
When $n=2$, the Tian-Yau metric is hyperk\"ahler, and the corresponding hyperk\"ahler triple $\bm{\omega}_{TY}$ is determined by $\omega_{TY}$ and $\Omega_X$. Let $\bm{\omega}_{\mathcal{C}}$ be the Calabi hyperk\"ahler triple defined in \eqref{e:calabitriple}.

\begin{corollary}
\label{coro:tydiff}
Under the same diffeomorphism $\Phi$ as in Proposition \ref{p:TY-asym} we have that
 \begin{equation}\label{e:TYtripleconv}
|\nabla^k_{g_{\mathcal C}}(\Phi^*\bm{\omega}_{TY}-\bm{\omega}_{\mathcal C})|_{g_\mathcal C}=O(e^{-\underline{\delta} z}) \end{equation}
for all $k \geq 0$, where $\underline{\delta}>0$ is the same constant as in \eqref{tritratrullala}.
\end{corollary}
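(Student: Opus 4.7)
The corollary should follow essentially directly from Proposition \ref{p:TY-asym} specialized to $n=2$, since the hyperk\"ahler triples in question just repackage the data $(\omega_{TY}, \Omega_X)$ and $(\omega_{\mathcal{C}}, \Omega_{\mathcal{C}})$ into their real and imaginary components. The plan is to unpack the definition $\bm{\omega}_{TY}=(\omega_{TY},\operatorname{Re}(\Omega_X),\operatorname{Im}(\Omega_X))$ and $\bm{\omega}_{\mathcal{C}}=(\omega_{\mathcal{C}},\operatorname{Re}(\Omega_{\mathcal{C}}),\operatorname{Im}(\Omega_{\mathcal{C}}))$ componentwise, and apply items (b) and (c) of Proposition \ref{p:TY-asym} to each slot separately.

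For the first component, item (c) with $n=2$ gives directly
\[
|\nabla^k_{g_{\mathcal C}}(\Phi^*\omega_{TY}-\omega_{\mathcal C})|_{g_{\mathcal C}}=O(e^{-\underline{\delta} z^{n/2}})=O(e^{-\underline{\delta} z}),
\]
which matches the claimed bound exactly. For the second and third components, item (b) with $n=2$ gives
\[
|\nabla^k_{g_{\mathcal C}}(\Phi^*\Omega_X-\Omega_{\mathcal C})|_{g_{\mathcal C}}=O\bigl(e^{-(\frac12-\epsilon)z^2}\bigr)
\]
for every $\epsilon>0$, and by taking real and imaginary parts this same bound applies to $\operatorname{Re}(\Phi^*\Omega_X)-\operatorname{Re}(\Omega_{\mathcal C})$ and $\operatorname{Im}(\Phi^*\Omega_X)-\operatorname{Im}(\Omega_{\mathcal C})$. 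Since $e^{-(\frac12-\epsilon)z^2}=o(e^{-\underline{\delta} z})$ as $z\to\infty$ for any fixed $\underline{\delta}>0$, these two components decay strictly faster than the required rate.

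Combining the three component estimates in the product metric on triples of $2$-forms yields the stated bound with the same constant $\underline{\delta}>0$ coming from \eqref{tritratrullala}. There is no genuine obstacle here; the content of the corollary is simply to observe that in real dimension four the hyperk\"ahler triple contains no information beyond $\omega_{TY}$ and $\Omega_X$, so that the slowest of the two decay rates in Proposition \ref{p:TY-asym} (namely $e^{-\underline{\delta} z^{n/2}}$ with $n=2$) governs the convergence of the whole triple. The only minor point to check is that taking real/imaginary parts of a complex $2$-form does not affect the $C^k$ bounds with respect to the real metric $g_{\mathcal C}$, which is immediate.
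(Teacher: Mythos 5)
Your proposal is correct and follows exactly the route the paper intends: the paper states the corollary with no separate proof precisely because, with $n=2$, the triples are $(\omega_{TY},\Rea\Omega_X,\Rea(-i\Omega_X))$ and $(\omega_{\mathcal C},\Rea\Omega_{\mathcal C},\Rea(-i\Omega_{\mathcal C}))$, and items (b) and (c) of Proposition \ref{p:TY-asym} applied componentwise give the bound, with the $O(e^{-\underline{\delta}z})$ rate from item (c) being the slowest and hence the one recorded in \eqref{e:TYtripleconv}.
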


In our gluing construction we need the following refinement of Corollary \ref{coro:tydiff}.

\begin{lemma}
\label{lemma:diff}
There exists a triple of $1$-forms $\bm{a}$ with $\partial_z \,\lrcorner\, \bm{a} = 0$ such that 
\begin{align}
\label{atycon}
\Phi^*\bm{\omega}_{TY}-\bm{\omega}_{\mathcal C}=d\bm{a}
\end{align}
and such that for all $k \geq 0$ and all $\epsilon > 0$,
\begin{align}
\label{atycond}
| \nabla^k_{g_\mathcal C}\bm{a}|_{g_\mathcal C}  = O(e^{-(\underline{\delta}-\epsilon)z}).
\end{align}
\end{lemma}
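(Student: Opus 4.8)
The plan is to treat this as a Poincar\'e lemma with exponential estimates, refining Corollary~\ref{coro:tydiff} by producing an explicit primitive in the prescribed gauge $\partial_z \,\lrcorner\, \bm{a} = 0$. Write $\bm{\psi} := \Phi^*\bm{\omega}_{TY} - \bm{\omega}_{\mathcal{C}}$, a triple of closed $2$-forms on $\mathcal{C} \setminus K'$; by Corollary~\ref{coro:tydiff} one has $|\nabla^k_{g_{\mathcal{C}}} \bm{\psi}|_{g_{\mathcal{C}}} = O(e^{-\underline{\delta} z})$ for all $k \geq 0$. By Proposition~\ref{Calabihyperkahler}, $\mathcal{C} \setminus K'$ is diffeomorphic to $\Nil^3_b(\epsilon,\tau) \times (z_0, \infty)$ for some $z_0 > 0$, with $z$ the globally defined moment-map coordinate and $g_{\mathcal{C}}$ of the explicit Gibbons--Hawking form \eqref{e:wtf666}. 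I will use throughout that $|\partial_z|_{g_{\mathcal{C}}} = O(z^{1/2})$, that the induced cotangent norm of a fixed slice form at height $s$ exceeds its cotangent norm at any height $z \leq s$ by a factor of at most $(s/z)^{1/2}$, and that in the obvious product frame all metric coefficients, their inverses, and the Christoffel symbols of $g_{\mathcal{C}}$ grow at most polynomially in $z$.

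Concretely, I would first decompose componentwise $\bm{\psi} = \bm{\psi}^{(0)} + dz \wedge \bm{\psi}^{(1)}$ with $\partial_z \,\lrcorner\, \bm{\psi}^{(0)} = \partial_z \,\lrcorner\, \bm{\psi}^{(1)} = 0$, so that $\bm{\psi}^{(1)} = \partial_z \,\lrcorner\, \bm{\psi}$, and observe that closedness of $\bm{\psi}$ is equivalent to $d_N \bm{\psi}^{(0)} = 0$ and $\partial_z \bm{\psi}^{(0)} = d_N \bm{\psi}^{(1)}$, where $d_N$ denotes the differential along the $\Nil^3_b$-slices. Then I would define
\[
\bm{a} := -\int_z^{\infty} \bm{\psi}^{(1)}(s, \cdot)\, ds,
\]
which converges (and is smooth, the integrand being smooth and exponentially small in $s$) since $\bm{\psi}^{(1)}$ decays exponentially. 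By construction $\partial_z \,\lrcorner\, \bm{a} = 0$ and $\partial_z \bm{a} = \bm{\psi}^{(1)}$, while $d_N \bm{a} = -\int_z^\infty d_N \bm{\psi}^{(1)}\, ds = -\int_z^\infty \partial_s \bm{\psi}^{(0)}\, ds = \bm{\psi}^{(0)}$, the last equality using that $\bm{\psi}^{(0)} \to 0$ as $s \to \infty$ (a consequence of the decay of $\bm{\psi}$). Hence $d\bm{a} = d_N \bm{a} + dz \wedge \partial_z \bm{a} = \bm{\psi}^{(0)} + dz \wedge \bm{\psi}^{(1)} = \bm{\psi}$, which is \eqref{atycon}.

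It then remains to establish the decay \eqref{atycond}, and this is where the real work lies. One has $|\bm{\psi}^{(1)}|_{g_{\mathcal{C}}} \leq |\partial_z|_{g_{\mathcal{C}}}\, |\bm{\psi}|_{g_{\mathcal{C}}} = O(z^{1/2} e^{-\underline{\delta} z})$, and, controlling $\nabla^k_{g_{\mathcal{C}}}$-derivatives of $\bm{\psi}^{(1)}$ in terms of those of $\bm{\psi}$ together with the polynomially bounded connection coefficients, $|\nabla^k_{g_{\mathcal{C}}} \bm{\psi}^{(1)}|_{g_{\mathcal{C}}} = O(P_k(z) e^{-\underline{\delta} z})$ for suitable polynomials $P_k$. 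Inserting this into the integral, using the $(s/z)^{1/2}$ comparison of slice cotangent norms at different heights, integrating in $s$ from $z$ to $\infty$, and absorbing the resulting polynomial prefactors into an arbitrarily small exponential loss then yields $|\nabla^k_{g_{\mathcal{C}}} \bm{a}|_{g_{\mathcal{C}}} = O(e^{-(\underline{\delta} - \epsilon)z})$ for every $\epsilon > 0$. I expect the main obstacle to be precisely this bookkeeping: one must verify, using the explicit metric \eqref{e:wtf666} rather than mere abstract exactness, that none of the operations involved --- interior product with $\partial_z$, transporting slice forms between heights, integration in $s$, and each covariant derivative --- degrades the exponential rate by more than a polynomial-in-$z$ factor, so that $\underline{\delta}$ survives up to an $\epsilon$. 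The $\epsilon$-loss is genuine, being forced by the $z^{1/2}$-type factors, in the same way as the $\epsilon$-losses already appearing in Proposition~\ref{p:TY-asym}.
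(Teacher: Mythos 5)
Your proposal is correct and follows essentially the same route as the paper: decompose $\Phi^*\bm{\omega}_{TY}-\bm{\omega}_{\mathcal C}$ into slice and $dz\wedge(\cdot)$ parts, set $\bm{a}=-\int_z^\infty$ of the $dz$-component, use the closedness relations to verify exactness, and absorb the polynomial-in-$z$ losses (from $|\partial_z|_{g_{\mathcal C}}\sim z^{1/2}$ and the non-parallel slice frame) into the $\epsilon$ in the exponential rate. The only difference is that you spell out the bookkeeping the paper merely asserts, which is fine.
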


\begin{proof}
By Proposition \ref{Calabihyperkahler}, the Calabi space $\mathcal{C}$ is diffeomorphic to $\Nil^3_b \times (0, \infty)$ in such a way that the Calabi metric $g_{\mathcal{C}}$ becomes the Gibbons-Hawking metric \eqref{e:wtf666}. Ignoring this diffeomorphism, we have a $2$-form triple ${\bm \phi}$ and a $1$-form triple ${\bm \psi}$ such that $\partial_z \,\lrcorner\,{\bm \phi} = 0$, $\partial_z \,\lrcorner\,{\bm \psi} = 0$, and
\begin{equation}\Phi^* \bm{\omega}_{TY}-\bm{\omega}_{\mathcal{C}}= \bm{\phi} + dz \wedge \bm{\psi}. \end{equation}
Since $\bm{\omega}_{TY}, \bm{\omega}_{\mathcal C}$ are closed, it follows that
\begin{equation}\label{argh42} d_{\Nil^3_b}\bm{\phi} =0,\;\, \p_z\bm{\phi} - d_{\Nil^3_b}\bm{\psi}=0.\end{equation}
Now we define the $1$-form triple
\begin{equation}
\bm{a} \equiv - \int_{z}^\infty {\bm \psi}\,dz. \label{e:def-a}\end{equation}
Thanks to \eqref{e:TYtripleconv}, this integral exists and satisfies \eqref{atycond}. Note that this is not completely obvious because the $1$-form basis $dx, dy, dt -  xdy$ on ${\rm Nil}^3_b$ is not parallel with respect to $g_{\mathcal{C}}$. However, this effect is absorbed by the $\epsilon$ in \eqref{atycond} because all error terms are at worst polynomial in $z$. Property \eqref{atycon} now follows in a standard manner by using \eqref{argh42}.
\end{proof}

\section{Liouville theorem for harmonic functions}
\label{s:liouville-functions}

In this section and the next, we will set up some technical tools for the gluing construction. One of the crucial technical ingredients in analyzing the linearized operator is to establish a Liouville theorem on the complete non-compact hyperk\"ahler manifolds that arise in our context.  

Our main goal in this section is to prove a Liouville theorem for harmonic functions with a small enough exponential growth rate, on a complete Riemannian $4$-manifold $(X^4, g)$ with non-negative Ricci curvature which is asympotically Calabi in the sense of Definition \ref{d:asymptotic-Calabi}. This is a necessary step towards proving our Liouville theorem for half-harmonic $1$-forms in Section \ref{s:liouville-1-form}.

\begin{definition}\label{d:asymptotic-Calabi} Given some constant $\delta>0$,
a complete Riemannian manifold $(X^4, g)$ is said to be $\delta$-\emph{asymptotically Calabi}
 if there exist a compact subset $K\subset X$ and a Calabi model space $(\mathcal{C}, g_{\mathcal C})$ as defined in Section \ref{s:Tian-Yau}, and a diffeomorphism 
 \begin{equation}\Phi: \mathcal{C} \setminus K'\rightarrow X\setminus K\end{equation} 
with $K' = \{|\xi|_h \geq \frac{1}{2}\} \subset \mathcal C$ such that for all $k \geq 0$,
\begin{equation}|\nabla_{g_{\mathcal C}}^k(\Phi^*g-g_{\mathcal C})|_{g_{\mathcal C}}=O(e^{-\delta z}) \ \text{as} \ z \to \infty,
\end{equation}
 where $z = (-{\log |\xi|_h^2})^{1/2}$ denotes the natural moment map coordinate on $(\mathcal{C}, g_{\mathcal{C}})$. 
\end{definition}

\begin{example}
According to Proposition \ref{p:TY-asym}, any complete hyperk\"ahler Tian-Yau space $(X^4, g)$ as in Theorem \ref{t:hein} is $\underline{\delta}$-asymptotically Calabi for some appropriate constant $\underline{\delta}>0$.
\end{example}

The following is our main result in this section.

\begin{theorem}
\label{t:liouville-theorem-functions}
Let $(X^{4}, g)$ be a complete Riemannian $4$-manifold which is $\delta$-asymptotically Calabi for some $\delta > 0$ and which has $\Ric_g \geq 0$. Then there exists an $\ell_0\in(0,1)$ depending on $(X^4,g)$ such that if $u$ is a harmonic function on $(X^{4}, g)$ with $u = O(e^{ \ell_0 z})$ as $z \to \infty$, then $u$ is a constant.
 \end{theorem}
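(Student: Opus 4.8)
## Proof Strategy

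\textbf{Overview.} The plan is to reduce the Liouville problem on $(X^4,g)$ to a problem about harmonic functions on the Calabi model space $(\mathcal C, g_{\mathcal C})$, and then to solve the latter by separation of variables. The reduction rests on two facts: first, $\Ric_g \geq 0$ gives, via Cheng--Yau gradient estimates and the fact that $X$ has only one end (it is asymptotically Calabi with connected cross-section $\Nil_b^3$), good a priori control of a harmonic function $u$ in terms of its growth; second, the exponential convergence $\Phi^*g \to g_{\mathcal C}$ at rate $e^{-\delta z}$ means that $u\circ\Phi$ is \emph{approximately} $g_{\mathcal C}$-harmonic, with error terms that decay like $e^{-\delta z}$ times lower-order data. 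So one should first analyze the genuinely harmonic functions on $\mathcal C$ of controlled growth, and then treat the cross-term/error contributions perturbatively.

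\textbf{Step 1: Harmonic analysis on the Calabi model.} Using the Gibbons--Hawking form \eqref{e:wtf666}, the Laplacian on $\mathcal C = \Nil_b^3 \times (0,\infty)$ (in the moment-map coordinate $z$, equivalently $s$ with $z = (3/2)s^{2/3}$) decomposes along the fibration $S^1 \to \Nil_b^3 \to \TT$. Expand a harmonic function $u$ in Fourier modes on the $S^1$-fiber and in the eigenfunctions of the flat Laplacian on $\TT$. The $S^1$-invariant, $\TT$-constant mode gives the ODE governing $\mathcal V_\infty$-type solutions (linear growth in $z$, i.e. polynomial in the radius). The $S^1$-invariant modes with nontrivial $\TT$-frequency give, after the substitution, an ODE whose bounded-at-infinity solutions decay like $e^{-c z}$ and whose other solutions grow like $e^{+c z}$ — these are the "$\TT$-horizontal" modes and correspond to $\pi^* dx$, $\pi^* dy$ directions. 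The genuinely new feature, and the technical heart of the section, is the non-$S^1$-invariant modes: here the potential $V = \frac{2\pi b z}{A}$ enters, and the radial ODE becomes (after the $s = (3/2)s^{2/3}$ substitution and scaling) a Hermite-type equation whose solutions are controlled by estimates on \emph{Hermite functions}. One must show that the only such solution with growth $o(e^{\ell_0 z})$ is the zero mode, for $\ell_0$ small enough. Concretely I would write the relevant ODE solutions as exponential integrals (as the paper advertises), extract the asymptotics of the decaying and growing branches, and identify a threshold exponential rate below which no nonconstant bounded-below-growth solution survives.

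\textbf{Step 2: Transfer to $X$ and absorb errors.} Given a $g$-harmonic $u$ on $X$ with $u = O(e^{\ell_0 z})$, the Cheng--Yau / De Giorgi--Nash--Moser estimates (using $\Ric_g \geq 0$) upgrade this to $|\nabla^k u|_g = O(e^{\ell_0 z})$ on $X \setminus K$, with no loss in the exponential rate beyond an arbitrarily small $\epsilon$ — here one uses that balls of radius $1$ at large $z$ are collapsed but still have bounded geometry after passing to the universal cover. Pulling back via $\Phi$, one gets $\Delta_{g_{\mathcal C}}(u\circ\Phi) = (\Delta_{g_{\mathcal C}} - \Phi^*\Delta_g)(u\circ\Phi) =: \rho$ with $|\rho|_{g_{\mathcal C}} = O(e^{(\ell_0 - \delta + \epsilon)z})$. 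Now decompose $u\circ\Phi$ into Fourier/eigenfunction modes as in Step 1; each mode satisfies an inhomogeneous ODE with exponentially small right-hand side. A variation-of-parameters argument, together with the growth bound $u\circ\Phi = O(e^{\ell_0 z})$ to rule out the fast-growing homogeneous solution, shows that every nonconstant mode must in fact decay, and then — choosing $\ell_0$ below the threshold from Step 1 and below $\delta$ — one gets that all nonconstant modes vanish identically outside $K$. Hence $u$ is constant outside $K$; since $u$ is harmonic and $X\setminus K$ is connected with $X$ connected, unique continuation (or the maximum principle, using that $u$ attains no interior extremum) forces $u$ constant on all of $X$.

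\textbf{Main obstacle.} The hard part is Step 1, specifically the analysis of the non-$S^1$-invariant modes. Unlike a cylinder $\mathbb R \times (\text{compact})$, where separation of variables yields constant-coefficient ODEs and clean exponential dichotomies, here the doubly-warped structure $ds^2 + s^{2/3}g_{\TT} + s^{-2/3}\theta^2$ produces ODEs with $z$-dependent (Hermite-type) coefficients, and one needs \emph{sharp two-sided} exponential asymptotics for both the recessive and dominant solutions — sharp enough to pin down the precise threshold rate $\ell_0$ and to guarantee that the perturbative error in Step 2 does not overwhelm the gap between the decaying and growing branches. Getting these Hermite-function estimates in an elementary, self-contained way (via exponential-integral representations rather than citing hypergeometric identities) is the delicate technical core; everything else is fairly standard elliptic theory plus bookkeeping of the exponential rates.
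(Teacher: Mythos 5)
Your Steps 1--2 follow essentially the same route as the paper: separation of variables on the Calabi model along $S^1 \to \Nil^3_b \to \TT$, Hermite-type ODEs for the non-$S^1$-invariant modes controlled via exponential-integral representations, and a perturbative transfer to $X$ in which the error $(\Delta_g - \Delta_{g_{\mathcal C}})u = O(e^{(\ell_0-\delta+\epsilon)z})$ is absorbed mode-by-mode (the paper packages this as solvability of the Poisson equation $\Delta_{g_{\mathcal C}}v=\phi$ with exponential weights, Proposition \ref{p:surjectivity}, and then applies the homogeneous classification, Proposition \ref{p:harmonic-function-decay}). Up to that point your outline is sound, modulo the minor caveat that Cheng--Yau applies to positive harmonic functions, so the derivative bounds are really obtained by rescaled local elliptic estimates (with a harmless polynomial loss), as in the paper.

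The genuine gap is your endgame. The mode analysis cannot show that ``all nonconstant modes vanish identically outside $K$,'' for two reasons. First, the $S^1$-invariant, $\TT$-constant mode admits the solution $az+b$, which grows only linearly and is therefore never excluded by any hypothesis of the form $u=O(e^{\ell_0 z})$; no choice of $\ell_0$ kills it. Second, ruling out the growing branch of each ODE leaves the decaying branch, and variation of parameters against the exponentially small inhomogeneity produces genuinely nonzero decaying contributions -- ``decaying'' does not upgrade to ``identically zero,'' so $u$ is not constant outside $K$ and the unique-continuation/maximum-principle finish has no valid premise. What the analysis actually yields is $u = az + b + O(e^{-\delta'' z})$ near infinity, and one still needs a global argument to conclude. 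This is exactly where $\Ric_g\geq 0$ enters in the paper (not merely for a priori estimates): since $|dz|_{g}^2 = V^{-1}\sim z^{-1}\to 0$ in the Calabi/Gibbons--Hawking metric, the asymptotic linearity gives $|du|_g\to 0$ at infinity even when $a\neq 0$; then $du$ is a harmonic $1$-form, Bochner's formula
\begin{equation*}
\tfrac12\Delta_g|du|^2 = |\nabla du|^2 + \Ric_g(du,du)\geq 0
\end{equation*}
makes $|du|^2$ subharmonic, and the maximum principle (Lemma \ref{l:max}) forces $du\equiv 0$, hence $u$ constant. Without this step (or an equivalent replacement), your argument does not eliminate the linear mode and does not close.
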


The proof heavily relies on the elliptic theory of the Laplace operator on the Calabi model space. We begin with a careful study of this model operator.

\subsection{Separation of variables on the model space}
\label{ss:separation-of-variables}

We work with a Calabi model space $\mathcal C$ with a smooth divisor $D$ defined as in Section \ref{s:Tian-Yau}. The Calabi metric is given
 by 
 \begin{equation}\omega_\mathcal C=\frac{n}{n+1}\sqrt{-1}\p\bp (-\log |\xi|_{h}^2)^{\frac{n+1}{n}},
 \end{equation}
which is well-defined for $|\xi|_h<1$. In order to carry out separation of variables, we  will study the local representation of the Laplace operator $\Delta_{\mathcal{C}}$ on $\mathcal C$.

We choose local holomorphic coordinates $\underline z=\{z_i\}_{i=1}^{n-1}$ on the smooth divisor $D$, and fix a local holomorphic trivialization  $e_0$ of the line bundle $L$  with $|e_0|^2=e^{-\psi}$,  where $\psi:D \to \dR$ is a smooth function. So we get local holomorphic coordinates $(\underline z,w)\equiv(z_1,\ldots,z_{n-1}, w)$ on $\mathcal C$ by writing a point  $\xi\in \mathcal C$ as $\xi=w e_0(\underline z)$. Then $|\xi|_h^2=|w|^2e^{-\psi}$. We may assume $\psi(0)=1$, $d\psi(0)=0$ and $\sqrt{-1}\partial\bar\partial\psi=\omega_0$. Let $\pi: \mathcal C\rightarrow D$ be the obvious projection map. Then  we obtain 
 \begin{equation}
 \label{Calabiformula}
 \omega_\mathcal C=(-\log |\xi|^2_h)^{\frac{1}{n}}\omega_D+\frac{1}{n}(-\log |\xi|_h^2)^{\frac{1}{n}-1} \sqrt{-1}(\frac{dw}{w}-\p\psi)\wedge (\frac{d\bar w}{\bar w}-\bp\psi).
 \end{equation}
 Let $u$ be a $C^2$-function in the Calabi space $\mathcal{C}$, the Laplacian at  points in the fiber $\pi^{-1}(0)$ is given by 
\begin{equation}\Delta_{\mathcal C}u= (-\log |\xi|^2_h)^{-\frac{1}{n}} \sum_{i=1}^{n-1}\frac{\partial^2u}{\partial z_i \partial \bar{z}_i} +n(-\log |\xi|_h^2)^{-\frac{1}{n}+1} |w|^2 \frac{\partial^2 u}{\partial w\partial \bar w}.\end{equation}
Now denote $\varrho\equiv|\xi|_h$, then we can write 
\begin{equation}w=\varrho e^{\frac{\psi}{2}+\sqrt{-1}\theta},\end{equation} where $\p_\theta$ generates the natural $S^1$-rotation on the total space of $ L$. Then it is straightforward to check that  
\begin{equation}
\frac{\p \varrho}{\p w}=\frac{\varrho}{2w},\ \frac{\p\theta}{\p w}=\frac{1}{2\sqrt{-1}w}, \frac{\p \varrho}{\p z_i}=-\frac{1}{2}\varrho\p_{z_i}\psi, \frac{\p \theta}{\p z_i}=0 \end{equation}
and 
\begin{equation}
|w|^2\frac{\p^2}{\p w\p \bar w} u=\frac{1}{4}(\varrho^2u_{\varrho\varrho}+\varrho u_{\varrho}+u_{\theta\theta}).
\end{equation}
 For fixed $r_0\in (0, 1)$, the level set $Y^{2n-1}\equiv\{\varrho=r_0\}$ is equipped with the induced Riemannian metric given by 
\begin{equation}h_0=(-\log r_0^2)^{\frac{1}{n}} g_D+\frac{1}{n} (-\log r_0^2)^{\frac{1}{n}-1}r_0^2(d\theta-\frac{1}{2}d^c\psi)\otimes (d\theta-\frac{1}{2}d^c\psi).\label{e:h_0}\end{equation}
Now we consider a smooth function $\phi\in C^{\infty}(Y^{2n-1})$ with \begin{equation}\mathcal L_{\p_\theta}\phi=\sqrt{-1}j\phi\label{e:S1-action}\end{equation} for some integer $j$. Replacing $\phi$ by $\bar\phi$ if necessary we may assume $j\geq 0$. Then $\phi$ is induced by a smooth section $\hat \phi$ of $(L^*)^{\otimes j}$. Precisely, if we locally write $ \hat\phi(\underline z)=\Phi(\underline z)(e_0(\underline z)^*)^{\otimes j}$, then 
\begin{equation}\phi(\underline z, w)=w^j\Phi(\underline z)|_{\varrho=r_0}=r_0^je^{j(\frac{\psi}{2}+\sqrt{-1}\theta)}\Phi(\underline z).\end{equation}
Now let $\hat\phi$ be a non-zero eigen-section of the $\bp$-Laplace operator, i.e. 
\begin{equation}\Delta_{\bp}\hat\phi=\hat{\lambda}\hat\phi.\end{equation}  By Kodaira-Nakano formula $\Delta_{\bp}=\Delta_{\p}+j(n-1)$, so we have $\hat{\lambda}\geq j(n-1)$. 
By a direct calculation, we get that on $\pi^{-1}(0)$,
\begin{equation}
 \sum_{i=1}^{n-1} \frac{\partial^2 \phi}{\partial z_i\partial \bar{z}_i}=\Big(-\hat{\lambda}+\frac{j(n-1)}{2}\Big)\phi. \label{e:phi-eq}
 \end{equation}
 Moreover, by the local expression of $h_0$ as in \eqref{e:h_0}, one can directly check that on $\pi^{-1}(0)\cap Y^{2n-1}$, 
\begin{align}
\begin{split}
\Delta_{h_0}\phi
&=(-\log r_0^2)^{-\frac{1}{n}}\sum_{i}\frac{\p^2\phi}{\p z_i\p \bar z_i}+n(-\log r_0^2)^{-\frac{1}{n}+1} r_0^{-2} \phi_{\theta\theta}\\
&= \Big((-\log r_0^2)^{-\frac{1}{n}}(-\hat{\lambda}+\frac{j(n-1)}{2})-j^2n(-\log r_0^2)^{-\frac{1}{n}+1} r_0^{-2}\Big)\phi.
\end{split}
\end{align}

Now suppose a smooth function $u(\varrho, z)$ on the Calabi space $\mathcal{C}$ is of the form $u\equiv f(\varrho) \phi(y)$, where  $\phi$ is a function on $Y^{2n-1}$ satisfying \eqref{e:S1-action} and \eqref{e:phi-eq}.
In polar coordinates,  we obtain
 \begin{align}
 \Delta_{\mathcal C}  u
 &=\phi(y)\cdot\Big((-\log \varrho^2)^{-\frac{1}{n}}((-\hat{\lambda}+\frac{j(n-1)}{2})f-\frac{n-1}{2}{\varrho}f_{\varrho})
 \nonumber\\
 &\ \ \ \ +\frac{n}{4}(-\log {\varrho}^2)^{1-\frac{1}{n}}({\varrho}^2f_{{\varrho}{\varrho}}+{\varrho}f_{\varrho}-j^2f)\Big)
 \nonumber\\
 &=\phi(y)\cdot(-\log \varrho^2)^{-\frac{1}{n}}\Big(\frac{n}{4}(-\log {\varrho}^2) ({\varrho}^2 f_{{\varrho}{\varrho}}+{\varrho}f_{\varrho}-j^2f )
 \nonumber\\
 &\ \ \ \ -\frac{n-1}{2}{\varrho} f_{\varrho} -(\hat{\lambda}-\frac{j(n-1)}{2}) f\Big). 
  \end{align}
  Notice this formula is now independent of the choice of local holomorphic coordinates. So $u$ is harmonic if and only if 
 \begin{equation}\frac{n}{4}(-\log {\varrho}^2) (\varrho^2 f_{{\varrho}{\varrho}}+\varrho f_{\varrho}-j^2f)-\frac{n-1}{2}{\varrho} f_{\varrho} -(\hat{\lambda}-\frac{j(n-1)}{2}) f=0.\end{equation}
Denote $z=(-\log \varrho^2)^{\frac{1}{n}}$, then we get 
 \begin{equation}f_{zz}-(n(\hat{\lambda}-\frac{j(n-1)}{2})+\frac{j^2n^2}{4}z^{n})z^{n-2}f=0.\end{equation}
In this section, we will also analyze the Poisson equation
\begin{equation}
\Delta_{\mathcal{C}} u = v.
\end{equation}
Suppose now $v \equiv \zeta(\varrho)\cdot \phi(y)$, then the same separation of variables gives the following ODE 
 \begin{equation}f_{zz}-(n(\hat{\lambda}-\frac{j(n-1)}{2})+\frac{j^2n^2}{4}z^{n})z^{n-2}f=z^{n-1}\cdot \zeta.\end{equation}
We remark that a similar separation of variables was carried out in \cite{KK}, but we will need stronger estimates on solutions in order to prove Theorem \ref{t:liouville-theorem-functions}.

For our application we focus on the case $n=2$. So the corresponding ODEs become 
\begin{equation}f_{zz}-(\lambda+j^2z^2)f=0 \end{equation}
and
\begin{equation}
f_{zz}-(\lambda+j^2z^2)f=z \cdot \zeta ,
\end{equation}
where \begin{equation}\lambda\equiv 2\hat{\lambda}-j\geq j.\end{equation} 
We have assumed $j\geq 0$ in the above discussion, but notice that the Laplace operator is a real operator, so the ODEs we get for $j$ and $-j$ are the same. 
Denote $z_0 \equiv (-\log r_0^2)^{\frac{1}{2}}$, then we notice that each eigenvalue of $\Delta_{h_0}$ can be represented by
\begin{equation}
\Lambda = \frac{\lambda}{2z_0}+\frac{2z_0 \cdot j^2}{r_0^{2}}.
\end{equation}

With the above computations, we are ready to set up the ODE system. 
Now we fix some $r_0\in (0, 1)$, and define $(Y^3, h_0)$ to be the level set $\{r=r_0\}$ endowed with the induced Riemannian metric $h_0$. The above computations tell us that the eigenvalues of $Y^3$ is given by linear combinations of $\hat{\lambda}$ and $j$.  
Below we will parametrize our summation in terms of eigenvalues of $(Y^3, h_0)$ (counted with multiplicity), but we shall keep in mind that we have further split the eigenspaces of $\Delta_{h_0}$ according to the $S^1$ action hence an eigenvalue is naturally written in terms of a linear combination of $\hat{\lambda}$ and $j$.

We denote by $\{\Lambda_{k}\}_{k=1}^{\infty}$
the spectrum of $\Delta_{h_0}$ and let $\{\varphi_k\}_{k=1}^{\infty}$ be the eigenfunctions which are homogeneous under the $S^1$ action and with
\begin{equation}
-\Delta_{h_0} \varphi_k = \Lambda_k \cdot \varphi_k.
\end{equation}
In the above notations, one can compute that in the case $n=2$, 
\begin{equation}
\Lambda_k = \frac{\lambda_k}{2z_0}+\frac{2z_0 \cdot j_k^2}{r_0^{2}}.\label{e:fiber-eigenvalues}
\end{equation}
First, we carry out separation of variables for harmonic functions   on $\Delta_{\mathcal{C}}$.
Let $u$ be a harmonic function on the model space $\mathcal{C}$, namely,
\begin{equation}
\Delta_{\mathcal{C}} u = 0
\end{equation}
For every fixed $z$, we can write the $L^2$-expansion along the fiber $Y^3$,
\begin{equation}
u(z, \bm{y}) = \sum\limits_{k=1}^{\infty} u_k(z) \cdot \varphi_k(\bm{y}).
\end{equation}
The above computations tell us that for each $k\in\dZ_+$, there are numbers $j_k\in\dN$ and $\lambda_k \geq n j_k$ such that the function $u_k(z)$ satisfies the differential equation
\begin{equation}
\frac{d^2u_k(z)}{dz^2} - (j_k^2 z^2 + \lambda_k) u_k(z) = 0,\ z\geq 1. \label{e:separation-harmonic}
\end{equation}
We also consider the Poisson equation 
\begin{equation}
\Delta_{\mathcal{C}} u = v.
\end{equation}
Take the $L^2$-expansion of $v$ in the direction of the cross section $Y^3$,
\begin{equation}
v(z,\bm{y}) = \sum\limits_{k=1}^{\infty}\xi_k(z) \cdot \varphi_k(\bm{y}),
\end{equation}
then the same procedure of separation of variables leads to a differential equation 
\begin{equation}
\frac{d^2u_k(z)}{dz^2} - (j_k^2 z^2 + \lambda_k) u_k(z) =\xi_k(z)\cdot z.\label{e:separation-poisson}\end{equation}

We end this subsection by giving a model example of the fiber $Y^3$.  
  
\begin{example}
[The spectrum of a Heisenberg manifold] In our interested context, $Y^3$ is a Heisenberg nilpotent manifold. We consider a simple example that $Y^3\equiv H(1,\dZ)\setminus H(1,\dR)$ with
\begin{equation}
H(1,\dR) \equiv\left\{\begin{bmatrix}
1 & x &t \\
0 & 1 &  y\\
0 & 0 & 1
\end{bmatrix}: \ x,y,t\in\dR\right\}.
\end{equation}
and
\begin{equation}
H(1,\dZ)\equiv\left\{\begin{bmatrix}
1 & m & p \\
0 & 1 &  n\\
0 & 0 & 1
\end{bmatrix}: \ m,n,p\in\dZ\right\}.
\end{equation}
In this case, $Y^3$
is a Heisenberg manifold of degree $1$. As a $\TT$ bundle over $S^1$, its  monodromy is given by $(\begin{smallmatrix} 1 & 1 \\ 0 & 1 \end{smallmatrix})\in\SL(2,\dZ)$.
So it is standard that the spectrum consists of two classes of eigenvalues
\begin{equation}
\mathfrak{T}\equiv\Big\{4\pi^2(k^2+\ell^2) \Big| k,\ell\in\dZ\Big\}
\ and\ 
\mathfrak{S}\equiv\Big\{2\pi|m|(2h+1+2\pi|m|) \Big| m\in \dZ\setminus\{0\},h\in\dN\Big\}.
\end{equation}
Detailed discussions can be found in \cite{DS} and \cite{Gordon-Wilson}. So
we can see that the above eigenvalues coincide with the form \eqref{e:fiber-eigenvalues}.
\end{example}  
  
In the following subsections, we will analyze the convergence and regularity issues of the formal solutions \eqref{e:separation-harmonic}
and \eqref{e:separation-poisson}.

\subsection{Uniform estimates for the fundamental solutions}

\label{ss:c0-estimate}

A crucial step in applying the method of separation of variables is to prove the 
 $C^0$-regularity of a formal solution obtained from the above separation of variables. 
Specifically, in our context, to prove such a $C^{0}$-regularity result, first we need to obtain 
 some effective estimates for the fundamental solutions to the linear differential equation (see \eqref{e:separation-harmonic})
 \begin{equation}
\frac{d^2u(z)}{dz^2} - (j^2 z^2 +\lambda) u(z) = 0, 
\end{equation}
which arises from the harmonic functions on the Calabi manifold $(\mathcal{C}, g_{\mathcal{C}})$. In our context, we always require 
\begin{equation}
j \geq 0, \ h\geq 0,\ z>1. 
\end{equation}
There are two different cases to analyze. 

The first case is much simpler, i.e. $j=0$ and the ODE becomes
\begin{equation}
\frac{d^2u(z)}{dz^2} = \lambda \cdot u(z). \label{e:exp-ode}
\end{equation}
Further, if $\lambda=0$, the solutions to \eqref{e:exp-ode} are linear.  
If $\lambda>0$, the above equation has two linearly independent solutions $e^{\sqrt{\lambda}z}$ and  $e^{-\sqrt{\lambda}z}$. All the required estimates in this case are standard and straightforward. 
Geometrically, the ODE analysis for \eqref{e:exp-ode} arises naturally from the flat cylindrical geometry and corresponding gluing constructions. 

So in our case, we only focus on the case $j\in\dZ_+$ which is substantially much more technically involved.
In the case $j\in\dZ_+$, we have already shown in
Section \ref{ss:separation-of-variables} that 
$\lambda$ and $j$ satisfy the relation
\begin{equation}
\lambda\geq j\geq 1.
\end{equation} Hence for each pair of $\lambda$ and $j$ satisfying the above, we choose $h\geq0$ such that  
\begin{equation}
\lambda = (2h+1)j.
\end{equation}
From now on, we focus on the differential equation for every $j\in\dZ_+$ and $h\geq 0$,
 \begin{equation}   
\frac{d^2u(z)}{dz^2} = j(jz^2 + 2h + 1)u(z).\label{e:model-ode}
\end{equation}
We will simplify the above equation by the following transformations. 
Let 
\begin{equation}
y =   \sqrt{j} z,\ V(y) =  u\Big(\frac{y}{\sqrt{j}}\Big),
\end{equation}
then $V(y)$ satisfies
\begin{equation}
\frac{d^2 V(y)}{dy^2} = (y^2 + (2h+1)) V(y).
\end{equation}
Further, we make the transformation
\begin{equation}
V(y) = e^{-\frac{y^2}{2}} Q(y),
\end{equation}
then $Q$ sovles the differential equation
\begin{equation}
\frac{d^2Q(y)}{dy^2}-2y\frac{dQ(y)}{dy} - 2(h+1)  Q(y)=0. \label{e:Hermite-ODE}
\end{equation}
Notice that equation \eqref{e:Hermite-ODE} is invariant under the change of variables $y\mapsto -y$. 
Given $y>1$ and $h\geq0$, we define the following exponential integral
\begin{equation}
H_{-h-1}(y) \equiv  \int_0^{\infty} e^{-t^2-2ty} t^{h}dt.
\end{equation}
Straightforward calculations show that for each given $h\geq 0$, the functions
 $H_{-h-1}(y)$ and $H_{-h-1}(-y)$ are linearly independent solutions to \eqref{e:Hermite-ODE}. In fact, 
the above solutions coincide with the usual Hermite functions up to a constant (see \cite{Lebedev} for more details). 
Eventually, we obtain two solutions to \eqref{e:model-ode},
\begin{equation}
\mathcal F(z)=e^{-\frac{jz^2}{2}}H_{-h-1}(-\sqrt{j}z)=e^{-\frac{y^2}{2}}\int_0^\infty e^{-t^2+2ty+h\log t}dt
 \end{equation}
 and
\begin{equation}
\mathcal U(z)=e^{-\frac{jz^2}{2}} H_{-h-1}(\sqrt{j}z)=e^{-\frac{y^2}{2}}\int_0^\infty e^{-t^2-2ty+h\log t}dt.
\end{equation}
The lemma below shows that $\mathcal{F}$ and $\mathcal{U}$  are two linearly independent solutions.

\begin{lemma} \label{l:wronskian}The Wronskian is a constant given by 
\begin{equation}
\mathcal W(\mathcal F, \mathcal U)=2^{-h}\sqrt{j\pi}\Gamma(h+1)> 0. 
\end{equation}
In particular, $\mathcal{F}$ and $\mathcal{U}$ are linearly independent.

\end{lemma}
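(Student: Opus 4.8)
The key observation is that \eqref{e:model-ode}, the common second-order linear ODE satisfied by both $\mathcal{F}$ and $\mathcal{U}$, has no first-order term; hence, by Abel's identity, the Wronskian $\mathcal{W}(\mathcal{F},\mathcal{U})$ is independent of $z$, so it suffices to evaluate it at one convenient point. Although the region of geometric interest is $z>1$, the coefficient $j(jz^2+2h+1)$ of \eqref{e:model-ode} is entire in $z$ and the exponential integrals $\int_0^\infty e^{-t^2\mp 2t\sqrt{j}z}t^{h}\,dt$ defining $\mathcal{F}$ and $\mathcal{U}$ converge locally uniformly in $z$ together with all $z$-derivatives (for $z$ near $0$ the integrands and their $z$-derivatives are dominated by $Ct^{h}e^{-t^2+2t}$, which is integrable on $(0,\infty)$ because $h\geq 0$). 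Thus $\mathcal{F}$ and $\mathcal{U}$ extend smoothly across $z=0$, and the plan is to evaluate the Wronskian there.

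Differentiating under the integral sign, the factor $e^{-jz^2/2}$ contributes nothing at $z=0$, and one is left with elementary Gaussian moments:
\begin{align}
\mathcal{F}(0)=\mathcal{U}(0)&=\int_0^\infty e^{-t^2}t^{h}\,dt=\tfrac12\,\Gamma\big(\tfrac{h+1}{2}\big),\\
\mathcal{F}'(0)=-\mathcal{U}'(0)&=2\sqrt{j}\int_0^\infty e^{-t^2}t^{h+1}\,dt=\sqrt{j}\,\Gamma\big(\tfrac{h+2}{2}\big),
\end{align}
the opposite signs of $\mathcal{F}'(0)$ and $\mathcal{U}'(0)$ arising from the $\mp 2t\sqrt{j}z$ in the exponents of $H_{-h-1}(\mp\sqrt{j}z)$. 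Substituting these four values and simplifying gives $\mathcal{W}(\mathcal{F},\mathcal{U})=\sqrt{j}\,\Gamma\big(\tfrac{h+1}{2}\big)\Gamma\big(\tfrac{h+2}{2}\big)$, and the Legendre duplication formula $\Gamma\big(\tfrac{h+1}{2}\big)\Gamma\big(\tfrac{h+2}{2}\big)=2^{-h}\sqrt{\pi}\,\Gamma(h+1)$ turns this into $2^{-h}\sqrt{j\pi}\,\Gamma(h+1)$, which is strictly positive since $j\geq 1$ and $\Gamma(h+1)>0$ for $h\geq 0$. Linear independence of $\mathcal{F}$ and $\mathcal{U}$ is then immediate.

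There is no real obstacle in this argument: the only points deserving a word of justification are the differentiation under the integral sign (dominated convergence, as above) and recalling the duplication identity. If one prefers to remain inside the range $z>1$, one can instead compute the constant $\mathcal{W}(\mathcal{F},\mathcal{U})$ from its $z\to+\infty$ asymptotics — Watson's lemma gives $\mathcal{U}(z)\sim\Gamma(h+1)2^{-(h+1)}(\sqrt{j}z)^{-(h+1)}e^{-jz^2/2}$, and completing the square in the exponent of the other integral gives $\mathcal{F}(z)\sim\sqrt{\pi}\,(\sqrt{j}z)^{h}e^{jz^2/2}$ — and feeding the leading terms of $\mathcal{F},\mathcal{F}',\mathcal{U},\mathcal{U}'$ into the Wronskian reproduces the same value. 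The evaluation at $z=0$ is, however, cleaner and completely elementary.
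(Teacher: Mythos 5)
Your proof is correct and follows essentially the same route as the paper: constancy of the Wronskian via the absence of a first-order term, evaluation at $z=0$ using the Gaussian moments $\int_0^\infty e^{-t^2}t^{h}\,dt$ and $\int_0^\infty e^{-t^2}t^{h+1}\,dt$, and the Legendre duplication formula (the paper writes the evaluation as $\mathcal{W}=2\mathcal{F}'(0)\mathcal{U}(0)$, which is the same use of the symmetry $\mathcal{F}(0)=\mathcal{U}(0)$, $\mathcal{U}'(0)=-\mathcal{F}'(0)$). Your extra justification of the smooth extension across $z=0$ and of differentiation under the integral sign is a point the paper leaves implicit, so it is a welcome refinement rather than a departure.
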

\begin{proof}
First observe that $\mathcal{W}(\mathcal{F}(z),\mathcal{U}(z))=\mathcal F'(z)\mathcal U(z)-\mathcal F(z)\mathcal U'(z)$
is a constant. In fact,
\begin{align}
\frac{d}{dz}\mathcal{W}(\mathcal{F}(z),\mathcal{U}(z)) & =  \Big(\mathcal{F}''(z)\mathcal{U}(z)+\mathcal{F}'(z)\mathcal{U}'(z)\Big)-\Big(\mathcal{F}(z)\mathcal{U}''(z) + \mathcal{F}'(z)\mathcal{U}'(z)\Big)
\nonumber\\
&= j(jz^2+2h+1)(\mathcal{F}(z)\mathcal{U}(z)-\mathcal{F}(z)\mathcal{U}(z)) =0.
\end{align}
Hence $\mathcal{W}(\mathcal{F}(z),\mathcal{U}(z))$ has to be a constant. Now we evaluate it at $z=0$, we get
\begin{equation*}
\mathcal W(\mathcal F, \mathcal U)=2\mathcal F'(0)\mathcal U(0). 
\end{equation*}
Now 
\begin{equation*}
\mathcal F'(0)=2\sqrt{j}\int_0^\infty e^{-t^2}t^{h+1}dt=\sqrt{j}\Gamma(\frac{h}{2}+1), 
\end{equation*}
and similarly
\begin{equation*}
\mathcal U(0)=\frac{1}{2}\Gamma(\frac{h}{2}+\frac{1}{2}). 
\end{equation*}
Applying Legendre duplication formula 
\begin{equation}
\frac{\Gamma(t)\Gamma(t+\frac{1}{2})}{\Gamma(2t)}=\frac{\sqrt{\pi}}{2^{2t-1}},\ t>0,
\end{equation}
 we have
\begin{equation*}
\mathcal W(\mathcal F, \mathcal U)=2^{-h}\sqrt{j\pi}\Gamma(h+1)>0. 
\end{equation*}
\end{proof}

The regularity of the formal solutions obtained from the above separation of variables requires very precise uniform estimates for the fundamental solutions $\mathcal F$ and $\mathcal U$. We will use the \emph{Laplace Method}, which is inspired by \cite{SS} in a different context. Again we denote $y=\sqrt{j} z$ and define 
\begin{align}
\begin{split}
F(t)&\equiv-t^2+2ty+h\log t
\\ 
U(t)&\equiv-t^2-2ty+h\log t.
\end{split}
\end{align}
Straightforward computations tell us that both $F$ and $U$ are strictly concave when $h\geq 0$. For fixed $y$, let $t_0$ and $s_0$ be the unique (positive) critical points of $F$ and $U$ respectively. It is straightforward that
\begin{align}
\begin{split}
t_0 &= \frac{y}{2} + \sqrt{\frac{h^2}{2}+\frac{y^2}{4}}\\
s_0 &= -\frac{y}{2} + \sqrt{\frac{h^2}{2}+\frac{y^2}{4}}.
\end{split}
\end{align}
  
\begin{lemma}
\label{l:exp-bound}
The following uniform estimates hold for all $j\in\dZ_+$ and $h\geq 0$,
\begin{equation}
\mathcal{F}(z) \leq (1+\sqrt{\pi}) e^{-\frac{jz^2}{2}+F(t_0(z))},
\end{equation}

\begin{equation}
\mathcal{U}(z) \leq (1+\sqrt{\pi}) e^{-\frac{jz^2}{2}+U(s_0(z))}.
\end{equation}

\end{lemma}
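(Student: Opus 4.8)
The plan is to reduce both inequalities to a single elementary integral estimate. Unwinding the definitions, $\mathcal F(z)=e^{-jz^2/2}\int_0^\infty e^{F(t)}\,dt$ and $\mathcal U(z)=e^{-jz^2/2}\int_0^\infty e^{U(t)}\,dt$, so after dividing by $e^{-jz^2/2+F(t_0)}$ (resp.\ by $e^{-jz^2/2+U(s_0)}$) the claimed bounds are equivalent to
\[
\int_0^\infty e^{F(t)-F(t_0)}\,dt\le 1+\sqrt\pi,\qquad \int_0^\infty e^{U(t)-U(s_0)}\,dt\le 1+\sqrt\pi .
\]
I will prove these directly; this amounts to a global version of the Laplace method, made simple by the fact that $F$ and $U$ are concave with a uniform upper bound on the second derivative.

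The key point is that $F''(t)=-2-h/t^2\le -2$ and $U''(t)=-2-h/t^2\le -2$ for all $t>0$, uniformly in $j$, $h$ and $y=\sqrt j\,z$. Since $t_0$ is the (interior) global maximum of $F$ on $(0,\infty)$, a second-order Taylor expansion with Lagrange remainder centered at $t_0$ gives, for each $t>0$, a point $\zeta$ strictly between $t$ and $t_0$ with
\[
F(t)=F(t_0)+F'(t_0)(t-t_0)+\tfrac12 F''(\zeta)(t-t_0)^2\le F(t_0)-(t-t_0)^2 ,
\]
using $F'(t_0)=0$ and $F''(\zeta)\le -2$. The same argument applies to $U$ when $h>0$, giving $U(t)\le U(s_0)-(t-s_0)^2$; when $h=0$ one has $s_0=0$ and $U(t)=-t^2-2ty\le -t^2=U(0)-(t-s_0)^2$ directly, since $-2ty\le 0$. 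In all cases, then, $F(t)-F(t_0)\le -(t-t_0)^2$ and $U(t)-U(s_0)\le -(t-s_0)^2$ for every $t>0$.

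Exponentiating and integrating these pointwise bounds finishes the proof:
\[
\int_0^\infty e^{F(t)-F(t_0)}\,dt\le \int_0^\infty e^{-(t-t_0)^2}\,dt\le \int_{-\infty}^\infty e^{-u^2}\,du=\sqrt\pi\le 1+\sqrt\pi ,
\]
and identically for $U$ with $t_0$ replaced by $s_0$. Multiplying back through by $e^{-jz^2/2}$ recovers the two stated inequalities; in fact this yields the slightly stronger constant $\sqrt\pi$, but only $1+\sqrt\pi$ is needed in the sequel. There is no serious obstacle here: the entire content is the observation that $F''\le -2$ and $U''\le -2$ hold \emph{uniformly} in all parameters (because the potentially troublesome term $h\log t$ is itself concave), which lets one dominate these oscillatory-looking exponential integrals by a single fixed Gaussian. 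The only care required is the harmless boundary case $h=0$ for $\mathcal U$, handled by the sign remark above, and noting that the Lagrange remainder point $\zeta$ always lies in the open interval between $t$ and the critical point, hence in $(0,\infty)$, so the Taylor expansion is legitimate even though $F$ and $U$ are defined only on the half-line.
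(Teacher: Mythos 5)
Your proof is correct, and it takes a genuinely different route from the paper. The paper proves the bound by a split-domain Laplace-method argument: it substitutes $t=t_0(1+\epsilon)$, uses $\log(1+\epsilon)-\epsilon\le-\tfrac{\epsilon^2}{2}+\tfrac{\epsilon^3}{3}$ to get the Gaussian-type bound $F(t_0(1+\epsilon))-F(t_0)\le-\epsilon^2\bigl(t_0^2+\tfrac{h}{6}\bigr)$ on $[0,2t_0]$ (contributing $\sqrt{\pi}\,e^{F(t_0)}$), and then bounds the tail $\int_{2t_0}^\infty$ separately by a linear estimate using $F'(2t_0)<0$ (contributing $e^{F(t_0)}$), which is where the constant $1+\sqrt{\pi}$ comes from. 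You instead exploit the single observation that $F''(t)=U''(t)=-2-h/t^2\le-2$ uniformly in all parameters, so that second-order Taylor expansion at the interior critical point gives the global pointwise bound $F(t)\le F(t_0)-(t-t_0)^2$ (and likewise for $U$), and one Gaussian domination finishes everything with the sharper constant $\sqrt{\pi}$ and no tail estimate. Your treatment of the boundary case $h=0$ for $\mathcal{U}$, where $s_0=0$ and the paper's substitution $t=s_0(1+\epsilon)$ degenerates, is a genuine improvement over the paper's remark that the second inequality is proved "in exactly the same way." What the paper's longer route buys is that the same decomposition and change of variables $t=t_0(1+\epsilon)$, with the quantity $t_0^2+\tfrac{h}{6}$, is reused essentially verbatim in the proof of the asymptotics in Lemma \ref{l:asymp-F-U}, so it feeds directly into the subsequent analysis, whereas your argument is self-contained but would not by itself yield those limits.
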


\begin{proof}

By the definition of $\mathcal{F}$ and $\mathcal{U}$, it suffices to prove 
\begin{equation}
\int_0^\infty e^{F(t)}dt\leq (1+\sqrt{\pi}) e^{F(t_0)},\end{equation}
and
\begin{equation}
\int_0^\infty e^{U(t)}dt\leq (1+\sqrt{\pi})e^{U(s_0)}.
\end{equation}
We only prove the first inequality and the second can be proved in exactly the same way. In fact, the second can be proved exactly the same way. Denote $a\equiv 2y$. For $\epsilon\in (-1, 1]$ we have \begin{align}
 F(t_0(1+\epsilon)) - F(t_0) 
&= -\epsilon(\epsilon+2)t_0^2 + \epsilon a t_0 + h \log (1 + \epsilon) 
\nonumber\\
&= -\epsilon^2t_0^2+h (\log (1+\epsilon)-\epsilon)\nonumber\\
&\leq  -\epsilon^2 t_0^2 -h(\frac{\epsilon^2}{2}-\frac{\epsilon^3}{3})
\leq -\epsilon^2(t_0^2 + \frac{h}{6}).
\end{align}
The above computations imply that under the transformation $t=t_0(1+\epsilon)$,
\begin{align}
\int_{0}^{2t_0}\exp(F(t))dt 
&\leq \int_{0}^{2t_0}\exp\Big( F(t_0)  - \epsilon^2(t_0^2 + \frac{h}{6})\Big) dt 
\nonumber\\
&= t_0\exp(F(t_0))\int_{-1}^1 \exp\Big(-(t_0^2 + \frac{h}{6})\epsilon^2\Big) d\epsilon
\nonumber\\
&=\frac{t_0}{\sqrt{t_0^2 + \frac{h}{6}}}\exp(F(t_0))\int_{-\sqrt{t_0^2 + \frac{h}{6}}}^{\sqrt{t_0^2 + \frac{h}{6}}} \exp(-\tau^2) d\tau
\nonumber\\
&\leq  \sqrt{\pi}\exp(F(t_0)).
\end{align}
In addition, let $t>2t_0$, then
\begin{align}
F(t) - F(2t_0) \leq F'(2t_0) (t - 2t_0), 
\end{align}
and hence \begin{align}
\int_{2t_0}^{\infty}\exp(F(t))dt \leq \exp(F(2t_0))\int_{2t_0}^{\infty}\exp\Big(F'(2t_0) (t - 2t_0)\Big)dt=\frac{\exp(F(2t_0))}{-F'(2t_0)}.
\end{align}
It can be directly computed that 
\begin{equation}
F'(2t_0) = -4t_0 + a + \frac{h}{2t_0} = -\frac{4t_0^2 + h}{2t_0}<0,
\end{equation}
then
\begin{align}
\int_{2t_0}^{\infty}\exp(F(t))dt  
\leq  \frac{2t_0}{4t_0^2+h}\exp(F(2t_0))
\leq  \frac{2t_0}{4t_0^2+h}\cdot\frac{\exp(F(t_0))}{\exp(t_0^2 + \frac{h}{6})}
\leq \exp(F(t_0)).
\end{align}
Combining the above calculations, 
\begin{equation}
\int_{0}^{\infty}\exp(F(t))dt \leq (1 + \sqrt{\pi}) \exp(F(t_0)).
\end{equation}
\end{proof}

Apply the same method as in Lemma \ref{l:exp-bound}, we have the following asymptotic property of $\mathcal{F}$ and $\mathcal{U}$.
\begin{lemma}\label{l:asymp-F-U} For fixed $j\in\dZ_+$ and $h> 0$, we have the following asymptotic formula

 \begin{equation}\label{Fasymptotics}
\lim\limits_{z \to + \infty }\frac{\mathcal{F} (z)}{\sqrt{\pi}e^{\frac{jz^2}{2}} (\sqrt{j} z)^{h}} = 1
 \end{equation}
and
\begin{equation}\label{Uasymptotics}
\lim\limits_{z\to+\infty}\frac{\mathcal{U}(z)}{e^{-\frac{jz^2}{2}}\Gamma(h+1) (2\sqrt{j}z)^{-h-1}} =1.
\end{equation}
 \end{lemma}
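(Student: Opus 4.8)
The plan is to apply Laplace's method to the two exponential integrals defining $\mathcal{F}$ and $\mathcal{U}$, reusing the exact identity already derived inside the proof of Lemma~\ref{l:exp-bound} and upgrading the one-sided bounds there to genuine leading-order asymptotics by dominated convergence. Throughout I write $y = \sqrt{j}z$, with $j \in \dZ_+$ and $h > 0$ fixed, so that $z \to \infty$ is equivalent to $y \to \infty$.

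\emph{The function $\mathcal{F}$.} Recall from the proof of Lemma~\ref{l:exp-bound} the exact identity $F(t_0(1+\epsilon)) - F(t_0) = -\epsilon^2 t_0^2 + h(\log(1+\epsilon) - \epsilon)$ for $\epsilon > -1$, where $t_0 = t_0(z)$ is the critical point of $F$. First I would substitute $t = t_0(1+\epsilon)$ and then $\epsilon = \sigma/t_0$ to obtain
\[
\int_0^\infty e^{F(t)}\,dt = e^{F(t_0)}\int_{-t_0}^\infty \exp\Big(-\sigma^2 + h\big(\log(1+\tfrac{\sigma}{t_0}) - \tfrac{\sigma}{t_0}\big)\Big)\,d\sigma .
\]
Using $\log(1+x) \le x$, the integrand (extended by zero to all of $\dR$) is bounded above by $e^{-\sigma^2}$ independently of $t_0$, and it converges pointwise to $e^{-\sigma^2}$ as $t_0 \to \infty$ (note $t_0 \to \infty$, since $y = t_0 - h/(2t_0)$ is increasing in $t_0$); dominated convergence then gives $\int_0^\infty e^{F(t)}\,dt = e^{F(t_0)}(\sqrt{\pi} + o(1))$. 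It remains to expand the prefactor: from $F'(t_0) = 0$ one computes $F(t_0) = t_0^2 - h + h\log t_0$ and $y = t_0 - h/(2t_0)$, hence $t_0 = y + h/(2y) + O(y^{-3})$ and, substituting back, $-\tfrac{y^2}{2} + F(t_0) = \tfrac{y^2}{2} + h\log y + O(y^{-2})$. Therefore $\mathcal{F}(z) = e^{-y^2/2}\int_0^\infty e^{F(t)}\,dt = \sqrt{\pi}\,e^{y^2/2}y^h(1+o(1))$, which is \eqref{Fasymptotics}.

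\emph{The function $\mathcal{U}$.} Here the interior critical point $s_0$ of $U$ tends to $0$ as $y \to \infty$, so instead of localizing near $s_0$ it is cleaner to rescale directly: substituting $t = u/(2y)$ in $\int_0^\infty e^{U(t)}\,dt = \int_0^\infty e^{-t^2 - 2ty}t^h\,dt$ yields $(2y)^{-h-1}\int_0^\infty e^{-u^2/(4y^2)}e^{-u}u^h\,du$. The integrand is dominated by $e^{-u}u^h \in L^1(0,\infty)$ and converges pointwise to $e^{-u}u^h$, so by dominated convergence the integral tends to $\Gamma(h+1)$. Multiplying by $e^{-y^2/2}$ and recalling $y = \sqrt{j}z$ gives $\mathcal{U}(z) = e^{-jz^2/2}\Gamma(h+1)(2\sqrt{j}z)^{-h-1}(1+o(1))$, which is \eqref{Uasymptotics}.

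The argument involves no serious obstacle: all the tail estimates are already contained in the proof of Lemma~\ref{l:exp-bound}, and the only genuinely new ingredient is the dominated-convergence step that converts those crude one-sided bounds into exact leading-order asymptotics. The one point that requires a little care is the bookkeeping in the case of $\mathcal{F}$: one must track the relation $y = t_0 - h/(2t_0)$ carefully when passing from the natural Laplace variable $t_0$ back to $y$, so as to confirm that the $O(1)$ discrepancy between $t_0$ and $y$ produces exactly the factor $y^h$ (and not $t_0^h$, nor an extra multiplicative constant) in the final formula.
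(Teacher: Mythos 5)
Your proposal is correct and follows essentially the same strategy as the paper: Laplace's method applied to the exponential integrals defining $\mathcal{F}$ and $\mathcal{U}$, with dominated convergence converting the bounds from Lemma \ref{l:exp-bound} into exact leading-order asymptotics, and the same expansions $t_0 = y + \tfrac{h}{2y}+O(y^{-3})$, $F(t_0)=y^2+h\log y + o(1)$ for the prefactor. The only real difference is your treatment of $\mathcal{U}$, where instead of localizing at the critical point $s_0$ and tracking the asymptotics of $s_0$ and $U(s_0)$ as the paper does, you rescale $t=u/(2y)$ directly and read off $\Gamma(h+1)$ by dominated convergence — a mild but genuine simplification that yields the same conclusion.
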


\begin{proof}
For simplicity, we will calculate the asymptotic behavior in $y$.
For fixed $h$ and $j$, as $y\rightarrow \infty$, it is straightforward that 
\begin{equation}
\begin{split}
t_0&=y+ \frac{h}{2y}+O(y^{-2})\\
s_0&=\frac{h}{2y}+O(y^{-2})\end{split}\label{e:t-s-asymp}
\end{equation}
which implies that \begin{equation}
\begin{split}
F(t_0)&=y^2+h\log y+O(y^{-1})
\\
 U(s_0)&=-h+h\log h-h\log (2y)+O(y^{-1}).\end{split}\label{e:F-U-asymp}
\end{equation}

First, we prove the asymptotics for $\mathcal{F}$.
As in the proof of Lemma \ref{l:exp-bound}, we get 
\begin{align}
\int_{0}^\infty e^{F(t)}dt
&=
e^{F(t_0)}\int_0^{2t_0} e^{F(t)-F(t_0)}dt + \int_{2t_0}^{\infty}e^{F(t)}dt
\nonumber\\
&=
t_0e^{F(t_0)}\int_{-1}^1 e^{-t_0^2\epsilon^2+h(\log(1+\epsilon)-\epsilon)}d\epsilon
+ \int_{2t_0}^{\infty}e^{F(t)}dt.
\end{align}
Notice that
\begin{equation}
\lim_{t_0\rightarrow \infty}t_0\int_{-1}^{1}e^{-t_0^2\epsilon^2+h(\log(1+\epsilon)-\epsilon)}d\epsilon=\sqrt{\pi}, 
\end{equation}
and 
\begin{equation}
\lim_{t_0\rightarrow \infty}e^{-F(t_0)}\int_{2t_0}^\infty e^{F(t)}dt=0.  
\end{equation}
Moreover, by \eqref{e:t-s-asymp}, $\lim\limits_{y\to+\infty}t_0(y)\to \infty$.
It follows that 
\begin{equation}
\lim\limits_{z\to\infty}\frac{\mathcal{F}(z)}{\sqrt{\pi}e^{-\frac{jz^2}{2}+F(t_0(z))}} =1. \end{equation}
Combining the above limit and \eqref{e:F-U-asymp}, the proof of \eqref{Fasymptotics} is complete. 

In the case $j\in\dZ_+$ and $h>0$,
we will prove the asymptotic behavior of $\mathcal{U}$ and we write
\begin{align}
\int_0^\infty e^{U(t)}dt
&=
s_0e^{U(s_0)}\int_{-1}^\infty e^{-s_0^2\epsilon^2+h(\log(1+\epsilon)-\epsilon)}d\epsilon\nonumber\\
&= s_0e^{U(s_0)}\int_{-1}^{\infty}e^{-s_0^2\epsilon^2} \cdot (1+\epsilon)^h e^{-h\epsilon}d\epsilon.
\end{align}
We claim that
\begin{equation}
\lim\limits_{s_0\to0}\int_{-1}^{\infty}(e^{-s_0^2\epsilon^2}-1) \cdot (1+\epsilon)^h e^{-h\epsilon}d\epsilon = 0.
\end{equation}
In fact, it is straightforward that for any $s_0> 0$, 
\begin{equation}
-1\leq e^{-s_0^2\epsilon^2}-1\leq 0
\end{equation}
and for any fixed $h>0$, 
\begin{equation}
\int_{-1}^{\infty} (1+\epsilon)^h e^{-h\epsilon}d\epsilon <\infty.
\end{equation}
Applying the dominated convergence theorem,
\begin{equation}
\lim\limits_{s_0\to0}\int_{-1}^{\infty}(e^{-s_0^2\epsilon^2}-1) \cdot (1+\epsilon)^h e^{-h\epsilon}d\epsilon = 0.
\end{equation}
This completes the proof the the claim.
Next, by the definition of the gamma function,
\begin{equation}
\int_{-1}^\infty (1+\epsilon)^h e^{-h\epsilon}d\epsilon=e^h\int_0^\infty e^{-hs}s^hds=e^hh^{-h-1}\Gamma(h+1).
\end{equation}
Therefore,
\begin{equation}
\lim\limits_{s_0\to0}\int_{-1}^{\infty}e^{-s_0^2\epsilon^2+h(\log(1+\epsilon)-\epsilon)}d\epsilon=e^h h^{-h-1}\Gamma(h+1).
\end{equation}
Since $\lim\limits_{y\to+\infty}s_0=0$ and $U$ yields to the asymptotic property \eqref{e:F-U-asymp}, eventually we obtain
\eqref{Uasymptotics}. 
\end{proof}

\begin{lemma}\label{l:quotient-bound} There is an absolute constant $C_0>0$ independent of $j\in\dZ_+$ and $h\geq 0$ such that the following uniform estimate holds for all $z\geq 1$, 
\begin{equation}0<\frac{e^{\widehat{F}(z)+\widehat{U}(z)}}{\mathcal{W}(z)}\leq C_0,\label{e:uniform-quotient}\end{equation}
where \begin{equation}\widehat{F}(z)=-\frac{jz^2}{2}+F(t_0(z))\end{equation}
and \begin{equation}\widehat{U}(z)=-\frac{jz^2}{2}+U(s_0(z)).\end{equation}

\end{lemma}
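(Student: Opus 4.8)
The plan is to exploit an algebraic cancellation that renders the numerator $e^{\widehat F(z)+\widehat U(z)}$ completely explicit — in particular independent of both $z$ and $j$ — after which the estimate reduces to an elementary bound on the Gamma function. First I would record the identities satisfied by the two critical points. Writing $y=\sqrt j\,z$, the explicit formulas for $t_0$ and $s_0$ give $t_0-s_0=y$, $t_0+s_0=\sqrt{2h^2+y^2}$, and, crucially, $t_0 s_0=\tfrac{h^2}{2}$; hence $t_0^2+s_0^2=(t_0+s_0)^2-2t_0 s_0=h^2+y^2$.

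Next I would insert these into $\widehat F(z)+\widehat U(z)=-jz^2+F(t_0)+U(s_0)$. Using $F(t)=-t^2+2ty+h\log t$ and $U(t)=-t^2-2ty+h\log t$, a direct computation shows
\[
\widehat F(z)+\widehat U(z)=-y^2-(t_0^2+s_0^2)+2y(t_0-s_0)+h\log(t_0 s_0)=-h^2+h\log\tfrac{h^2}{2},
\]
so that all $z$- and $j$-dependence cancels and $e^{\widehat F(z)+\widehat U(z)}=e^{-h^2}h^{2h}2^{-h}$. (At $h=0$ one uses the convention $0\cdot\log 0=0$, which is consistent with the fact that the logarithmic terms in $F,U$ then vanish identically.) By Lemma \ref{l:wronskian} the Wronskian equals $\mathcal W(z)=2^{-h}\sqrt{j\pi}\,\Gamma(h+1)>0$, so the factors $2^{-h}$ cancel and, since $j\geq 1$,
\[
0<\frac{e^{\widehat F(z)+\widehat U(z)}}{\mathcal W(z)}=\frac{e^{-h^2}h^{2h}}{\sqrt{j\pi}\,\Gamma(h+1)}\leq\frac{1}{\sqrt\pi}\,g(h),\qquad g(h):=\frac{e^{-h^2}h^{2h}}{\Gamma(h+1)}.
\]

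It then remains to show that $g$ is bounded on $[0,\infty)$. The function extends continuously to $h=0$ with $g(0)=1$, because $h^{2h}=e^{2h\log h}\to 1$ as $h\to 0^+$; hence $g$ is bounded on every compact interval. For $h\to\infty$, Stirling's formula yields $\log g(h)=-h^2+2h\log h-\log\Gamma(h+1)=-h^2+h\log h+h+O(\log h)\to-\infty$, so $g(h)\to 0$. Therefore $\sup_{h\geq 0}g(h)<\infty$, and the lemma holds with $C_0=\pi^{-1/2}\sup_{h\geq 0}g(h)$; positivity of the quotient is immediate since $e^{\widehat F+\widehat U}>0$ and $\mathcal W(z)>0$.

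The only delicate point is the analysis of $g$ near $h=0$ and as $h\to\infty$ in the last step, but this is entirely routine. The real content of the lemma is the exact cancellation that produces the closed-form expression for $\widehat F+\widehat U$, and that is the step I would write out carefully.
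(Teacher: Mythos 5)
Your proof is correct in structure and takes essentially the same route as the paper: evaluate $\widehat F+\widehat U$ in closed form using the symmetric functions $t_0-s_0$, $t_0s_0$, $t_0^2+s_0^2$, divide by the Wronskian from Lemma \ref{l:wronskian}, and bound the resulting $h$-dependent ratio by an elementary Gamma-function estimate.

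One caveat: you took the displayed formulas for $t_0,s_0$ at face value, but they contain a typo --- since $t_0,s_0$ are the critical points of $F,U$ they solve $2t^2\mp 2yt-h=0$, so the term under the square root is $\tfrac{h}{2}$, not $\tfrac{h^2}{2}$ (this is confirmed by the identities $t_0s_0=\tfrac{h}{2}$, $t_0^2+s_0^2=y^2+h$ used in the paper's proofs of this lemma and of Lemma \ref{l:monotonicity}, and by the asymptotics $s_0\sim\tfrac{h}{2y}$). With the true critical points your cancellation gives $\widehat F+\widehat U=-h+h\log\tfrac{h}{2}$ rather than $-h^2+h\log\tfrac{h^2}{2}$, so the quotient equals $e^{-h}h^{h}/(\sqrt{j\pi}\,\Gamma(h+1))$; the same Stirling argument you used shows this is bounded uniformly in $h\ge 0$ and $j\ge 1$, so your conclusion and method survive unchanged, only the explicit closed form needs correcting.
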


\begin{proof}

The proof is based on Lemma \ref{l:exp-bound}. 
First, we discuss the case $h=0$ and $j\in\dZ_+$. Direct computations give that $t_0=y$ and $s_0 =0$, then by definition we have that $\widehat{F}(z)=\frac{jz^2}{2}$ and $\widehat{U}(z)=-\frac{jz^2}{2}$. Therefore, \eqref{e:uniform-quotient} immediately follows.

Next, we prove the case $j\in\dZ_+$ and $h>0$.
We notice that
\begin{align}
t_0 - s_0 &= \frac{a}{2} \\
t_0^2 + s_0^2 &= \frac{a^2}{4} + h \\
t_0 s_0 &= \frac{h}{2},
\end{align}
by elementary calculations,
\begin{align}
F(t_0) + U(s_0) 
&= -(t_0^2 + s_0^2) + a(t_0-s_0) + h\log(t_0s_0)  \notag \\
&= \frac{a^2}{4} - h + h\log(\frac{h}{2}) = jz^2 - h + h\log(\frac{h}{2}).
\end{align}
Immediately we have that
\begin{equation}
e^{\widehat{F}(z) + \widehat{U}(z)} \leq e^{-h+h\log (\frac{h}{2})}. \label{e:bound-sum}
\end{equation}
Combining \eqref{e:bound-sum} and Lemma \ref{l:wronskian},
\begin{align}
\frac{e^{\widehat{F}(z)+\widehat{U}(z)}}{\mathcal{W}(z)} 
\leq  
\frac{e^{-h+h\log (\frac{h}{2})}}{\sqrt{j\pi}2^{-h}\Gamma(h+1)}
\leq C_0.
\end{align}
This proves the lemma. 
\end{proof}

A key technical point of this section is to construct a well-behaved solution of the Poisson equation
\begin{equation}
\Delta_{\mathcal{C}} u = v
\end{equation}
by applying separation of variables and the uniform estimate on the ODE solutions.
For this purpose, we need the following monotonicity.

\begin{lemma}
\label{l:monotonicity}
Let 
$\widehat{F}(z)$ and $\widehat{U}(z)$ be the function
defined in Lemma \ref{l:quotient-bound}, then 
$\widehat{F}(z) - \eta z$ is increasing for $z>2\eta$ and 
$\widehat{U}(z)+\eta z$ is decreasing for $z>2\eta$.
\end{lemma}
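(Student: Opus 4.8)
The plan is to compute $\widehat{F}'(z)$ and $\widehat{U}'(z)$ explicitly via the envelope theorem and then simply read off the two monotonicity statements. Write $F(t)=F(t,y)=-t^2+2ty+h\log t$ with $y=\sqrt{j}z$, so that the $z$-dependence enters only through $y$. Since $t_0(z)$ is by definition the critical point of $t\mapsto F(t,y)$, we have $(\partial_t F)(t_0,y)=0$, and differentiating $\widehat{F}(z)=-\tfrac{jz^2}{2}+F(t_0(z),y)$ kills the term involving $t_0'(z)$, leaving
$$\widehat{F}'(z)=-jz+(\partial_y F)(t_0,y)\cdot\sqrt{j}=-jz+2\sqrt{j}\,t_0.$$
(An equivalent route, avoiding any appeal to the envelope principle, is to differentiate $F(t_0(z),y)$ by the chain rule and then discard the $t_0'(z)$-term using $(\partial_t F)(t_0,y)=0$; this is the same computation.)

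Next I would substitute the explicit formula $t_0=\tfrac{y}{2}+\sqrt{\tfrac{h^2}{2}+\tfrac{y^2}{4}}$ with $y=\sqrt{j}z$. The linear part $\tfrac{y}{2}=\tfrac{\sqrt{j}z}{2}$ contributes $2\sqrt{j}\cdot\tfrac{\sqrt{j}z}{2}=jz$, which cancels against $-jz$; thus
$$\widehat{F}'(z)=2\sqrt{j}\sqrt{\tfrac{h^2}{2}+\tfrac{jz^2}{4}}=\sqrt{\,j^2z^2+2jh^2\,}.$$
Since $j\in\dZ_+$ and $h\ge 0$ this gives the clean bound $\widehat{F}'(z)\ge jz\ge z$, hence $\widehat{F}'(z)-\eta\ge z-\eta>0$ for $z>2\eta$ (in fact already for $z>\eta$), which shows $\widehat{F}(z)-\eta z$ is increasing. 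The identical computation with $U(t)=-t^2-2ty+h\log t$ and its critical point $s_0=-\tfrac{y}{2}+\sqrt{\tfrac{h^2}{2}+\tfrac{y^2}{4}}$ yields $\widehat{U}'(z)=-jz-2\sqrt{j}\,s_0=-\sqrt{\,j^2z^2+2jh^2\,}\le -jz\le -z$, so $\widehat{U}'(z)+\eta\le -z+\eta<0$ for $z>2\eta$, giving that $\widehat{U}(z)+\eta z$ is decreasing.

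There is no genuine obstacle here; the only point requiring a little care is the bookkeeping of the chain rule through the substitution $y=\sqrt{j}z$ and checking that the $-jz$ exactly cancels the linear part of $2\sqrt{j}\,t_0$ (respectively $2\sqrt{j}\,s_0$), after which the conclusion is an elementary inequality using $j\ge1$, $h\ge0$, and $z>0$.
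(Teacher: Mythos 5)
Your proof is correct and is essentially the paper's own argument: the paper likewise differentiates $\widehat F$ and $\widehat U$ (in the variable $a=2\sqrt{j}\,z$ rather than in $z$), discards the $t_0'$-, $s_0'$-terms via the critical-point equations, and obtains $d\widehat F/da=\sqrt{h/2+a^2/16}\ \ge a/4$, i.e.\ $\widehat F'(z)\ge jz$, and symmetrically $\widehat U'(z)\le -jz$, which with $j\ge 1$ gives the claimed monotonicity for $z>2\eta$. The only discrepancy is that your explicit radicand $2jh^2$ inherits the paper's typo in the displayed formulas for $t_0,s_0$ (the true critical points of $F$ and $U$ have $h/2$, not $h^2/2$, under the square root), but since $h\ge 0$ this is harmless: either way the square-root term is nonnegative, so the bounds $\widehat F'(z)\ge jz$ and $\widehat U'(z)\le -jz$ and hence the conclusion stand.
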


\begin{proof}

Let $y=\sqrt{j}z$ and $a = 2y$, then by definition,
\begin{equation}
\widehat{F} = -\frac{a^2}{8} - (t_0(a))^2 + at_0(a) + h\log (t_0(a))
\end{equation}
and
\begin{equation}
\widehat{U} = -\frac{a^2}{8} - (s_0(a))^2 - as_0(a) + h\log (s_0(a)).
\end{equation}
We show that $\widehat{F}$ is increasing in $a$ and $\widehat{U}$ is decreasing in $a$.
Indeed,
\begin{align}
\frac{d\widehat{F}}{da} = -\frac{a}{4} + t_0(a) + \Big(-2t_0(a) + a + \frac{h}{t_0(a)} \Big) t_0'(a) = \sqrt{\frac{h}{2}+\frac{a^2}{16}} \geq \frac{a}{4}. 
\end{align}
So the monotonicity of $\widehat{F}(z)-\eta z$ immediately follows when $z>2\eta$.
Similarly, the monotonicity of $\widehat{U}+\eta z$ follows from the computation
\begin{align}
\frac{d\widehat{U}}{da} = -\frac{a}{4} - s_0(a) + \Big(-2s_0(a) - a + \frac{h}{s_0(a)}\Big) s_0'(a)
= -\sqrt{\frac{h}{2}+\frac{a^2}{16}} \leq -\frac{a}{4}.
\end{align}
\end{proof}

\subsection{Asymptotics of harmonic functions}

Let  $(\mathcal{C},g_{\mathcal{C}})$ be a Calabi model space with a cross section $Y^3$. We will show that any harmonic function $u$ with a slow exponential growth must be linear.
\begin{proposition}[Harmonic functions with slow exponential growth]\label{p:harmonic-function-decay} 
Suppose $u$ is harmonic on the Calabi model space $(\mathcal{C},g_{\mathcal{C}})$, i.e.,
\begin{equation}
\Delta_{g_{\mathcal{C}}}u=0,
\end{equation}
 If $u=O(e^{\delta z})$ for some $\delta\in(0, \underline{\delta})$, where 
$\underline{\delta}>0$ depends only on the Calabi model space $(\mathcal C, g_{\mathcal{C}})$.
then there are constants $a_0,b_0\in\dR$ such that 
\begin{equation}
u(z)=a_0z+b_0+ O(e^{-\underline{\delta} z}).
\end{equation}

\end{proposition}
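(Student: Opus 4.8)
The plan is to prove Proposition~\ref{p:harmonic-function-decay} by separation of variables along the cross section $Y^3=\{\varrho=r_0\}$, using the uniform ODE estimates from Lemmas~\ref{l:wronskian}, \ref{l:exp-bound}, \ref{l:quotient-bound}, \ref{l:monotonicity} and the asymptotics in Lemma~\ref{l:asymp-F-U}. First I would expand the harmonic (hence smooth) function $u$, along $\mathcal C\cong(1,\infty)_z\times Y^3$, in the $L^2(Y^3,h_0)$-orthonormal basis of $S^1$-homogeneous eigenfunctions $\{\varphi_k\}$ of $\Delta_{h_0}$: for each $z\ge1$ write $u(z,\bm y)=\sum_{k\ge1}u_k(z)\varphi_k(\bm y)$. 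The computation leading to \eqref{e:separation-harmonic} shows that each $u_k$ is smooth and solves $u_k''=(j_k^2z^2+\lambda_k)u_k$ on $\{z\ge1\}$, with $j_k\in\dN$ and $\lambda_k\ge j_k$; moreover when $j_k=0$ one has $\lambda_k=2\hat\lambda_k\ge0$, which is either $0$ (the unique ``trivial'' mode, whose eigenfunction $\varphi_0$ is constant on $Y^3$) or bounded below by a constant $\lambda_\ast>0$ that depends only on $(\mathcal C,g_{\mathcal C})$, namely twice the first positive eigenvalue of the $\bar\partial$-Laplacian on the elliptic curve $D=E$. So there are three cases for $u_k$: (i) $j_k=\lambda_k=0$, where $u_k$ is affine in $z$; (ii) $j_k=0$, $\lambda_k\ge\lambda_\ast$, where $u_k\in\mathrm{span}\{e^{\pm\sqrt{\lambda_k}z}\}$; (iii) $j_k\ge1$, where $\{\mathcal F_k,\mathcal U_k\}$ is a fundamental system by Lemma~\ref{l:wronskian}.

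Next I would fix $\underline\delta\in(0,\min\{\sqrt2,\sqrt{\lambda_\ast},1\})$ (depending only on the model through $\lambda_\ast$) and take $\delta<\underline\delta$. By Parseval on the \emph{fixed} manifold $(Y^3,h_0)$, the hypothesis $u=O(e^{\delta z})$ gives $\sum_k|u_k(z)|^2=\|u(z,\cdot)\|_{L^2(Y^3)}^2\le Ce^{2\delta z}$, hence $|u_k(z)|\le Ce^{\delta z}$ with $C$ independent of $k$. In case (iii), Lemma~\ref{l:asymp-F-U} shows $\mathcal F_k$ grows super-exponentially in $z$, so the $\mathcal F_k$-component of $u_k$ must vanish; in case (ii), $\sqrt{\lambda_k}\ge\sqrt{\lambda_\ast}>\delta$ forces the $e^{+\sqrt{\lambda_k}z}$-component to vanish; in case (i) the affine solution survives. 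Thus $u=(a_0z+b_0)\varphi_0+\sum_{k\ge1}c_k\psi_k(z)\varphi_k$, where $a_0,b_0\in\dR$ and $\psi_k=e^{-\sqrt{\lambda_k}z}$ in case (ii) and $\psi_k=\mathcal U_k$ in case (iii), each a positive, decreasing solution on $\{z\ge1\}$. To control the tail, I would observe that on $\{z\ge1\}$ the coefficient $j_k^2z^2+\lambda_k$ is $\ge2$ in case (iii) and $\ge\lambda_\ast$ in case (ii); a Riccati comparison --- equivalently, the monotonicity of $\widehat U(z)$ from Lemma~\ref{l:monotonicity} together with $\mathcal U_k\le(1+\sqrt\pi)e^{\widehat U_k}$ from Lemma~\ref{l:exp-bound} --- then yields $-\psi_k'/\psi_k\ge\underline\delta$, hence $\psi_k(z)/\psi_k(z_1)\le e^{-\underline\delta(z-z_1)}$ for all $z\ge z_1\ge1$, \emph{uniformly} in $k$. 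Taking $z_1=1$ and using $\sum_{k\ge1}|u_k(1)|^2\le Ce^{2\delta}<\infty$, this proves that the series converges and that $\|u(z,\cdot)-(a_0z+b_0)\varphi_0\|_{L^2(Y^3)}=O(e^{-\underline\delta z})$.

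Finally I would upgrade this slab $L^2$-bound to a pointwise one: $w\equiv u-(a_0z+b_0)\varphi_0$ is harmonic on $\mathcal C$, so a standard interior elliptic (Moser) estimate on geodesic balls of the rescaled Calabi metric --- which has bounded geometry since $g_{\mathcal C}$ is Ricci-flat K\"ahler of bounded curvature --- bounds $\sup_{\bm y}|w(z,\bm y)|$ by the $L^2$ average over a neighboring slab, giving $|w|=O(e^{-\underline\delta z})$ after harmlessly shrinking $\underline\delta$ to absorb polynomial factors in $z$. Absorbing the constant $\varphi_0$ into $a_0,b_0$ gives $u(z)=a_0z+b_0+O(e^{-\underline\delta z})$, as claimed. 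The main obstacle in this argument is exactly the step producing the \emph{uniform} decay rate $-\psi_k'/\psi_k\ge\underline\delta$ over \emph{all} pairs $(j_k,\lambda_k)$: a single $\underline\delta>0$ below which no mode of $u$ can grow and below which every surviving mode decays. On $\{z\ge1\}$ the crude bound $j_k^2z^2+\lambda_k\ge2$ makes a Riccati argument suffice here, but the sharp Hermite-function estimates of Lemmas~\ref{l:exp-bound}, \ref{l:quotient-bound}, and \ref{l:monotonicity} are what make this robust, and they are indispensable for the companion Poisson-equation analysis and for the Liouville theorem for half-harmonic $1$-forms in Theorem~\ref{t:liouville-1-form}.
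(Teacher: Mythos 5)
Your proposal is correct and follows essentially the same route as the paper's proof: separation of variables along $Y^3$, elimination of the growing modes ($\mathcal{F}_k$ for $j_k\geq 1$ and $e^{+\sqrt{\lambda_k}z}$ for $j_k=0$) using the $O(e^{\delta z})$ hypothesis with $\underline{\delta}$ governed by the smallest positive $j_k=0$ eigenvalue, followed by a uniform exponential decay rate for the surviving modes. The only inessential deviation is how the tail is summed --- you use Parseval in $L^2(Y^3)$ plus an interior elliptic estimate to upgrade to a pointwise bound, whereas the paper sums pointwise directly via the integration-by-parts coefficient bound $|u_k(z_0)|\lesssim \Lambda_k^{-K_0}$, the eigenfunction sup bounds of Lemma \ref{l:eigenfunction-bound}, and Weyl's law.
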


\begin{proof}

To start with, we choose a closed Sakaki manifold $Y^3$
which is  given by the level set $\{\varrho=r_0\}$ in the Calabi space $\mathcal{C}$.
Denote by $\{\Lambda_k\}_{k=1}^{\infty}$ be the spectrum of $(Y^3,h_0)$, where $h_0$ is the induced 
Riemannian metric from $g_{\mathcal{C}}$.
Let $\varphi_k\in C^{\infty}(Y^3)$ be the eigenfunctions satisfying
\begin{equation}
\begin{split}-\Delta_{h_0}\varphi_k&=\Lambda_k \varphi_k\\
\mathcal{L}_{\partial_{\theta}}\varphi_k&=\sqrt{-1} j_k \varphi_k,\ j_k\in \dN.
\end{split}
\end{equation}
As computed in Section \ref{ss:separation-of-variables}, separation of variables gives the following expansion,
 \begin{equation}
 u(z,\bm{y})=\sum\limits_{k=1}^{\infty}u_{k}(z)\cdot\varphi_{k}(\bm{y})\label{e:u-phi}
 \end{equation}
where $\bm{y}\in Y^3$, and $u_{k}$ satisfies the equation 
\begin{equation}
\frac{d^2u_k(z)}{dz^2} - (j_k^2 z^2 + \lambda_k) u_k(z) = 0,\ z\geq 1,
\end{equation}
for some $j_k\in\dN$
and $\lambda_k\geq1$. Note that, in Section \eqref{ss:separation-of-variables}, we have shown the relations 
\begin{equation}
\Lambda_k=\frac{\lambda_k}{z_0}+\frac{2z_0\cdot j_k^2}{r_0^2}
\end{equation}
and   $\lambda_k \geq j_k$, where $z_0\equiv(-\log r_0^2)^{\frac{1}{2}}$. 
So all the estimates obtained in the previous sections directly apply here.

Since the harmonic function $u$ is smooth, so the convergence \eqref{e:u-phi} is in the $C^\infty$ topology in any compact subset of $\mathcal{C}$.  
Immediately, for $k\in\dZ_+$ and for some fixed $z_0>1$,
\begin{align}
|u_{k}(z_0)| & = \Big|\int_{Y^3} u\cdot\varphi_{k} \dvol_{h_0}\Big| = \Big| \int_{Y^3} u\cdot \frac{(-\Delta_{h_0})^{K_0}\varphi_{k}}{(\Lambda_{k})^{K_0}}\dvol_{h_0}\Big| \nonumber\\
& \leq \frac{1}{(\Lambda_{k})^{K_0}} \Big|\int_{Y^3}|(-\Delta_{h_0})^{K_0}u| \cdot \varphi_{k}\dvol_{h_0}\Big| \leq \frac{C_1}{(\Lambda_{k})^{K_0}}, \label{e:u_{k}-uniform-bound}
\end{align}
where 
\begin{equation}C_1\equiv\|u\|_{C^{2K_0}(Y^3\times\{z_0\})}\cdot(\Vol_{h_0}(Y^3))^{1/2}.
\end{equation}

Before discussing the asymptotic behavior of the harmonic function $u$, let us give a more precise expression for each ODE solution $u_k$ under the growth condition $u=O(e^{\delta z})$ for $0<\delta<\underline{\delta}$.
First, for every $k\in\dZ_+$,
there exist constants $C_k$ and $C_k^*$ such that\begin{equation}
u_k(z)= C_k \cdot \mathcal{U}_k(z) + C_k^* \cdot  \mathcal{F}_k(z).
\end{equation}
The growth condition on $u$ gives the growth of $u_k$. Indeed, by assumption for any sufficiently large $z\in(2 z_0,+\infty)$ with $z_0>10^6$, it holds that \begin{equation}|u(z)|\leq C_0\cdot e^{\delta z},\label{e:u-asym-condition}\end{equation}
which implies that 
\begin{equation}
|u_k(z)|=\Big|\int_{Y^3}u\cdot\varphi_{k}\Big|\leq C_0\cdot(\Vol_{h_0}(Y^3))^{1/2}e^{\delta_0z}.\label{e:u_k-growth}
\end{equation}

There are two cases to analyze:

First, we consider the case  $j_k =0$, then $u_k$ satisfies the linear equation
\begin{equation}
u_k''(z) - \lambda_k \cdot u_k(z) = 0.\end{equation}
We only consider $\lambda_k>0$. Otherwise, the solution is just a linear function. 
In this case, we pick the fundamental solutions 
\begin{equation}
\mathcal{F}_k(z) \equiv e^{\sqrt{\lambda_k} \cdot z} \ \text{and}\ \mathcal{U}_k(z) \equiv e^{-\sqrt{\lambda_k} \cdot z},\ 
\end{equation}
We define
\begin{equation}
\underline{\delta} \equiv \min\Big\{\sqrt{\lambda_k} \Big| k \in \dZ_+\Big\}> 0. \label{e:underline-delta}
\end{equation}
If we choose $\delta\in(0,\underline{\delta})$,  then \eqref{e:u_k-growth} implies
\begin{equation}C_k^*=0\end{equation} for each $k\in\dZ_+$ which satisfies $j_k=0$, and hence
\begin{equation}
u_k(z)=C_ke^{-\sqrt{\lambda_k} \cdot z}.\label{e:exp-neg}
\end{equation}

Next, we consider the case $k\in\dZ_+$ such that $j_k\neq 0$. 
 Lemma \ref{l:asymp-F-U} implies that $\mathcal{F}_k$ is growing and  $\mathcal{U}_k$ is decaying. 
Therefore, apply \eqref{e:u_k-growth} again, we have \begin{equation}
 C_k^* = 0
 \end{equation}
for every $k\in\dZ_+$ which satisfies $j_k \neq 0$.

Combining the above two cases, we conclude that if $\delta\in(0,\underline{\delta})$, then for every $k\in\dZ_+$, there exists some constant $C_k\in\dR$ such that
\begin{equation}
u_k(z) = C_k \cdot \mathcal{U}_k(z)
\end{equation}
and hence
\begin{equation}
u(z,\bm{y}) = \sum\limits_{k=1}^{\infty} C_k \cdot \mathcal{U}_k(z) \cdot \varphi_k(\bm{y}).
\end{equation}
By definition, in our context $\mathcal{U}_{m,h}(z)>0$,  so there  is no harm to assume $u_{k}(z_0)\neq 0$.

Now we are in a position to estimate the upper bound of the harmonic function $u$ which satisfies $u=O(e^{\delta z})$ with $0<\delta<\underline{\delta}$.
We still separate in two cases. 
First, we consider $k\in\dZ_+$ with $j_k=0$.
For fixed $z_0>10^6$, we apply \eqref{e:exp-neg}, then for every sufficiently large  $z\in(2 z_0,+\infty)$,
\begin{equation}
\Big|\frac{u_k(z)}{u_k(z_0)}\Big|=\Big|\frac{\mathcal{U}_k(z)}{\mathcal{U}_k(z_0)}\Big|=e^{-\sqrt{\lambda_k}\cdot (z-z_0)}\leq e^{-\underline{\delta}(z-z_0)},
\end{equation}
and hence
\begin{align}
\Big|\sum_{\substack{k>0\\ j_k\geq 1}} u_k(z) \cdot \varphi_k(\bm{y}) \Big|
&=
\Big|\sum_{\substack{k>0\\ j_k=0}}\frac{u_k(z)}{u_k(z_0)}\cdot u_k(z_0)\cdot \varphi_{k}(\bm{y})\Big|
\nonumber\\
&\leq 
\sum_{\substack{k>0\\ j_k=0}}\Big|\frac{u_k(z)}{u_k(z_0)}\Big| \cdot |u_k(z_0) |\cdot |\varphi_{k}(\bm{y})| \leq C
e^{-\underline{\delta}z/2}\cdot\sum_{\substack{k>0\\ j_k=0}} \frac{1}{(\Lambda_{k})^{K_0-1}}.
\end{align}
Next, let $k\in\dZ_+$ satisfy $j_k \geq 1$.
For fixed $z_0>10^6$ and take $z\in(2 z_0,\infty)$, 
 then we have
\begin{equation}
\Big|\frac{u_{k}(z)}{u_{k}(z_0)}\Big|=\Big|\frac{\mathcal{U}_{k}(z)}{\mathcal{U}_{k}(z_0)}\Big|\leq e^{-\frac{j_k(z^2-z_0^2)}{2}}\leq Ce^{-\frac{3z^2}{8}}.
\end{equation}
Taking the sum, 
\begin{align}
\Big|\sum_{\substack{k>0\\ j_k\geq 1}} u_k(z) \cdot \varphi_k(\bm{y}) \Big|
&=
\Big|\sum_{\substack{k>0\\ j_k\geq 1}}\frac{u_k(z)}{u_k(z_0)}\cdot u_k(z_0)\cdot \varphi_{k}(\bm{y})\Big| \nonumber\\
& \leq  
\sum_{\substack{k>0\\ j_k\geq 1}}\Big|\frac{u_k(z)}{u_k(z_0)}\Big| \cdot |u_k(z_0)| \cdot |\varphi_{k}(\bm{y}) | \leq Ce^{-\frac{3 z^2}{8}}\sum_{\substack{k>0\\ j_k\geq 1}} \frac{1}{(\Lambda_k)^{K_0-1}}.
\end{align}
The estimates in the above two cases imply that
\begin{align}
|u(z,\bm{y})| 
&= 
\Big|\sum_{k=1}^{\infty} u_k(z) \cdot \varphi_k(\bm{y}) \Big| 
\nonumber\\
&\leq 
C\Big(
e^{-\underline{\delta}z/2}\sum_{\substack{k>0\\ j_k=0}} \frac{1}{(\Lambda_{k})^{K_0-1}} + 
e^{-\frac{3 z^2}{8}}\sum_{\substack{k>0\\ j_k\geq 1}} \frac{1}{(\Lambda_k)^{K_0-1}}\Big)
\leq  Ce^{-\underline{\delta}z/2}\sum_{k=1}^{\infty} \frac{1}{(\Lambda_{k})^{K_0-1}}.
\end{align}
We can choose $K_0\geq 3$, applying Weyl's law, then the above numerical series converges and
hence
\begin{equation}
|u(z,\bm{y})| \leq C.
\end{equation}
Therefore,  there exists sufficiently large $N_0>10^6$ such that for all $z\in(N_0,\infty)$ 
\begin{equation}
|u(z)-(a_0z+b_0)|\leq Ce^{-2\pi z},
\end{equation}
where $C$ depends on $\Vol_{h_0}(Y^3)$ and $\|u\|_{C^{K_0}(Y^3\times \{z_0\})}$ for some fixed $z_0>10^6$ and $K_0\geq 3$.

\end{proof}

Let $u$ be a harmonic function on the Calabi model space 
$(\mathcal{C},g_{\mathcal{C}})$, then there is an expansion of $u$,
\begin{align}
u(z,\bm{y})= \sum\limits_{k=1}^{\infty} \Big(C_k \mathcal{U}_k(z) + C_k^* \mathcal{F}_k(z)\Big)\varphi_k(\bm{y}).
\end{align}
Combining those growing and decaying components, we have the following decomposition of $u$,
\begin{equation}
u=u_{\mathfrak{p}} + u_{\mathfrak{n}} + (az+b),
\end{equation}
where
\begin{equation}
u_{\mathfrak{p}}(z,\bm{y})\equiv \sum_{\substack{k\geq 1\\ (\lambda_k,j_k )\neq (0,0)}} C_k^* \cdot \mathcal{F}_k(z) \varphi_k(\bm{y})
\end{equation}
and
\begin{equation}
u_{\mathfrak{n}}(z,\bm{y})\equiv \sum_{\substack{k\geq 1\\ (\lambda_k,j_k )\neq (0,0)}} C_k\cdot  \mathcal{U}_k(z) \varphi_k(\bm{y}).
\end{equation}

\begin{lemma}\label{l:decay-harmonic}
Let $u$ satisfy $\Delta_{h_0}u=0$ and assume $u=u_{\mathfrak{n}}$, then  $u_{\mathfrak{n}}$ is a harmonic function with $u=O(e^{-\delta z})$ for some $\delta >0$.
\end{lemma}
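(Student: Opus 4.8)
Lemma \ref{l:decay-harmonic} is essentially a bookkeeping statement assembling the estimates proved in the previous subsections, so the plan is to verify convergence of the series defining $u_{\mathfrak{n}}$ and to exhibit a uniform exponential decay rate from the spectral data. First I would fix the reference cross section $Y^3 = \{\varrho = r_0\}$ and expand $u = u_{\mathfrak{n}} = \sum_{k} C_k \mathcal{U}_k(z)\varphi_k(\bm y)$ with the coefficients $C_k$ determined by the $L^2(Y^3)$-projection of $u$ at some fixed height $z_1 > 10^6$. As in the proof of Proposition \ref{p:harmonic-function-decay}, integrating by parts $K_0$ times against $(-\Delta_{h_0})^{K_0}\varphi_k / \Lambda_k^{K_0}$ yields $|C_k \mathcal{U}_k(z_1)| = |u_k(z_1)| \le C_1 \Lambda_k^{-K_0}$, where $C_1$ depends only on $\|u\|_{C^{2K_0}(Y^3 \times \{z_1\})}$ and $\Vol_{h_0}(Y^3)$; choosing $K_0 \ge 3$ and invoking Weyl's law makes $\sum_k \Lambda_k^{-(K_0-1)}$ convergent, which is the summability input needed below.

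The heart of the argument is the pointwise ratio bound $|\mathcal{U}_k(z)/\mathcal{U}_k(z_1)|$ for $z > 2z_1$, split into the two cases already handled earlier. When $j_k = 0$ and $\lambda_k > 0$ we have $\mathcal{U}_k(z) = e^{-\sqrt{\lambda_k}\, z}$, so the ratio is $e^{-\sqrt{\lambda_k}(z - z_1)} \le e^{-\underline{\delta}(z - z_1)}$ by the definition \eqref{e:underline-delta} of $\underline{\delta}$. When $j_k \ge 1$, the monotonicity in Lemma \ref{l:monotonicity} (applied with $\eta = 0$, so $\widehat{U}(z)$ is decreasing) together with the upper bound in Lemma \ref{l:exp-bound} and the Wronskian estimate in Lemma \ref{l:quotient-bound} gives $\mathcal{U}_k(z) \le C_0 \mathcal{U}_k(z_1) e^{-(j_k/2)(z^2 - z_1^2)} \le C_0 \mathcal{U}_k(z_1) e^{-(z^2 - z_1^2)/2}$, which for $z$ large is far smaller than any fixed exponential. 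Combining the two cases, for $z > 2 z_1$,
\begin{equation}
|u_{\mathfrak n}(z,\bm y)| \le \sum_{\substack{k \ge 1 \\ j_k = 0}} |C_k \mathcal{U}_k(z_1)|\, |\varphi_k(\bm y)|\, e^{-\underline{\delta}(z - z_1)} + C_0 e^{-(z^2 - z_1^2)/2}\sum_{\substack{k \ge 1 \\ j_k \ge 1}} |C_k \mathcal{U}_k(z_1)|\, |\varphi_k(\bm y)|.
\end{equation}
Bounding $|\varphi_k(\bm y)|$ by $C \Lambda_k^{m}$ for a fixed $m$ (elliptic estimates / Sobolev on the fixed manifold $Y^3$) and absorbing $\Lambda_k^m$ into the factor $\Lambda_k^{-K_0}$ by taking $K_0$ large, both series converge, and we conclude $|u_{\mathfrak n}(z, \bm y)| \le C e^{-\underline{\delta} z /2}$, i.e. $u_{\mathfrak n} = O(e^{-\delta z})$ with $\delta = \underline{\delta}/2 > 0$. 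That $u_{\mathfrak n}$ is itself harmonic is immediate: it is the $C^\infty_{loc}$-convergent sum of the harmonic pieces $C_k \mathcal{U}_k(z)\varphi_k(\bm y)$ (each exactly solving $\Delta_{\mathcal C}(\mathcal{U}_k \varphi_k) = 0$ by the separation-of-variables computation), and the decay estimate justifies termwise differentiation on $z \ge 2z_1$; alternatively one simply notes $u_{\mathfrak n} = u - u_{\mathfrak p} - (az+b)$ is a difference of harmonic functions by hypothesis.

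The main obstacle — really the only subtle point — is to make the interchange of sum and limit rigorous in a region where we lack an a priori growth bound on the individual $\mathcal{U}_k$ in terms that are uniform in $k$: one must be careful that the ratio estimates from Lemmas \ref{l:exp-bound}--\ref{l:monotonicity} are genuinely uniform over all pairs $(j_k, h_k)$, which is exactly what those lemmas were designed to give (the constant $C_0$ there is asserted to be independent of $j$ and $h$). Once that uniformity is in hand, the convergence of $\sum_k \Lambda_k^{-(K_0-1)}$ dominates everything and the proof is a short estimate. I would also remark that the decay rate obtained, $\underline{\delta}/2$, can in fact be improved to the sharp rate $\underline{\delta}$ by a more careful treatment near $z = 2 z_1$, but for the purposes of the gluing construction the crude bound suffices.
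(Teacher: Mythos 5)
Your proposal is correct and follows essentially the same route as the paper: the paper's proof of this lemma simply states that it is identical to the convergence arguments in Proposition \ref{p:harmonic-function-decay}, and your argument (coefficient decay $|u_k(z_1)|\le C\Lambda_k^{-K_0}$ by integration by parts, the two-case ratio bounds for $\mathcal{U}_k$, the eigenfunction sup bound, and Weyl's law with $K_0\ge 3$) is precisely that argument specialized to $u=u_{\mathfrak n}$.
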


\begin{proof} The proof of the lemma is identical to the convergence arguments in Proposition \ref{p:harmonic-function-decay}. 

\end{proof}

\subsection{Regularity and asymptotics for Poisson equation}

With the above lemmas, the following estimate for the solutions of the 
non-homogeneous equation immediately follows.

\begin{lemma}
\label{l:error-estimate}

Let $(\mathcal{C},g_{\mathcal{C}})$ be the model space with a fixed fiber $(Y^3,h_0)$. Let $K_0\geq 1$ and let      
 $\xi \in C^{2K_0}(\mathcal{C})$ satisfy the expansion
 \begin{equation}
 \xi(z, \bm{y}) = \sum\limits_{k=1}^{\infty} \xi_k (z) \cdot \varphi_k (\bm{y}).
 \end{equation}
In addition, assume that there is some $\eta_0  \neq  0$ such that for every $0\leq m\leq 2K_0$,
\begin{equation}|\nabla^m \xi(z,\bm{y})|=O(e^{\eta_0 z}),\end{equation}
then for every  $z \geq 1$ and $k\in\dZ_+$,\begin{equation}|\xi_{k}(z)|\leq  \frac{ Ce^{\eta_0 z}}{(\Lambda_{k})^{K_0}},\end{equation}
where the constant $C>0$ is independent of $k$ and $z$.
\end{lemma}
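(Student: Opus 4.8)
The plan is to extract the coefficient $\xi_k(z)$ by integrating against the eigenfunction $\varphi_k$ over the fiber $Y^3 \times \{z\}$, and then to trade powers of the eigenvalue $\Lambda_k$ for derivatives of $\xi$ along the fiber. Concretely, for fixed $z \geq 1$ we have
\begin{equation}
\xi_k(z) = \int_{Y^3} \xi(z,\bm{y}) \cdot \varphi_k(\bm{y}) \, \dvol_{h_0}(\bm{y}),
\end{equation}
and since $-\Delta_{h_0}\varphi_k = \Lambda_k \varphi_k$ we may write $\varphi_k = \Lambda_k^{-K_0}(-\Delta_{h_0})^{K_0}\varphi_k$ and integrate by parts $K_0$ times (the fiber $Y^3$ is closed, so there are no boundary terms):
\begin{equation}
\xi_k(z) = \frac{1}{(\Lambda_k)^{K_0}}\int_{Y^3} \big((-\Delta_{h_0})^{K_0}\xi\big)(z,\bm{y}) \cdot \varphi_k(\bm{y}) \, \dvol_{h_0}(\bm{y}).
\end{equation}
This is exactly the device already used in the proof of Proposition \ref{p:harmonic-function-decay}, cf. \eqref{e:u_{k}-uniform-bound}, so I would cite that computation rather than redo it in detail.

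Next I would estimate the right-hand side by Cauchy--Schwarz on $Y^3 \times \{z\}$:
\begin{equation}
|\xi_k(z)| \leq \frac{1}{(\Lambda_k)^{K_0}} \, \big\|(-\Delta_{h_0})^{K_0}\xi(z,\cdot)\big\|_{L^2(Y^3)} \cdot \|\varphi_k\|_{L^2(Y^3)} = \frac{1}{(\Lambda_k)^{K_0}} \, \big\|(-\Delta_{h_0})^{K_0}\xi(z,\cdot)\big\|_{L^2(Y^3)},
\end{equation}
using the normalization $\|\varphi_k\|_{L^2}=1$. It then remains to bound $\|(-\Delta_{h_0})^{K_0}\xi(z,\cdot)\|_{L^2(Y^3)}$ by $C e^{\eta_0 z}$. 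Since the fiberwise Laplacian $\Delta_{h_0}$ is built from tangential derivatives of order at most $2$, the operator $(-\Delta_{h_0})^{K_0}$ involves only derivatives of $\xi$ of order at most $2K_0$ that are tangent to the fiber, with coefficients controlled by the fixed fiber geometry $(Y^3,h_0)$ (and its finitely many derivatives). Hence pointwise $|(-\Delta_{h_0})^{K_0}\xi(z,\bm{y})| \leq C \sum_{m=0}^{2K_0}|\nabla^m\xi(z,\bm{y})| \leq C e^{\eta_0 z}$ by hypothesis, and integrating over the fiber (whose volume is bounded by a constant depending only on $(Y^3,h_0)$) gives $\|(-\Delta_{h_0})^{K_0}\xi(z,\cdot)\|_{L^2(Y^3)} \leq C e^{\eta_0 z}$ with $C$ independent of $k$ and $z$. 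Combining the displays yields $|\xi_k(z)| \leq C e^{\eta_0 z}/(\Lambda_k)^{K_0}$, as claimed.

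The only genuine subtlety—and the step I would be most careful about—is the comparison between the ambient covariant derivatives $\nabla^m\xi$ (taken with respect to $g_{\mathcal{C}}$, which is what the hypothesis $|\nabla^m\xi| = O(e^{\eta_0 z})$ refers to) and the purely tangential derivatives appearing in $(-\Delta_{h_0})^{K_0}$. One has to observe that the restriction of $\Delta_{h_0}$ to a slice can be expressed using tangential derivatives and the second fundamental form of the slice inside $(\mathcal{C},g_{\mathcal{C}})$, all of which are controlled on $\{z = z_0\}$ uniformly once $z_0$ is bounded below—but here we need it uniformly in $z \to \infty$. This is fine because the relevant geometric quantities of the slices $\{\varrho = \text{const}\}$ in the Calabi model, after the substitution $z = (-\log\varrho^2)^{1/2}$, grow at most polynomially in $z$, which is absorbed into $\eta_0 z$ (if $\eta_0 > 0$) or, if $\eta_0 < 0$, by slightly enlarging the constant and restricting to $z \geq 1$; in any case one may simply allow the implicit constant in $O(e^{\eta_0 z})$ to absorb a fixed polynomial factor, exactly as is done for $\bm{a}$ in Lemma \ref{lemma:diff}. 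With that understood, the proof is a short integration-by-parts argument, and I would present it as such.
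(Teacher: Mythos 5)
Your proof is correct and follows essentially the same route as the paper: extract $\xi_k(z)$ by pairing with $\varphi_k$, write $\varphi_k = \Lambda_k^{-K_0}(-\Delta_{h_0})^{K_0}\varphi_k$, integrate by parts on the closed fiber, and bound $|(-\Delta_{h_0})^{K_0}\xi|$ via the assumed derivative bounds together with $\|\varphi_k\|_{L^2(Y^3)}=1$. Your closing remark on comparing fiberwise $h_0$-derivatives with ambient $g_{\mathcal{C}}$-covariant derivatives is a point the paper passes over silently, and your treatment of it (polynomial-in-$z$ factors absorbed into the constant or a slight shift of the exponent, consistent with how the lemma is later applied) is the right way to handle it.
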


\begin{proof}
 The estimate will be proved by the standard integration by parts. Since the eigenfunctions $\varphi_k$ satisfy 
 \begin{equation}
 -\Delta_{h_0}\varphi_{k}=\Lambda_{k}\varphi_{k}
 \end{equation} and $\|\varphi_k\|_{L^2(Y^3)}=1$, we have that
\begin{align} 
\Big|\xi_{k}(z)\Big|&=\Big|\int_{Y^3}\xi\cdot\varphi_k\Big|
=\Big|\int_{Y^3}\xi\cdot\frac{(-\Delta_{h_0})^{K_0}\varphi_k}{(\Lambda_k)^{K_0}}\Big|\nonumber\\
&\leq \frac{1}{(\Lambda_k)^{K_0}}\int_{Y^3}|\Delta_{h_0}^{K_0}\xi|\cdot|\varphi_k| \leq  \frac{Q_{2K_0}\cdot \Vol_{h_0}(Y^3)^{1/2} \cdot e^{\eta_0 z}}{(\Lambda_k)^{K_0}},
\end{align}
where $Q_{2K_0}$ depends only on the asymptotic bound of $\nabla^{2K_0}\xi$.
The proof is done.

\end{proof}

\begin{lemma}\label{l:coefficients-estimate}
Consider the inhomogeneous ordinary differential equation
\begin{equation}
\frac{d^2u_k(z)}{dz^2} - (j_k^2 z^2 + \lambda_k) u_k(z) = \xi_k(z)\cdot z,\ z\geq 10^6,
\end{equation}
where $\underline{\delta}>0$ is the constant defined in \eqref{e:underline-delta}.
Assume that the function $\xi_k(z)$ satisfies the following property:
 there are constants \begin{equation}\eta_0\in(-\underline{\delta}/2,\underline{\delta}/2)\setminus\{0\}\end{equation} and $Q_k>0$ such that 
\begin{equation}
|\xi_{k}(z)| \leq Q_k \cdot  e^{\eta_0 z}.
\end{equation}
Let $u_{k}(z)$ be the particular solution defined by
\begin{equation}
u_{k}(z)\equiv\frac{\mathcal{G}_k(z)+\mathcal{D}_k(z) }{\mathcal{W}_{k}(z)},\end{equation}
where 
\begin{equation}
\mathcal{D}_k(z) \equiv \mathcal{F}_k(z)\int_z^{\infty}\mathcal{U}_k(r)\cdot\Big(\xi_{k}(r)\cdot r\Big)dr,
\end{equation}

\begin{equation}
\mathcal{G}_k(z) \equiv \mathcal{U}_k(z)\int_1^{z}\mathcal{F}_k(r)\cdot\Big(\xi_{k}(r)\cdot r\Big)dr
\end{equation}
and $\mathcal{W}_k$ is the Wronskian
\begin{equation}
\mathcal{W}_k(z) \equiv \mathcal{W}\Big(\mathcal{F}_k(z),\mathcal{U}_k(z)\Big).\end{equation}
Then there are constants $C_0>0$ and $\eta_0<\eta<\eta_0 +  \underline{\delta}/10$ which are independent of $k$ such that the particular solution $u_k$ satisfies the uniform estimate 
\begin{equation}
|u_{k}(z)| \leq C_0 \cdot Q_k\cdot e^{\eta z}.
\end{equation}
\end{lemma}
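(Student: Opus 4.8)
The plan is to estimate the two pieces $\mathcal{D}_k$ and $\mathcal{G}_k$ separately, using the exponential bounds for $\mathcal{F}_k$ and $\mathcal{U}_k$ from Lemma \ref{l:exp-bound}, the lower bound for the Wronskian from Lemma \ref{l:wronskian} (or equivalently the uniform quotient bound in Lemma \ref{l:quotient-bound}), and the monotonicity of $\widehat F(z)-\eta z$ and $\widehat U(z)+\eta z$ from Lemma \ref{l:monotonicity}. By Lemma \ref{l:exp-bound} we have $\mathcal{F}_k(z)\le (1+\sqrt\pi)e^{\widehat F_k(z)}$ and $\mathcal{U}_k(z)\le(1+\sqrt\pi)e^{\widehat U_k(z)}$, and by Lemma \ref{l:quotient-bound} we have $e^{\widehat F_k(z)+\widehat U_k(z)}\le C_0\,\mathcal{W}_k(z)$ with $C_0$ independent of $j_k,h_k$. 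So the whole task reduces to bounding the two integrals of $\mathcal{F}_k$ (resp.\ $\mathcal{U}_k$) against $\xi_k(r)\,r$.

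For $\mathcal{D}_k$: write
\begin{equation}
\frac{\mathcal{D}_k(z)}{\mathcal{W}_k(z)}=\frac{\mathcal{F}_k(z)}{\mathcal{W}_k(z)}\int_z^\infty \mathcal{U}_k(r)\,\xi_k(r)\,r\,dr,
\end{equation}
bound $|\xi_k(r)|\le Q_k e^{\eta_0 r}$ and $\mathcal{U}_k(r)\le(1+\sqrt\pi)e^{\widehat U_k(r)}$, and use that $\widehat U_k(r)+\eta r$ is decreasing for $r>2\eta$ (which holds since $|\eta_0|<\underline\delta/2$ and $j_k\ge1$, so $2\eta<1$ once $\eta$ is chosen slightly bigger than $\eta_0$; the $j_k=0$ case is the easy exponential case handled directly). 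Then $\int_z^\infty e^{\widehat U_k(r)}\,r\,e^{\eta_0 r}\,dr\le \int_z^\infty e^{\widehat U_k(r)+\eta r}\,dr\cdot(\text{something})$ — more precisely one pulls out $e^{\widehat U_k(z)+\eta z}$ using monotonicity and integrates the remaining rapidly decaying Gaussian-type factor, picking up a factor that can be absorbed by enlarging $\eta$ by at most $\underline\delta/10$. Combined with $e^{\widehat F_k(z)}/\mathcal{W}_k(z)\le C_0 e^{-\widehat U_k(z)}$, the $\widehat U_k(z)$ factors cancel and we are left with $C_0 Q_k e^{\eta z}$. The bound for $\mathcal{G}_k/\mathcal{W}_k$ is symmetric: there we use that $\widehat F_k(r)-\eta r$ is increasing, so the integral $\int_1^z e^{\widehat F_k(r)}\,r\,e^{\eta_0 r}\,dr$ is dominated (up to polynomial-in-$z$ factors absorbed into the $\eta$-enlargement) by $e^{\widehat F_k(z)+\eta z}$ times a constant, and then the prefactor $\mathcal{U}_k(z)/\mathcal{W}_k(z)\le C_0 e^{-\widehat F_k(z)}$ cancels the $\widehat F_k(z)$.

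The key point making all constants uniform in $k$ is precisely Lemma \ref{l:quotient-bound}: without it, $\mathcal{W}_k$ carries a factor $2^{-h_k}\Gamma(h_k+1)\sqrt{j_k}$ which could blow up, but it exactly compensates the growth of $e^{\widehat F_k+\widehat U_k}$. One must also check that the enlargement of $\eta$ needed at each step is controlled independently of $k$: the ``loss'' comes only from integrating factors like $e^{-c(r^2-z^2)}$ or $e^{-c\,r}$ against a polynomial weight $r$, which near $r=z$ contributes at worst a bounded multiple of $z$, hence can be swallowed by replacing $e^{\eta_0 z}$ with $e^{\eta z}$ for any $\eta>\eta_0$; choosing $\eta<\eta_0+\underline\delta/10$ leaves room.

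The main obstacle I expect is the bookkeeping near the lower endpoint $r\approx 1$ (or $r\approx z$) of the integrals, where the monotonicity lemma only applies for $r>2\eta$ and one has to verify that the boundary contributions and the transition region $1\le r\le 2\eta$ (harmless since $2\eta<1$ when $\eta$ is small) do not spoil uniformity; and more substantively, making precise the claim that $\int_z^\infty e^{\widehat U_k(r)+\eta_0 r}\,r\,dr\le C_0\,e^{\widehat U_k(z)+\eta z}$ with $C_0$ and the $\eta$-loss both uniform in $j_k,h_k$. This requires a genuine estimate on how fast $\widehat U_k(r)+\eta r$ decreases — from the proof of Lemma \ref{l:monotonicity}, $\tfrac{d}{da}\widehat U_k = -\sqrt{h/2+a^2/16}\le -a/4$, i.e.\ $\tfrac{d}{dr}\widehat U_k(r)\le -\tfrac{j_k r}{4}$, so for $j_k\ge1$ the decrease is at least quadratic, giving a Gaussian-type tail whose integral against $r\,e^{\eta r}$ is easily $O(e^{\widehat U_k(z)+\eta z})$ uniformly; the symmetric estimate for $\widehat F_k$ uses $\tfrac{d}{dr}\widehat F_k(r)=\sqrt{h/2+a^2/16}\ge j_k r/4$.
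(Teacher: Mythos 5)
Your proposal is correct and follows essentially the same route as the paper: split into $\mathcal{D}_k$ and $\mathcal{G}_k$, handle $j_k=0$ with the explicit exponentials, and for $j_k\ge 1$ combine Lemma \ref{l:exp-bound}, the uniform quotient bound of Lemma \ref{l:quotient-bound} (the source of $k$-independence), and the monotonicity of $\widehat F_k(z)-\eta z$, $\widehat U_k(z)+\eta z$ from Lemma \ref{l:monotonicity}, absorbing the factor $r$ and the tail integral into a slight enlargement $\eta_0<\eta<\eta_0+\underline{\delta}/10$. The paper's only cosmetic difference is that it splits off an explicit $e^{-\epsilon r}$ factor (resp. bounds the $\mathcal{G}_k$-integral by $z$ times its endpoint value) rather than invoking the quantitative derivative bound $\tfrac{d}{dr}\widehat U_k\le -j_k r/4$, but this is the same mechanism.
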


\begin{proof}

We will prove that there exists some constant $\eta_0<\eta<\eta_0 + \underline{\delta}/10$ such that
\begin{equation}
\frac{\mathcal{D}_k(z)}{\mathcal{W}_{k}(z)}\leq C_0\cdot Q_k\cdot  e^{\eta z} \label{e:bdd-decaying-part}
\end{equation}
and 
\begin{equation}
\frac{\mathcal{G}_k(z)}{\mathcal{W}_{k}(z)}\leq C_0 \cdot Q_k \cdot e^{\eta z}, \label{e:bdd-growing-part}
\end{equation}
where the positive constant $C_0>0$ is independent of the index $k$.

We prove \eqref{e:bdd-decaying-part} and \eqref{e:bdd-growing-part} in two different cases.

In the first case, $k\in\dZ_+$ satisfies $j_k=0$. The fundamental solutions have an explicit form
\begin{equation}
\mathcal{F}_k(z) \equiv e^{\sqrt{\lambda_k} \cdot z}\end{equation} 
and
\begin{equation} 
\mathcal{U}_k(z) \equiv e^{-\sqrt{\lambda_k} \cdot z}. 
\end{equation}
 Immediately,
 \begin{equation}
 \mathcal{W}_k(z)=\mathcal{W}(\mathcal{F}_k(z),\mathcal{U}_k(z))=2\sqrt{\lambda_k}
 \end{equation}
and hence for $\eta>\eta_0$,
\begin{align}
\frac{|\mathcal{D}_k(z)|}{|\mathcal{W}_k(z)|}
= \frac{\mathcal{F}_k(z)}{\mathcal{W}_k(z)} \int_{z}^{\infty} \mathcal{U}_k(r) |\xi_k(r)\cdot r|dr \leq \frac{Q_ke^{\sqrt{\lambda_k} \cdot z}}{\sqrt{\lambda_k}}\int_z^{\infty}e^{(-\sqrt{\lambda_k} + \eta) \cdot r}dr \leq   C_0\cdot Q_ke^{\eta z}.\end{align}

Similarly, 
\begin{align}
\frac{|\mathcal{G}_k(z)|}{|\mathcal{W}_k(z)|}
= \frac{\mathcal{U}_k(z)}{\mathcal{W}_k(z)} \int_{z_0}^{z} \mathcal{F}_k(r) |\xi_k(r)\cdot r|dr
\leq \frac{Q_k e^{-\sqrt{\lambda_k} \cdot z}}{\sqrt{\lambda_k}}\int_{z_0}^{z}e^{(\sqrt{\lambda_k} + \eta) \cdot r}dr
\leq  C_0\cdot Q_ke^{\eta z}.
\end{align}

In the latter case $j_k\in\dZ_+$ and $k\in\dZ_+$, we will prove the uniform estimates.
A crucial point is to apply the monotonicity in Lemma \ref{l:monotonicity}. In fact, 
 \begin{align}
\frac{\mathcal{D}_k(z)}{\mathcal{W}_{k}(z)}
&= \frac{\mathcal{F}_k(z)}{\mathcal{W}_k(z)} \int_{z}^{\infty} \mathcal{U}_k(r) \xi_k(r)\cdot rdr \nonumber \\
&\leq\frac{C_0e^{\widehat{F}_k(z)}}{\mathcal{W}_k(z)}\int_z^{\infty}e^{\widehat{U}_k(r)} \xi_k(r)\cdot r dr
\leq \frac{C_0e^{\widehat{F}_k(z)}}{\mathcal{W}_k(z)}\int_z^{\infty}e^{\widehat{U}_k(r)+\eta' r} dr,
\end{align}
where $\eta'>\eta_0$. We choose  $\epsilon\in( \underline{\delta}/100, \underline{\delta}/10)$ and denote $\eta \equiv \eta' + \epsilon$, then
by Lemma  \ref{l:monotonicity}
\begin{align}
\frac{e^{\widehat{F}_k(z)}}{\mathcal{W}_k(z)}\int_z^{\infty}e^{\widehat{U}_k(r)+\eta' r} dr
&= \frac{e^{\widehat{F}_k(z)}}{\mathcal{W}_k(z)}\int_z^{\infty}e^{\widehat{U}_k(r)+\eta r} \cdot e^{-\epsilon r}dr
\nonumber\\
& \leq \frac{C_0\cdot Q_k\cdot e^{\widehat{F}_k(z)+\widehat{U}_k(z)+\eta z}}{  \mathcal{W}_k(z)}\int_z^{\infty}e^{-\epsilon r}dr \nonumber \\
& \leq  C_0\cdot Q_k\cdot\frac{e^{\widehat{F}_k(z)+\widehat{U}_k(z)+\eta z}}{\mathcal{W}_k(z)}
\leq C_0\cdot Q_k\cdot e^{\eta z}.
\end{align}
The proof of \eqref{e:bdd-decaying-part} is done.

Next, for the estimate \eqref{e:bdd-growing-part}, 
\begin{align}
\frac{\mathcal{G}_k(z)}{\mathcal{W}_k(z)}
&=
\frac{\mathcal{U}_k(z)}{\mathcal{W}_k(z)} \int_{1}^{z} \mathcal{F}_k(r) \xi_k(r)\cdot rdr \nonumber \\
&\leq  \frac{C_0e^{\widehat{U}_k(z)}}{ \mathcal{W}_k(z)} \int_1^z e^{\widehat{F}_k(r) + \eta' r} dr 
\leq \frac{C_0\cdot Q_k z\cdot e^{\widehat{U}_k(z) + \widehat{F}_k(z) + \eta' z} }{\mathcal{W}_k(z)}
\leq  C_0\cdot Q_k \cdot e^{\eta z}. \label{e:G/W}
\end{align}
This completes the proof of the proposition.

\end{proof}

\begin{lemma}
[Uniform estimate for eigenfunctions]\label{l:eigenfunction-bound} Let $\{\varphi_{k}\}_{k=1}^{\infty}$  be the eigenfunctions of $\Delta_{h_0}$ on $(Y^3,h_0)$ with $\|\varphi_{k}\|_{L^2(Y^3)}=1$, then there exists $C>0$ which depends only on the metric $h_0$ such that
\begin{equation}
\|\varphi_k\|_{L^{\infty}(Y^3)}\leq C\cdot\Lambda_k.
\end{equation}
\end{lemma}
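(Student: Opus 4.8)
The goal is a Sobolev-type bound $\|\varphi_k\|_{L^\infty(Y^3)} \leq C \Lambda_k$ for the $L^2$-normalized eigenfunctions of $\Delta_{h_0}$ on the closed Riemannian $3$-manifold $(Y^3, h_0)$. Since $Y^3$ is compact, its geometry is fixed, so all constants that follow depend only on $h_0$. The plan is to combine elliptic $L^p$ estimates with Sobolev embedding, using the eigenvalue equation to convert derivative bounds into powers of $\Lambda_k$.

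First I would recall that $-\Delta_{h_0}\varphi_k = \Lambda_k \varphi_k$ with $\|\varphi_k\|_{L^2} = 1$. Applying interior (here global, since $Y^3$ is closed) $W^{2,2}$ elliptic estimates gives
\begin{equation}
\|\varphi_k\|_{W^{2,2}(Y^3)} \leq C\bigl(\|\Delta_{h_0}\varphi_k\|_{L^2(Y^3)} + \|\varphi_k\|_{L^2(Y^3)}\bigr) = C(\Lambda_k + 1)\|\varphi_k\|_{L^2(Y^3)} \leq C(\Lambda_k + 1).
\end{equation}
In dimension $3$ one has the Sobolev embedding $W^{2,2}(Y^3) \hookrightarrow C^{0,1/2}(Y^3)$ (since $2 \cdot 2 = 4 > 3$), and in particular $W^{2,2}(Y^3)\hookrightarrow L^\infty(Y^3)$, with operator norm depending only on $h_0$. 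This already yields $\|\varphi_k\|_{L^\infty} \leq C(\Lambda_k + 1)$, and since the lemma is only needed for the tail of the spectrum (and the small eigenvalues are finite in number and harmless), this is equivalent to the stated bound $\|\varphi_k\|_{L^\infty} \leq C\Lambda_k$ after enlarging $C$ to absorb the finitely many $\Lambda_k \leq 1$, or more simply by noting $\Lambda_k \geq \lambda_1 > 0$ for a positive lower bound $\lambda_1$ on the spectrum if $Y^3$ is connected and we restrict to nonconstant eigenfunctions; for the constant eigenfunction the bound is trivial.

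I do not anticipate a serious obstacle here: the only subtlety is being careful that one is genuinely working on the \emph{compact} manifold $Y^3 = \{\varrho = r_0\}$ with its fixed induced metric $h_0$ from \eqref{e:h_0}, rather than on the noncompact Calabi space, so that global elliptic estimates and Sobolev embeddings apply with uniform constants. If one wanted the sharper bound $\|\varphi_k\|_{L^\infty}\leq C\Lambda_k^{(3-1)/4} = C\Lambda_k^{1/2}$ coming from the Sogge/Hörmander eigenfunction estimates, that would require a microlocal argument; but the crude $W^{2,2}$ bound giving the $\Lambda_k$ power stated in the lemma is more than enough for the intended application (controlling the series $\sum_k \|\varphi_k\|_{L^\infty}/(\Lambda_k)^{K_0-1}$ via Weyl's law with $K_0 \geq 3$), so I would stop there.
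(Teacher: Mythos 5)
Your proof is correct and follows essentially the same route as the paper: apply the global $W^{2,2}$ elliptic estimate to $-\Delta_{h_0}\varphi_k=\Lambda_k\varphi_k$ on the closed $3$-manifold $(Y^3,h_0)$ and then the Sobolev embedding $W^{2,2}(Y^3)\hookrightarrow C^{0,1/2}(Y^3)$. Your extra remarks about absorbing the $+1$ and the constant eigenfunction, and about the sharper Sogge bound, go slightly beyond what the paper records but do not change the argument.
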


\begin{proof}
 The proof follows from the standard elliptic regularity. Indeed, the eigenfunction $\varphi_k$ satisfies the elliptic equation 
\begin{equation}-\Delta_{h_0}\varphi_{k}=\Lambda_k\cdot\varphi_k.\end{equation} 
 It follows from the standard  elliptic regularity that there exists some constant $C>0$ depending only the metric $h_0$ such that
\begin{equation}
\|\varphi_k\|_{W^{2,2}(Y^3)}\leq C\cdot\Lambda_k\cdot \|\varphi_k\|_{L^2(Y^3)}=C\cdot\Lambda_k.
\end{equation}
Applying the Sobolev embedding theorem, 
\begin{equation}
\|\varphi_k\|_{C^{0,\frac{1}{2}}(Y^3)} \leq C\|\varphi_k\|_{W^{2,2}(Y^3)}\leq C\cdot\Lambda_k,
\end{equation}
where $C>0$ depends only on the metric $h_0$. The proof is complete.
\end{proof}

\begin{proposition}[Sovability of Poisson Equation]\label{p:surjectivity}  Let $(\mathcal{C}, g_{\mathcal{C}})$ be the Calabi space, there is some constant $\underline\delta>0$ which depends only on $\mathcal{C}$ such that the following property holds: given any 
 \begin{equation}\eta_0 \in(-\underline\delta,\underline\delta)\setminus
 \{0\},\end{equation} 
 if $v\in C^{3K_0,\alpha}(\mathcal{C})$ for $K_0\geq 3$
and $v(z,\bm{y})=O(e^{\eta_0 z})$, then the equation
\begin{equation}
\Delta_{g_0}u=v\label{e:possion-eq}
\end{equation}
has a solution $u\in C^{3K_0+2,\alpha}(\mathcal{C})$ with 
\begin{equation}
u(z,\bm{y})=O(e^{\eta z})
\end{equation}
for any $\eta>\eta_0$.

\end{proposition}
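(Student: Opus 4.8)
The plan is to solve \eqref{e:possion-eq} by separation of variables in a neighborhood of infinity and then patch with a standard interior solve. First I would fix a cross section $(Y^3,h_0)=\{\varrho=r_0\}$ with $r_0$ close to $1$ and expand $v$ along the eigenfunctions $\varphi_k$ of $\Delta_{h_0}$, writing $v(z,\bm y)=\sum_k\xi_k(z)\varphi_k(\bm y)$ for $z\geq z_0$. By Lemma \ref{l:error-estimate} (applied with the hypothesis $v=O(e^{\eta_0 z})$ in $C^{3K_0}$) the coefficients satisfy $|\xi_k(z)|\leq C e^{\eta_0 z}/\Lambda_k^{K_0}$, with $C$ uniform in $k$. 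For each $k$ I then take the particular solution $u_k$ of the inhomogeneous ODE \eqref{e:separation-poisson} constructed in Lemma \ref{l:coefficients-estimate} from the fundamental solutions $\mathcal F_k,\mathcal U_k$ and their Wronskian $\mathcal W_k$; that lemma gives $|u_k(z)|\leq C_0 Q_k e^{\eta z}$ with $Q_k = C/\Lambda_k^{K_0}$ and $\eta_0<\eta<\eta_0+\underline\delta/10$, uniformly in $k$. Here $\underline\delta$ is exactly the constant from \eqref{e:underline-delta}, which depends only on $\mathcal C$; shrinking $\eta_0$ to lie in $(-\underline\delta,\underline\delta)$ is what makes the estimates in Lemma \ref{l:coefficients-estimate} applicable (one needs $\eta_0\in(-\underline\delta/2,\underline\delta/2)$ after possibly relabelling, which is harmless).

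Next I would show the formal series $u(z,\bm y)\equiv\sum_k u_k(z)\varphi_k(\bm y)$ converges to a genuine $C^{3K_0+2,\alpha}$ function on the end and solves $\Delta_{g_{\mathcal C}} u=v$ there. The key point is the uniform-in-$k$ decay $|u_k(z)|\leq C e^{\eta z}/\Lambda_k^{K_0}$ together with the eigenfunction bound $\|\varphi_k\|_{L^\infty(Y^3)}\leq C\Lambda_k$ from Lemma \ref{l:eigenfunction-bound}; since $K_0\geq 3$, Weyl's law makes $\sum_k \Lambda_k^{-(K_0-1)}$ converge, so the series and (after differentiating the ODE to control $u_k'',u_k'$ in terms of $\xi_k$ and $u_k$) its first two derivatives converge absolutely and locally uniformly, giving $u=O(e^{\eta z})$ pointwise. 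Elliptic (Schauder) regularity applied to $\Delta_{g_{\mathcal C}}u=v$ with $v\in C^{3K_0,\alpha}$ then upgrades $u$ to $C^{3K_0+2,\alpha}$ on the end, and the $C^{3K_0+2,\alpha}$ bound is again $O(e^{\eta z})$ because the equation, the metric, and $v$ all have at worst polynomial-in-$z$ geometry relative to the exponential weight.

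Finally I would globalize. Let $\chi$ be a cutoff equal to $1$ for $z$ large and $0$ on a compact set, and let $u_{\rm end}$ be the solution just constructed on the end. Then $\tilde v\equiv v-\Delta_{g_{\mathcal C}}(\chi u_{\rm end})$ is compactly supported (indeed $C^{3K_0,\alpha}$ and supported in a fixed annulus), so I need to solve $\Delta_{g_{\mathcal C}} w=\tilde v$ with $w$ decaying. Since $(\mathcal C,g_{\mathcal C})$ is complete, Ricci-flat, and of maximal volume growth along the line obstruction is absent here: more concretely, one can either invoke that $\mathcal C$ is parabolic-free in the relevant sense, or—cleanly—solve $\Delta w=\tilde v$ on $\mathcal C$ by exhaustion, using that $\int_{\mathcal C}\tilde v\,\dvol=0$ is \emph{not} actually required because the Calabi space is non-parabolic (it has a positive Green's function: recall its volume growth is $\sim z^{3}$ in the $z$-variable, i.e. $\sim r^{4/3}$, which is faster than quadratic), so the Green's operator applied to the compactly supported $\tilde v$ produces $w\to 0$ at infinity, with $w=O(e^{-\delta z})$ by separation of variables one more time (this decaying piece is harmless for the claimed $O(e^{\eta z})$ bound since $\eta>\eta_0$ can be taken with the decaying part absorbed). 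Then $u\equiv \chi u_{\rm end}+w$ solves $\Delta_{g_{\mathcal C}}u=v$ globally, lies in $C^{3K_0+2,\alpha}(\mathcal C)$, and satisfies $u=O(e^{\eta z})$.

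\textbf{Main obstacle.} The delicate step is the uniform-in-$k$ control of the particular solutions $u_k$ and the convergence of the resulting series: one must ensure that the choice of particular solution (the specific combination $\mathcal D_k/\mathcal W_k+\mathcal G_k/\mathcal W_k$ in Lemma \ref{l:coefficients-estimate}) does not introduce a $k$-dependent constant that destroys summability, which is exactly why the Wronskian lower bound (Lemma \ref{l:wronskian}), the pointwise bounds on $\mathcal F_k,\mathcal U_k$ (Lemma \ref{l:exp-bound}), the quotient bound (Lemma \ref{l:quotient-bound}) and the monotonicity (Lemma \ref{l:monotonicity}) were all established beforehand; with those in hand the argument is essentially bookkeeping, but getting the weight $\eta$ to stay in the required window uniformly in $k$ is the crux.
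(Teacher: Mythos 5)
Your first two paragraphs are essentially the paper's own proof: expand $v$ along the eigenfunctions of $(Y^3,h_0)$, invoke Lemma \ref{l:error-estimate} for the uniform bound $|\xi_k(z)|\leq Ce^{\eta_0 z}/\Lambda_k^{K_0}$, take the particular solutions of Lemma \ref{l:coefficients-estimate} (whose uniformity in $k$ rests on Lemmas \ref{l:wronskian}, \ref{l:exp-bound}, \ref{l:quotient-bound}, \ref{l:monotonicity}, exactly as you say), control $\|\varphi_k\|_{L^\infty}$ by Lemma \ref{l:eigenfunction-bound}, and sum using Weyl's law and $K_0\geq 3$; the paper then gets $C^{3K_0+2,\alpha}$ regularity by applying $W^{2,p}$/Schauder estimates to the partial sums $U_N$, $V_N$ and bootstrapping, which is the same elliptic upgrade you sketch, just phrased through partial sums. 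So on the core of the argument you and the paper coincide, including the identification of the delicate point (uniform-in-$k$ weighted bounds on the particular solutions).

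Your third paragraph, however, is not in the paper and as written does not work. The Calabi model space is \emph{incomplete} as $|\xi|_h\to 1$ (i.e.\ as $z\to 0$), so the assertion that $(\mathcal C,g_{\mathcal C})$ is complete with a positive Green's function cannot be used as stated; moreover, with $\chi=\chi(z)$ vanishing for $z\leq 1$, the error term $\tilde v=v-\Delta_{g_{\mathcal C}}(\chi u_{\rm end})$ equals $v$ on the noncompact region $\{z\leq 1\}$ adjacent to the incomplete end, so it is not compactly supported and the Green's-operator step has nothing to act on in the advertised way. The good news is that this step is superfluous: the paper takes the series itself as the solution, and the proposition is only ever applied on the end $[T_0,\infty)\times Y^3$ (in the proof of Theorem \ref{t:liouville-theorem-functions} and in Section \ref{s:liouville-1-form}), which is exactly where the separation-of-variables construction lives. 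So either delete the patching paragraph and read the statement, as the paper implicitly does, on the end where $z\geq 1$, or, if you insist on a solution on all of $\mathcal C$, you must treat the incomplete end $z\to 0$ honestly (e.g.\ a boundary-value or exhaustion argument there), which your Green's-function argument does not do.
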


\begin{proof}
The proof of the proposition is constructive. 
The basic strategy is to apply separation of variables to construct a solution to the equation \eqref{e:possion-eq}.
Given a function $v$ and for any fixed $z\geq 1$, there is an expansion over the fiber $Y^3$,
\begin{equation}
v(z,\bm{y}) = \sum\limits_{k=1}^{\infty} v_k(z) \varphi_k(\bm{y}).
\end{equation}
  Separation of variables enables us to construct 
a formal solution 
\begin{equation}
u(z,\bm{y})=\sum\limits_{k=1}^{\infty}u_{k}(z)\varphi_k(\bm{y})
\end{equation}
to the equation \eqref{e:possion-eq},
where $u_{k}$ are the particular solutions in Lemma \ref{l:coefficients-estimate}. 
Since a priori the above series is defined in the $L^2$-topology along each fiber $Y^3\times\{z\}$, we need to verify the higher order convergence of the series, which will indicate that $u$ is a regular solution to \eqref{e:possion-eq}.

First, we will show that the above series converges in the $C^0$-topology and thus $u$ is a $C^0$-function. The main point is to reduce the uniform convergence to the convergence of certain numerical series involving only in the eigenvalues $\{\Lambda_k\}_{k=1}^{\infty}$ of a definite fiber $(Y^3,h_0)$. Indeed, Lemma \ref{l:error-estimate} guarantees that the solutions satisfy all the conditions in Lemma \ref{l:coefficients-estimate}. Since we have obtained in Lemma \ref{l:coefficients-estimate} the uniform estimate for the ODE solutions $u_k$
and also in Lemma \ref{l:eigenfunction-bound}
the uniform estimate for the eigenfunctions, the $L^2$-expansion has the following bound,
\begin{align}
|u(z,\bm{y})| 
\leq  \sum\limits_{k=1}^{\infty}|u_{k}(z)| \cdot |\varphi_k(\bm{y})| 
\leq C\sum\limits_{k=1}^{\infty}\frac{e^{\eta z}}{(\Lambda_k)^{K_0-1}}. 
\label{e:numerical-series}
\end{align}
Since the spectrum of Laplacian $\{\Lambda_k\}_{k=1}^{\infty}$ obeys Weyl's 
law on $(Y^3,h_0)$, it follows that for sufficiently large $k$,
\begin{equation}
C_0^{-1} k^{\frac{2}{3}}\leq|\Lambda_k| \leq C_0 k^{\frac{2}{3}},
\end{equation}
where $C_0>0$ depends only on  $h_0$.
Plugging the above asymptotics into \eqref{e:numerical-series}, we have that
\begin{equation}
\sum\limits_{k=1}^{\infty}\frac{1}{(\Lambda_k)^{K_0-1}} \leq C\sum\limits_{k=1}^{\infty} \frac{1}{k^{\frac{4}{3}}} < \infty 
\end{equation}
and hence
\begin{equation}
|u(z,\bm{y})| \leq Ce^{\eta z}.
\end{equation}
Therefore, 
$u\in C^0(\mathcal{C})$ and $u$ exponentially decays. 

Next, we will apply the standard elliptic regularity on the Calabi manifold
to show that $u\in C^2$ and thus $u$ is a regular solution.
For the expansions
\begin{align}
u(z,\bm{y})=\sum\limits_{k=1}^{\infty}u_k(z)\varphi_k(\bm{y}), \
v(z,\bm{y})=\sum\limits_{k=1}^{\infty}v_k(z)\varphi_k(\bm{y}),
\end{align}
we denote by
\begin{align}
U_N(z,\bm{y})\equiv\sum\limits_{k=1}^{N}u_k(z)\varphi_k(\bm{y}), \
V_N(z,\bm{y})\equiv\sum\limits_{k=1}^{N}v_k(z)\varphi_k(\bm{y})
\end{align}
the partial sums of $u$ and $v$ respectively.
Immediately,
\begin{equation}
\Delta_{g_{\mathcal{C}}}U_N=V_N.
\end{equation}
For every $\bm{x}\equiv (z,\bm{y})\in \mathcal{C}$, we will apply the elliptic regularity on the ball $B_2(\bm{x})\subset\mathcal{C}$ to obtain the higher regularity of $u$. For this purpose, first we prove the following claim.

\begin{claim}
As $N\to\infty$,
$\|V_N-v\|_{C^0(B_2(\bm{x}))}\to 0$.
\end{claim}
\begin{proof}
The proof of the claim follows from basically from Weyl's law. 
For the partial sum of $v$, 
\begin{equation}
V_N \equiv \sum\limits_{j=1}^Nv_{j} \varphi_j=\sum\limits_{j=1}^N\Big(\int_{Y^3}v\cdot\varphi_j\dvol_{h_0}\Big)\varphi_j
=\sum\limits_{j=1}^N\Big(\int_{Y^3}v\cdot\frac{(-\Delta_{h_0})^{K_0}\varphi_j}{(\Lambda_j)^{K_0}}\dvol_{h_0}\Big)\varphi_j.
\end{equation}
Applying integration by parts,
\begin{align}
\|V_N\|_{L^{\infty}(B_1(p_0))}
&\leq \sum\limits_{j=1}^N \Big(\frac{1}{(\Lambda_j)^{K_0}}\int_{Y^3}|\Delta_{h_0}^{K_0}v|\cdot |\varphi_j|\dvol_{h_0}\Big)\|\varphi_j\|_{L^{\infty}(B_1(p_0))}\notag\\
&\leq V_0\cdot\|v\|_{C^{2K_0}(Y^3\times\{z_0\})}\cdot\sum\limits_{j=1}^N\frac{1}{(\Lambda_j)^{K_0-1}},
\end{align}
where $V_0=\Vol_{h_0}(Y^3)$.
Notice that,  the spectrum $\{\Lambda_j\}_{j=1}^{\infty}$ satisfies the Weyl's law on $(Y^3,h_0)$, so in particular for sufficiently large $j$,
\begin{equation}
C_0^{-1} j^{\frac{2}{3}}\leq|\Lambda_j| \leq C_0 j^{\frac{2}{3}}.
\end{equation}
Since $K_0\geq 3$, 
\begin{equation}
\|V_N\|_{L^{\infty}(B_1(p_0))} \leq C \|v\|_{C^{2K_0}(Y^3\times\{z_0\})}\cdot\sum\limits_{j=1}^N\frac{1}{j^{\frac{4}{3}}} \leq C.
\end{equation}
The proof of the claim is done. 
\end{proof}

The proof of the higher order convergence is exactly the same. In fact, we just need to replace $\|v\|_{C^{2K_0}}$ with the higher order norm $\|v\|_{C^{2K_0+m}}$ with $m\leq K_0$.
Since $\Delta_{g_{\mathcal{C}}} U_N=V_N$, the  standard $W^{2,p}$- implies that regularity 
for every $1<p<\infty$, $\|U_N\|_{W^{2,p}(B_1(\bm{x}))}\leq C_{p,\bm{x}}$. By assumption $v\in C^{3K_0}(\mathcal{C})$ with $K_0\geq 3$, we have 
$\|V_N\|_{C^2(B_2(\bm{x}))}\leq C_{\bm{x}}$. Hence the regularity of $u$ will be improved as follows, for every $1<p<\infty$, 
\begin{equation}
\|U_N\|_{W^{4,p}(B_1(\bm{x}))}\leq C_{p,\bm{x}}  ( \|U_N\|_{W^{2,p}(B_{3/2}(\bm{x}))} + \|V_N\|_{W^{2,p}(B_2(\bm{x}))} )\leq C_{p,\bm{x}}.
\end{equation}
Now taking $p>4$ and applying the Sobolev embedding, 
\begin{equation}
\|U_N\|_{C^{3,\alpha}(B_1(\bm{x}))}\leq C_{p,\bm{x}},\ \alpha \equiv 1-\frac{4}{p},
\end{equation}
which implies that $U_N$ converges to a smooth solution $u$.  Then applying the standard Schauder estimate and bootstrapping, the statement of the proposition just follows.

\end{proof}

\subsection{Proof of the Liouville theorem}

\label{ss:proof-liouville-functions}

With the above technical preparations, we complete the proof of the main result in this section, Theorem~\ref{t:liouville-theorem-functions}.
We need the following lemma which is an immediate corollary of the Bochner formula and the maximum principle.
\begin{lemma}\label{l:max}
Let $(M^n,g,p)$ be a complete non-compact manifold with $\Ric_g \geq 0$. Let $\omega$ be a harmonic $1$-form on $(M^n,g)$, i.e., $\Delta_{H}\omega=0$ and assume that
\begin{equation}\lim\limits_{d_g(x,p)\to+\infty}|\omega(x)|= 0, \label{e:asymp}
\end{equation}
then $\omega \equiv 0$ on $M^n$.
\end{lemma}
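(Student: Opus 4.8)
The plan is the classical Bochner--Weitzenb\"ock argument together with the strong maximum principle. First I would invoke the Weitzenb\"ock formula for $1$-forms: writing $\Delta_H = dd^\ast + d^\ast d$ for the Hodge Laplacian, $\nabla^\ast\nabla$ for the (nonnegative) connection Laplacian, and $\Delta$ for the Laplace--Beltrami operator $\Tr(\nabla^2)$, one has $\Delta_H\omega = \nabla^\ast\nabla\omega + \Ric(\omega)$. Since $\Delta_H\omega = 0$ by hypothesis, this gives $\nabla^\ast\nabla\omega = -\Ric(\omega)$. By elliptic regularity for the Hodge Laplacian, $\omega$ is smooth, hence so is $|\omega|^2$, and the standard pointwise computation (using an orthonormal frame parallel at the point) yields
\[
\tfrac12\,\Delta|\omega|^2 = |\nabla\omega|^2 - \langle \nabla^\ast\nabla\omega,\omega\rangle = |\nabla\omega|^2 + \Ric(\omega^\sharp,\omega^\sharp).
\]
Since $\Ric_g\geq 0$, the right-hand side is nonnegative, so $|\omega|^2$ is a subharmonic function on $M^n$.

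Next I would run a maximum-principle argument exploiting the decay assumption \eqref{e:asymp}. Suppose, for contradiction, that $\omega\not\equiv 0$. Because $|\omega|^2$ is continuous, nonnegative, and tends to $0$ at infinity, it is bounded and attains its supremum $c := \sup_M|\omega|^2$, and $c > 0$. At any point $x_0$ where $|\omega|^2(x_0) = c$, the subharmonic function $|\omega|^2$ attains an interior maximum, so by the Hopf strong maximum principle $|\omega|^2$ is constant in a neighborhood of $x_0$. Hence the set $\{|\omega|^2 = c\}$ is open; it is also closed by continuity and nonempty; since $M^n$ is connected, $|\omega|^2\equiv c$ on all of $M^n$. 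This contradicts \eqref{e:asymp}, since $c > 0$. (Equivalently, once $|\omega|^2$ is constant the Bochner identity forces $|\nabla\omega|^2 + \Ric(\omega^\sharp,\omega^\sharp)\equiv 0$, so $\nabla\omega = 0$; a parallel form has constant norm, which together with \eqref{e:asymp} again forces $\omega\equiv 0$.) Therefore $\omega\equiv 0$ on $M^n$.

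There is no substantial obstacle in this argument; the only points needing a moment's care are the smoothness of $\omega$ (immediate from elliptic regularity) and the fact that the supremum of $|\omega|^2$ is actually attained on $M^n$ rather than merely approached at infinity --- which is precisely where the decay hypothesis \eqref{e:asymp} is used, and where completeness/non-compactness of $M^n$ play their role in the propagation of constancy to all of $M^n$.
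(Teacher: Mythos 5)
Your proof is correct and follows essentially the same route as the paper: the Bochner--Weitzenb\"ock formula with $\Ric_g \geq 0$ shows $|\omega|^2$ is subharmonic, and then the maximum principle combined with the decay hypothesis \eqref{e:asymp} forces $\omega \equiv 0$. The only difference is that you spell out the maximum-principle step (attainment of the supremum, the open-closed argument via the strong maximum principle) in more detail than the paper's two-line proof, which is fine.
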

\begin{proof}
Since $\omega$ is harmonic, by Bochner's formula,
\begin{equation}
\frac{1}{2}\Delta_g|\omega|^2 = |\nabla \omega|^2 + \Ric_g(\omega,\omega) \geq 0,
\end{equation}
then $|\omega|^2$ is subharmonic.  
Given the asymptotic property \eqref{e:asymp}, applying the maximum principle to the above subharmonic function $|\omega|^2$, 
we have $\omega \equiv 0$ on $M^n$.

\end{proof}

\begin{proof}[Proof of Theorem \ref{t:liouville-theorem-functions}]
Let $u$ satisfy $\Delta_{g}u=0$
 $(X^{4}, g)$. We also assume that $u$ satisfies the asymptotic behavior
\begin{equation}
u = O(e^{ \ell_0 z}) \label{e: exp-growth}
\end{equation}
for some $\ell_0\in(0,1)$.
The main part of the proof is to determine a positive number $\ell_0>0$ such that if \eqref{e: exp-growth} holds, 
then $u$ has at most linear growth at infinity, which enables us to apply Lemma \ref{l:max}.

By assumption,
there is a diffeomorphism
\begin{equation}
\Phi: X^{4}\setminus K \longrightarrow [10^2,+\infty)\times Y^{3}
\end{equation}
such that for all $k\geq 0$
\begin{equation}
\|g-\Phi^*g_{\mathcal C}\|_{C^k}  \leq C e^{-\delta z}.\label{e:C1-exp-close}
\end{equation}
To obtain an accurate growth order of $u$, we will study the equation of $u$ in terms of the metric $g_{\mathcal{C}}$ on the model space $[10^2,+\infty)\times Y^3
$.

First, we will show that a harmonic function on $(X^{4}, g)$ with exponential growth
is well behaved in terms of the model metric $g_{\mathcal{C}}$ near infinity. Preciesly, we will prove the following claim.

\begin{claim} Assume that $(X^4,g)$ is $\delta$-asymptotically Calabi. Let $\hat{\delta} \in (0, \delta/10)$
such that $u$ satisfies
\begin{align}
\begin{split}
\Delta_{g} u & = 0 \\
u &= O(e^{\hat{\delta} z}),
\end{split}
\end{align}
then for every fixed $k\in \dZ_+$, let $\bm{x}_0 \in [T_0(k),+\infty)\times Y^{3}$ 
with $T_0(k)\geq 100^{k^3}>0$ and denote
 $z_0 \equiv z(\bm{x}_0)$, we have
 \begin{equation}
 \|\nabla^k \Delta_{g_{\mathcal{C}}} u(\bm{x}_0)\| \leq C(k,g) \cdot e^{-\frac{\delta z_0}{2}}.
 \end{equation}
\end{claim}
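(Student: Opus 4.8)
The plan is to exploit that $u$ is exactly $g$-harmonic while $g$ is exponentially close to the model metric $g_{\mathcal C}$, so that $\Delta_{g_{\mathcal C}} u$ is ``almost zero'' with an explicit exponentially small error. First I would write, in the end coordinates provided by $\Phi$, the difference of the two Laplacians as a first-order differential operator with exponentially decaying coefficients:
\begin{equation}
\Delta_{g_{\mathcal C}} u - \Delta_g u = \bigl(g_{\mathcal C}^{ij} - g^{ij}\bigr)\,\partial_i\partial_j u + \bigl(b^k_{g_{\mathcal C}} - b^k_g\bigr)\,\partial_k u,
\end{equation}
where $b^k$ denotes the first-order coefficients built from the metric and its first derivatives. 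By the asymptotically Calabi hypothesis \eqref{e:C1-exp-close}, the coefficient functions $(g_{\mathcal C}^{ij} - g^{ij})$ and $(b^k_{g_{\mathcal C}} - b^k_g)$, together with all their derivatives measured in $g_{\mathcal C}$, are $O(e^{-\delta z})$. Since $\Delta_g u = 0$, this already gives $\Delta_{g_{\mathcal C}} u$ as an explicit expression in the first and second derivatives of $u$ multiplied by exponentially small factors. The only missing ingredient is then a bound on the $g_{\mathcal C}$-derivatives of $u$ itself of order at most $e^{\hat\delta z}$ with $\hat\delta$ small, which is supplied by interior elliptic (Schauder) estimates for the equation $\Delta_g u = 0$ on unit $g_{\mathcal C}$-balls: since $g$ and $g_{\mathcal C}$ are uniformly equivalent and have uniformly bounded geometry on the end, and $u = O(e^{\hat\delta z})$, interior estimates on $B_1(\bm x_0)$ give $\|\nabla_{g_{\mathcal C}}^m u\|(\bm x_0) \le C(m) e^{\hat\delta z_0}$ for every $m$, after enlarging the radius slightly and absorbing the mild variation of $z$ over a unit ball into the constant (this is exactly the kind of $\epsilon$-loss already seen in \eqref{hatschi2}).

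Next I would differentiate the identity for $\Delta_{g_{\mathcal C}} u$ up to order $k$ using $\nabla_{g_{\mathcal C}}$, applying the Leibniz rule: each term is a product of $\nabla_{g_{\mathcal C}}^a$ of an exponentially-decaying coefficient ($O(e^{-\delta z})$, by \eqref{e:C1-exp-close}) with $\nabla_{g_{\mathcal C}}^b$ of $u$ ($O(e^{\hat\delta z})$ by the previous paragraph), where $a + b \le k + 2$. Hence $\|\nabla_{g_{\mathcal C}}^k \Delta_{g_{\mathcal C}} u(\bm x_0)\| \le C(k,g)\,e^{-(\delta - \hat\delta) z_0} \le C(k,g)\,e^{-\delta z_0/2}$, using $\hat\delta < \delta/10 < \delta/2$. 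The threshold $T_0(k)\ge 100^{k^3}$ only enters to guarantee we are deep enough in the end for all the interior estimates and the asymptotic expansion to be valid with uniform constants; its precise form is not important.

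I do not expect a serious obstacle here; the statement is essentially a bookkeeping consequence of: (i) the metric asymptotics \eqref{e:C1-exp-close}, (ii) standard interior elliptic estimates on a space of bounded geometry, and (iii) the Leibniz rule. The one point requiring a little care is the mild non-uniformity coming from the fact that the frame $dx, dy, dt - x\,dy$ on $\mathrm{Nil}^3_b$ is not $g_{\mathcal C}$-parallel, so that covariant derivatives of coordinate expressions pick up polynomial-in-$z$ factors; as in the proof of Lemma \ref{lemma:diff}, these are harmless because they are dominated by the genuine exponential gain $e^{-(\delta-\hat\delta)z_0}$ and can be absorbed into $C(k,g)$. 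A secondary bookkeeping point is to make sure that the interior estimate is applied on $g_{\mathcal C}$-balls of a fixed radius (say radius $2$) rather than $g$-balls, which is legitimate since the two metrics are uniformly equivalent on the end.
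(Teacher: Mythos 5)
Your overall strategy is the same as the paper's: write $\Delta_{g_{\mathcal C}}u=(\Delta_{g_{\mathcal C}}-\Delta_g)u$, note that the coefficients of the difference operator decay like $O(e^{-\delta z})$ with all derivatives by the asymptotically Calabi condition, bound the $g_{\mathcal C}$-derivatives of $u$ by $O(e^{\delta' z})$ with $\delta'$ slightly larger than $\hat\delta$ via interior elliptic estimates, and conclude by the Leibniz rule using $\hat\delta<\delta/10$. The exponent bookkeeping at the end is fine.

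However, there is a genuine gap at the one step that actually requires work: your justification of the interior derivative estimates. You assert that $g$ and $g_{\mathcal C}$ ``have uniformly bounded geometry on the end,'' and apply Schauder estimates on unit $g_{\mathcal C}$-balls with uniform constants. This is false as stated: the Calabi model end in complex dimension $2$ is \emph{collapsing}. In the Gibbons--Hawking form $g_{\mathcal C}=z(g_{\mathbb{T}^2}+dz^2)+z^{-1}\theta^2$ the $S^1$-fiber has length $\sim z^{-1/2}\to 0$, so the injectivity radius tends to zero along the end, and the standard interior estimate on a unit ball does not come with a $z$-independent constant. This is precisely the difficulty the paper's proof addresses: it rescales $\tilde g=\lambda^2 g$ with $\lambda=z_0^{1/2}$, so that the unit $\tilde g$-ball around $\bm{x}_0$ is non-collapsed with uniformly bounded curvature, applies $W^{k,p}$ estimates and Sobolev embedding there, and then converts back to $g$, picking up a polynomial factor $z_0^{2k}$ which is absorbed into a slightly larger exponential rate $\delta'\in(\hat\delta,(1+10^{-3})\hat\delta)$. (An equivalent fix is to pass to a local universal cover unwrapping the collapsing circle, where bounded geometry holds and the lifted estimates descend.) Your remark about absorbing the variation of $z$ over a unit ball, and about non-parallel frames on ${\rm Nil}^3_b$, does not address this collapse; without the rescaling or covering argument, the claimed bound $\|\nabla^m_{g_{\mathcal C}}u\|(\bm{x}_0)\leq C(m)e^{\hat\delta z_0}$ with a uniform constant is not justified, and the rest of the proof rests on it.
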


\begin{proof}
Denoting $\phi\equiv  (\Delta_{g} - \Delta_{g_{\mathcal{C}}}) u$, 
then $\Delta_{g} u = 0$ implies 
\begin{equation}
\Delta_{g_{\mathcal{C}}} u + \phi = 0.\label{e:difference}
\end{equation}
We will show that 
for each $k\in\dN$ we have 
\begin{equation}
\|\nabla^k \phi(\bm{x}_0)\| \leq C(k,g) \cdot e^{-\frac{\delta z_0}{2}},
\end{equation} 
where $C(k,g)>0$ depends only on $k\in\dN$ and the curvature bound of the cutoff region $(X^4\setminus K, g)$.

The higher order derivative estimate will be proved by the $W^{k,p}$-estimate for harmonic functions on the complete space $(X^4, g)$. Since the metric $g$ is collapsing near the infinity, the standard elliptic estimate cannot be directly applied. To overcome this difficulty, 
we will scale up the metric $\tilde{g}=\lambda^2 g$ such that $B_1(\bm{x_0})$ is non-collapsing for $\tilde{g}$ which guarantees the elliptic estimate holds in terms of the rescaled metric $\tilde{g}$. For fixed $\bm{x}_0\in [T_0(k),+\infty)$, we take 
\begin{equation}\lambda= z_0^{\frac{1}{2}}\end{equation} and hence there is some constant $v_0>0$ which is independent of the $z$-coordinate such that
\begin{equation}\Vol_{\tilde{g}}(B_1(\bm{x_0}))\geq v_0 >0.\end{equation} 
By explicit calculation on the model space $\mathcal C$ using \eqref{Calabiformula}  one easily sees that
curvatures are uniformly bounded in a ball of definite size of radius,  i.e. 
\begin{equation}
\sup\limits_{B_2(\bm{x_0})}\|\Rm\|_{\tilde{g}}\leq\Lambda_0,
\end{equation}
where $\Lambda_0>0$ is independent of the $z$-coordinate.
It follows that for every $k\in\dN$ and $1<p<\infty$, there exists $C(k,v_0,\Lambda_0,p)>0$ such that under the rescaled metric $\tilde{g}$,
\begin{equation}
\|u\|_{W_{\tilde{g}}^{k+2,p}(B_{1}(x_0))}\leq C\|u\|_{W_{\tilde{g}}^{k,p}(B_{1+\frac{1}{k^{2}}}(x_0))},
\end{equation}
which implies that for every $k\in\dZ_+$,
\begin{equation}
\|u\|_{W_{\tilde{g}}^{k,p}(B_{1}(x_0))}\leq C\sup\limits_{B_3(\bm{x}_0)}|u|.\label{e:integral-estimate}
\end{equation}
Therefore, for every $k\in\dZ_+$ and sufficiently large $p\in(1,\infty)$, applying the Sobolev embedding on $(B_{4/3}(\bm{x}_0),\tilde{g})$, there exists $C(k,p,v_0,\Lambda_0)>0$ such that
\begin{equation}
\sup\limits_{B_1(\bm{x}_0)}|\nabla^{k}u|_{\tilde{g}} \leq C\|\nabla^{k+1}u\|_{L^{p}(B_{4/3}(x_0))}.\end{equation}
By \eqref{e:integral-estimate} and the growth assumption on $u$, there is some constant $C>0$ such that
\begin{equation}
\sup\limits_{B_1(\bm{x}_0)}|\nabla^{k}u|_{\tilde{g}}\leq C\sup\limits_{B_2(\bm{x}_0)}|u| \leq Ce^{\hat{\delta} z_0}.
\end{equation}
In terms of the original metric $g$, we have
\begin{equation}
|\nabla^{k}u(\bm{x}_0)|_{g}\leq \sup\limits_{B_{1/\lambda}(\bm{x}_0)}|\nabla^{k}u|_{g}\leq C\cdot  z_0^{2k} e^{\hat{\delta} z_0}< Ce^{\delta' z_0},\label{e:ck-elliptic}
\end{equation}
where $\delta'\in\Big(\hat{\delta},(1+10^{-3})\hat{\delta}\Big)$.

Next, by \eqref{e:C1-exp-close}, there is some constant $\delta>0$ such that 
\begin{equation}
\|\Phi^*g_{\mathcal{C}}-g\|_{C^{k}(B_2(\bm{x}_0))}\leq C_k e^{-\delta z}.\label{e:delta-close}
\end{equation}
 then the elliptic estimate \eqref{e:ck-elliptic} and \eqref{e:delta-close} imply that 
\begin{equation}
| \phi (\bm{x}_0)| =  |(\Delta_{g}-\Delta_{g_{\mathcal{C}}})u(\bm{x}_0)| \leq C e^{-\frac{\delta\cdot z_0}{2}}
\end{equation}
and similarly
\begin{equation}
|\nabla^{k} \phi (\bm{x}_0)|  \leq C_k e^{-\frac{\delta\cdot z_0}{2}}.
\end{equation}

\end{proof}

The above error estimate enables us to  construct a harmonic function with respect to the model metric $g_{\mathcal{C}}$ on $[10^2,+\infty)\times Y^3$ which has at most linear growth and is exponentially close to the original function $u$. 
Let 
\begin{equation}
\ell_0 \in \Big(0, \min\{\frac{\delta}{10^2},\underline{\delta} \}\Big),\label{e:def-l_0}
\end{equation}
where $\underline{\delta}>0$ is the constant in Proposition \ref{p:harmonic-function-decay}.
By assumption the harmonic function $u$ satisfies the asymptotic behavior,
\begin{equation}
u=O(e^{\ell_0z}).
\end{equation}
Then applying the above claim and Proposition \ref{p:surjectivity}  on $[T_0,+\infty)\times Y^3$, there exists a solution to the equation
\begin{equation}
\Delta_{g_{\mathcal{C}}}v=\phi\label{e:surjective}
\end{equation}
such that 
\begin{equation}v=O(e^{-\ell z})\end{equation} for some $\ell\in(-\delta/2,0)$.
Therefore, combine \eqref{e:difference}
and \eqref{e:surjective}, we have
\begin{align}
0 = \Delta_{g}(u) = \Delta_{g_{\mathcal{C}}} (u + v), \label{e:model-harmonic}
\end{align}
and $u + v = O( e^{\ell_0 z})$. 
Since $0< \ell_0 <1$ has been specified in \eqref{e:def-l_0}, now we are in a position to apply Proposition \ref{p:harmonic-function-decay} to $u + v$, which shows that 
\begin{equation}( u + v )= a z + b + O(e^{-\underline{\delta} z}),\end{equation} 
and hence in the non-compact part $[T_0,+\infty)\times Y^3$,
\begin{align}
u = a z + b + O(e^{-\delta'' z}), \ \delta''\equiv\min\{\ell,\underline{\delta}\}.\label{e:linear-growth} 
\end{align}

The above asymptotics immediately implies that
\begin{equation}
|du|_{g}\to0\ \text{as} \ z\to\infty.\label{e:decay-of-|dz|}
\end{equation}
 Let $\Delta_H$ be the Hodge-Laplacian on $(X^4, g)$. Since $\Delta_{g}u=0$, it holds that
 \begin{equation}
 \Delta_H(du)=dd^*(du)=-d\Delta_{g}u=0.
 \end{equation}
Since the complete space $(X^4,g)$ satisfies $\Ric_{g}\geq 0$, and $|du|$ satisfies  the decay property \eqref{e:decay-of-|dz|}, applying Lemma \ref{l:max} implies that \begin{equation}
|du|_{g}\equiv0 \ \text{on}\ X^4.
\end{equation} 
Therefore, $u$
has to be a constant.
\end{proof}

\section{Liouville theorem for half-harmonic  1-forms}
\label{s:liouville-1-form}
We call a $1$-form $\gamma$  \emph{half-harmonic} if it is in the kernel of the operator $\mathscr{D}\equiv d^+\oplus d^*$. If the manifold is compact, then such a form is automatically harmonic, but this is no longer true on a non-compact manifold. The main result of this section is the following. 
\begin{theorem}
\label{t:liouville-1-form}
Let $(X^{4}=M\setminus D, \omega)$ be given by the Tian-Yau construction where $M$ is a smooth Fano manifold of complex dimension $2$ and $D$ is a smooth anti-canonical divisor. Then there is some positive constant $\delta_h>0$ which depends only on the geometry of $X^4$ such that if a 1-form $\gamma$ on $X$ satisfies
\begin{equation} \label{eqnN1}
d^+\gamma=d^*\gamma=0
\end{equation}
and 
\begin{equation}
|\gamma|_\omega=O(e^{\delta_h r^{\frac{2}{3}}})
\end{equation}
as $r \to \infty$, 
where $r$ is the distance function on $X$ to a fixed point, then $\gamma = 0$. 
\end{theorem}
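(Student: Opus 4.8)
The strategy is to reduce the half-harmonic $1$-form equation to a statement about harmonic functions on $X$ and then invoke the Liouville theorem for harmonic functions (Theorem~\ref{t:liouville-theorem-functions}), using that the hyperk\"ahler Tian-Yau space is $\underline\delta$-asymptotically Calabi by Proposition~\ref{p:TY-asym}. Fix any of the compatible integrable complex structures $J$ coming from the hyperk\"ahler triple, say the one determined by $\omega_{TY}$, and write $\gamma = \gamma^{1,0} + \gamma^{0,1}$. The point already flagged in the outline (Section~\ref{ss:outline}) is that on a hyperk\"ahler $4$-manifold the equation $d^+\gamma = d^*\gamma = 0$ is equivalent to $\bp\gamma^{0,1} = 0$ and $\bp^*\gamma^{0,1} = 0$: the self-dual $2$-forms decompose as $\mathbb{C}\omega_{TY} \oplus \Lambda^{0,2} \oplus \Lambda^{2,0}$, so $d^+\gamma = 0$ forces the $(0,2)$-part $\bp\gamma^{0,1}$ to vanish and (after using $d\gamma$ traceless against $\omega_{TY}$, which is the content of the $\mathbb{C}\omega_{TY}$ component together with $d^*\gamma=0$) gives $\bp^*\gamma^{0,1} = 0$ as well. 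I would carry out this linear-algebra reduction carefully first, since the precise bookkeeping of which components of $d\gamma$ are self-dual is where sign/convention errors hide.

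\textbf{Solving $\bp\gamma^{0,1}=0$ by a potential.} Next I would produce a function $f$ with $\gamma^{0,1} = \bp f$. The equation $\bp\gamma^{0,1}=0$ is purely complex-analytic, so I would work on the Fano compactification $M$ with a fixed smooth K\"ahler metric. The form $\gamma^{0,1}$ has a controlled rate of blow-up near $D$ coming from the hypothesis $|\gamma|_\omega = O(e^{\delta_h r^{2/3}})$ translated through $r^{2/3}\sim z$ and the asymptotics of $g_{\mathcal C}$; in particular $\gamma^{0,1}$ extends to a $\bp$-closed $(0,1)$-form on $M$ with at worst a mild (finite-order) pole along $D$, i.e.\ a section of $\Omega^{0,1}_M \otimes \mathcal{O}(kD)$ for some $k$ depending on $\delta_h$. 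Since $\mathcal{O}(kD) = K_M^{-k}$ is positive, Kodaira--Nakano vanishing gives $H^{0,1}(M, \mathcal{O}(kD)) = 0$, hence $\gamma^{0,1} = \bp f$ for a (smooth, away from $D$) function $f$; a priori $f$ is unique only up to addition of holomorphic sections of $\mathcal{O}(kD)$, so I cannot directly control its growth from the vanishing theorem alone. To get \emph{some} growth bound I would instead build $f$ directly from the Tian-Yau construction: using the Calabi ansatz description of a neighborhood of infinity and standard weighted/local elliptic estimates for the $\bp$-equation on balls of the rescaled metric (exactly as in the Claim inside the proof of Theorem~\ref{t:liouville-theorem-functions}), one obtains a solution $f$ with $f = O(e^{\ell z})$ for some explicit $\ell$ proportional to $\delta_h$.

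\textbf{Upgrading to constancy.} Once $\gamma^{0,1} = \bp f$, the remaining equation $\bp^*\gamma^{0,1} = 0$ reads $\bp^*\bp f = 0$, i.e.\ $\Delta_g f = 0$ with respect to the Tian-Yau metric (up to the usual factor $\Delta_g = 2\,\bp^*\bp$ on a K\"ahler manifold acting on functions). So $f$ is a harmonic function on the complete Ricci-flat — hence $\Ric_g \geq 0$ — manifold $(X^4,g)$, with $f = O(e^{\ell z})$. Choosing $\delta_h$ small enough that the resulting $\ell$ is smaller than the threshold $\ell_0$ of Theorem~\ref{t:liouville-theorem-functions} (here is where one needs the decay rate of the complex structure, $O(e^{-(1/2-\epsilon)z^2})$, to be much faster than the metric decay $O(e^{-\underline\delta z})$, so that the reduction to a purely metric harmonic function does not cost too much in the exponent), Theorem~\ref{t:liouville-theorem-functions} forces $f$ to be constant, whence $\gamma^{0,1} = \bp f = 0$, and since $\gamma$ is real, $\gamma = 0$.

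\textbf{Main obstacle.} The genuine difficulty is the middle step: producing a potential $f$ with an honest growth estimate. Kodaira vanishing hands us existence but no control, and the ambiguity by holomorphic sections of $\mathcal{O}(kD)$ must be pinned down by a hands-on construction near $D$ using the explicit Calabi geometry; matching the weight functions and justifying the elliptic estimates on the collapsing ends (where one must rescale to restore non-collapsing before applying interior $L^p$ theory) is the technical heart. Everything afterward is a clean application of the already-proved Liouville theorem for harmonic functions, provided $\delta_h$ is chosen small enough relative to $\underline\delta$ and $\ell_0$.
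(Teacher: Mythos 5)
Your overall architecture coincides with the paper's: reduce \eqref{eqnN1} to $\bp\gamma^{0,1}=\bp^*\gamma^{0,1}=0$, produce a potential $f$ with $\bp f=\gamma^{0,1}$, note that $\bp^*\gamma^{0,1}=0$ makes $f$ harmonic for $\omega_{TY}$, and conclude via Theorem \ref{t:liouville-theorem-functions}. The gap is in your middle step, and it is not merely technical. You assert that a solution of $\bp f=\gamma^{0,1}$ with $f=O(e^{\ell z})$, $\ell$ proportional to $\delta_h$, can be built ``directly from the Tian-Yau construction'' by weighted/local elliptic estimates for the $\bp$-equation on rescaled balls. This is precisely the estimate the paper says one apparently cannot obtain by such methods: a weight of the form $e^{\delta z}$ is unavailable for H\"ormander-type $\bp$-estimates because $z^a$ is plurisubharmonic on the Calabi model space only for $a\geq 2$, and the global complex-geometric construction (Kodaira vanishing $H^1(M,K_M^{-1})=0$, regularity $\beta\in W^{1,p}\cap C^\alpha$, and the crucial correction by $\beta_1\in H^0(M,K_M^{-1})$ with $\beta_1|_D=\beta|_D$ so that $\beta-\beta_1$ vanishes along $D$) delivers only $f=O(e^{\epsilon z^2})$ for every $\epsilon>0$ (Proposition \ref{p:propN2}). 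Local estimates on rescaled balls give local solutions with no mechanism for gluing them into a single $f$ with exponential-linear growth; the ambiguity by holomorphic data, which you acknowledge, is exactly what destroys the bound, and you do not supply a substitute.

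Because the best a priori bound on $f$ is $O(e^{\epsilon z^2})$, two further steps are unavoidable and are absent from your plan. First, one cannot compare $\Delta_{\omega_{TY}}$ and $\Delta_{\omega_{\mathcal C}}$ directly (the metrics agree only to $O(e^{-\underline{\delta} z})$, far too slow against $e^{\epsilon z^2}$); instead one uses the much faster complex-structure asymptotics $O(e^{-(\frac12-\epsilon)z^2})$ to conclude $\Delta_{\omega_{\mathcal C}}f=O(e^{C\delta_h z})$, and then the Poisson theory on the model (Proposition \ref{p:surjectivity}) splits $f$ into an $O(e^{C\delta_h z})$ piece plus a $g_{\mathcal C}$-harmonic piece still only known to be $O(e^{\epsilon z^2})$. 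Second, that harmonic piece is tamed not by the Laplace equation alone but by separation of variables applied to the first-order equation $du+J_{\mathcal C}dv=O(e^{C\delta_h z})$, which eliminates the growing modes and leaves at most linear growth (Proposition \ref{prop5-6}); only then does Theorem \ref{t:liouville-theorem-functions} apply. Note also that your parenthetical remark placing the need for fast complex-structure decay in the final step is misplaced: if $f$ really were $O(e^{\ell z})$ for the Tian-Yau metric, no comparison would be needed at all, since $\Delta_{\omega_{TY}}f=\bp^*\gamma^{0,1}=0$ holds exactly; the comparison is forced precisely because the only available growth bound is $O(e^{\epsilon z^2})$. Until you either genuinely prove an $O(e^{\ell z})$ bound for some potential or reproduce these two intermediate steps, the proof is incomplete.
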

\begin{proof}
We begin with an interpretation of the equation (\ref{eqnN1}) in terms of complex geometric data. Notice in the Tian-Yau construction we have a preferred complex structure on $X$ induced from $M$, which we denote by $J$. With respect to $J$ we can write $\gamma=\gamma^{1,0}+\gamma^{0, 1}$ with $\gamma^{1,0}=\overline{\gamma^{0,1}}$. Then by the K\"ahler identities we have 
\begin{equation}
d^+\gamma=0 \Longleftrightarrow 
\begin{cases}
\bar\partial \gamma^{0, 1}=0\\
\sqrt{-1}(\bp^* \gamma^{0, 1}-\partial^* \gamma^{1, 0})=0
\end{cases}
\end{equation}
and
\begin{equation}
d^*\gamma=0 \Longleftrightarrow \bp^* \gamma^{0, 1}+\p^* \gamma^{1, 0}=0. 
\end{equation}
Thus, equation (\ref{eqnN1}) is equivalent to 
\begin{equation}
\bp \gamma^{0, 1}=0, \ \bp^*\gamma^{0, 1}=0.
\end{equation}
The theorem follows from Theorem \ref{t:liouville-theorem-functions} once we prove that there exists some small $\delta > 0$ and a smooth function $f = O(e^{\delta z})$ such that $\bar\partial f = \gamma$ (note that $\Delta f = \bar\partial^*\gamma = 0$).

 We next give a brief outline of the proof. In Step 1, we will construct a solution $f$ to $\bar\partial f = \gamma$ such that $f = O(e^{\epsilon z^2})$ for all $\epsilon > 0$. This is done using a complex geometric argument which amounts to an application of H\"ormander's weighted $L^2$ estimates for the $\bar\partial$-operator. Interestingly it does not seem to be possible to obtain the required improvement $f = O(e^{\delta z})$ using only this type of method, owing to the fact that the function $z^a$ is plurisubharmonic on the Calabi model space $(\mathcal{C},g_{\mathcal{C}})$ if and only if $a \geq 2$.

To overcome this problem we use the elliptic theory on $(\mathcal{C},g_{\mathcal{C}})$ developed in Section \ref{s:liouville-functions}. Thanks to the bound $f = O(e^{\epsilon z^2})$ for all $\epsilon > 0$ from Step 1 and the $O(e^{-(\frac{1}{2}-\epsilon)z^2})$ complex structure asymptotics of Proposition \ref{p:TY-asym}, it follows that $\bar\partial_{\mathcal{C}} f = O(e^{\delta z})$ on $(\mathcal{C},g_{\mathcal{C}})$. In particular, since $\Delta = {\rm tr}(\sqrt{-1}\partial\bar\partial)$, the Poisson equation estimates of Proposition \ref{p:surjectivity}  imply that $f$ can be decomposed into an $O(e^{\delta z})$ part $f_1$ and a \emph{$g_{\mathcal{C}}$-harmonic} part $f_2$ which is $O(e^{\epsilon z^2})$ for all $\epsilon > 0$ (see Step 2 for details). Observe that it would not be possible to compare $\Delta_{TY}f$ and $\Delta_{\mathcal{C}}f$ directly because $g_{TY}$ and $g_{\mathcal{C}}$ are only asymptotic at rate $O(e^{-\delta z})$, which is too slow to beat the $O(e^{\epsilon z^2})$ growth of $f$ from Step 1.

Step 3 analyzes the $g_{\mathcal{C}}$-harmonic part $f_2$ of $f$. It is clear from Section \ref{s:liouville-functions} that $f_2 = O(e^{Cz})$ for some large constant $C$. The required improvement $f_2 = O(e^{\delta z})$ comes from the first-order equation $\bar\partial_{\mathcal{C}} f_2 = O(e^{\delta z})$ satisfied by $f_2$ (in addition to $\Delta_{g_{\mathcal{C}}} f_2 = 0$). Technically this is done using separation of variables for the $\bar\partial_{\mathcal{C}}$-operator but the underlying idea can be easily explained: being of $O(e^{Cz})$ rather than $O(e^{Cz^2})$ growth, the leading terms of the harmonic function $f_2$ must be $S^1$-invariant, but on $S^1$-invariant functions the $\bar\partial_{\mathcal{C}}$-operator directly controls the radial derivative $\frac{\partial}{\partial z}$. 

Step 4 concludes the proof by appealing to Theorem \ref{t:liouville-theorem-functions}.
 \medskip\

\noindent \textbf{Step 1.} In this step, we prove the following proposition.

\begin{proposition} \label{p:propN2}
There is a smooth function $f$ on $X$ with $\bp f=\gamma$ and $|f|=O(e^{\epsilon z^2})$ for all $\epsilon>0$. 
\end{proposition}

\begin{remark}
We also have $\Delta_\omega f=0$, but at this point we cannot apply Theorem \ref{t:liouville-theorem-functions} directly to conclude that $f$ is a constant since this would require stronger control, $|f|=O(e^{\delta z})$. 
\end{remark}

\begin{proof}[Proof of Proposition \ref{p:propN2}] We work on the compact manifold $M$. Let $S$ be a holomorphic section of $K_M^{-1}$ with $S^{-1}(0)=D$, and let $h$ be a smooth hermitian metric on $K_M^{-1}$ whose curvature form $\omega_h$ is a K\"ahler form on $M$ with positive Ricci curvature. By Theorem \ref{t:hein} near $D$ we have 
\begin{equation}C^{-1} \sqrt{-1}\partial\bar\partial (-{\log |S|^2_h})^{3/2}\leq \omega_{TY}\leq C\sqrt{-1}\partial\bar\partial (-{\log |S|^2_h})^{3/2}.
\end{equation}
By a straightforward computation this implies that 
\begin{equation}\label{e:1986}\omega_{TY}\leq C|S|_h^{-2}\omega_h\end{equation}
and hence, trivially,
\begin{equation}\label{e:1987}|\gamma|_{\omega_h}\leq C^{-1} |S|_h^{-1}|\gamma|_{\omega_{TY}}=O(|S|_h^{-1-\epsilon})\end{equation} for any $\epsilon>0$. 
Define $\alpha=\gamma\otimes S$. This is a section of $\Lambda_M^{0,1} \otimes K_M^{-1}$ which lies in $L^p_{\omega_h}(M, \Lambda^{0, 1}_M\otimes K_M^{-1})$ for all $p\geq 1$. Since $\bp\gamma=0$, one can directly check that $\bp \alpha=0$ in the distributional sense. Now notice that $H^{1}(M, K_M^{-1})=H^{1}(M, K_M \otimes L)=0$ by the Kodaira vanishing theorem applied to the ample line bundle $L = K_M^{-2}$. Thus, we can define $\beta=\bp^*\Delta_{\bp}^{-1}\alpha$ with respect to $\omega_h$. It follows from elliptic regularity that $\beta\in W^{1, p}_{\omega_h}(M, K_M^{-1})$ for all $p\geq 1$, so that $\beta\in C^\alpha_{\omega_h}(M, K_M^{-1})$ for all $\alpha<1$. Moreover by local regularity we know $\beta$ is smooth outside $D$ and $\bp\beta=\alpha$. Let $f=\beta \otimes S^{-1}$, then on $X$ we have $\bp f=\gamma$. The immediate estimate  we get is that  for some constant $C>0$,
\begin{equation}
f=O(|S|_h^{-1})=O(e^{Cz^2}).\label{e:exp-square}\end{equation}
The lemma below allows us to improve \eqref{e:exp-square} to the growth order $e^{\epsilon z^2}$ for any $\epsilon>0$. The key point is that the estimate \eqref{e:1986} can be improved to almost $O(1)$ in directions tangential to $D$. 

\begin{lemma}
Denote $\beta_0:=\beta|_D$, then $\bp \beta_0=0$, i.e. $\beta_0$ is a holomorphic section of $K_M^{-1}|_D$. 
\end{lemma}
\begin{proof}
 We choose a finite cover $D=\bigcup_{k=1}^{N_0} O_k$ such that for each $k$ there exists a local holomorphic coordinate system $(z,w)$ on some domain $U_k \subset M$ such that $U_k \cap D = O_k = \{w = 0\}$. We will show that  $\bp \beta_0=0$ in every $O_k\subset D$ in the distributional sense. Let  $\psi$ be a smooth section of $\Lambda^{0,1}_D \otimes (K_M^{-1}|_D)$ with compact support in $O_k$. It suffices to show that 
$\langle\beta_0,\bp^*\psi\rangle_{O_k}=0$.
To this end, write $\psi(z) = \sigma(z) d\overline{z} \otimes (dz \wedge dw)^{-1}$ for some smooth function $\sigma \in C^{\infty}_0(O_k,\mathbb{C})$ and use this to define the trivial extension $\hat{\psi}(z,w) = \sigma(z) d\overline{z} \otimes (dz \wedge dw)^{-1}$ for all $(z,w) \in U_k$. Denote by $O_k(\tau)$ the slice $\{w = \tau\}$ in $U_k$, which is a complex submanifold of $M$, and equip $O_k(\tau)$ with the restriction of the K\"ahler metric $\omega_h$ from $M$. Notice that $\hat{\psi}$ restricts to a smooth section of $\Lambda_{O_k(\tau)}^{0,1} \otimes (K_M^{-1}|_{O_k(\tau)})$ with compact support in $O_k(\tau)$. Since $\bp \beta=\alpha$ and $\beta\in W^{1,p}\cap C^{\alpha}$ for any $p\geq 1$, it follows that
\begin{align}\label{e:weak-solution}
\langle\beta_0, \bp^*{\psi}\rangle_{O_k}
=\lim\limits_{\tau\to0}\langle\beta, \bp^*\hat{\psi}\rangle_{O_k(\tau)}=\lim\limits_{\tau\to0}\langle\bp\beta,\hat{\psi}\rangle_{O_k(\tau)}=\lim\limits_{\tau\to0}\langle\alpha,\hat{\psi}\rangle_{O_k(\tau)}.
\end{align}
Notice that \begin{equation}|\gamma(\p_{\bar{z}})|\leq |\gamma|_{\omega_{TY}} |\p_{\bar{z}}|_{\omega_{TY}}\leq |\gamma|_{\omega_{TY}} (-\log |S|^2_h)^{\frac{1}{4}}=O(|S|_h^{-\epsilon}). \end{equation}
Since $\alpha = \gamma \otimes S$, it then follows that $|\alpha(\p_{\bar{z}})| = O(|S|_h^{1-\epsilon}) \rightarrow 0$ uniformly as $w\rightarrow 0$. Using \eqref{e:weak-solution}, it follows that
\begin{equation}
\langle\beta_0, \bp^*{\psi}\rangle_{O_k}
=0,
\end{equation}
 as desired. By standard elliptic regularity, $\beta_0$ is a holomorphic section. 
\end{proof}

Since $M$ is Fano we have $H^1(M, \mathcal O_M)=0$ so by a standard exact sequence (\cite[p.139]{GH}) the restriction map $H^0(M, K_M^{-1})\rightarrow H^0(D, K_M^{-1}|_D)$ is surjective. This means we can find some $\beta_1$ $\in$ $H^0(M, K_M^{-1})$ such that $\beta_1|_{D}=\beta_0|_D$. Let $f=(\beta-\beta_1)\otimes S^{-1}$. Then we still have $\bp f=\gamma$ on $X$ but now since $\beta-\beta_1=0$ on $D$ and $\beta\in C^\alpha_{\omega_h}(M, \mathbb{C})$ for all $\alpha<1$, we finally obtain  Proposition \ref{p:propN2}.\end{proof}

\noindent \textbf{Step 2.} 
Let $f=u+\sqrt{-1}v$ be the smooth function constructed by Proposition \ref{p:propN2} with $\Delta_{\omega_{TY}}u=\Delta_{\omega_{TY}}v=0$.
In this step, we reduce the problem to a question on the Calabi model space through the diffeomorphism $\Phi:
(\mathcal{C}\setminus K',\omega_{C},J_{C})\rightarrow (X\setminus K,\omega_{TY}, J_{TY})$ chosen in Proposition \ref{p:TY-asym}.  
The main point is to obtain the decomposition  $u=u_1+u_2$ and $v=v_1+v_2$ such that $u_1=O(e^{\delta z})$, $v_1=O(e^{\delta z})$
and $\Delta_{\omega_{\mathcal{C}}}u_2=\Delta_{\omega_{\mathcal{C}}}v_2=0$. The growth estimates for $u_2$ and $v_2$ will be shown in Step 3.

The  idea of the proof of Step 2 is as follows. First, we will estimate 
$\Delta_{\omega_{\mathcal{C}}} u$ and $\Delta_{\omega_{\mathcal{C}}} v$ and all of their derivatives. Specifically, we will prove that they have slow exponential growth rates (as shown in \eqref{e:D-Laplace-growth}). Then applying Proposition \ref{p:surjectivity}, we can  construct solutions to the Poisson equations 
\begin{align}
\Delta_{\omega_{\mathcal{C}}}u_1 = \Delta_{\omega_{\mathcal{C}}}u, \ 
\Delta_{\omega_{\mathcal{C}}}v_1 = \Delta_{\omega_{\mathcal{C}}}v,
\end{align}
such that $u_1=O(e^{\delta z})$ and $v_1=O(e^{\delta z})$
This completes the desired decomposition of $u$ and $v$.

To obtain the derivative estimates for $\Delta_{\omega_{\mathcal{C}}} u$ and $\Delta_{\omega_{\mathcal{C}}} v$, we will prove the derivative estimates for $du + J_{\mathcal{C}}dv$. 
To start with, by the assumption on $\gamma$ and the first order equation given by Step 1,
\begin{equation}
du+J_{TY}dv=Re(\gamma)=O(e^{C \delta_h z}).\label{e:real-part-gamma}
\end{equation} 
Applying the asymptotic estimate for $J_{TY}$ in Proposition \ref{p:TY-asym}, we can convert the above growth control  to the corresponding estimate for $du + J_{\mathcal{C}}dv$.

In fact, applying Item (b) of Proposition \ref{p:TY-asym},
for any $\epsilon>0$ and for any $k\in\dN$,
\begin{equation}
|\nabla_{g_\mathcal C}^k(\Phi^*J_{TY}-J_{\mathcal C})|_{g_\mathcal C}=O(e^{(-\frac{1}{2}+\epsilon)z^2}).
\end{equation}
We also need derivative estimates for $u$ and $v$ with respect to the model metric $\omega_{\mathcal{C}}$.
Notice that by Step 1,  $f=u+\sqrt{-1}v=O(e^{\epsilon z^2})$ for any $\epsilon>0$, which implies that 
\begin{align}
|u| + |v| &\leq 2 |f|=O(e^{\epsilon r^{4/3}})=O(e^{\epsilon z^2}),
\end{align}
 where $z$ is the natural coordinate on $\mathcal C$.
Since $u$ and $v$ satisfy
$\Delta_{\omega_{TY}} u = \Delta_{\omega_{TY}} v = 
0,
$ by applying the same $W^{k,p}$-estimate as in the proof of Theorem \ref{t:liouville-theorem-functions}, we have for all $\epsilon>0$ and $k\geq 1$,
\begin{equation}
|\nabla^k u|_{\omega_{TY}}=O(e^{\epsilon z^2}), \ |\nabla^kv|_{\omega_{TY}} =O(e^{\epsilon z^2}).
\end{equation} 
Since the asymptotic order of  harmonic functions $u$ and $v$ is dominated by
$e^{\epsilon z^2}$ and the asymptotic order of the metric $\omega_{TY}$ is $e^{-\underline{\delta} z}$, so in terms of the model metric we have
 \begin{equation}
|\nabla^k u|_{\omega_{\mathcal{C}}}\leq C e^{\frac{\epsilon z^2}{2}}, \ 
|\nabla^kv|_{\omega_{\mathcal{C}}} \leq Ce^{\frac{\epsilon z^2}{2}}.
\end{equation} 
Now we apply the assumption $|\gamma|_{\omega_{TY}}=O(e^{C\delta_h z})$ and the above elliptic regularity to \eqref{e:real-part-gamma}, we get for $k\in\dN$,
\begin{align}
|\nabla^k(du+J_{\mathcal C}dv)|_{\omega_\mathcal C}
&=  \Big|\nabla^k(du+J_{TY}dv) + \nabla^k\Big((J_{\mathcal{C}}-\Phi^*J_{TY})dv\Big)\Big|_{\omega_\mathcal C}\nonumber\\
&=
O(e^{C_k\delta_h z}) + O (e^{-\frac{z^2}{4}}) =O(e^{C_k\delta_h z}). 
\label{e:system-derivative-estimate}
\end{align}

Now we proceed to prove the derivative estimates for $\Delta_{\omega_{\mathcal{C}}}u$ and $\Delta_{\omega_{\mathcal{C}}}v$ by making use of the system
\begin{equation}du+J_{TY}dv=Re(\gamma).\label{e:re-gamma}\end{equation}
The advantage of the above equation is that $\Delta_{\omega_{TY}}= \Tr_{\omega_{\mathcal{C}}}(d J_{TY} d)$ so that the behavior $\Delta_{\omega_{TY}}$ will follow from the asymptotics of $J_{TY}$.
 In fact, taking the differential of \eqref{e:re-gamma}, 
\begin{equation}
dJ_{TY}du=dJ_{TY}Re(\gamma), \ dJ_{TY}dv=dRe(\gamma).
\end{equation}
Then using Item $(a)$ of Proposition \ref{p:TY-asym},  similar to the above we have for all $k\in\dZ_+$
\begin{equation}
|\nabla^kdJ_{\mathcal C}du|=O(e^{C_k\delta_h z}), \
 |\nabla^kdJ_{\mathcal C}dv|=O(e^{C_k\delta_h z}).
 \end{equation}
Taking the trace, then we obtain 
\begin{equation}
|\nabla^k\Delta_{\omega_\mathcal C}u|=O(e^{C_k\delta_h z}), \
 |\nabla^k\Delta_{\omega_\mathcal C}v|=O(e^{C_k\delta_h z}).
\label{e:D-Laplace-growth}
 \end{equation}
Applying the linear theory for $\Delta_{\omega_\mathcal C}$ in Proposition \ref{p:surjectivity}, if $\delta_h\ll\underline\delta$, then we choose two solutions $u_1$ and $v_1$ provided by Proposition  \ref{p:surjectivity}
\begin{equation}
\Delta_{\omega_{\mathcal{C}}} u_1 = \Delta_{\omega_{\mathcal{C}}} u\
\text{and} \
\Delta_{\omega_{\mathcal{C}}} v_1 = \Delta_{\omega_{\mathcal{C}}} v
\end{equation}
such that $u_1$, $v_1$ satisfy
\begin{equation}
\label{eqn5-18}
|\nabla^ku_1|_{\omega_{\mathcal C}}=O(e^{C_k\delta_h z}), \ 
|\nabla^k v_1|_{\omega_{\mathcal C}}
=O(e^{C_k\delta_h z}).
\end{equation}
So we have finished the proof of the decomposition $u=u_1+u_2$ and $v=v_1+v_2$ such that
\begin{equation}
\Delta_{\omega_{\mathcal C}}u_2=\Delta_{\omega_\mathcal C}v_2=0.
\end{equation}
We also obtain that 
\begin{equation}|du_2+J_{\mathcal C}dv_2|_{\omega_{\mathcal C}}=O(e^{C\delta_h z})\end{equation}
and
\begin{equation} \label{eqn5-21}
|u_2|=O(e^{\epsilon z^2}), \ 
|v_2|=O(e^{\epsilon z^2}).
\end{equation}

\vspace{2mm}
\noindent
\textbf{Step 3.} Now we estimate the harmonic functions $u_2$ and $v_2$ with respect to the model metric $\omega_{\mathcal{C}}$ using separation of variables. The goal is to improve the growth order of $u_2$ and $v_2$ from $O(e^{\epsilon z^2})$ for all $\epsilon > 0$ to $O(z)$, using the fact that they also satisfy a first-order equation. 

\begin{proposition} \label{prop5-6}
We have
\begin{align}
|u_2|=O(z), \
|v_2|=O(z).
\end{align}
\end{proposition}
\begin{remark}
The operator $(u, v)\mapsto du+J_{\mathcal C}dv$ has a kernel which consists of pairs $(u, v)$ such that $u+\sqrt{-1}v$ is holomorphic. In our case this is eliminated since we have the growth control (\ref{eqn5-21}). Notice that the smallest growth rate of a non-constant holomorphic function on $\mathcal C$ is $e^{\frac{1}{2}z^2}. $
\end{remark}
\begin{proof}[Proof of Proposition \ref{prop5-6}]
We denote 
$$\psi=du_2+J_\mathcal Cd v_2=O(e^{C\delta_h z}), $$
  Then we have the following expansion as in Section 4.1: let $\{\Lambda_k\}_{k=1}$ be the spectrum of $Y^3$ and $\{\varphi_k\}_{k=1}^{\infty}$ are the corresponding eigenfunctions on $Y$ with $\mathcal L_{\p_\theta}\varphi_k=\sqrt{-1}j_k\varphi_k$, 
\begin{equation}
u_2=\sum\limits_k f_k(z)\varphi_k(z_\alpha, \theta), \
 v_2=\sum\limits_k g_k(z)\varphi_k(z_\alpha, \theta),
\end{equation}
 which implies that
\begin{equation}
du_2=\sum\limits_k(f_k'(z)\cdot\varphi_k\cdot dz+f_k(z)\cdot d\varphi_k), \
dv_2=\sum\limits_k(g_k'(z) \varphi_k\cdot dz+g_k(z)\cdot d\varphi_k).
\end{equation}
On $\mathcal C$ by the definition in Section \ref{ss:separation-of-variables}, 
we have $J_{\mathcal C}(zdz)=d\theta$, so we have 
\begin{align}
\begin{split}
\sum\limits_k \big(f_k'(z)-\sqrt{-1}j_k \cdot z \cdot g_k(z) \big)\varphi_k&=\psi(\p_z) \\
\sum\limits_k \big(z^{-1}\cdot g_k'(z)+\sqrt{-1}j_k \cdot f_k(z) \big)\varphi_k&=\psi(\p_\theta).
\end{split}
\end{align}
This implies that for each $k$, 
\begin{equation}\label{eqn5-26}
\begin{split}
f_k'(z)-\sqrt{-1}j_k\cdot z \cdot  g_k(z) =O(e^{C\delta_hz})\\
z^{-1}\cdot g_k'(z)+\sqrt{-1}j_k \cdot f_k(z)=O(e^{C\delta_hz}).
\end{split}
\end{equation}

There are three different cases.

If $j_k\neq 0$, then we can write $f_k$ and $g_k$ are given by a linear combination of one growing solution $\mathcal F_k$ and one decaying solution $\mathcal U_k$. Using the analysis in Section \ref{s:liouville-functions} we know that the asymptotic order of $\mathcal F_k$ is $e^{\frac{j_k z^2}{2}}$ (see Lemma \ref{l:asymp-F-U}). The control (\ref{eqn5-21}) then implies both $f_k$ and $g_k$ can only be a multiple of the decaying solution $\mathcal U_k = O(e^{-\frac{j_k z^2}{2}})$. 

If $j_k=0$ and $\lambda_k\neq0$, then $f_k$ and $g_k$ are given by linear combinations of the exponential functions of the form $e^{\sqrt{\lambda_k}z}$ and $e^{-\sqrt{\lambda_k} z}$. Let $\underline{\delta}>0$ be the positive constant given in Proposition \ref{p:harmonic-function-decay},  we use (\ref{eqn5-26}) and the fact that $\psi=O(e^{C\delta_h z})$ to conclude that, if $C\delta_h<\underline{\delta}$, then both $f_k$ and $g_k$ must be proportional to the decaying solutions. 

If $j_k=0$ and $\varphi_k$ is constant, then $f_k$ and $g_k$ are linear functions in $z$. Now since $u_2$ and $v_2$ are harmonic functions on $\mathcal C$, by Lemma \ref{l:decay-harmonic}, we conclude that 

\begin{equation}
|u_2|=O(z), \
|v_2|=O(z).
\end{equation}
This completes the proof of Proposition \ref{prop5-6}.\end{proof}

\noindent\textbf{Step 4.} We now complete the proof of Theorem \ref{t:liouville-1-form}. By Proposition \ref{prop5-6} and (\ref{eqn5-18}),
\begin{equation}
|u|=O(e^{C\delta_h z}), \
|v|=O(e^{C\delta_h z}). 
\end{equation}
If we further choose $C\delta_h \leq \ell_0$, where $\ell_0$ is the constant of Theorem \ref{t:liouville-theorem-functions}, we conclude that $u$ and $v$ must be constant, hence $\gamma=0$.
\end{proof}

\section{Construction of the approximate hyperk\"ahler triple}

\label{s:approx-triple}

In this section, we will obtain a closed manifold $\mathcal{M}$ by gluing two pieces of hyperk\"ahler Tian-Yau spaces with a {\it neck region} which satisfies appropriate topological balancing condition, such that $\mathcal{M}$ has the same homological invariants as the $\K3$ surface (see Proposition \ref{p:topological-invariants}).  Moreover, we will
construct a closed {\it definite triple} $\bm{\omega}^{\mathcal{M}}$ on $\mathcal{M}$ which is very close to an ${\rm{SU}}(2)$-structure. This will be perturbed to a {\it hyperk\"ahler triple} $\bm{\omega}^{\HK}$ in Section~\ref{s:existence}.

First, we briefly describe the geometry of the Tian-Yau spaces $(X_{b_-}^4,g_{b_-})$ and $(X_{b_+}^4,g_{b_+})$ for fixed $b_{\pm}\in\{1,\ldots, 9\}$.
Denote by $\Nil^3_{b_{\pm}}$ the Heisenberg nilpotent $3$-manifolds with $\deg(\Nil^3_{b_{\pm}})=b_{\pm}$. It follows from Proposition \ref{Calabihyperkahler} and Corollary \ref{coro:tydiff} that there exists a coordinate system on the end of the Tian-Yau spaces $(X_{b_{\pm}}^4 , g_{b_{\pm}})$ 
\begin{align}
\Phi^{TY}_{\pm} :  [\zeta_0^{\pm}, \infty) \times  \Nil^3_{b_{\pm}} \rightarrow X_{b_{\pm}}^4,
\end{align}
such that
\begin{align} 
(\Phi^{TY}_{\pm})^* g_{b_{\pm}} &= 
V_{\pm}( g_{\TT} + dz_{\pm}^2 ) + V_{\pm}^{-1} \theta_{b_{\pm}}^2 + O(e^{-\underline{\delta}_{\pm} z_{\pm}}), 
\end{align}
as $z_{\pm} \to \infty$,
and the area of each $2$-torus fiber $\TT$ is $A_-$ and $A_+$ respectively. Moreover, the harmonic potential $V_{\pm}$ admits the expansion in coordinates  
\begin{equation}
V_{\pm} = \frac{2 \pi}{A_{\pm}} b_{\pm} z_{\pm} + c_{\pm},
\end{equation}
as  $z_{\pm} \to \infty$,
and $\theta_{b_{\pm}}$ is the fixed connection $1$-form on the model space satisfying 
\begin{equation}
d \theta_{b_{\pm}} = \frac{2 \pi b_{\pm}}{A_{\pm}} \dvol_{\TT}.  
\end{equation}
From now on, we rescale the above Tian-Yau metrics such that the above $2$-tori have the common area $A$, which we assume satisfies $\frac{1}{2} \leq A \leq 1$.
We are free to make the following assumptions
\begin{align}
\label{cpmass}
c_{\pm} &= 0, \\
\label{thetaass}
\theta_{b_{\pm}} &= \frac{ 2 \pi b_{\pm}}{A} ( dt - x dy).
\end{align}
The normalization \eqref{cpmass} can be arranged by applying a translation in the $z_{\pm}$ coordinates (since $b_{\pm} \neq 0$ by assumption). The normalization \eqref{thetaass} can be arranged using the discussion of gauge transformations given in Section \ref{s:model-space}.

Notice that, we have fixed a scale of the space such that the slope of the above linear functions are uniquely determined.
In our gluing construction, for fixed parameters $T_->0$ and $T_+>0$, 
we choose the cutoff region in $X_{b_{\pm}}^4$,
\begin{align}
X_{b_{-}}^4 (T_-) &\equiv X_{b_{-}}^4 \setminus \Big((T_-, +\infty)\times \Nil_{b_{-}}^3\Big)\\
X_{b_{+}}^4 (T_-) &\equiv X_{b_{+}}^4 \setminus \Big((T_+, +\infty)\times \Nil_{b_{+}}^3\Big).
\end{align}

\subsection{The neck region: a doubly periodic analogue of the Ooguri-Vafa metric}
\label{ss:neck-region}
Given $m_0 \in \dZ_+$,
the neck region $(\mathcal{N}_{m_0}^4,g_{GH})$ is obtained from a Gibbons-Hawking space based on $\TT \times \mathbb{R} $ with $m_0$ monopole points. The Gibbons-Hawking metric is determined by a harmonic function on $\TT\times\dR$.
 Notice that, the topology of $\K3$ requires 
\begin{equation}m_0 = b_- + b_+.\end{equation} 
Consider the flat cylinder $(\TT \times \mathbb{R}, g_0)$ and denote by $\mathcal{P}_{m_0}\equiv\{p_1,\ldots, p_{m_0}\}\subset \TT\times\dR$ a finite set of monopoles, let $V_{\infty}$ be the Green's function from Corollary \ref{coro:gf}, which  satisfies
\begin{equation}
-\Delta_{g_0} V_{\infty} = 2\pi\sum\limits_{m=1}^{m_0} \delta_{p_{m}}.  
\end{equation}
In the gluing procedure, we need to modify the above Green's function $V_{\infty}$ such that the metric on the neck matches up with the metric of the Tian-Yau parts. 
For this purpose, we analyze the asymptotic behavior of $V_{\infty}$ at the two ends of the neck region. By  Corollary \ref{coro:gf},   
 there are bounded harmonic functions $h_{+}$ and $h_{-}$ on $\TT\times\dR$ such that
\begin{align}
V_{\infty}(z)  = 
\begin{cases}
\frac{\pi(b_- + b_+)}{A} z  + \beta_- +   h_-,  & z\ll-1,
\\
-\frac{\pi(b_- + b_+)}{A}  z  + \beta_+  + h_+, &  z \gg 1,
\end{cases}
\end{align}
and  $h_{\pm}$ satisfy the asymptotic behavior $h_{\pm}(\bm{x}) = O(e^{-\epsilon_0 |z(\bm{x})|})$.

For fixed $\beta>0$, we define a new harmonic function on $\TT\times\dR$,
\begin{equation}
V_{\beta}(z) \equiv V_{\infty}(z) + k z + \beta
\end{equation}
such that the metric on the neck can be glued with the metric on the Tian-Yau space.
First, we need to match the slopes, that is, the slope parameter $k$ should be chosen as 
\begin{align}
k = \frac{\pi (b_- - b_+)}{A}.
\end{align}
Then near to the two ends of the neck region, the harmonic function $V_{\beta}$ can be written as
\begin{align}
\label{vbasy}
V_{\beta}(z) = \begin{cases}
 \frac{2\pi b_-}{A} z + \beta_- + \beta + h_- , &  z \ll -1,
\\
-\frac{2\pi b_+}{A} z + \beta_+ + \beta + h_+ , &  z \gg 1.
\end{cases} 
\end{align}
Using this potential, we next define the neck metric through the Gibbons-Hawking ansatz.
Letting $\mathcal{P}_{m_0}$ denote the set of monopole points, 
we note that $H^2( (\TT \times \mathbb{R}) \setminus \mathcal{P}_{m_0}, \dZ)$ has dimension $m_0 +1$ with generators being small spheres around the monopole points, and any torus of the form $\TT \times \{z'\}$ where $z'$ is any value of $z$ for which there are no monopole points. It is easy to see that the $2$-form $\frac{1}{2\pi}* dV_{\beta}$ attains integer values on these cycles, which implies that the cohomology class $[\frac{1}{2\pi}*dV_{\beta}]$ lies in the image of the natural inclusion 
\begin{align}
 H^2((\TT \times \mathbb{R}) \setminus \mathcal{P}_{m_0}, \dZ) 
\hookrightarrow  H^2((\TT \times \mathbb{R}) \setminus \mathcal{P}_{m_0}, \dR).
\end{align} 
Therefore, we let $\mathcal{N}_{m_0}^4$ be the total space of the $S^1$-bundle over $(\TT \times \mathbb{R}) \setminus \mathcal{P}_{m_0}$  
corresponding to the class $[\frac{1}{2\pi}*dV_{\beta}]$, completed by adding finitely many points corresponding to $\mathcal{P}_{m_0}$. Choose a connection $1$-form $\theta$ on $\mathcal{N}_{m_0}$ so that 
\begin{align}
d \theta = * dV_{\beta}.
\end{align}
Then applying the Gibbons-Hawking construction to $V_\beta$, we obtain a smooth hyperk\"ahler triple $\bm{\omega}^N$ over $\mathcal{N}_{m_0}^4$, which induces an incomplete hyperk\"ahler metric over the part in $\TT\times \mathbb{R}$ where $V_\beta$ is strictly positive.

For parameters $T_->0$ and $T_+>0$, we define,
\begin{equation}
\mathcal{N}_{m_0}^4(-T_-, T_+) \equiv \pi^{-1} \big(  U \cap \{-T_- < z < T_+\} \big),
\end{equation}
where $\pi : \mathcal{N}_{m_0}^4 \rightarrow U \equiv (\TT \times \mathbb{R}) \setminus \mathcal{P}_{m_0}$ is the bundle projection.

\begin{proposition}\label{gaugep}
There is a diffeomorphism 
\begin{align}
\Phi^N_{-} : (-T_-, -T_- +1, ) \times \Nil^3_{b_-}(\epsilon, \tau) \rightarrow \mathcal{N}_{m_0}^4(-T_-, -T_- +1),
\end{align} 
which preserves the $z$-coordinate, such that 
\begin{equation}
\label{pneqn}
(\Phi^N_{-})^*\theta= \theta_{b_-} + O(e^{ - \delta_N T_-}),
\end{equation}
as $T_- \rightarrow \infty$. 
Similarly, there is a diffeomorphism
\begin{align}
\Phi^N_{+} : (T_+ -1, T_+) \times \Nil^3_{-b_+}(\epsilon, \tau) \rightarrow\mathcal{N}_{m_0}^4(T_+ -1, T_+),
\end{align} 
which preserves the $z$-coordinate, such that 
\begin{equation}
(\Phi^N_{+})^*\theta= \theta_{-b_+} + O(e^{- \delta_N T_+}),
\end{equation}
as $T_+ \rightarrow \infty$. 
Furthermore, there exist triples of $1$-forms on the ends of the neck such that  
\begin{align}
\label{pneqn-}
(\Phi^N_{-})^*\bm{\omega}^N - \bm{\omega}_{b_-} = d ( \bm{a}^N_-)\\
(\Phi^N_{+})^*\bm{\omega}^N - \bm{\omega}_{-b_+} = d ( \bm{a}^N_+),
\end{align}
with 
\begin{align}
|\nabla^k \bm{a}^N_{\pm}| \leq C e^{- \delta_N |z|}
\end{align}
for any integer $k \geq 0$ and $\epsilon>0$, where $\delta_N>0$ is a  uniform constant in Proposition \ref{p:TY-asym}, $ \bm{\omega}_{b_-}$ and  $\bm{\omega}_{-b_+}$ are the hyperk\"ahler triples on the corresponding Calabi model spaces. 
\end{proposition}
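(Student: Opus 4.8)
The plan is to prove Proposition~\ref{gaugep} by reducing everything to the asymptotic analysis of the Green's function $V_\beta$ from Corollary~\ref{coro:gf} together with the gauge-fixing discussion of Section~\ref{ex:model-space}. First I would record the key structural fact from \eqref{vbasy}: near the end $z \ll -1$ we have $V_\beta = V_{b_-}^{\mathrm{model}} + h_-$ where $V_{b_-}^{\mathrm{model}} = \frac{2\pi b_-}{A}z + (\beta_- + \beta)$ is exactly the model potential on $\Nil^3_{b_-}\times\dR$ (after the harmless translation absorbing $\beta_-+\beta$ into $z$, legitimate since $b_- \neq 0$), and $h_-$ is a bounded harmonic function with $|\nabla^k_{g_0} h_-| = O(e^{-\epsilon_0|z|})$ for all $k$; likewise at $z \gg 1$ with $b_-$ replaced by $-b_+$. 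Since $*dV_\beta = *dV_{b_-}^{\mathrm{model}} + *dh_-$ and $*dh_-$ has the same exponential decay, the curvature $2$-form of the $S^1$-bundle $\mathcal{N}^4_{m_0}$ restricted to $\{-T_-<z<-T_-+1\}$ differs from the model curvature $\frac{2\pi b_-}{A}\dvol_\TT$ by an exact form $d\zeta_-$ with $|\nabla^k\zeta_-| = O(e^{-\epsilon_0 T_-})$ (choosing $\zeta_-$ by integrating in $z$, as in the proof of Lemma~\ref{lemma:diff}, the polynomial-in-$z$ errors from the non-parallel coframe being absorbed into a slightly smaller rate $\delta_N$).

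Next, over the region $\{-T_-<z<-T_-+1\}$ the bundle $\mathcal{N}^4_{m_0}$ is topologically the same degree-$b_-$ bundle over $\Nil^3_{b_-}$'s base as the model, so I can fix a bundle isomorphism covering the identity on the base, and under it pull back $\theta$ to a connection $\theta'$ on the model bundle with $d\theta' = \frac{2\pi b_-}{A}\dvol_\TT + d\zeta_-$. Then $\theta' - \zeta_-$ is a connection with the exact model curvature, hence differs from $\theta_{b_-}$ by a \emph{closed} $1$-form $\mu_-$; by the explicit gauge-transformation computation before Remark~\ref{boahey} (the diffeomorphisms $\varphi(x,y,t,z) = (x-q,y+p,t+px,z)$ with $\varphi^*\theta_{b_-} = \theta_{b_-} + \frac{2\pi b_-}{A}(p\,dx + q\,dy)$), after composing with such a $\varphi$ I can arrange that the remaining $\mu_-$ is a multiple of a $z$-dependent coframe $1$-form with no cohomological obstruction on the $(-T_-,-T_-+1)$ slice; then $\mu_- = d\psi_-$ for an explicit function, and absorbing $\psi_-$ into a further diffeomorphism (a translation along the $S^1$-fiber) I reach $(\Phi^N_-)^*\theta = \theta_{b_-} + (\text{the }\zeta_-\text{ error}) = \theta_{b_-} + O(e^{-\delta_N T_-})$, which is \eqref{pneqn}. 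The argument for $\Phi^N_+$ is identical with $b_- \rightsquigarrow -b_+$ and $-T_- \rightsquigarrow T_+$, using $\theta_{-b_+}$ and $\Nil^3_{-b_+}$.

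For the triple statement \eqref{pneqn-}, I would feed the connection asymptotics back into the Gibbons-Hawking formulas \eqref{hktd}: writing $\bm{\omega}^N$ in terms of $V_\beta$, $\theta$, $e_1,e_2,dz$ and $\bm{\omega}_{b_-}$ in terms of $V_{b_-}^{\mathrm{model}}$, $\theta_{b_-}$, the difference $(\Phi^N_-)^*\bm{\omega}^N - \bm{\omega}_{b_-}$ is a triple of closed $2$-forms built from $h_-$, $*dh_-$ and the $O(e^{-\delta_N T_-})$ connection error, all exponentially small in all derivatives. Since these closed $2$-forms live on $\Nil^3_{b_-}\times(-T_-,-T_-+1)$ and are supported near $z = -\infty$, exactly as in Lemma~\ref{lemma:diff} I set $\bm{a}^N_- \equiv -\int_z^{-\infty}(\text{the }dz\text{-contraction})\,dz$ (suitably interpreted, integrating toward the end where everything decays), which is closed-form-primitive modulo the $d_{\Nil}$-direction and satisfies $|\nabla^k\bm{a}^N_-|\le Ce^{-\delta_N|z|}$ after slightly shrinking the rate to handle the polynomial factors from the non-parallel coframe; the identity $(\Phi^N_-)^*\bm{\omega}^N - \bm{\omega}_{b_-} = d\bm{a}^N_-$ then follows from $d$-closedness as in \eqref{argh42}--\eqref{e:def-a}.

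The main obstacle I expect is the gauge-fixing bookkeeping in the second paragraph: one must check that the closed $1$-form remaining after matching the curvature genuinely lies in the span of the cohomologically trivial directions on a one-parameter ($z$) family of Heisenberg nilmanifolds, so that it can be killed by the explicit $\varphi$'s and a fiber translation without reintroducing a bounded (non-decaying) error — in other words, that the moduli $p,q$ and the flat-connection ambiguity $H^1(\TT,\dR)/H^1(\TT,\dZ)$ are exactly enough to absorb the leading behavior of $\theta$, leaving only the exponentially decaying tail. This is where the precise relation $k = \frac{\pi(b_- - b_+)}{A}$ and the fact that $b_\pm \neq 0$ are used crucially, mirroring Remark~\ref{boahey}: for $b_\pm = 0$ the argument would fail because the analogous parameters become genuine moduli.
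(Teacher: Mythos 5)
Your proposal is correct and follows essentially the same route as the paper's proof: match the curvatures using the exponential asymptotics of $V_\beta$ and a $z$-integrated primitive as in Lemma \ref{lemma:diff}, fix a bundle isomorphism covering the identity on the base (the paper obtains it from the exponential sheaf sequence, since both bundles over $U_-$ have Euler number $b_-$), and then absorb the remaining closed $1$-form discrepancy by decomposing it as $df + p\,dx + q\,dy$ via $H^1(U_-,\dR)\cong\dR^2$, killing $df$ by a fiber rotation and $p\,dx+q\,dy$ by the lifted torus translation from Section \ref{ex:model-space} — precisely the gauge-fixing bookkeeping you flag as the main obstacle. The triple identity \eqref{pneqn-} is likewise deduced in the paper by the same Lemma~\ref{lemma:diff}-type integration in $z$ that you propose.
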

\begin{proof}
We just deal with the negative end of the neck, the positive end is similar. 
Let 
\begin{align}
U_- = \{ p \in (\TT \times \mathbb{R} ) \setminus \{p_1, \dots, p_{m_0}\} \ | \ -\infty <  z < - T_-/2 \}.
\end{align}
Note that $U_-$ deformation retracts to $\TT$, so $H^2(U_-, \dZ) = \dZ$. The neck is a circle bundle over $U = (\TT \times \mathbb{R}) \setminus \{p_1, \dots, p_{m_0}\}$, and call the restriction to $U_-$ by $\mathcal{N}_{U_-}$. Note this bundle has Euler number $b_-$.

Over $U_-$ there exists another $S^1$-bundle explicitly identified with an open subset of the model space 
\begin{align}
\mathcal{N}_{b_-} = (-\infty, -T_- +1, ) \times \Nil^3_{b_-}(\epsilon, \tau),
\end{align}
with connection form $\theta_{b_-}$, which has curvature form  $- \frac{2 \pi b_-}{A} dx \wedge dy$, so this bundle also has Euler number $b_-$. 
From the exponential sheaf sequence, $H^1(U_-, \mathcal{E}^*) \cong H^2(U_-, \dZ)$, so there exists a bundle equivalence
\begin{align}
H : \mathcal{N}_{b_-} \rightarrow \mathcal{N}_{U_-}
\end{align}
which covers the identity map on the base, and such that the pullback bundle $H^*\mathcal{N}_{U_-} =  \mathcal{N}_{b_-}$.
The $1$-forms $H^* \theta$ and $\theta_{b_-}$ are therefore both connection forms on 
$\mathcal{N}_{b_-}$. Note that 
\begin{align}
d (H^* \theta) = H^* d \theta = H^*( * dV) = * dV_{\beta},
\end{align}
and
\begin{align}
d \theta_{b_-} = - \frac{2 \pi b_-}{A} dx \wedge dy. 
\end{align}
From the asymptotics on $V_{\beta}$ in \eqref{vbasy}, we have 
\begin{align}
d ( H^* \theta - \theta_{b_-}) = O( e^{\delta' z}), 
\end{align}
as $z \rightarrow - \infty$. By same method from the proof of Lemma \ref{lemma:diff}, we conclude that 
\begin{align}
d ( H^* \theta - \theta_{b_-}) = d a,
\end{align}
where $a = O ( e^{\delta' z})$, as $z \to - \infty$.
Therefore 
\begin{align}
d( H^* \theta - \tilde{\theta}_{b_-}) = 0
\end{align}
where $\tilde{\theta}_{b_-} = \theta_{b_-} + a$. Since 
$H^* \theta$ and $\tilde{\theta}_{b_-}$ are two connections with the same curvature form,
and since $H^1(U_-, \dR) \cong H^1(\TT, \dR) \cong  \dR \oplus \dR$, we conclude that
\begin{align}
H^* \theta -  \tilde{\theta}_{b_-} = df + p dx + q dy, 
\end{align}
for some function $f : U_- \rightarrow \dR$, and constants $p,q \in \dR$. 

Next, there exists a gauge transformation, that is, a mapping 
$G:  \mathcal{N}_{b_-} \rightarrow  \mathcal{N}_{b_-}$, covering the identity map, given by fiber rotation by $e^{if}$, so that 
\begin{align}
\label{p61e}
G^* H^* \theta -  \tilde{\theta}_{b_-} = p dx + q dy.
\end{align}
Then, by the discussion in Subsection \ref{ex:model-space}, there exists a mapping 
$R:  \mathcal{N}_{b_-} \rightarrow  \mathcal{N}_{b_-}$ which is the lift of a rotation on the torus, so that $R^* \theta_{b_-} = \theta_{b_-} - p dx - q dy$. Pulling back \eqref{p61e},
\begin{align}
R^* G^* H^* \theta -  R^* \tilde{\theta}_{b_-} = R^*(p dx + q dy). 
\end{align}
Since $R$ covers a rotation on the torus, the right hand side is invariant under $R$, 
so this can be rewritten as
\begin{align}
R^* G^* H^* \theta - \theta_{b_-}' = 0,
\end{align}
where
$\theta_{b_-}' = \theta_{b_-} + O(e^{\delta z})$ as $z \rightarrow - \infty$.
Then we define $\Phi^N_- = H \circ G \circ R$. The $z$ coordinate is not affected because $H$ and $G$ both cover the identity map, and $R$ covers a rotation on the torus.

Next, it follows from \eqref{pneqn} that the leading terms of the hyperk\"ahler triple on the neck agree with the model hyperk\"ahler triple for $z \ll 0$ (note we can allow $V$ to become negative, the triple is still defined). 
The same method from the proof of Lemma \ref{lemma:diff} then yields \eqref{pneqn-}.

\end{proof}

\subsection{The attaching maps and constraints}
\label{constraints}

We next define the ``attaching maps'' which will be used to construct the manifold $\mathcal{M}$. 
Let 
\begin{align}
DZ_- = \Phi^{TY}_- \Big( (T_-, T_- + 1) \times \Nil^3_{b_-} \Big) \subset X_{b_-}^4,
\end{align}
using the above coordinates on the end of $X_{b_-}^4$. Define
\begin{align}
\Psi_- : DZ_- \rightarrow \mathcal{N}_{m_0}^4
\end{align}
by $\Psi_- (\Phi^{TY}_-)^{-1} (z_-, p) = \Phi^N_-(z_- - 2 T_-, p)$.

Simlarly, let 
\begin{align}
DZ_+ =  \Phi^{TY}_- \Big( (T_+, T_+ + 1) \times \Nil^3_{b_+}  \Big) \subset X_{b_+}^4,
\end{align}
using the above coordinates on the end of $X_{b_+}^4$.
Define
\begin{align}
\Psi_+ : DZ_+ \rightarrow \mathcal{N}_{m_0}^4
\end{align}
be defined by
$\Psi_+  (\Phi^{TY}_+)^{-1}(z_+, p) = \Phi^N_+(2 T_+  - z_+, \psi(p))$, where $\psi_+: \Nil^3_{b_+} \rightarrow \Nil^3_{-b_+}$ is the diffeomorphism given by 
\begin{align}
\psi (x,y,t) = (-x,-y, -t).
\end{align}
We obtain the manifold $\mathcal{M}$ by gluing the pieces together using the attaching maps:
\begin{equation}
\mathcal{M} \equiv X_{b_-}^4(T_- +1) \bigcup_{\Psi_-} \mathcal{N}_{m_0}^4(-T_-, T_+) \bigcup_{\Psi_+} X_{b_+}^4(T_+ +1).
\end{equation}
The manifold $\mathcal{M}$ carries an orientation compatible with both Tian-Yau pieces, and we will fix this orientation in the following. 

Next, we want the potentials to agree up to the constant term in the damage zones after identifying the corresponding regions by the attaching maps. 
On $DZ_-$  we have 
\begin{align}
\Psi_-^* V_{\beta} &= \frac{2 \pi}{A} b_- (z_- - 2 T_- ) + \beta_- + \beta\\
& = \frac{2 \pi}{A} b_- z_- + \Big( -  \frac{2 \pi}{A} b_- (2 T_-) + \beta_- + \beta \Big),
\end{align}
which we want to equal to the leading terms of $V_-$, 
so we must have 
\begin{align}
\label{c1eq}
0 =  -  \frac{2 \pi}{A} b_- (2 T_-) + \beta_- + \beta.
\end{align}
Similarly, on the other damage zone $DZ_+$ we have 
\begin{align}
\Psi_+^* V_{\beta} &= -\frac{2 \pi}{A} b_+ (2T_+ - z_+) + \beta_+ + \beta\\
& = \frac{2 \pi}{A} b_+ z_+ + \Big(  -\frac{2 \pi}{A} b_+ (2T_+) + \beta_+ + \beta  \Big),
\end{align}
which we want to equal to the leading terms of $V_+$,
so we must have 
\begin{align}
\label{c2eq}
0 =  -  \frac{2 \pi}{A} b_+ (2 T_+) + \beta_+ + \beta .
\end{align}

\begin{remark}
If both $b_+ = 0$ and $b_- = 0$, then there is no constraint. This is the already known $\ALH \# \ALH$ gluing \cite{CCIII}, so we do not need to analyze this case further. 
\end{remark}

To summarize: the gluing procedure requires 
\begin{align}
\frac{4\pi b_\pm}{A}\cdot T_\pm &= \beta_\pm + \beta.
\end{align}
Immediately, the above constraints give $1$ free parameter $\beta>0$. So $T_->0$ and $T_+>0$ are completely determined by $\beta>0$ in the case $b_->0$ and $b_+>0$, i.e., 
\begin{align}
T_\pm &= \frac{A(\beta + \beta_\pm )}{4\pi b_\pm}.
\label{e:time-constraint}
\end{align}

\begin{remark}
We emphasize that we are fixing all the other gluing parameters so that only $\beta$ varies. We will prove some effective estimates in Section \ref{s:weight-analysis} and Section \ref{s:existence} which give uniform etimates for the linearized gluing operator $\mathscr{L}$ (defined in Section \ref{ss:outline}) and for sufficiently large $\beta\gg1$.
We also note that the estimates are unifrom  as long as other parameters vary in compact sets.
\end{remark}

\subsection{Gluing definite triples and topology of $\mathcal{M}$}

We have hyperk\"ahler triples 
\begin{align}
\begin{split}
\bm{\omega}^- &\equiv(\omega_1^-, \omega_2^-, \omega_3^-) \ \mbox{ on } X_{b_-}^4,\\
\bm{\omega}^N &\equiv (\omega_1^N, \omega_2^N, \omega_3^N) \ \mbox{ on } \mathcal{N}_{m_0}^4(-T_-,T_+),\\
\bm{\omega}^+ &\equiv (\omega_1^+, \omega_2^+, \omega_3^+) \ \mbox{ on } X_{b_+}^4.
\end{split}
\end{align}
We assume that $\omega_1$ is the K\"ahler form with respect to which the
tori are holomorphic on all three pieces. 
Next, we will glue these triples in the damage zones $DZ_-$ and $DZ_+$, to get a definite triple on $\mathcal{M}$. In this section, we show that we can moreover obtain a \textit{closed} definite triple, which is also very close to an ${\rm{SU}}(2)$-structure.

\begin{proposition}\label{p:gluing-definite-triple} There exist smooth triples of $1$-forms $\bm{a}^{\pm} \in \Omega^1(DZ_{\pm}) \otimes \dR^3$ satisfying
\begin{align}
\bm{\omega}^{\pm} + d \bm{a}^{\pm} =  \Psi_{\pm}^* \bm{\omega}^N  \mbox{ in } DZ_{\pm},
\end{align}
such that for any $k\in\dN$,
\begin{align}
| \nabla^k \bm{a}^{\pm}| \leq C_k e^{-\delta z_{\pm}} \mbox{ in } DZ_{\pm},
\end{align}
where $\delta>0$ and $C_k>0$ are uniform constants independent of $\beta$.
\end{proposition}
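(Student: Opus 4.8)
\textbf{Proof plan for Proposition \ref{p:gluing-definite-triple}.}

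The plan is to combine the asymptotic estimates already established for the three pieces — Corollary \ref{coro:tydiff} and Lemma \ref{lemma:diff} on the Tian-Yau ends, and the identities \eqref{pneqn-} of Proposition \ref{gaugep} on the two ends of the neck — and then to correct for the fact that the attaching maps $\Psi_\pm$ shift the $z$-coordinate and, in the plus case, apply the orientation-reversing diffeomorphism $\psi$. Concretely, fix a sign, say the minus side. By Lemma \ref{lemma:diff} applied on the Tian-Yau end there is a $1$-form triple $\bm a^{TY}_-$ with $\partial_z\,\lrcorner\,\bm a^{TY}_- = 0$, with $(\Phi^{TY}_-)^*\bm\omega^- - \bm\omega_{b_-} = d\bm a^{TY}_-$ on the Calabi model, and with $|\nabla^k_{g_{\mathcal C}}\bm a^{TY}_-|_{g_{\mathcal C}} = O(e^{-(\underline\delta-\epsilon)z_-})$. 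On the neck side, \eqref{pneqn-} gives $(\Phi^N_-)^*\bm\omega^N - \bm\omega_{b_-} = d\bm a^N_-$ with the analogous exponential decay. Since $\Psi_- = \Phi^N_-\circ(\text{shift by }-2T_-)\circ(\Phi^{TY}_-)^{-1}$ on $DZ_-$, pulling back $\bm\omega^N$ through $\Psi_-$ and subtracting $\bm\omega^- = \bm\omega_{b_-} + d\bm a^{TY}_-$ (after the identification $\Phi^{TY}_-$) yields
\[
\Psi_-^*\bm\omega^N - \bm\omega^- = d\bigl(\tau_{-2T_-}^*\bm a^N_- - \bm a^{TY}_-\bigr),
\]
where $\tau_{-2T_-}$ is the $z$-translation; one then sets $\bm a^- \equiv \tau_{-2T_-}^*\bm a^N_- - \bm a^{TY}_-$ (transported back to $DZ_-\subset X^4_{b_-}$ via $\Phi^{TY}_-$). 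The key point making this legitimate is that the constants $\beta_-,\beta,c_-$ have been chosen (constraints \eqref{c1eq}, \eqref{cpmass}) precisely so that the two pullbacks of $V_\beta$ and $V_-$ have \emph{identical} leading linear terms on $DZ_-$; hence both triples are asymptotic to the \emph{same} model triple $\bm\omega_{b_-}$ on $DZ_-$, and the difference of the two primitives is exact with a primitive of the stated decay. On $DZ_-$ the $z_-$-coordinate ranges over $(T_-,T_-+1)$, so $z_- \ge T_-$ there; combined with the constraint \eqref{e:time-constraint}, $T_-\ge cA\beta$, so the bound $|\nabla^k\bm a^-|\le C_k e^{-\delta z_-}$ holds with a constant $C_k$ independent of $\beta$ — this uniformity is exactly what later sections need.

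The plus side is the same argument after an extra bookkeeping step: $\Psi_+$ involves $z_+\mapsto 2T_+ - z_+$ and $\psi(x,y,t) = (-x,-y,-t)$, which exchanges $\Nil^3_{b_+}$ with $\Nil^3_{-b_+}$ and matches $\theta_{b_+}$ with $\theta_{-b_+}$ (compare \eqref{e:wtf} with its $-b$ analogue). One checks that $\psi^*$ intertwines the Calabi model triple $\bm\omega_{b_+}$ (written in the $z_+$ coordinate increasing towards infinity) with the neck's model triple $\bm\omega_{-b_+}$ near the end $z\to T_+$, up to a permutation/sign on the triple which is absorbed into the overall $\mathrm{SO}(3)$-ambiguity of hyperk\"ahler triples; once this is fixed, the identity and the exponential bound follow exactly as before.

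I expect the main obstacle to be purely organizational rather than analytic: tracking the three coordinate changes ($z$-translation on the minus end, $z$-reflection composed with $\psi$ on the plus end) and verifying that, under all of them, the two triples being glued are asymptotic to literally the same model triple — so that their difference is exact with an exponentially small primitive of the correct sign of the exponent. All the hard analytic input (existence of the primitives, their decay rates, the gauge-fixing that forces $\theta\to\theta_{b_\pm}$) is already packaged in Corollary \ref{coro:tydiff}, Lemma \ref{lemma:diff}, and Proposition \ref{gaugep}; what remains is to assemble it and to read off $\beta$-independence of the constants from the constraint \eqref{e:time-constraint}, which forces $z_\pm\gtrsim\beta$ on $DZ_\pm$.
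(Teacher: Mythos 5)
Your proposal is correct and takes essentially the same route as the paper: the paper's proof is literally a one-line combination of Lemma \ref{lemma:diff} and Proposition \ref{gaugep}, and your write-up simply fills in the bookkeeping (the $z$-translation/reflection in $\Psi_\pm$, the map $\psi$, and the matching of the model triples forced by \eqref{cpmass}, \eqref{c1eq}, \eqref{c2eq}) that the paper leaves implicit. The $\beta$-independence of the constants via $z_\pm\approx T_\pm$ on $DZ_\pm$ is also exactly the intended reading.
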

\begin{proof}
This follows upon combining Lemma \ref{lemma:diff} and Proposition \ref{gaugep}. 
\end{proof}

Let $\phi_{\pm}$ be cutoff functions such that 
\begin{align}
\phi_{\pm} =\begin{cases}
0 &\mbox{ for } z_{\pm} \leq T_{\pm}\\
 1 &\mbox{ for } z_{\pm} \geq T_{\pm} +1\\
\end{cases}.
\end{align}
Then we define 
\begin{align}
\bm{\omega}^{\mathcal{M}} = 
\begin{cases}
\bm{\omega}^-  &\mbox{ on }  X_{b_-}^4(T_-),
\\
\bm{\omega}^- + d \big( \phi_- \bm{a}^- \big)  &\mbox{ on } X_{b_-}^4(T_-,T_-+1),
\\
\bm{\omega}^N & \mbox{ on }\mathcal{N}_{m_0}^4(-T_- +1, T_+ -1),
\\
\bm{\omega}^+ + d \big( \phi_+ \bm{a}^+ \big) &\mbox{ on } X_{b_+}^4(T_+,T_+ + 1),
\\
\bm{\omega}^+  &\mbox{ on }  X_{b_+}^4(T_+).\\
\end{cases}\label{e:def-approx-triple}
\end{align}
\begin{corollary}
\label{c:sutt}The triple $\bm{\omega}^{\mathcal{M}}$ 
is a closed definite triple on $\mathcal{M}$. Furthermore, for any $k\in\dN$, there is some constant $C_k>0$ independent of the gluing parameter $\beta>0$ such that
\begin{align}
\Vert Q_{\bm{\omega}} - \Id \Vert_{C^{k}(\mathcal{M})} \leq C_k e^{-\delta_{q}\beta},
\end{align}
where $\delta_q>0$ is a uniform constant independent of $\beta$ and 
$Q_{\bm{\omega}}=(Q_{ij})$ is defined by 
\begin{equation}\frac{1}{2}\omega_i \wedge \omega_j=Q_{ij}\dvol_{\bm{\omega}^{\mathcal{M}}}.
\end{equation}
Here the norm is measured with respect to $g_{\beta}$, the Riemannian metric associated to $\bm{\omega}^{\mathcal{M}}$.
\end{corollary}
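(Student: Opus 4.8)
\textbf{Proof proposal for Corollary \ref{c:sutt}.}

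The plan is to verify the two assertions separately. Closedness is the easy part: on each of the five regions in \eqref{e:def-approx-triple} the triple $\bm{\omega}^{\mathcal{M}}$ is either one of the hyperk\"ahler triples $\bm{\omega}^{\pm}$, $\bm{\omega}^N$ — which are closed by construction via the Gibbons-Hawking ansatz — or it is one of these plus an exact term $d(\phi_{\pm}\bm{a}^{\pm})$; hence $d\bm{\omega}^{\mathcal{M}}=0$ everywhere. One should also check that the piecewise definitions agree on the overlaps: on $X_{b_{\pm}}^4(T_{\pm}, T_{\pm}+1)$ where $\phi_{\pm}$ transitions, the formula $\bm{\omega}^{\pm}+d(\phi_{\pm}\bm{a}^{\pm})$ interpolates between $\bm{\omega}^{\pm}$ (at $z_{\pm}=T_{\pm}$, where $\phi_{\pm}=0$) and $\bm{\omega}^{\pm}+d\bm{a}^{\pm}=\Psi_{\pm}^*\bm{\omega}^N$ (at $z_{\pm}=T_{\pm}+1$, where $\phi_{\pm}=1$) by Proposition \ref{p:gluing-definite-triple}, so it matches $\bm{\omega}^N$ on the neck side. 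Definiteness of the glued triple then follows from the $C^0$-estimate below, since $Q_{\bm{\omega}}$ is close to $\Id$, hence positive.

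For the quantitative estimate, first observe that away from the damage zones $DZ_{\pm}$ the triple $\bm{\omega}^{\mathcal{M}}$ coincides with an honest hyperk\"ahler triple, so there $Q_{\bm{\omega}}=\Id$ exactly and there is nothing to prove. It therefore suffices to estimate $\|Q_{\bm{\omega}}-\Id\|_{C^k}$ on $DZ_{\pm}$. On $DZ_-$, say, we have $\bm{\omega}^{\mathcal{M}}=\bm{\omega}^-+d(\phi_-\bm{a}^-)$, and $\bm{\omega}^-$ is hyperk\"ahler, so $\frac12\omega_i^-\wedge\omega_j^-=\delta_{ij}\,\dvol_{\bm{\omega}^-}$. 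Expanding
\begin{equation}
\tfrac12(\omega_i^-+d(\phi_-a^-_i))\wedge(\omega_j^-+d(\phi_-a^-_j)) = \tfrac12\omega_i^-\wedge\omega_j^- + \tfrac12\big(\omega_i^-\wedge d(\phi_-a^-_j)+\omega_j^-\wedge d(\phi_-a^-_i)\big) + \tfrac12 d(\phi_-a^-_i)\wedge d(\phi_-a^-_j),
\end{equation}
we see the deviation of the wedge products from $\delta_{ij}\dvol$ is controlled by the $C^1$-norm of $\phi_-\bm{a}^-$, hence, since $|\phi_-|\le 1$ and $|\nabla\phi_-|\le C$, by $\|\bm{a}^-\|_{C^1(DZ_-)}$. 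By Proposition \ref{p:gluing-definite-triple} this is $O(e^{-\delta z_-})$ on $DZ_-$, and analogously for $DZ_+$. The key point now is to translate the decay in the fiber coordinate $z_{\pm}$ into decay in $\beta$: on the damage zone $DZ_-$ one has $z_-\in(T_-,T_-+1)$, and the gluing constraint \eqref{e:time-constraint} gives $T_-=\tfrac{A(\beta+\beta_-)}{4\pi b_-}\geq c\,\beta$ for a uniform constant $c>0$ (since $b_-\le 9$ and $\tfrac12\le A\le 1$); thus $e^{-\delta z_-}\le e^{-\delta T_-}\le e^{-\delta_q\beta}$ with $\delta_q=\delta c>0$ uniform. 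The same works on $DZ_+$ using the constraint for $T_+$. Higher-order bounds follow the same way: differentiating the wedge-product expansion and using the $C^k$-bounds $|\nabla^k\bm{a}^{\pm}|\le C_k e^{-\delta z_{\pm}}$ from Proposition \ref{p:gluing-definite-triple} together with the uniform bounds on $\bm{\omega}^{\pm}$ and on the geometry of $DZ_{\pm}$ (which, after the fixed rescaling of the Tian-Yau metrics, are independent of $\beta$), one obtains $\|\tfrac12\omega_i\wedge\omega_j-\delta_{ij}\dvol_{\bm{\omega}^-}\|_{C^k}\le C_k e^{-\delta_q\beta}$. Finally, since $Q_{ij}$ is an algebraic (smooth, and for $\bm{\omega}$ close to a hyperk\"ahler triple, uniformly non-degenerate) function of the wedge-product coefficients and $Q=\Id$ when $\bm{\omega}$ is hyperk\"ahler, a Taylor expansion of this algebraic map yields $\|Q_{\bm{\omega}}-\Id\|_{C^k(\mathcal{M})}\le C_k e^{-\delta_q\beta}$, as claimed. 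One should be slightly careful here about which volume form the $Q_{ij}$ are normalized against — the statement uses $\dvol_{\bm{\omega}^{\mathcal{M}}}=(\det Q)^{1/3}\dvol_0$, but since $\det Q\to 1$ the difference is again an exponentially small perturbation absorbed into $C_k$ and $\delta_q$.

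The main obstacle — really the only nontrivial point — is the bookkeeping that converts fiberwise exponential decay into exponential decay in the single parameter $\beta$; this hinges entirely on the linear relation \eqref{e:time-constraint} forcing $T_{\pm}\gtrsim\beta$, together with the fact (emphasized in the remark after \eqref{e:time-constraint}) that all other gluing parameters are held fixed, so the constants $C_k,\delta,\delta_q$ can be taken uniform. Everything else is a routine expansion of the wedge products and an application of the estimates already established in Lemma \ref{lemma:diff}, Proposition \ref{gaugep}, and Proposition \ref{p:gluing-definite-triple}.
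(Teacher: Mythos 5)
Your proposal is correct and follows essentially the same route as the paper, whose proof of Corollary \ref{c:sutt} is simply the observation that it follows from Proposition \ref{gaugep} and Proposition \ref{p:gluing-definite-triple}. You have merely spelled out the details the paper leaves implicit: $Q_{\bm{\omega}}=\Id$ exactly outside the damage zones, the wedge-product expansion there is controlled by the $C^k$-decay of $\bm{a}^{\pm}$, and the constraint \eqref{e:time-constraint} converts $e^{-\delta z_{\pm}}$ with $z_{\pm}\approx T_{\pm}$ into $e^{-\delta_q\beta}$.
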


\begin{proof}This follows from Proposition \ref{gaugep} and Proposition \ref{p:gluing-definite-triple}.
\end{proof}

We next analyze some topological properties of the manifold $\mathcal{M}$. Note that we do not yet know that $\mathcal{M}$ is diffeomorphic to the $\K3$ surface. 
\begin{proposition}\label{p:topological-invariants} The compact oriented manifold $\mathcal{M}$ has the following 
topological properties:
\begin{align}
b_1(\mathcal{M}) = 0, \ \chi(\mathcal{M}) = 24, 
\ b_2^+(\mathcal{M}) = 3, \ b_2^-(\mathcal{M}) = 19.
\end{align}
\end{proposition}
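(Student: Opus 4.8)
The plan is to compute the topological invariants of $\mathcal{M}$ directly from its decomposition into the two Tian--Yau pieces and the neck, using Mayer--Vietoris, and then to deduce the remaining invariants from the constraints satisfied by any compact oriented $4$-manifold together with the known values. First I would record the homotopy types of the building blocks: a Tian--Yau space $X_{b_\pm}^4 = M_{b_\pm} \setminus D_\pm$, where $M_{b_\pm}$ is a del Pezzo surface of degree $b_\pm$ (so $M_{b_\pm}$ is $\mathbb{CP}^2$ blown up at $9-b_\pm$ points, or $\mathbb{CP}^1\times\mathbb{CP}^1$), and $D_\pm$ a smooth anticanonical curve (an elliptic curve). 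Removing a smooth divisor from a surface gives a manifold homotopy equivalent to the complement, and by the Lefschetz-type arguments (or by direct handle decomposition) one has $b_1(X_{b_\pm}^4)=0$, $b_2(X_{b_\pm}^4) = b_2(M_{b_\pm}) - 1 = (10-b_\pm)-1 = 9-b_\pm$, and $\chi(X_{b_\pm}^4) = \chi(M_{b_\pm}) - \chi(D_\pm) = (12-b_\pm) - 0 = 12 - b_\pm$; here I use $b_2(M_{b_\pm}) = 10 - b_\pm$ and $\chi(M_{b_\pm}) = 12 - b_\pm$ for a degree $b_\pm$ del Pezzo. The neck region $\mathcal{N}_{m_0}^4(-T_-,T_+)$ is an $S^1$-bundle over $(\TT\times[-T_-,T_+])\setminus\mathcal{P}_{m_0}$ completed by adding $m_0 = b_-+b_+$ points at the monopoles; it retracts onto a neighborhood of the $m_0$ added points fibered over a wedge-like base, and a standard computation (each added monopole point contributes like a point in a smooth $4$-manifold while the $S^1$-bundle over $\TT\times$interval with $m_0$ punctures is homotopy equivalent to $S^1$-bundles glued along cross-sections) gives $\chi(\mathcal{N}_{m_0}^4(-T_-,T_+)) = m_0 = b_-+b_+$, with $H^*(\mathcal{N})$ computable via the Gysin sequence of the circle bundle as in Proposition~\ref{nilbn}.

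Next I would run Mayer--Vietoris (or just additivity of Euler characteristic) for the decomposition $\mathcal{M} = X_{b_-}^4(T_-+1) \cup \mathcal{N}_{m_0}^4(-T_-,T_+) \cup X_{b_+}^4(T_++1)$, where consecutive pieces overlap in damage zones each homotopy equivalent to $\Nil^3_{b_\pm}$, hence with $\chi = 0$. Additivity of $\chi$ then gives
\[
\chi(\mathcal{M}) = \chi(X_{b_-}^4) + \chi(X_{b_+}^4) + \chi(\mathcal{N}_{m_0}^4) - \chi(DZ_-) - \chi(DZ_+) = (12-b_-) + (12-b_+) + (b_-+b_+) - 0 - 0 = 24.
\]
For $b_1$, the Mayer--Vietoris sequence in cohomology, together with $b_1(X_{b_\pm}^4) = 0$, $b_1(\Nil^3_{b_\pm}) = 2$ (Proposition~\ref{nilbn}), and the explicit description of $H^1(\Nil^3_b)$ as generated by $\pi^*dx, \pi^*dy$, reduces to showing that the restriction maps from the adjacent pieces kill these classes. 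The key point here is that the generators $\pi^*dx$ and $\pi^*dy$ of $H^1$ of each damage-zone nilmanifold do not extend over the respective Tian--Yau piece: the torus $\TT$ in the cross-section bounds in $X_{b_\pm}^4$ (it is a fiber degenerating toward the divisor, or more precisely the $3$-manifold $\Nil^3_{b_\pm}$ sits as the boundary of a neighborhood of infinity and $H^1(X_{b_\pm}^4) = 0$ forces the restriction $H^1(X_{b_\pm}^4)\to H^1(\Nil^3_{b_\pm})$ to be zero). A careful bookkeeping of the two Mayer--Vietoris connecting maps then yields $b_1(\mathcal{M}) = 0$.

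Finally, with $b_1(\mathcal{M}) = 0$ and $\chi(\mathcal{M}) = 24$ in hand, I would pin down $b_2^\pm$ using two facts: $b_2(\mathcal{M}) = \chi(\mathcal{M}) - 2 + 2b_1(\mathcal{M}) = 22$ (since $b_0 = b_4 = 1$, $b_1 = b_3 = 0$), and the signature, which I would compute either from the $G$-signature / Novikov additivity of $\sigma$ under the decomposition along the $\Nil^3$'s (the signature of a $4$-manifold with boundary is additive up to correction terms, and $\Nil^3$ bounds pieces whose intersection forms I can identify), or more simply by invoking that $\mathcal{M}$ carries a hyperk\"ahler metric once Theorem~\ref{t:codim-3} is established, forcing $\sigma(\mathcal{M}) = -16$ and $b_2^+ = 3$. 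However, since the logical order has the topology computed \emph{before} the hyperk\"ahler metric is produced, the honest route is Novikov additivity: $\sigma(\mathcal{M}) = \sigma(X_{b_-}^4) + \sigma(X_{b_+}^4) + \sigma(\mathcal{N}_{m_0}^4)$ with all three computed from the intersection forms of the compactifications, giving $\sigma(\mathcal{M}) = -16$, hence $b_2^+ + b_2^- = 22$ and $b_2^+ - b_2^- = -16$, so $b_2^+(\mathcal{M}) = 3$ and $b_2^-(\mathcal{M}) = 19$. The main obstacle I anticipate is the careful verification of the vanishing of the Mayer--Vietoris maps on $H^1$ and $H^2$ (equivalently, a clean bookkeeping of which cohomology classes on the $\Nil^3$ cross-sections extend over which pieces), and correctly accounting for the contributions of the $m_0$ monopole points and the circle-bundle structure to $\sigma(\mathcal{N}_{m_0}^4)$; the Euler characteristic computation is routine by additivity.
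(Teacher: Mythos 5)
Your Euler characteristic computation is the same as the paper's and is fine, but there are two genuine gaps. First, the $b_1$ step: with $H^1(X_{b_\pm}^4)=0$ (which you assert but do not prove; the paper proves it via the long exact sequence of the pair in compactly supported cohomology, Poincar\'e duality, and injectivity of $H_2(\TT;\RR)\to H_2(Q_{b_\pm};\RR)$ for a complex submanifold), Mayer--Vietoris gives
\begin{equation*}
H^1(\mathcal{M})\;\cong\;\ker\Bigl(i^*:H^1(\mathcal{N}_{m_0}^4)\longrightarrow H^1(\Nil^3_{b_-})\oplus H^1(\Nil^3_{b_+})\Bigr),
\end{equation*}
so the crux is injectivity of the restriction \emph{from the neck} to the two boundary nilmanifolds. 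Your stated ``key point'' --- that the restriction $H^1(X_{b_\pm}^4)\to H^1(\Nil^3_{b_\pm})$ vanishes --- is automatic (the source is zero) and points in the wrong direction: classes on the cross-sections that fail to extend feed into $H^2(\mathcal{M})$ via the connecting map, not into $H^1$. What is actually needed, and what the paper supplies, is the Gysin sequence of the $S^1$-fibration of the neck over $(\TT\times\RR)\setminus\mathcal{P}_{m_0}$, identifying $H^1(\mathcal{N}_{m_0}^4)$ with the span of $\pi^*dx,\pi^*dy$, whose restrictions to each $\Nil^3_{b_\pm}$ are nonzero and independent by Proposition \ref{nilbn}. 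Your ``careful bookkeeping'' phrase hides exactly this step.

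Second, the signature. Novikov additivity along the two nilmanifold cross-sections is legitimate, and $\sigma(X_{b_\pm}^4)=b_\pm-9$ follows easily from $Q_{b_\pm}=X_{b_\pm}^4\cup\nu(D_\pm)$, but the whole weight of the argument then falls on $\sigma(\mathcal{N}_{m_0}^4)$, which you do not compute and which is not ``the intersection form of a compactification'' in any standard sense: the neck has $b_2=m_0+2$, with a mix of $(-2)$-spheres over segments joining monopole points and torus classes over loops in $\TT$, and the cross-terms matter (a naive $A_{m_0-1}$-chain count does not give the value $2-m_0$ forced by $\sigma(\mathcal{M})=-16$). So as written this is a gap, and the alternative you mention (quoting the hyperk\"ahler metric) is circular, as you note. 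The paper avoids all of this: the glued closed definite triple $\bm{\omega}^{\mathcal{M}}$ trivializes $\Lambda^2_+(\mathcal{M})$, hence gives an almost complex structure with $c_1^2=0$, and the Hirzebruch signature theorem $2\chi(\mathcal{M})+3\tau(\mathcal{M})=c_1^2=0$ yields $\tau=-16$ directly, whence $b_2^+=3$, $b_2^-=19$. If you want to keep the Novikov route you must actually determine the intersection form of the Gibbons--Hawking neck; otherwise adopt the signature-theorem argument.
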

\begin{proof}
We write the manifold $\mathcal{M}$ as the union of open sets $U \cup V$,
where  
\begin{align}
U = \mathcal{N}_{m_0}^4 (-T_-, T_+), \ V = X_{b_-}(T_- +1)  \sqcup  X_{b_+} (T_- +1),
\end{align}
where $X_{b_{\pm}} = Q_{b_{\pm}} \setminus \TT$, $m_0=b_- + b_+$, with $Q_{b_{\pm}}$ a del Pezzo surface
of degree $b_{\pm}$. Clearly, $U \cap V $ deformation retracts onto $\Nil_{b_-}^3 \sqcup \Nil_{b_+}^3$. 

Next, we claim that the de Rham cohomology $H^1(X_{b_{\pm}}) = 0$. To see this, we use the long exact sequence of a pair in de Rham cohomology
\begin{align}
  \label{lesp}
  \cdots \rightarrow H^k_c(Q_{b_{\pm}} \setminus \TT)
  \rightarrow H^k(Q_{b_{\pm}}) \rightarrow H^k (\TT) \xrightarrow{\phi}  H^{k+1}_c(Q_{b_{\pm}} \setminus \TT)
  \rightarrow \cdots,
\end{align}
see \cite[Chapter 11]{Spivak}.
Since $H^3( Q_{b_{\pm}}) = 0$, \eqref{lesp} yields an exact sequence 
\begin{align}
  \dots \rightarrow H^2( Q_{b_{\pm}}) \xrightarrow{i^*} H^2(\TT) \rightarrow H^3_c( Q_{b_{\pm}} \setminus \TT) \rightarrow 0.
  \end{align}
  Here the mapping $i^* : H^2( Q_{b_{\pm}})  \rightarrow H^2(\TT)$ is just the pullback under inclusion, which is dual to the mapping on homology $i_* :  H_2(\TT; \RR)\rightarrow   H_2( Q_{b_{\pm}};\RR)$. Since $\TT$ is a complex submanifold of a K\"ahler manifold, this latter mapping is injective, so the mapping $i^*$ is surjective, and
  by Poincar\'e duality  we conclude that
  \begin{align}
  H^1( Q_{b_{\pm}} \setminus \TT) \cong  H^3_c(Q_{b_{\pm}} \setminus \TT) = 0.
  \end{align}
Since we just showed that $H^1( X_{b_{\pm}}) = 0$, the Mayer-Vietoris sequence in cohomology for the pair $\{U, V\}$ yields an exact sequence 
\begin{align}
  \label{mvpair}
  0 \rightarrow H^1(\mathcal{M}) \rightarrow H^1(\mathcal{N}_{m_0}) \xrightarrow{i^*}
H^1 (\Nil_{b_-}^3 \sqcup \Nil_{b_+}^3) \cong H^1(\Nil_{b_-}^3) \oplus H^1 (\Nil_{b_+}^3).
  \end{align}
  The mapping $i^*$ is the pullback under inclusion of the two nilmanifold fibers of the neck at each end.
We claim that this mapping is injective. 
To see this, let $\mathcal{P}_{m_0}\equiv 
\{p_1,\ldots, p_{m_0}\}$ denote the monopole points in $B = \TT \times (-T_-, T_+)$, where $m_0=b_-+b_+$.
Then there are $\tilde{p}_j \in \mathcal{N}_{m_0}^4$ such that 
$\mathcal{N}_{m_0}^4 \setminus \widetilde{\mathcal{P}}_{m_0}$ is a circle bundle over $B \setminus \mathcal{P}_{m_0}$, 
\begin{align}
\label{neckfib}
  S^1 \longrightarrow \mathcal{N}_{m_0}^4 \setminus \widetilde{\mathcal{P}}_{m_0}
  \xrightarrow{\ \pi \ } B\setminus\mathcal{P}_{m_0}.
\end{align}
The Gysin sequence of \eqref{neckfib} begins with
\begin{align}
  \label{gys1}
  0 \rightarrow H^1 (  B\setminus\mathcal{P}_{m_0}) \xrightarrow{\pi^*}
H^1(  \mathcal{N}_{m_0}^4 \setminus \widetilde{\mathcal{P}}_{m_0}) \rightarrow \cdots
  \end{align}
  It is easy to see inclusion induces an isomorphism
 $H^1 (  B\setminus\mathcal{P}_{m_0}) \cong H^1(B) \cong \RR \oplus \RR$,
and similarly, $H^1 (   \mathcal{N}_{m_0}^4 \setminus \widetilde{\mathcal{P}}_{m_0})
\cong H^1 (  \mathcal{N}_{m_0}^4)$.  Then \eqref{gys1} becomes
\begin{align}
  \label{gys2}
  0 \rightarrow \mbox{span}\{dx, dy\} \xrightarrow{\pi^*}
H^1(  \mathcal{N}_{m_0}^4 ) \rightarrow \cdots
\end{align}
Together with Proposition \ref{nilbn}, and the exact sequence \eqref{mvpair}, we conclude that $i^* \pi^* dx$ and $i^* \pi^* dy$ are both nontrivial and are linearly independent in
$H^1(\Nil^3_{b_{-}} \sqcup \Nil^3_{b_{+}} )$, so $i^*$ is injective as claimed. Then \eqref{mvpair} implies that $b_1(\mathcal{M}) = 0$.
Since $\mathcal{M}$ is a compact orientable $4$-manifold, Poincar\'e duality also implies that $b_3(\mathcal{M}) =0$.

Next, it follows from the fibration \eqref{neckfib}
that $\chi( \mathcal{N}_{m_0}^4\setminus \widetilde{\mathcal{P}}_{m_0} ) = 0$, and therefore \begin{align}\chi(\mathcal{N}_{m_0}^4 ) = \# {\mbox{ of monopole points }} = m_0 = b_- + b_+.\end{align}

For a Tian-Yau space, it follows that 
\begin{align}
\chi(X_{b}^4) = \chi ( Q_b \setminus \TT) = \chi(Q_b) - \chi(\TT) = \chi(Q_b),
\end{align}
where $Q_b$ is a degree $b$ del Pezzo surface, so
\begin{align}
\chi(X_b^4) = \chi( Q_b \setminus \TT) = 12 - b
\end{align}
Note also that $\chi(\Nil_{b_-}^3) =\chi(\Nil_{b_+}^3) = 0$ since it is an orientable 3-manifold.
Then we have
\begin{align}
\chi (\mathcal{M}) = \chi( X_{b_-}^4 ) + \chi(\mathcal{N}) + \chi(X_{b_+}^4)
= (12 - b_-)  +  (b_+  + b_-) + (12 - b_+) = 24.
\end{align}
Since we have shown above that $b_1(\mathcal{M}) = b_3(\mathcal{M}) = 0$, this proves that $b_2(\mathcal{M})=22$.

Next, as we constructed in \eqref{e:def-approx-triple} the approximate definite triple $\bm{\omega}^{\mathcal{M}}\equiv (\omega_1, \omega_2, \omega_3)$, 
which are everywhere non-zero self-dual 2-forms forming a basis of $\Lambda^2_+$ at every point. This implies the bundle $\Lambda^2_+(\mathcal{M})$ is a trivial rank $3$ bundle.
Also, $\omega_1$ being non-zero everywhere means that there is an 
almost complex structure ($\omega_1 / |\omega_1|$ is a unit norm self-dual 2-form, which is equivalent to an orthogonal almost complex structure). By Corollary \ref{c:sutt}, for $\beta \gg 1$, the rank 2 subbundle $V \subset \Lambda^2_0$, given by the orthogonal complement of $\omega_1 / |\omega_1|$ is trivial.
Then $0 = c_1 ( V \otimes \dC) = c_1(T\mathcal{M},J)^2$, and the Hirzebruch signature theorem implies that
\begin{align}
2 \chi(\mathcal{M}) + 3 \tau(\mathcal{M})  = \int_{\mathcal{M}} c_1^2 = 0,
\end{align}
from which it follows that $\tau(\mathcal{M})  = -16$. Therefore, $b_2^+(\mathcal{M})=3$ and $b_2^-(\mathcal{M})=19$.

\end{proof}

\section{Geometry and regularity of the approximate metric}

\label{s:gluing-space}
In this section, we will give a detailed analysis of the geometry of $(\mathcal{M}, g_{\beta})$.

\subsection{Notations}
\label{ss:notations}

Since the arguments in the next sections are very tedious and involved, in this subsection we will list some fixed constants and make necessary conventions which will be frequently used in the later proofs. Throughout the rest of the paper, the notation $a_j \to a$ will implicitly mean the limit as $j \rightarrow \infty$, unless otherwise noted. 

\subsubsection{Tian-Yau spaces and their asymptotic rates}

To start with,
for two positive integers 
\begin{equation}
b_-, b_+ \in \{1,2,\ldots, 9\},
\end{equation}
let $(X_{b_{-}}^4,g_{b_-}, q_{-})$ and 
$(X_{b_{_+}}^4,g_{b_+}, q_{+})$
be fixed hyperk\"ahler Tian-Yau spaces
with reference points $q_{-}\in X_{b_-}^4$ and $q_{+}\in X_{b_+}^4$ such that their degrees are $b_{-}$ and $b_{+}$ respectively. See Section \ref{s:Tian-Yau} for the definition of a Tian-Yau space and the natural coordinate outside a large compact subset. 
On $(X_{b_{-}}^4, g_{b_-}, q_-)$ and $(X_{b_{+}}^4, g_{b_+}, q_+)$, 
there are diffeomorphisms
\begin{equation}
\Phi_\pm: [\zeta_0^\pm,+\infty)\times \Nil_{b_\pm}^3 \rightarrow X_{b_\pm}^4 \setminus K_\pm
\end{equation}
between the Gibbons-Hawking space which models over a flat cylinder $\TT\times\dR$. 
We define the definite constants $D_0^\pm$ 
by
\begin{align}
D_0^{\pm} \equiv \text{The distance between} \ q_\pm \ \text{and the level set}\  \{\bm{x}\in X_{b_\pm}^4|z_\pm(\bm{x})=\zeta_0^\pm\}.
\end{align}
Proposition \ref{p:TY-asym} shows that 
there are some positive constants 
\begin{equation}
\dl>0 ,\ \dr>0
\end{equation}
such that for any $k\in\dN$,
\begin{equation}
|\nabla_{g_{\mathcal{C}}^\pm}^k(\Phi^*\omega_{TY}^{\pm}-\omega_{\mathcal C}^\pm)|_{g_{\mathcal{C}}^\pm}=O(e^{-\dl z_\pm}),
\end{equation} 
where $\omega_{TY}^{\pm}$, $\omega_{\mathcal{C}}^\pm$ 
are the K\"ahler forms on the Tian-Yau spaces $X_{b_\pm}^4$,and the Calabi model spaces respectively.

\subsubsection{Some notations about the neck region}

Now we fix some parameters in the neck region for the convenience of our discussions in the later sections.

Let $\mathcal{P}_{m_0}\equiv\{p_1,\ldots, p_{m_0}\}$ be the set of monopoles on the flat cylinder $(\dT^2\times\dR,g_0)$ with coordinates $(x,y,z)$ such that 
\begin{enumerate} 
\item $z(p_1)=0$.

\item There are definite constants 
\begin{equation}
\iota_0>0,\ T_0>0
\end{equation}
 such that for all $k\neq l$, we have
\begin{equation} \iota_0  \leq d_{g_0}(p_k,p_l) \leq T_0.\end{equation}

\end{enumerate}
Around each monopole $p_m \in \mathcal{P}_{m_0}$, we define the associated distance function 
 \begin{equation}
 d_m(\bm{x})\equiv d_g(p_m,\bm{x}),\ p_{m}\in\mathcal{P}_{m_0},\ \bm{x}\in(\mathcal{M},g).
 \end{equation}
In our proof, the following notations will also be needed. 
We fix definite constants 
\begin{equation}
\iota_0'>0,\ T_0'>0\end{equation}
 such that for all $1\leq m< l \leq m_0$, then in terms of the Gibbons-Hawking metric of the neck region, we have 
\begin{equation}
\iota_0' \cdot (\beta)^{\frac{1}{2}}\leq d_{g}(p_m, p_l) \leq  T_0' \cdot (\beta)^{\frac{1}{2}}.
\end{equation}
We have already defined in Section \ref{s:approx-triple} the Gibbons-Hawking metric in the neck region $\mathcal{N}_{m_0}^4$. Given a gluing parameter $\beta>0$,  by Theorem \ref{t:harmonic-function-cylinder},  
 the defining Green's function $V_{\beta}$ satisfies the asymptotic property that there are constants 
 \begin{equation}
\el>0, \ \er>0
 \end{equation}
such that for any $k\in\dN$ we have
\begin{equation}
\Big|\nabla^k \Big(V_{\beta}- \Big(\frac{2\pi b_-}{A}z + \beta\Big) \Big) \Big| = O(e^{ \el z}), \qquad  z\to-\infty
\end{equation}
and 
\begin{equation}
\Big|\nabla^k \Big(V_{\beta}- \Big(-\frac{2\pi b_+}{A}z + \beta\Big)\Big) \Big|= O(e^{-\er z}), \qquad z\to+\infty.
\end{equation}

The following functions defined on $\mathcal{N}_{m_0}^4$ as well as on the Tian-Yau pieces are crucial in analyzing the rescaled limits and the definition of the weight function in the next section, which naturally comes from the construction of the model metric:

\begin{enumerate}

\item On the negative part of the neck region, we define the function  
\begin{equation}
L_-(\bm{x})\equiv \Big(\frac{2\pi b_-}{A} \cdot z(\bm{x}) +\beta\Big)^{\frac{1}{2}},\ -T_-\leq z(\bm{x})<0 \end{equation}

\item  On the positive part of the neck region, we define the function  
\begin{equation}
L_+(\bm{x})\equiv \Big(-\frac{2\pi b_+}{A} \cdot z(\bm{x}) +\beta\Big)^{\frac{1}{2}},\ 0\leq z(\bm{x})<T_+ \end{equation}

\item For $\bm{x}\in\mathcal{M}$ located in the end region of $X_{b_{-}}^4$ and  satisfy $\zeta_0^{-} \leq z_{-}(\bm{x}) \leq T_-$, we define
\begin{equation}\underline{L}_-(\bm{x}) \equiv \Big(\frac{2\pi b_{-}}{A} \cdot z_{-}(\bm{x})  \Big)^{\frac{1}{2}}\end{equation}

\item For $\bm{x}\in\mathcal{M}$ located in the end region of $X_{b_{+}}^4$ and  satisfy  $\zeta_0^{+} \leq z_+(\bm{x}) \leq T_+$, we define
\begin{equation}\underline{L}_+(\bm{x}) \equiv \Big(\frac{2\pi b_{+}}{A} \cdot z_{+}(\bm{x})\Big)^{\frac{1}{2}}.\end{equation}

\end{enumerate}

\subsubsection{Subdivision of the manifold $\mathcal{M}$}

\label{sss:subdivision}

Fix a gluing parameter $\beta>1$,  the manifold $(\mathcal{M},g_{\beta})$ will be divided into the following $9$ regions depending on the different collapsing behaviors of metric $g$: 

\begin{align*}
&\I: \ d_m(\bm{x}) \leq \beta^{-\frac{1}{2}} \ \text{for some}\ 1\leq m\leq m_0
\\
& \hspace{1cm} \mbox{(in the neck, very close to a monopole point) }\\
&\II: \ 2\beta^{-\frac{1}{2}}\leq d_m(\bm{x})\leq \frac{\iota_0'}{4}\cdot(\beta)^{\frac{1}{2}}  
\text{ for some}\ 1\leq m\leq m_0
\\
& \hspace{1cm} \mbox{(in the neck, not close, but not too far from any monopole point)  }\\
&\III: \  z(\bm{x})\in[-m_0T_0,m_0T_0] \ \text{and } d_m(\bm{x}) \geq  \frac{\iota_0'}{2} \cdot (\beta)^{\frac{1}{2}} \text{ for all}\  1\leq m \leq m_0
\\
& \hspace{1cm} \mbox{(in a bounded region of the neck, but far from any monopole point)}\\
  &\IV_{-} : \  z(\bm{x})\in[-T_-/2,-2m_0T_0]  \\
&\hspace{1cm} \mbox{(in the negative end region of the neck)}\\
&\IV_{+}: \ z(\bm{x})\in[2m_0T_0, T_+/2] \\
&\hspace{1cm} \mbox{(in the positive end region of the neck)}\\
&\V_{-}: \  \bm{x}\in X_{b_-}^4\ \text{and}\ 2 \zeta_0^{-} \leq z_-(\bm{x})\leq T_-
\\
&\hspace{1cm} \mbox{(in the end region of $X_{b_-}^4$)}\\
&\V_{+}: \  \bm{x}\in X_{b_+}^4\ \text{and} \ 2 \zeta_0^{+} \leq z_+(\bm{x})\leq T_+
\\
&\hspace{1cm} \mbox{(in the end region of $X_{b_+}$)}\\
&\VI_{-}: \ \bm{x}\in \overline{B_{D_0^-}(q_-)}\subset  X_{b_-}^4
\\
&\hspace{1cm} \mbox{(in the bounded part of $X_{b_-}^4$)}\\
&\VI_{+}: \ \bm{x}\in \overline{B_{D_0^+}(q_+)}\subset  X_{b_+}^4
\\
&\hspace{1cm} \mbox{(in the bounded part of $X_{b_+}^4$)}.
\end{align*}
We note that for $\bm{x} \in\IV_{\pm}$, we have 
\begin{align}
 T_0' (\beta)^{\frac{1}{2}} \leq  d_m(\bm{x})\leq R_{\pm} \text{ for all}\ 1\leq m \leq m_0,
\end{align}
where
\begin{align}
R_- &\equiv \sup\Big\{d_{g}(x,p_1) \Big| -T_- \leq z(\bm{x}) \leq 0 \Big\},
\\
R_+ &\equiv \sup\Big\{d_{g}(x,p_1) \Big| 0 \leq z(\bm{x}) \leq T_+ \Big\}.
\end{align}
Immediately, there is some constant $C>0$ (independent of $\beta$) such that
\begin{equation}
 C^{-1} \beta^{\frac{3}{2}}\leq R_{\pm} \leq C \beta^{\frac{3}{2}}.
\end{equation}
\begin{remark}
\label{re:subdivision}
Notice that the above regions do not completely cover the manifold $\mathcal{M}$. However, each gap region shares the geometric behavior with the adjacent regions in the above subdivision. Therefore, the curvature estimates and the rescaled geometries in each gap region will be the same as in the adjacent regions, so we will ignore these gap regions in the following.
\end{remark}

\subsection{Regularity of the approximate metrics}
\label{ss:regularity}

In this subsection, we prove uniform curvature estimates on $\mathcal{M}$ which will be crucial in showing that certain rescalings of the approximate metric have bounded curvature. We will show two different ways to understand the regularity. 

The first way is to directly compute the curvature tensors. 
Since the Gibbons-Hawking metric has an explicit form in terms of the defining harmonic function, the curvature estimates just follow from straightforward calculations. 
The following lemma gives sharp curvature estimates for every point on $\mathcal{M}$.
\begin{lemma}
\label{l:curvature-estimates}

The following uniform curvature estimates hold for every point in $\mathcal{M}$:
\begin{enumerate}
\item 
Let $r(\bm{x})$ denote the Euclidean distance to the monopole points, then there exists constants $C>0$ so that
such that for each $1\leq m\leq m_0$
for every $\bm{x}\in B_{r_0}(p_m)$ with 
$r_0\equiv \frac{1}{2}\InjRad_{g_0}(\TT)$,
the following curvature estimates hold,
\begin{align}
|\Rm|(\bm{x}) \leq
\begin{cases}
C\beta,  & 0\leq r(\bm{x})  < \beta^{-1},
\\
\frac{C}{\beta^2 r(\bm{x})^3}, & \beta^{-1}\leq  r(\bm{x}) <  r_0.
\end{cases}\label{e:curvature-estimate-r}
\end{align}
In terms of the intrinsic distance function with respect to the Riemannian metric $g_j$, 
\begin{align}
|\Rm|(\bm{x}) \leq
\begin{cases}
C\beta,  &  0\leq r(\bm{x})  < \beta^{-1},
\\
\frac{C}{\beta^{\frac{1}{2}} d_{m}(\bm{x})^3}, &  \beta^{-1}\leq  r(\bm{x}) <  r_0.
\end{cases}\label{e:curvature-estimate-dist}
\end{align}

\item If $\bm{x}$ is in the neck region but has some definite distance away from the monopoles,
the following curvature estimates hold for some uniform constant $C>0$,
\begin{align}|\Rm|(\bm{x}) \leq 
\begin{cases}
\frac{C}{\beta^2 z(\bm{x})}, & \frac{r_0}{10} < |z(\bm{x})| < \beta,\\
\frac{C}{\beta^3}, & -T_1\leq z(\bm{x}) \leq -\beta \text{and}\ \beta \leq z(\bm{x}) <T_2.
\end{cases}
\end{align}

\item 
For $\bm{x} \in X_{b_{\pm}} (T_{\pm}) \subset \mathcal{M}$, there is a constant $C$ so that
\begin{align}
|\Rm|(\bm{x}) \leq
\begin{cases}
C   & d(\bm{x}) < \zeta_{\pm}
\\
\frac{C}{d(\bm{x})^2} & d (\bm{x}) \geq \zeta_{\pm},
\end{cases}\label{e:curvature-estimate-TY}
\end{align}
where $d(\bm{x})$ is the distance to a base point in $X_{b \pm}$.

\item 
For $\bm{x} \in DZ_{\pm} \subset \mathcal{M}$, there is a constant $C>0$ so that 
\begin{align}
|\Rm|(\bm{x}) \leq \frac{C}{\beta^{3}}.
\end{align}
\end{enumerate}

\end{lemma}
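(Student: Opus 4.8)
\textbf{Proof strategy for Lemma \ref{l:curvature-estimates}.} The plan is to exploit the fact that on each of the building blocks the approximate metric $g_\beta$ coincides, up to exponentially small error terms, with an \emph{explicit} Gibbons-Hawking metric or with one of the (asymptotically Calabi) Tian-Yau metrics, and then to compute curvature directly from these explicit models. The unifying computational input is the curvature formula for a Gibbons-Hawking metric $g = V\,\pi^* h_U + V^{-1}\theta\otimes\theta$: in an orthonormal frame adapted to the fibration, $|\Rm_g|$ is controlled by $V^{-3}|\nabla^2_{h_U} V| + V^{-4}|\nabla_{h_U} V|^2$ (with $h_U$ flat), plus, near a pole where $V\sim\frac1{2r}$, the contribution of the Hopf-type singularity of the circle bundle which reproduces the standard Eguchi--Hanson-type curvature $\sim r^{-3}$ in the Euclidean regime. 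I would record this formula once as a preliminary computation and then apply it region by region.

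\textbf{Item (1): near a monopole.} Here the relevant model is the Gibbons-Hawking metric over a Euclidean ball in $\dR^3$ (obtained from $\TT^2\times\dR$ after passing to the universal cover, which is legitimate on a ball of radius $r_0 = \frac12\InjRad_{g_0}(\TT^2)$) with potential $V_\beta = \frac1{2r} + (\text{harmonic remainder of size }\sim\beta)$. For $r \lesssim \beta^{-1}$ the $\frac1{2r}$ term dominates and, after the standard rescaling identifying the region with a fixed piece of $\dR^4$ scaled by $\sim\beta$, one reads off $|\Rm| \lesssim \beta$. For $\beta^{-1}\le r\le r_0$ the potential is $\sim\beta$ and smooth, its flat-space derivatives are $O(r^{-2})$ and $O(r^{-3})$ from the pole term, and the curvature formula gives $|\Rm|\lesssim \beta^{-3}\cdot r^{-3} + \beta^{-4}\cdot r^{-4} \lesssim \beta^{-2}r^{-3}$ since $r\ge\beta^{-1}$. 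To convert to the intrinsic distance $d_m$ one uses that in this region $d_m \sim \beta^{1/2} r$ (the base directions are stretched by $V^{1/2}\sim\beta^{1/2}$ once $r$ exceeds $\beta^{-1}$, while the fiber is short), so $\beta^{-2}r^{-3} = \beta^{-2}(d_m/\beta^{1/2})^{-3} = \beta^{-1/2} d_m^{-3}$, giving \eqref{e:curvature-estimate-dist}. The exponentially small gluing error from Proposition \ref{p:gluing-definite-triple} and Proposition \ref{gaugep} is negligible compared to all these polynomial-in-$\beta$ bounds.

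\textbf{Items (2)--(4).} In the neck away from the monopoles the potential is $V_\beta \sim \frac{2\pi b_\pm}{A}|z| + \beta$, which is affine in $z$ modulo $O(e^{-\epsilon|z|})$ corrections (Theorem \ref{t:harmonic-function-cylinder}/Corollary \ref{coro:gf}) and modulo the tails of the pole terms, which are $O(r^{-2})$, $O(r^{-3})$ in flat-space derivatives. Since an affine potential has vanishing second derivative, the curvature formula leaves only $V^{-3}|\nabla^2_{h_U}(\text{remainder})| + V^{-4}|\nabla_{h_U}V|^2$; using $V\sim \max(|z|,\beta)$ and $|\nabla_{h_U}V|\sim 1$ (plus the pole tails) one gets $|\Rm|\lesssim V^{-2}\cdot(\text{slope})^2\cdot V^{-1}$ up to the pole corrections, which yields $\beta^{-2}|z|^{-1}$ for $|z|\lesssim\beta$ and $\beta^{-3}$ for $|z|\gtrsim\beta$; the regime near $|z|\sim r_0$ where the pole tails still matter is absorbed into the same bounds. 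For item (3), on the Tian-Yau pieces $X_{b_\pm}(T_\pm)$ the metric is (exponentially close to) $g_{TY}$, which is smooth with bounded curvature on a fixed compact core — giving $|\Rm|\le C$ for $d(\bm x)<\zeta_\pm$ — and satisfies $|\Rm|_{g_{TY}} = O(d^{-2})$ at infinity by the Calabi asymptotics of Proposition \ref{p:TY-asym} together with the Remark after Proposition \ref{Calabihyperkahler} recording the $O(r^{-2})$ decay in complex dimension $2$; this is \eqref{e:curvature-estimate-TY}. For item (4), the damage zones $DZ_\pm$ sit at $z_\pm\sim T_\pm \sim \beta$, where both the Tian-Yau and the neck models have potential of size $\sim\beta$ with affine leading part, so the neck-type estimate gives $|\Rm|\lesssim\beta^{-3}$; one must additionally check that the cutoff terms $d(\phi_\pm \bm a^\pm)$ in \eqref{e:def-approx-triple} contribute only $O(e^{-\delta\beta})$ to the curvature, which follows from the exponential decay of $\bm a^\pm$ and its derivatives in Proposition \ref{p:gluing-definite-triple}, beating $\beta^{-3}$.

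\textbf{Main obstacle.} The genuinely delicate point is the bookkeeping at the \emph{transitions} between regimes — in particular the crossover near $r\sim\beta^{-1}$ in item (1), where the estimate switches from the rescaled-ALE bound $C\beta$ to the polynomial tail $C\beta^{-2}r^{-3}$, and the conversion between the Euclidean distance $r$ and the intrinsic distance $d_m$, which behaves differently on the two sides of this crossover (roughly $d_m\sim r$ for $r\lesssim\beta^{-1}$ but $d_m\sim\beta^{1/2}r$ for $r\gtrsim\beta^{-1}$, since the base metric is $V\sim\beta$ times flat while the fiber has length $\sim V^{-1/2}\sim\beta^{-1/2}$). Getting the powers of $\beta$ to match continuously across all the overlap regions, and confirming that the exponentially small errors from the gluing never compete with the worst polynomial bound $\beta^{-3}$, is where the care is needed; the curvature computations themselves are routine once the Gibbons-Hawking curvature formula is in hand.
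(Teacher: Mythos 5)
Your overall route is the same as the paper's: the paper also proves this lemma by a region-by-region computation from the Gibbons--Hawking curvature formula, namely $|\Rm|^2=\tfrac12 V_\beta^{-1}\Delta^2(V_\beta^{-1})$, using the expansion $V_\beta=\tfrac1{2r}+\beta+h$ near a pole, the (asymptotically linear) potential along the neck, the conversion $r\leftrightarrow d_m$ obtained by integrating $\sqrt{V_\beta}$, the quadratic curvature decay of the Tian--Yau ends for item (3), and boundedness of the cutoff terms for item (4). The one genuine problem is your preliminary curvature formula, which is off by a power of $V$: expanding the paper's identity for harmonic $V$ gives $|\Rm|\lesssim V^{-2}|\nabla^2 V|+V^{-3}|\nabla V|^2$, not $V^{-3}|\nabla^2 V|+V^{-4}|\nabla V|^2$. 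This is not cosmetic: for the exact model $V=\beta+\tfrac{1}{2r}$ (Taub--NUT, cf.\ Lemma \ref{l:rescaled-Taub-NUT}) the curvature for $r\gtrsim\beta^{-1}$ is comparable to $\beta^{-2}r^{-3}$, so your intermediate assertion $|\Rm|\lesssim\beta^{-3}r^{-3}+\beta^{-4}r^{-4}$ is false as a bound on the actual curvature (and it would contradict the sharpness of \eqref{e:curvature-estimate-r} noted in the remark following the lemma); you only land on the stated estimate because you then weaken to $\beta^{-2}r^{-3}$. You are also internally inconsistent: in item (2) you use $V^{-3}\cdot(\text{slope})^2$, which is the correct second term, not the $V^{-4}|\nabla V|^2$ of your stated formula. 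With the corrected exponents, all of your regime-by-regime conclusions do go through and reproduce the paper's bounds.

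A second, minor slip: for $r\lesssim\beta^{-1}$ the intrinsic distance satisfies $d_m\sim\sqrt{r}$ (since $V_\beta\sim\tfrac1{2r}$ dominates there and $d_m\sim\int_0^r\sqrt{V_\beta}$), not $d_m\sim r$ as you state in your final paragraph; the paper records exactly this. It does not affect \eqref{e:curvature-estimate-dist}, because the conversion is only needed for $r\ge\beta^{-1}$, where your relation $d_m\sim\beta^{1/2}r$ agrees with the paper's.
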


\begin{remark}
The curvature estimates in Lemma \ref{l:curvature-estimates} are sharp in the following sense. The second estimate in \eqref{e:curvature-estimate-r} and \eqref{e:curvature-estimate-dist} corresponds to the curvature behavior of the Taub-NUT metric which is exactly of cubic decay. The curvature estimate in \eqref{e:curvature-estimate-TY} is sharp as well because the curvatures decay quadratically in the end of a complete Tian-Yau space. 
\end{remark}

\begin{proof}
The proof only requires straightforward calculations, so we only sketch the calculations. 
We use the following formula for the pointwise norm squared of the curvature of a Gibbons-Hawking metric
\begin{align}
\label{ghcf}
|\Rm|^2 = \frac{1}{2} V_{\beta}^{-1} \Delta^2 ( V_{\beta}^{-1}),
\end{align}
see \cite{GW}. We just need to consider the case of $1$ monopole point located at the origin,  the case of several monopole points follows easily from this case. Let $r_0\equiv\frac{1}{2}\InjRad_{g_0}(\TT)$, then we have the expansion 
\begin{align}
V_{\beta}(\bm{x}) = \frac{1}{2r(\bm{x})} + \beta + h(\bm{x}), \ x\in B_{r_0}(0^3),
\label{e:V-close-to-0}\end{align}
where $h$ is a bounded harmonic function. 

First, we estimate the curvature in the case $r(\bm{x})<\frac{1}{\beta}$.  By \eqref{e:V-close-to-0},
\begin{align}
V_{\beta}^{-1}(\bm{x}) = \frac{2r}{1 + 2r \beta + 2r h }, 
\end{align}
so it follows that
\begin{align}
V_{\beta}^{-1} = \frac{2r}{1 + 2r \beta + 2r h } 
\leq  C r ( 1 - 2r \beta + 4r^2 \beta^2 + 8 r^3 \beta^3 ) 
\end{align}
for $r < \beta^{-1}$, then 
\begin{align}
|V_{\beta}^{-1} \Delta^2 (V_{\beta}^{-1}) | \leq C r \beta^3,
\end{align}
and the first claimed estimate follows from this. 

Before showing the curvature estimates in other regions, we relate the intrinsic distance function $d_m(\bm{x})$ and the Euclidean radial function $r(\bm{x})$. By directly estimating the integral of $\sqrt{V_{\beta}}$, we have that
\begin{align}
\begin{split}
\frac{1}{C'}\cdot \sqrt{r(\bm{x})}&\leq d_m(\bm{x}) \leq C' \sqrt{r\bm(x)}, \ \hspace{17pt}   r(\bm{x})<\beta^{-1},
\\
\frac{1}{C'}\cdot\beta^{\frac{1}{2}}\cdot r(\bm{x})&\leq d_m(\bm{x}) \leq C'\cdot \beta^{\frac{1}{2}}\cdot r(\bm{x}), \  r(\bm{x}) \geq \beta^{-1},
\end{split}
\end{align}
 where $C'>0$ is some universal constant. So the first part of the curvature estimate in \eqref{e:curvature-estimate-dist} immediately follows.

Next, let $\bm{x}\in B_{r_0}(0^3)$ satisfy $ r(\bm{x}) \geq \beta^{-1}$. Substituting \eqref{e:V-close-to-0} into \eqref{ghcf}, then similar expansion formula shows that for some uniform constant $C>0$,
\begin{equation}
|\Rm|(\bm{x})\leq\frac{C}{\beta^2 r^3(\bm{x})}.
\end{equation}
Correspondingly in terms of the intrinsic distance function, the curvature estimate turns out to be
\begin{equation}|\Rm|(\bm{x})\leq \frac{C}{\beta^{\frac{1}{2}} d_{m}(\bm{x})^3}.
\end{equation}

The above in fact covers the curvature estimates in Region I and Region II.

From now on, we consider the case that $\bm{x}$ is in the neck region satisfying $\frac{r_0}{10}\leq |z(\bm{x})| \leq \beta$. In this case, the harmonic function $V_{\beta}$ has the expansion,
\begin{align}
V_{\beta}(\bm{x})=
\begin{cases}
\frac{2\pi b_-z(\bm{x})}{A}+h_-(\bm{x})+\beta, & -T_-\leq z(\bm{x})\leq -\zeta_0
\\
h(\bm{x})+\beta, & -\zeta_0\leq z(\bm{x})\leq \zeta_0,
\\
-\frac{2\pi b_+z(\bm{x})}{A} + h_+(\bm{x}) + \beta, & \zeta_0 \leq z(\bm{x}) \leq T_+.
\end{cases}
\end{align}
We apply the above expansion to the curvature formula \eqref{ghcf}, then we obtain the following curvature estimate
\begin{equation}
|\Rm|(\bm{x}) \leq \frac{C}{\beta^2 z(\bm{x})},
\end{equation}
where $C>0$ is a uniform curvature estimate. 
Similarly, one can calculate that in the damage zones, 
\begin{equation}
|\Rm|(\bm{x}) \leq \frac{C}{\beta^3}
\end{equation}
for some uniform constant $C>0$. Note that the cutoff function and its derivatives up to third order are uniformly bounded, the curvature of the glued metric is therefore also of order $\beta^{-3}$ in the damage zone region.

Next, we recall from Section \ref{ex:model-space} that 
for the model spaces, the defining harmonic functions are $V_-(\bm{x}) = \frac{2\pi b_- z(\bm{x})}{A}$ and $V_+(\bm{x}) = \frac{2\pi b_+ z(\bm{x})}{A}$, so \eqref{ghcf} implies that 
\begin{align}
|\Rm|(\bm{x}) \leq  \frac{C}{d(\bm{x})^2}, 
\end{align} 
for some uniform constant $C>0$, so the complete end of the model space has exactly inverse quadratic curvature decay. It follows from Proposition \ref{p:TY-asym} that the Tian-Yau metric does also.

\end{proof}

The curvature estimates in Lemma \ref{l:curvature-estimates} relies on the explicit formulas of the Gibbons-Hawking ansatz. 
For the sake of conceptually understanding the collapsing behavior, we introduce the following $\epsilon$-regularity theorem for collapsed Einstein manifolds due to Naber and the fourth author of this paper (see \cite{NaberZhang} for more details).   
 
\begin{theorem}
[Naber-Zhang, \cite{NaberZhang}]\label{t:collapsed-eps-reg}
Let $(M^n, g, p)$ satisfy $\Ric_g \equiv \lambda g$ and $|\lambda|\leq n-1$.
 Given a manifold $(Z^k,z^k)$ with $k=\dim(Z^k)< n$, there are  uniform constants $\delta_0>0$, $w_0>0$ and $C_0>0$ which depend only on $n$ and the geometry of $B_1(z^k)$ such that the following property holds: 
if 
\begin{equation}
d_{GH}(B_2(p), B_2(z^k)) < \delta_0,
\end{equation}
then 
the group $\Gamma_{\delta_0}(p) \equiv \Image[\pi_1(B_{\delta_0}(p))\to \pi_1(B_2(p))]$ has a nilpotent subgroup $\mathcal{N}$ of index bounded by $w_0$ such that 
$\rank(\mathcal{N}) \leq n-k$.

Furthermore, if $\rank(\mathcal{N})=n-k$, then $\sup\limits_{B_1(p)}|\Rm|\leq C_0$. Conversely, if  $\sup\limits_{B_3(p)}|\Rm|\leq C_0$, then $\rank(\mathcal{N})=n-k$.

\end{theorem}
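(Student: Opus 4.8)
The final statement quoted is the $\varepsilon$-regularity theorem of Naber and the fourth author (Theorem~\ref{t:collapsed-eps-reg}). Since the excerpt attributes it to \cite{NaberZhang}, the ``proof'' here should be an outline of how that theorem is established in that reference, adapted to the level of detail appropriate for citing it; I will sketch the structure of the argument rather than reprove it from scratch.

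\textbf{Overall strategy.} The plan is to combine the Cheeger--Fukaya--Gromov theory of $N$-structures on collapsed manifolds with bounded curvature with the improved (Ricci-bounded, not curvature-bounded) collapsing theory coming from Cheeger--Colding and the quantitative nilpotent-structure results, and then to upgrade the purely topological conclusion (existence of a nilpotent subgroup of controlled index and rank) to the analytic conclusion (local curvature bound) by an $\varepsilon$-regularity argument via contradiction and blow-up. First I would reduce to the case $\lambda = 0$ up to rescaling by a bounded factor, which is harmless since $|\lambda| \le n-1$ and all statements are local on $B_2(p)$. Then the argument splits into: (i) the structural part, identifying $\Gamma_{\delta_0}(p)$ and extracting the nilpotent subgroup $\mathcal{N}$ with $[\Gamma_{\delta_0}(p):\mathcal{N}] \le w_0$ and $\rank(\mathcal{N}) \le n-k$; and (ii) the sharp part, the equivalence between $\rank(\mathcal{N}) = n-k$ and a uniform curvature bound $\sup_{B_1(p)}|\Rm| \le C_0$.

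\textbf{Step (i): the nilpotent structure.} The hypothesis $d_{GH}(B_2(p), B_2(z^k)) < \delta_0$ together with $\Ric_g \ge -(n-1)g$ places us in the regime of almost-metrically-splitting/collapsing with a lower Ricci bound. By the quantitative splitting and the generalized Margulis lemma for manifolds with lower Ricci bounds (Kapovitch--Wilking), the image group $\Gamma_{\delta_0}(p) = \Image[\pi_1(B_{\delta_0}(p)) \to \pi_1(B_2(p))]$ is virtually nilpotent with a nilpotent subgroup $\mathcal{N}$ of index at most a dimensional constant $w_0 = w_0(n)$, and the nilpotency rank of $\mathcal{N}$ is at most the number of collapsed directions, which is at most $n - k$ because $B_2(p)$ is $\delta_0$-close to the $k$-dimensional $B_2(z^k)$; here one uses that the local universal cover, rescaled, converges to a limit space that splits off at least the $k$ dimensions seen in $Z^k$. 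This yields the first assertion. The constants depend only on $n$ and on the geometry of $B_1(z^k)$ (a bound on its curvature/injectivity radius, or more weakly a noncollapsing and regularity assumption), which is exactly the dependence claimed.

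\textbf{Step (ii): the $\varepsilon$-regularity dichotomy.} For the sharp statement, the key point is that maximal nilpotency rank $\rank(\mathcal{N}) = n - k$ forces the collapse to be of pure $N$-structure type with no curvature concentration. I would argue by contradiction: suppose there is a sequence $(M_j^n, g_j, p_j)$ with $\Ric_{g_j} \equiv \lambda_j g_j$, $|\lambda_j| \le n-1$, $d_{GH}(B_2(p_j), B_2(z^k)) \to 0$, $\rank(\mathcal{N}_j) = n-k$, but $\sup_{B_1(p_j)} |\Rm_{g_j}| \to \infty$. Rescale to unit curvature scale and pass to a pointed limit; the rank condition together with the Cheeger--Fukaya--Gromov frame-bundle argument (passing to the $\mathrm{O}(n)$-frame bundle, where the collapse becomes an honest collapse with bounded curvature after the rescaling cannot happen without curvature blowing up) produces a nontrivial local nilpotent Killing structure on the limit of codimension exactly $k$, which is a smooth Ricci-flat (since $\lambda_j \to 0$ after rescaling, or the rescaled limit is Ricci-flat) manifold fibering over a $k$-dimensional base; one then shows the curvature scale is bounded below in terms of $n$ and the geometry of $B_1(z^k)$, contradicting $\sup|\Rm| \to \infty$. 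This gives the forward implication with an effective $C_0$. The converse implication — if $\sup_{B_3(p)}|\Rm| \le C_0$ then $\rank(\mathcal{N}) = n-k$ — is the classical Cheeger--Fukaya--Gromov statement: bounded curvature plus collapse onto a $k$-dimensional space yields an $N$-structure whose orbits are exactly the $(n-k)$-dimensional collapsed fibers, so $\mathcal{N}$ (the nilpotent structure group of the local cover) has rank $n-k$; one must choose $C_0$ in the two implications consistently, which is arranged by taking the minimum of the two constants produced.

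\textbf{Main obstacle.} The delicate point is the forward implication in Step (ii): ruling out curvature concentration under the \emph{maximal} rank hypothesis when only a Ricci bound (not a curvature bound) is assumed a priori. The naive Cheeger--Fukaya--Gromov machinery requires bounded curvature to begin with; here one only gets it \emph{a posteriori}. Bridging this gap is precisely the content of \cite{NaberZhang}, using the quantitative nilpotent structure theory (effective $\varepsilon$-regularity, slicing, and the frame-bundle trick combined with Colding--Naber-type parabolic estimates) to show that the presence of a maximal-rank nilpotent fundamental-group image rigidifies the local geometry enough to propagate a scale-invariant curvature bound. Since this is exactly the theorem being quoted, in the present paper it suffices to invoke \cite{NaberZhang} for this step; I would not attempt to reproduce it in full.
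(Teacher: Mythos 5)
Your proposal is, in effect, a citation-level sketch of the argument in \cite{NaberZhang}, and since the paper itself quotes this theorem from that reference without reproving it, that stance is legitimate; the structure you outline (a generalized Margulis-lemma input for the bounded-index nilpotent subgroup, rank bounded by the number of collapsed directions, and an $\epsilon$-regularity dichotomy tying maximal rank to a curvature bound) is consistent with the content of the cited theorem, although your step (ii) forward direction is only a gesture at the blow-up/contradiction argument and the real work remains in the black box. What the paper does differently is to note that only the codimension-one case is needed and to give a short self-contained proof of that case (Lemma \ref{l:codim-1}): pass to the local universal covers and use equivariant Gromov--Hausdorff convergence; lift the Cheeger--Colding harmonic splitting maps to conclude that the limit of the covers splits as $\dR^{n-1}\times Y$; the infinite order of $\Gamma_2(p_j)$ forces $Y$ to be noncompact, while the limit group $\Gamma_{\infty}$ acts transitively on $Y$, so $Y$ contains a line and, by the splitting theorem under the almost nonnegative Ricci bound, $Y\cong\dR$; hence the covers are non-collapsed and the standard $\epsilon$-regularity for non-collapsed Einstein metrics gives the curvature bound (the paper also offers, as an alternative, the direct Gibbons--Hawking curvature computations of Lemma \ref{l:curvature-estimates}). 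Your route buys the general statement in all codimensions with effective constants, which is exactly what the reference establishes; the paper's route buys an elementary, fully written-out verification of the single case it uses, bypassing the frame-bundle and nilpotent Killing structure machinery. If the goal were a proof usable inside this paper without relying on \cite{NaberZhang} as a black box, the codimension-one argument via universal covers and splitting is the one to write out.
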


\begin{remark}
Given a finitely generated nilpotent group $\mathcal{N}$, let $\mathcal{N}=\mathcal{N}_0\rhd \mathcal{N}_1\rhd\ldots\rhd\mathcal{N}_m=\{e\}$ be the lower central series with abelian factor groups $\mathcal{N}_{j-1}/\mathcal{N}_j$, where $\mathcal{N}_{j+1}\equiv[\mathcal{N},\mathcal{N}_j]$ are the commutator subgroups. Then the nilpotent rank of $\mathcal{N}$ is defined as the sum of the ranks of the abelian factors, i.e.
\begin{equation}
\rank(\mathcal{N})\equiv \sum\limits_{j=1}^m \rank(\mathcal{N}_{j-1}/\mathcal{N}_j).
\end{equation}

\end{remark}

\begin{remark}
If the Einstein assumption is replaced with bounded Ricci curvature, then the uniform curvature bound can be replaced with bounded $C^{1,\alpha}$-covering geometry for any $0<\alpha<1$. This can be used in analyzing the regularity of the damage zones.  
\end{remark}

In fact, theorem \ref{t:collapsed-eps-reg} has a quick proof in the special case of {\it codimension-1 collapse} which exactly applies in our case. For the  readers' convenience, we give the statement and the proof here.
\begin{lemma}\label{l:codim-1}
Let $(M_j^n,g_j,p_j)$ be a sequence of Einstein manifolds with $|\Ric_{g_j}| \leq \epsilon_j \to 0$ such that 
\begin{equation}
(M_j^n,g_j,p_j)\xrightarrow{GH}\dR^{n-1}\label{e:converge-euclidean}
\end{equation}
and $\Gamma_{2}(p_j)\equiv\Image[\pi_1(B_2(p_j))\to\pi_1(M_j^n)]$ is of infinite order. Then for any $R>0$,
\begin{equation}
\sup\limits_{B_R(p_j)}|\Rm| \leq \frac{C_0(n)}{R^2}.\label{e:curvature-est}
\end{equation}

\end{lemma}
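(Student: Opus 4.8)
\textbf{Proof proposal for Lemma \ref{l:codim-1}.} The plan is to argue by contradiction using a rescaling (blow-up) argument, exploiting the fact that codimension-one collapse of Einstein manifolds to $\dR^{n-1}$ with an infinite fundamental group image forces the local geometry to look like $\dR^{n-1}\times S^1$ at the scale of the collapse, and then invoking interior elliptic regularity for the Einstein equation in harmonic coordinates on the passage to the universal cover. Concretely, suppose \eqref{e:curvature-est} fails. Then, after fixing $R$, we can find a subsequence and points $x_j\in B_R(p_j)$ with $\lambda_j^2\equiv |\Rm_{g_j}|(x_j)\cdot \left(\text{point-selected scale}\right)^2\to\infty$; applying Anderson's or the standard point-selection lemma we may assume $\sup_{B_{\lambda_j^{-1}}(x_j)}|\Rm_{g_j}|\le 4|\Rm_{g_j}|(x_j)=4\lambda_j^2$ and that $|\Rm_{g_j}|(x_j)\to\infty$. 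Rescale $\tilde g_j\equiv \lambda_j^2 g_j$, so that $|\Rm_{\tilde g_j}|(x_j)=1$, curvature is uniformly bounded on the unit $\tilde g_j$-ball, and $|\Ric_{\tilde g_j}|=\lambda_j^{-2}\epsilon_j\to 0$.

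First I would pass to the universal covers. Because $(M_j^n,g_j,p_j)\xrightarrow{GH}\dR^{n-1}$ and $\Gamma_2(p_j)$ is infinite, standard collapsing theory (Cheeger–Fukaya–Gromov, or just the structure of the $\mathbb{R}^{n-1}$-splitting limit) shows that the local universal covers $(\widetilde{B_2(x_j)},\tilde g_j,\tilde x_j)$ converge in the pointed $C^{1,\alpha}$ (in fact $C^\infty$ away from nothing, since curvature is bounded after rescaling and Ricci is almost flat) topology to a complete Ricci-flat limit $(Y^n,g_Y,y_\infty)$ which splits isometrically as $\dR^{n-1}\times L$ with $L$ a complete $1$-manifold; since the deck group action is by an infinite discrete group of isometries whose quotient collapses to a point in the fiber direction, $L\cong\dR$, hence $Y^n\cong \dR^n$ is flat. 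The key point is that the rescaling $\lambda_j\to\infty$ only makes the collapse \emph{more} severe, so the $\mathbb{R}^{n-1}$-splitting persists and in fact becomes a full $\dR^n$-splitting on the cover. This would force $|\Rm_{\tilde g_j}|(x_j)\to 0$, contradicting $|\Rm_{\tilde g_j}|(x_j)=1$.

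The technical heart is making the convergence of the local covers strong enough to conclude curvature vanishes in the limit. Here I would invoke: (i) the $\epsilon$-regularity / $C^{1,\alpha}$-compactness for spaces with bounded curvature and almost-vanishing Ricci, which upgrades $C^{1,\alpha}$ convergence to $C^\infty$ via Einstein (or almost-Einstein) elliptic regularity in harmonic coordinates once one has a definite lower injectivity radius bound on the cover; and (ii) a uniform lower bound on the injectivity radius of $\widetilde{B_1(x_j)}$ with respect to $\tilde g_j$, which follows from Cheeger–Gromov–Taylor type injectivity radius estimates given bounded curvature together with the noncollapsing of the cover — the cover is noncollapsed precisely because we have ``undone'' the collapse by taking the $S^1$ (or $\dZ$-)cover in the short direction. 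Then $Y^n$ is a complete smooth Ricci-flat manifold with $|\Rm_{g_Y}|(y_\infty)=1$, but the splitting $Y^n=\dR^{n-1}\times\dR$ forces $g_Y$ flat, the contradiction.

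The main obstacle I anticipate is item (ii): carefully establishing that the relevant local covers are uniformly noncollapsed at the rescaled unit scale, and that the deck transformation group structure indeed produces an $\dR$-factor (rather than, say, a circle of definite size that would not fully remove the collapse) in the pointed limit. This requires tracking how $\Gamma_2(p_j)$ — known only to be infinite — sits inside the isometry group of the limit; one resolves it by noting that infiniteness plus the one-dimensional collapse forces a $\dZ$-subgroup acting by translations with translation length $\to 0$ after rescaling (since the unrescaled fibers shrink and $\lambda_j\to\infty$ shrinks nothing in the cover), so the quotient of the $\dZ$-action still collapses and the full universal cover limit splits off $\dR$. Everything else — point selection, Shi-type or elliptic derivative estimates, harmonic coordinate regularity — is standard, and I would only sketch it. I would also remark that when $n=4$ (our case) one may alternatively cite the Cheeger–Tian $\epsilon$-regularity theorem directly, but the self-contained codimension-one argument above is cleaner.
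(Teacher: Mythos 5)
Your blow-up scheme has a genuine gap at its central step. The contradiction argument hinges on the claim that after rescaling by the curvature scale $\lambda_j\to\infty$ at points $x_j\in B_R(p_j)$, the collapse and the $\dR^{n-1}$-splitting ``persist'', so that the local universal covers limit to a flat $\dR^{n-1}\times\dR$. That claim is unjustified and false in general: pointed Gromov--Hausdorff closeness to $\dR^{n-1}$ at unit scale gives no information at scale $\lambda_j^{-1}$, and the collapsing Taub-NUT family $V_\sigma=\sigma+\frac{1}{2r}$ (Example \ref{ex:taub-nut}, Lemma \ref{l:rescaled-Taub-NUT}) is Ricci-flat, converges to $\dR^{3}$, yet its curvature-scale blow-up at the nut is the non-flat, non-splitting, simply connected Taub-NUT space. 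So the splitting must be forced by the hypothesis that $\Gamma_2(p_j)$ is infinite, and your mechanism for importing it --- ``a $\dZ$-subgroup acting by translations with translation length $\to 0$ after rescaling'' --- is both backwards and circular: rescaling by $\lambda_j^2$ multiplies displacements by $\lambda_j$, so the translation lengths tend to $0$ only if the fiber size at $x_j$ is $o(\lambda_j^{-1})$, which is precisely the statement you need to prove (it is equivalent to the collapse surviving at the curvature scale). Moreover, the hypothesis concerns loops at the basepoint $p_j$, at unit scale, and their images in $\pi_1(M_j^n)$; nothing in your argument transfers this to the local fundamental group at the blow-up points $x_j$ and scale $\lambda_j^{-1}$, which is what your local-cover limit needs. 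Without that transfer the limit of your covers can perfectly well be Taub-NUT (which is its own universal cover), and no contradiction with $|\Rm_{\tilde g_j}|(x_j)=1$ arises; your step (ii) about noncollapsing of the covers has the same circularity.

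For comparison, the paper's proof avoids blow-up entirely: it works at the original scale, passes to the global universal covers with deck groups $\Gamma_j=\pi_1(M_j^n)$, lifts Cheeger--Colding harmonic splitting maps to identify the equivariant limit as $\dR^{n-1}\times Y$, uses the infinite-order hypothesis only to force $Y$ noncompact, and uses homogeneity of $Y$ under the limit group (whose quotient is $\dR^{n-1}$) together with the splitting theorem to get $Y\cong\dR$; hence the covers are non-collapsed and close to $\dR^n$ on balls of every fixed radius, and \eqref{e:curvature-est} follows from the $\epsilon$-regularity theorem for non-collapsed almost-Einstein manifolds applied upstairs and then descended. If you insist on a blow-up proof, you would first have to show that a high-curvature point forces the collapsing circle class near $p_j$ to bound (as at a monopole), contradicting its infinite order in $\pi_1(M_j^n)$ --- a different and more delicate argument than the persistence claim you sketched.
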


\begin{remark}
Simple rescaling and contradicting arguments imply theorem \ref{t:collapsed-eps-reg} in the case $k=n-1$, which is an effective version of the lemma.
\end{remark}

\begin{proof}
Let $(\widetilde{M_j^n},\tilde{g}_j,\Gamma_j, \tilde{p}_j)$
be the Riemannian universal covers of $(M_j^n,g_j)$ which converge to the limit product space $(\dR^{n-1}\times Y, \tilde{d}_{\infty},\Gamma_{\infty},\tilde{p}_{\infty})$ in the equivariant Gromov-Hausdorff topology, where $\Gamma_j\equiv\pi_1(M_j^n)$ and $\Gamma_j\to\Gamma_{\infty}\leq \Isom(\dR^{n-1}\times Y)$. See Section 3 of \cite{Fukaya-Yamaguchi} for the precise definition of the equivariant Gromov-Hausdorff convergence. 
In summary, we have the following diagram
\begin{equation}
\xymatrix{
(\widetilde{M_j^n}, \tilde{g}_j,  \tilde{p}_j)
\ar[rr]^{eqGH}\ar[d]_{\pr_j} &   & \dR^{n-1}\times Y \ar [d]^{\pr_{\infty}}
\\
 (M_j^n, g_j ,  p_j )\ar[rr]^{GH} &  & \dR^{n-1},
 }
\end{equation}
where the covering maps $\pr_j:\widetilde{M_j^n}\to M_j^n$ converge to a natural projection map $\pr_{\infty}: \dR^{n-1}\times Y \to \dR^{n-1}$. 

The main part is to prove the claim that $Y$ is isometric to $\dR$. 

Applying Cheeger-Colding's quantitative splitting theorem (see \cite{ChC}), the convergence assumption \eqref{e:converge-euclidean} implies that for any fixed $R>0$, there are harmonic splitting maps $\Phi_j\equiv(u_j^{(1)},\ldots, u_j^{(n-1)}): B_{10R}(p_j)\to \dR^{n-1}$ 
which realize the Gromov-Hausdorff maps
such that 
\begin{equation}
\sum\limits_{\alpha,\beta=1}^{n-1}\fint_{B_{5R}(p_j)}
|\langle\nabla u_j^{(\alpha)}, \nabla u_j^{(\beta)}\rangle - \delta_{\alpha\beta}| + \sum\limits_{\alpha=1}^{n-1}\fint_{B_{5R}(p_j)}
|\nabla^2 u_j^{(\alpha)}|^2 \to0.
\end{equation}
Let $\widetilde{\Phi}_j\equiv(\tilde{u}_j^{(1)},\ldots, \tilde{u}_j^{(n-1)})$
be the lifted harmonic functions on the universal covers, then the volume comparison theorem implies that
\begin{equation}
\sum\limits_{\alpha,\beta=1}^{n-1}\fint_{B_{5R}(\tilde{p}_j)}
|\langle\tilde{\nabla} \tilde{u}_j^{(\alpha)}, \tilde{\nabla} \tilde{u}_j^{(\beta)}\rangle - \delta_{\alpha\beta}| + \sum\limits_{\alpha=1}^{n-1}\fint_{B_{5R}(\tilde{p}_j)}
|\tilde{\nabla}^2 \tilde{u}_j^{(\alpha)}|^2 \to 0.
\end{equation}
By the definition of the splitting maps, $Y$ is the Gromov-Hausdorff limit of the level sets of the lifted splitting maps $\widetilde{\Phi}_j^{-1}(0^{n-1})$.
Since $\Gamma_{2}(p_j)\leq\pi_1(M_j^n)$ is of infinite order which acts on $\widetilde{M}_j^n$ isometrically and discretely, the limit space $Y$ must be non-compact.

On other hand hand, notice that 
$\widetilde{\Phi}_j^{-1}(0^{n-1})$ is invariant under the deck transformation group $\Gamma_j$ and $\Gamma_j$ converge to some limiting group $\Gamma_{\infty}\leq \Isom(\dR^{n-1}\times Y)$ such that $\pr_{\infty}$ is given by the quotient $(\dR^{n-1}\times Y)/\Gamma_{\infty}=\dR^{n-1}$. Hence $\Gamma_{\infty}\leq\Isom(Y)$ and $\Gamma_{\infty}$ acts homogeneously on $Y$. 

Therefore, by standard arguments, the noncompact homogeneous space $Y$ admits a line (see \cite{ChGr} or lemma 2.4 in \cite{NaberZhang}).  
The Ricci curvature assumption implies that $Y$ is isometric to $\dR$. 
This completes the proof of the claim.

The curvature estimate \eqref{e:curvature-est} immediately follows from the $\epsilon$-regularity theorem for noncollapsed Einstein manifolds (for example see Section 7 in \cite{ChC1}).

\end{proof}

\subsection{Rescaled geometries}

\label{ss:rescaled-limits}

In this subsection, we will focus on the rescaled geometry of each region defined in Section \ref{sss:subdivision}, which can be viewed as a geometric preparation for defining the weighted H\"older space. 
In this direction, a necessary technical preparation is to rescale $(\mathcal{M}, g)$ by correctly choosing
some rescaled metric $\tilde{g} = \lambda^2 g$
 such that the weighted H\"older space in the rescaled space is much easier to analyze.  

In our context, we will discuss a sequence $(\mathcal{M}_j, g_j)$ with a sequence of gluing parameters $\beta_j \to \infty$. In the remaining part of this section, we will specify the following way of rescaling which will be consistent with the definition of the weight function. 
For every $\bm{x}_j \in \mathcal{M}_j$, we will choose the rescaling factors $\lambda_j>0$ and the corresponding  rescaled metric $\tilde{g}_j = \lambda_j^2 g_j$ we have the convergence 
\begin{equation}
(\mathcal{M}_j, \tilde{g}_j, \bm{x}_j) \xrightarrow{GH} (\mathcal{M}_{\infty}, \tilde{g}_{\infty}, \bm{x}_{\infty}).
\end{equation}
In the meanwhile, for applying the delicate tools in analysis, necessarily we need to improve the Gromov-Hausdorff convergence to some convergence with higher regularity. 
To this end,
we will select subdomains $U_j\subset \mathcal{M}_j$ which is of almost full measure such that the above convergence keeps Riemann curvatures uniformly bounded in $U_j$. The main tool of proving the curvature estimates is given by Lemma \ref{l:curvature-estimates} and Lemma \ref{l:codim-1}.

Our main task is to appropriately define the rescaling factors $\lambda_j>0$ which depends on the different regions in the definition of the weight function. 
The primary scenario is the following: while $d_{g_j}(p_m,\bm{x}_j)$ is increasing and $\bm{x}_j$ is moving from the monopoles in the neck region to the Tian-Yau pieces, the limiting geometries of the rescaled limits vary in a natural way. First, around the monopoles, the rescaled limit is the standard Taub-NUT space such that the $S^1$-fiber at infinity equals $1$. The advantage of rescaling in this way is that the local geometry around the monopoles can be captured in the rescaled limit.
When $d_{g_j}(p_m,\bm{x}_j)$ is increasing, the length of the $S^1$ of the Taub-NUT space is decreasing such that the rescaled limit will collapse to $\dR^3$. When $\bm{x}_j$ is farther from the monopoles, the size of the $\TT$-fiber will be shrinking such that the rescaled limit will become $\dT^2 \times \mathbb{R}$. Eventually when $\bm{x}_j$ is located in the Tian-Yau pieces, we choose the original scale so that we will obtain a complete Tian-Yau space.

\vspace{0.5cm}

\noindent
{\bf Region $\I$:} 

\vspace{0.5cm}

In this subsection, 
we focus on the blowing-up geometry around each monopole in the neck region $\mathcal{N}_{m_0}^4(-T_-, T_+)$.
We will prove that, by correctly rescaling the Gibbons-Hawking metric defined in the above section, 
the blowing-up limit around each monopole is the Taub-NUT space.

Let $(\TT\times\dR, g_0)$ be a cylinder with a flat product metric $g_0$. Given a constant $\beta>0$, let $V_{\beta}$ be a harmonic function such that 
\begin{align}
\begin{split}
-\Delta_{g_0} V_{\beta} &=  2\pi \sum\limits_{m = 1}^{m_0} \delta_{p_m} \\
V_{\beta}(\bm{x}) &= \frac{1}{2|\bm{x}-p_m|} + h_m(p) + \beta, \  \bm{x} \in B_{r_0}(p_m),
\end{split}
\end{align}
where each $h_m$ is a bounded harmonic function, 
$r_0 \equiv \frac{1}{2}\min\{d_0, i_0\}$ and
\begin{align}
\begin{split}
d_0 &\equiv \min\limits_{1\leq m<l\leq m_0} d_{g_0}(p_m, p_l) \\
i_0 &\equiv \InjRad_{g_0}(\TT \times \mathbb{R}).
\end{split}
\end{align}
Let $(\mathcal{N}_{m_0}^4, g_{\beta})$ be the Gibbons-Hawking space defined by
\begin{equation}
g_{\beta} \equiv V_{\beta} g_{\TT\times\dR} + V_{\beta}^{-1}\theta^2,
\end{equation}
where $\theta$ is a connection $1$-form with
\begin{equation}
d\theta = *d V_{\beta}.
\end{equation}
Given any positive constant $\sigma>0$, we define the rescaled metric as follows, 
\begin{align}
\begin{split}
\lambda_{\sigma,\beta} &\equiv \sigma \cdot \beta^{\frac{1}{2}}
\\
\tilde{g}_{\sigma,\beta} &\equiv (\lambda_{\sigma,\beta})^2 g_{\beta}.
\end{split}
\end{align}
Then we have the following useful lemma.

\begin{lemma}
\label{l:rescaled-Taub-NUT}
For every monopole point $p_m\in \mathcal{P}_{m_0}$ and for every fixed positive constant $\sigma>0$, we have the following $C^{\infty}$-convergence
\begin{equation}
(\mathcal{N}_{m_0}^4, \tilde{g}_{\sigma,\beta}, p_{m})\xrightarrow{C^{\infty}} (\dR^4, \tilde{g}_{\sigma,\infty}, \tilde{p}_{m,\infty})\ \text{as}\ \beta \to +\infty,
\end{equation}
 such that $(\dR^4, \tilde{g}_{\sigma,\infty}, \tilde{p}_{m,\infty})$ is a Ricci-flat Taub-NUT space with\begin{equation}
\tilde{g}_{\sigma,\infty} = G_{\sigma}\cdot g_{\dR^3} + (G_{\sigma})^{-1} \theta^2 
\end{equation}
and 
\begin{equation}
G_{\sigma}(p) = \frac{1}{2d_0(p,0^3)} + \frac{1}{\sigma^2},
\end{equation}
where $d_0$ is the distance function in the Euclidean space $\dR^3$.
\end{lemma}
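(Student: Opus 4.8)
The plan is to work directly on the Gibbons-Hawking base and use the continuity of the Green's function construction as $\beta\to\infty$ together with the explicit scaling behaviour of Taub-NUT metrics recorded in Example \ref{ex:taub-nut}. First I would fix a monopole $p_m\in\mathcal P_{m_0}$ and a radius $r_0$ as in the statement, and examine the potential $V_\beta$ on the flat Euclidean ball $B_{r_0}(p_m)\subset\TT\times\dR$. By construction one has $V_\beta=\frac{1}{2r}+h_m+\beta$ near $p_m$ with $r$ the $g_0$-distance to $p_m$ and $h_m$ a bounded $g_0$-harmonic function (independent of $\beta$, up to the additive constant already split off). The first step is to pass to the rescaled base coordinate $\bm y = \lambda_{\sigma,\beta}\,(\bm x - p_m)$ on $\RR^3$: because $g_0$ is flat, the ball $B_{r_0}(p_m)$ becomes, in the $\bm y$-coordinate, a ball of radius $\lambda_{\sigma,\beta}\,r_0 = \sigma\beta^{1/2}r_0\to\infty$, and the rescaled potential $\lambda_{\sigma,\beta}^{-1}V_\beta$ expressed in $\bm y$ equals $\frac{1}{2\,d_0(\bm y,0)} + \lambda_{\sigma,\beta}^{-1}h_m + \sigma^{-1}$ (using $\lambda_{\sigma,\beta}^{-1}\beta = \sigma^{-1}\beta^{1/2}\cdot\beta^{-1}\cdot\beta = \sigma^{-1}$ — more precisely $\lambda_{\sigma,\beta}^{-1}\beta = \sigma^{-1}\beta^{1/2}$, so one must be slightly careful; the correct bookkeeping is that the \emph{metric} scales so that the $\sigma^{-1}$ in $G_\sigma$ emerges, exactly as in the intrinsic rescaling argument of Example \ref{ex:taub-nut} and Lemma \ref{l:rescaled-Taub-NUT}'s statement). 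Since $h_m$ is bounded independently of $\beta$ on $B_{r_0}(p_m)$ and $h_m$ together with all its $g_0$-derivatives is bounded on $B_{r_0/2}(p_m)$ by interior elliptic estimates, the term $\lambda_{\sigma,\beta}^{-1}h_m$ and its derivatives tend to zero locally uniformly on $\RR^3$. Hence the rescaled potentials converge in $C^\infty_{\mathrm{loc}}(\RR^3\setminus\{0\})$ to $G_\sigma(\bm y)=\frac{1}{2 d_0(\bm y,0)}+\sigma^{-2}$.

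The second step is to upgrade this base-level convergence to convergence of the $4$-dimensional Gibbons-Hawking metrics. The connection form $\theta$ with $d\theta = *dV_\beta$ is determined up to gauge, and after rescaling $d\tilde\theta = *_{\tilde g_0}(d(\lambda^{-1}V_\beta))$ where the Hodge star is now the Euclidean one on the rescaled base; choosing the gauge compatibly (possible because $B_{r_0}(p_m)\setminus\{p_m\}$ retracts to $S^2$ and the relevant cohomology classes are locally trivial away from the single pole), the rescaled connection forms converge in $C^\infty_{\mathrm{loc}}$ to a connection form $\theta_\infty$ on $\RR^3\setminus\{0\}$ with $d\theta_\infty = *dG_\sigma$. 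Plugging into the Gibbons-Hawking formula $\tilde g_{\sigma,\beta}=(\lambda^{-1}V_\beta)\,g_{\RR^3}+(\lambda^{-1}V_\beta)^{-1}\tilde\theta^2$ gives $C^\infty_{\mathrm{loc}}$ convergence on the complement of the central fibre to the metric $G_\sigma\, g_{\RR^3}+G_\sigma^{-1}\theta_\infty^2$. Because $G_\sigma$ has precisely a $\frac{1}{2d_0}$ pole at the origin, this extends smoothly across the added point to give $\RR^4$ with the Taub-NUT metric (this is exactly the content of Example \ref{ex:taub-nut}: $V=\sigma^{-2}+\frac{1}{2r}$ yields Taub-NUT, and the normalization of the $\delta$-mass ensures the Hopf-fibration model near the pole so the total space is smooth). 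Finally, to conclude $C^\infty$ convergence of \emph{pointed} spaces including the central fibre one argues near $p_m$ directly in the Hopf coordinates: on $B_{r_0}(p_m)$ the space $\mathcal N_{m_0}^4$ is modelled on $\dC^2$ via the Hopf map, the rescaled metric in these coordinates is a smooth function of $\lambda^{-1}V_\beta$ and $\tilde\theta$ which converge smoothly, so there is no loss of regularity at the origin.

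The main obstacle I expect is the gauge-fixing for $\theta$ and the verification of smoothness at the added point $\tilde p_{m,\infty}$: one must choose the connection forms $\theta=\theta_\beta$ (which a priori live on the globally nontrivial circle bundle over $(\TT\times\dR)\setminus\mathcal P_{m_0}$) so that their restrictions to $B_{r_0}(p_m)$, after rescaling, actually converge rather than merely converge up to a $\beta$-dependent gauge transformation, and one must check that the limiting object genuinely compactifies to $\RR^4$ rather than to some quotient or to a bundle with the wrong behaviour at infinity. This is handled exactly by the local model computation: near a single pole with residue normalized so that $V=\frac{1}{2r}+O(1)$, the Gibbons-Hawking space is the standard Hopf fibration $\dC^2\to\RR^3$ (cf.\ the discussion following Example \ref{ex:ooguri-vafa} and the normalization remark in Section \ref{s:greens-flat-cylinder}), and the bounded harmonic remainder $h_m$ contributes only lower-order, $\beta\to0$ corrections. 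All other steps — the convergence of $\lambda^{-1}V_\beta$, the identification of the limit potential, and the identification of $G_\sigma g_{\RR^3}+G_\sigma^{-1}\theta_\infty^2$ with Taub-NUT — are routine given Example \ref{ex:taub-nut}, the boundedness of $h_m$, and standard interior elliptic estimates for harmonic functions on the flat ball.
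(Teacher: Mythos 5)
Your overall route is the same as the paper's: restrict to the pullback of a small flat ball around $p_m$, rescale the Gibbons--Hawking data, identify the limit potential, and invoke the ansatz to recognize Taub--NUT. But the central scaling computation, as you have written it, is wrong, and it is precisely the step that produces $G_\sigma$. If the metric is rescaled by $\lambda_{\sigma,\beta}^2=\sigma^2\beta$, then to keep the Gibbons--Hawking form with the \emph{same} connection form $\theta$ (so that $d\theta=*dV$ still holds with the Euclidean Hodge star in the new base coordinates), the base coordinates must be dilated by $\gamma_\beta=\lambda_{\sigma,\beta}^{2}=\sigma^2\beta$, not by $\lambda_{\sigma,\beta}$, and the rescaled potential is $\lambda_{\sigma,\beta}^{-2}V_\beta$, not $\lambda_{\sigma,\beta}^{-1}V_\beta$. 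With the correct exponents one gets $\lambda_{\sigma,\beta}^{-2}V_\beta=\tfrac{1}{2\tilde r}+\tfrac{h_m+\beta}{\sigma^2\beta}\to\tfrac{1}{2\tilde r}+\tfrac{1}{\sigma^2}=G_\sigma$, where $\tilde r=\sigma^2\beta\, r$. Your version gives a pole term $\tfrac{1}{2\,d_0(\bm y,0)}$ but a constant term $\lambda_{\sigma,\beta}^{-1}\beta=\sigma^{-1}\beta^{1/2}\to\infty$, so the rescaled potentials do not converge at all at your coordinate scale; the parenthetical attempt to repair this ("$\lambda_{\sigma,\beta}^{-1}\beta=\sigma^{-1}$", and a limiting constant "$\sigma^{-1}$" rather than $\sigma^{-2}$) is arithmetically incorrect, and deferring to "the correct bookkeeping" while citing the statement of the lemma itself is circular. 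This is exactly the point of the paper's (terse) proof: it chooses the coordinate dilation $\gamma_\beta=\sigma^2\beta$ so that the potential rescales to $V_\beta/(\sigma^2\beta)$ and the constant term converges to $1/\sigma^2$, matching the intrinsic rescaling rule of Example \ref{ex:taub-nut} (metric scaled by $c$ corresponds to potential $c^{-1}V$ read in moment-map coordinates scaled by $c$).

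Once that exponent is fixed, the remainder of your argument is sound and in fact supplies details the paper leaves implicit: the interior elliptic estimates showing $h_m/(\sigma^2\beta)\to 0$ in $C^\infty_{\mathrm{loc}}$, the convergence of the connection forms up to gauge on the punctured ball (where, with the correct scaling, $\theta$ is literally unchanged and its curvature $*dV_\beta$ converges, the bundle over the punctured ball being determined by its degree $1$ over the small $S^2$), and the smoothness of both the approximating spaces and the limit across the added point via the Hopf model, which is what upgrades $C^\infty_{\mathrm{loc}}$ convergence away from the fibre over $p_m$ to pointed $C^\infty$ convergence including the basepoint.
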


\begin{remark}
When $\sigma\to\infty$, the above family of Taub-NUT spaces will converge to $\dR^4$ with the standard Euclidean metric. When $\sigma\to0$, the above family of Taub-NUT spaces will converge to $\dR^3$ with the standard Euclidean metric.
\end{remark}

\begin{proof}
To prove this lemma, we need to rescale both the metric and the coordinates. 
We choose the pull-back region $\pi^{-1}(B_{r_0}^{g_0}(p_m))$ with $r_0>0$ defined as the above, then for every $\bm{x}\in \pi^{-1}(B_{r_0}^{g_0}(p_m))$ with $\pi(\bm{x})=(x,y,z)$, 
\begin{equation}
V_{\beta}(\bm{x})=\frac{1}{2\sqrt{x^2+y^2+z^2}} + h(\bm{x}) + \beta,
\end{equation}
where $h$ is a bounded harmonic function on $\dR^3$.
Let us denote the rescaled coordinates by
\begin{align}
\tilde{x}_{\beta} \equiv \gamma_{\beta}\cdot  x, \
\tilde{y}_{\beta} \equiv \gamma_{\beta}\cdot  y, \ 
\tilde{z}_{\beta} \equiv \gamma_{\beta}\cdot  z, 
\end{align}
and we choose
\begin{equation}
\gamma_{\beta} \equiv    \sigma^2 \cdot \beta.
\end{equation}
So the rescaled metrics $\tilde{g}_{\sigma,\beta}$ converge to  
\begin{equation}
\tilde{g}_{\sigma,\infty} = 
G_{\sigma}\cdot  g_{\dR^3} + (G_{\sigma})^{-1} \theta^2 
\end{equation}
such that 
\begin{equation}
G_{\sigma}(p) = \frac{1}{2d_0(p,0^3)} + \frac{1}{\sigma^2},
\end{equation}
where $d_0$ is the distance function in the Euclidean space $\dR^3$. This tells us that $\tilde{g}_{\sigma,\infty}$ is a Taub-NUT metric, and the proof is complete.

\end{proof}
Returning to the analysis of Region $\I$: In this case, we choose $\lambda_j \equiv (\beta_j)^{\frac{1}{2}}$ and the corresponding metric $\tilde{g}_j = \lambda_j^2 g_j$. Applying Lemma \ref{l:rescaled-Taub-NUT}, the rescaled spaces converge to the standard Taub-NUT space, i.e.,
\begin{equation}
(\mathcal{M}, \tilde{g}_j, p_{m}) \xrightarrow{GH} (\dR^4, \tilde{g}_{\infty}, p_{m,\infty}),
\end{equation}
where the length of the $S^1$-fiber at infinity equals $1$.
By the regularity theory of non-collapsing Einstein manifolds, the above convergence can be improved to $C^{\infty}$ everywhere.

\vspace{0.5cm}

\noindent
{\bf Region $\II$:}

\vspace{0.5cm}

We will analyze the convergence rescaled spaces for every fixed reference point $\bm{x}_j$ in Region $\II$. To understand the geometries of the rescaled limits, we will break down this region in three different cases which depend on the distance of a reference point $\bm{x}_j$ to the monopoles:

\begin{enumerate}
\item[(a)] There is a uniform constant $\sigma_0>0$ such that $2\beta_j^{-\frac{1}{2}} \leq d_{m}(\bm{x}_j) \leq \frac{1}{\sigma_0}\cdot \beta_j^{-\frac{1}{2}}$.

\item[(b)] The distance to a pole $d_m(\bm{x}_j)$ satisfies 
\begin{align}
\frac{d_m(\bm{x}_j)}{\beta_j^{-\frac{1}{2}}}\to \infty,  \mbox{ and }
\frac{d_m(\bm{x}_j)}{\beta_j^{\frac{1}{2}}} \to 0.
\end{align}

\item[(c)] There is some uniform constant $C_0>0$ such that
\begin{equation}
0< C_0 \cdot \beta_j^{\frac{1}{2}}\leq d_m(\bm{x}_j)\leq \frac{\iota_0'}{4}\cdot\beta_j^{\frac{1}{2}}.
\end{equation}

\end{enumerate}

In Case (a) and Case (b), we choose
\begin{equation}
\lambda_j \equiv \frac{1}{d_m(\bm{x}_j)}
\end{equation}
and define the rescaled metric $\tilde{g}_j = \lambda_j^2 g_j$. Immediately, $d_{\tilde{g}_j}(p_{m},\bm{x}_j)=1$.
In Case (c), we denote $d_j\equiv \min\limits_{1\leq m\leq m_0}d_m(\bm{x}_j)$ and define the following rescaled metric by $\tilde{g}_j\equiv \lambda_j^2 g_j$ and 
\begin{equation}
\lambda_j \equiv \frac{1}{d_j}. 
\end{equation}

Now we proceed to describe the rescaled limits in each of the above cases.
Applying Lemma \ref{l:rescaled-Taub-NUT} to Case (a), the rescaled spaces converge to a Ricci-flat Taub-NUT space with a monopole $p_{m,\infty}$, i.e., 
\begin{equation}
(\mathcal{M}, \tilde{g}_j, \bm{x}_j) \xrightarrow{GH} (\dR^4, \tilde{g}_{\infty}, \bm{x}_{\infty}),
\end{equation}
where $d_{\tilde{g}_{\infty}}(\bm{x}_{\infty},p_{m,\infty})=1$ and
the $S^1$-fiber at infinity has length at least $\sigma_0>0$.
Moreover, the above convergence is $C^{\infty}$ everywhere.

In Case (b), we have the convergence 
\begin{equation}
\Big(\mathcal{M}\setminus B^{g_j}_{\beta_j^{-\frac{1}{2}}}(p_m), \tilde{g}_j, \bm{x}_j\Big) \xrightarrow{GH} (\dR^3\setminus\{0^3\}, g_{\dR^3}, \bm{x}_{\infty}),
\end{equation}
where $g_{\dR^3}$ is the standard Euclidean metric in $\dR^3$.
In terms of the rescaled metrics $\tilde{g}_j$, the diameters of the fibers converge in the following way,
\begin{align}
\label{s1diam}
\diam_{\tilde{g}_j}(S^1) \leq C\cdot\beta_j^{-\frac{1}{2}}\cdot \frac{1}{d_m(\bm{x}_j)} \to 0.
\end{align}
Denote  $\gamma_j\equiv\frac{\beta_j^{\frac{1}{2}}}{d_m(\bm{x}_j)}$ and choose the rescaled coordinates 
\begin{equation}
x_j \equiv\gamma_j\cdot x, \
y_j \equiv \gamma_j\cdot y, \ 
z_j \equiv \gamma_j\cdot z,
\end{equation}
then one can check that the metric tensor $\tilde{g}_j$ in terms of the rescaled coordinates converges to the Euclidean metric $dx_{\infty}^2 + dy_{\infty}^2 + dz_{\infty}^2$, where $(x_j,y_j,z_j)$ converges to 
$(x_{\infty},y_{\infty},z_{\infty})$ with $|dx_{\infty}|=|dy_{\infty}|=|dz_{\infty}|=1$.
Therefore, by \eqref{s1diam}, the rescaled Gromov-Hausdorff limit is the punctured Euclidean space $\dR^3\setminus\{0^3\}$. Moreover, applying 
Lemma \ref{l:curvature-estimates} (or Lemma~\ref{l:codim-1}), 
it follows that the sequence converges with uniformly bounded curvature away from the origin.

In Case (c), we will prove the rescaled limit is a punctured flat cylinder. That is, let $s_j>0$ be a sequence of numbers such that 
\begin{align}
s_j\to 0 \ , \frac{1}{s_j\beta_j}\to 0,
\end{align}
then we claim that 
\begin{equation}
\Big(\mathcal{M}\setminus\pi^{-1}\Big(\bigcup\limits_{m=1}^{m_0} B_{s_j}^{g_0}(p_m)\Big),\tilde{g}_j,\bm{x}_j\Big)\xrightarrow{GH}\Big((\TT\times\dR)\setminus\mathcal{P}_{m_0},g_0,\bm{x}_{\infty}\Big),
\end{equation}
where $g_0$ is a flat  product metric on $\TT\times\dR$.

To see this, we will carefully look at the convergence in a sequence of punctured domains with unbounded diameter. 
We denote  $U(a,b) \equiv \{\bm{x}\in\mathcal{M}| a \leq z(\bm{x}) \leq b\}$.
Let $\xi_j>0$ be a sequence with $\xi_j/\beta_j \to 0$ and we choose a sequence of punctured domains 
\begin{equation}
\mathring{U}_j\equiv U(z(\bm{x}_j)-\xi_j, z(\bm{x}_j)+\xi_j)\setminus \pi^{-1}\Big(\bigcup\limits_{m=1}^{m_0} B_{s_j}^{g_0}(p_m)\Big),
\end{equation}
where $B_{s_j}^{g_0}(p_m)$ are balls of radii $s_j$ in the flat  product metric $g_0$ on $\TT\times\dR$.
 It is straightforward that 
 \begin{equation}
 \diam_{\tilde{g}_j}(\mathring{U}_j)\approx C\cdot\xi_j\to \infty \label{e:infinite-diameter}
 \end{equation}
 and
 \begin{equation}
 \diam_{\tilde{g}_j} \Big(\pi^{-1}(B_{s_j}^{g_0}(p_m))\Big) \approx C \cdot s_j \to 0.\label{e:small-punctures}
 \end{equation}
The above arguments show that the $\mathring{U}_{\infty}$ is a complete space minus $m_0$ points.

On the other hand,
we will show that the metrics $\tilde{g}_j$ converge to a flat  product metric on $\TT\times\dR$. In fact,
 for every $\bm{y}\in 
\mathring{U}_j$, there is a bounded harmonic function $h_j$ such that
the Green's function $V_{\beta_j}$ satisfies\begin{equation}
\Big|V_{\beta_j}(\bm{y})-\Big(h_j(\bm{y}) + \frac{2\pi b_{-}}{A} \cdot z(\bm{y})+\beta_j\Big)\Big| \leq \frac{1}{2s_j}.
\end{equation}
By the assumption of Case (c), for every $j$,  it holds that $\gamma_j \equiv \frac{d_m(\bm{x}_j)}{\beta_j^{\frac{1}{2}}}\in[C_0, \frac{\iota_0'}{4}]$. Since $\frac{1}{s_j\beta_j} \to  0$,
the following holds for some uniform constant $C>0$,
\begin{align}
\Big|\lambda_j^2V_{\beta_j}(\bm{y}) - \frac{1}{\gamma_j^2}\Big| &= 
\frac{|V_{\beta_j}(\bm{y})-\beta_j|}{\gamma_j^2 \beta_j} \leq \frac{C + C \cdot \xi_j + \frac{1}{2s_j}}{\gamma_j^2\beta_j} = \frac{\frac{C}{\beta_j}+\frac{C\xi_j}{\beta_j}+\frac{1}{2s_j\beta_j}}{\gamma_j^2}\to 0.
\label{e:product}
\end{align}

Therefore, applying \eqref{e:infinite-diameter}, \eqref{e:small-punctures} and \eqref{e:product}, we have 
\begin{equation}
(\mathring{U}_j, \tilde{g}_j, \bm{x}_j) \xrightarrow{GH} \Big((\TT\times\dR)\setminus\mathcal{P}_{m_0}, g_0, \bm{x}_{\infty}\Big),
\end{equation}
where $g_0$ is a flat  product metric on $\TT\times\dR$ and $\mathcal{P}_{m_0}$ has $m_0$ points. Similar to Case (b), Applying Lemma \ref{l:curvature-estimates} (or Lemma~\ref{l:codim-1}), it follows that the sequence converges  with uniformly bounded curvature away from the monopoles.

\vspace{0.5cm}

\noindent
{\bf Region $\III$:}

\vspace{0.5cm}

For every fixed $\bm{x}_j$ in Region $\III$, we define $\lambda_j \equiv \beta_j^{-\frac{1}{2}}$ and $\tilde{g}_j \equiv \lambda_j^2 g_j$. Let $s_j>0$ be a sequence of numbers such that 
\begin{align}
s_j\to 0, \
\frac{1}{s_j\beta_j}\to 0,
\end{align}
then applying the arguments in Case (c) of Region $\II$, we have
\begin{equation}
\Big(\mathcal{M}\setminus\pi^{-1}\Big(\bigcup\limits_{m=1}^{m_0} B_{s_j}^{g_0}(p_m)\Big),\tilde{g}_j,\bm{x}_j\Big)\xrightarrow{GH}\Big((\TT\times\dR)\setminus\mathcal{P}_{m_0},g_0,\bm{x}_{\infty}\Big),
\end{equation}
where $g_0$ is a flat  product metric  on $\TT\times\dR$. Applying Lemma \ref{l:curvature-estimates} (or Lemma~\ref{l:codim-1}), it follows that the sequence converges  with uniformly bounded curvature away from the monopoles.

\vspace{0.5cm}

\noindent
{\bf Region $\IV_{-}$:} 

\vspace{0.5cm}

For fixed $\bm{x}_j$ in Region $\IV_{-}$, we choose the following rescaling factor 
\begin{equation}
\lambda_j \equiv (L_{-}(\bm{x}_j))^{-1}
\end{equation}
and the corresponding rescaled metric $\tilde{g}_j=\lambda_j^2 g_j$. To start with, let us estimate the lower bound of the rescaled distance from $\bm{x}_j$ to a monopole.
For every $\bm{x}_j$ in Region $\VI_{-}$, by the  definition of this region, we have that
\begin{align}
d_{\tilde{g}_j}(p_m, \bm{x}_j) \geq \frac{10T_0' \cdot \beta_j^{\frac{1}{2}}}{L_{-}(\bm{x}_j)}
=  \frac{10T_0' \cdot \beta_j^{\frac{1}{2}}}{\Big(\frac{2\pi b_{-}}{A} \cdot z(\bm{x}_j) + \beta_{-}+\beta_j \Big)^{\frac{1}{2}}}.
\end{align}
If $\beta_j>0$ is sufficiently large, then immediately
\begin{equation}
d_{\tilde{g}_j}(p_m, \bm{x}_j) \geq 5T_0'>0.
\end{equation}

Now we consider the following cases: 

\begin{enumerate}
\item[(a)] There is a constant $C_0 > 10 T_0'$ independent of $j$ such that \begin{equation}5T_0' \leq d_{\tilde{g}_j}(p_m,\bm{x}_j) \equiv \lambda_j\cdot d_m(\bm{x}_j) \leq C_0\end{equation} for each $1\leq m\leq m_0$.

\item[(b)] The reference points $\bm{x}_j$ in Region $\IV_{-}$ satisfy
\begin{equation}d_{\tilde{g}_j}(p_m,\bm{x}_j) \equiv \lambda_j\cdot d_m(\bm{x}_j) \to \infty,\end{equation}

\end{enumerate}

In Case (a), we have the convergence
\begin{equation}
(\mathcal{M}, \tilde{g}_j, \bm{x}_j) \xrightarrow{GH} \Big((\TT\times\dR)\setminus\mathcal{P}_{m_0}, g_0, \bm{x}_{\infty}\Big),
\end{equation}
where $g_0$ is a flat  product metric and the set $\mathcal{P}_{m_0}$ contains $m_0$ point.
To see this, first we notice that there is some constant $C>0$ such that 
\begin{equation}
|z(\bm{x}_j)| \leq C.
\end{equation}
Let $\xi_j>0$ be a sequence satisfying $\xi_j\to\infty$ and $\frac{\xi_j}{\beta_j}\to0$, and denote 
\begin{equation}
U(a,b) \equiv \{\bm{x}\in\mathcal{M}| a \leq z(\bm{x}) \leq b\}.
\end{equation}
For fixed $\bm{x}_j$ in Region $\IV_{-}$, we choose a punctured domain 
\begin{equation}
\mathring{U}_j\equiv U(z(\bm{x}_j)-\xi_j, z(\bm{x}_j)+\xi_j)\setminus \pi^{-1}\Big(\bigcup\limits_{m=1}^{m_0} B_{s_j}^{g_0}(p_m)\Big),
\end{equation}
where $B_{s_j}^{g_0}(p_m)$ are balls of radii $s_j$ in the flat product metric $g_0$ on $\TT\times\dR$ and $s_j>0$ is a sequence of numbers satisfying  
\begin{align}
s_j\to0, \ 
\frac{1}{s_j\beta_j}\to0.
\end{align}
 It is straightforward that 
 \begin{equation}
 \diam_{\tilde{g}_j}(\mathring{U}_j)\approx C\cdot\xi_j\to \infty \label{e:infinite-diameter-2}
 \end{equation}
 and the limit space $\mathring{U}_{\infty}$
 has two ends. Moreover, 
 \begin{equation}
 \diam_{\tilde{g}_j} \Big(\pi^{-1}(B_{s_j}^{g_0}(p_m))\Big) \approx C \cdot s_j \to 0.\label{e:small-punctures-2}
 \end{equation}
 Therefore, the limit space $\mathring{U}_{\infty}$ is a complete space minus $m_0$ points.

Next, we will show $\tilde{g}_j$ converges to a flat  product metric $g_0$ on $\TT\times\dR$. To this end, it suffices to show that
\begin{equation}
\frac{V_{\beta_j}(\bm{y})}{(L_{-}(\bm{x}_j))^2} \to 1.
\end{equation}
In fact, for every $\bm{y}\in 
\mathring{U}_j$, there is a bounded harmonic function $h_j$ such that
the Green's function $V_{\beta_j}$ satisfies\begin{equation}
\Big|V_{\beta_j}(\bm{y})-\Big(h_j(\bm{y}) + \frac{2\pi b_{-}}{A} \cdot z(\bm{y})+\beta_j\Big)\Big| \leq \frac{1}{2s_j}.
\end{equation}
Since $\frac{1}{s_j\beta_j} \to  0$,
the following holds for some uniform constant $C>0$,
\begin{align}
\Big|\frac{V_{\beta_j}(\bm{y})}{(L_{-}(\bm{x}_j))^2} - 1\Big| = 
\frac{|V_{\beta_j}(\bm{y})-(L_{-}(\bm{x}_j))^2|}{\Big|\frac{2\pi b_{-}}{A}\cdot z(\bm{x}_j) + \beta_{-} + \beta_j\Big|}
\leq \frac{C + C\xi_j +\frac{1}{2s_j}}{\beta_j-C} \to 0.
\label{e:product-2}
\end{align}
Therefore, applying \eqref{e:infinite-diameter-2}, \eqref{e:small-punctures-2} and \eqref{e:product-2}, we have 
\begin{equation}
(\mathring{U}_j, \tilde{g}_j, \bm{x}_j) \xrightarrow{GH} \Big((\TT\times\dR)\setminus\mathcal{P}_{m_0}, g_0, \bm{x}_{\infty}\Big),
\end{equation}
where $g_0$ is a flat  product metric on $\TT\times\dR$ and $\mathcal{P}_{m_0}$ has $m_0$ points.  Moreover, by Lemma \ref{l:curvature-estimates} (or Lemma~\ref{l:codim-1}), it follows that the sequence converges  with uniformly bounded curvature away from the monopoles.

In Case (b), it holds that 
\begin{equation}
(\mathcal{M}, \tilde{g}_j, \bm{x}_j) \xrightarrow{GH} (\TT\times\dR, g_0 , \bm{x}_{\infty}),
\end{equation}
where $g_0$ is a flat  product metric  on $\TT\times\dR$. The proof of this is similar to the previous case. Here we choose the domain 
\begin{equation}
U_j \equiv  U(z(\bm{x}_j)-\xi_j, z(\bm{x}_j)+\xi_j),
\end{equation}
where the sequence of numbers $\xi_j>0$ satisfy $\xi_j\to\infty$ and $\frac{\xi_j}{\beta_j}\to0$. Then the same arguments show that
\begin{equation}
(U_j, \tilde{g}_j, \bm{x}_j) \xrightarrow{GH} (\TT \times \mathbb{R}, g_0, \bm{x}_{\infty}),
\end{equation}
 where $g_0$ is a flat  product metric on $\TT\times\dR$.  By Lemma \ref{l:curvature-estimates} (or Lemma~\ref{l:codim-1}), it follows that the sequence converges  with uniformly bounded curvature in any compact subset containing $\bm{x}_j$ of bounded diameter.

\vspace{0.5cm}

\noindent
{\bf Region $\IV_{+}$:}

\vspace{0.5cm}

For every fixed reference point $\bm{x}_j$ in  Region $\IV_{+}$, we choose the rescaling factor 
\begin{equation}
\lambda_j \equiv (L_{+}(\bm{x}_j))^{-1}
\end{equation}
and the rescaled metric $\tilde{g}_j=\lambda_j^2g_j$. The rescaled limits are the same as those in Region $\IV_{-}$

\vspace{0.5cm}

\noindent
{\bf Region $\V_{-}$:} 

\vspace{0.5cm}

For every fixed reference point $\bm{x}_j$ in  Region $\V_{-}$, we choose the rescaling factor 
\begin{equation}
\lambda_j \equiv (\underline{L}_{-}(\bm{x}_j))^{-1}
\end{equation}
and the rescaled metric $\tilde{g}_j=\lambda_j^2g_j$.
Let $(X_{b_-}^4, g_{b_-}, q_-)$
be a Tian-Yau space in our context with a fixed reference point $q_{-}\in X_{b_-}^4$.
We need to analyze the following cases:
\begin{enumerate}
\item[(a)] Assume 
 $z_{-}(\bm{x}_j)\to \infty$.
\item[(b)] Assume that there is some constant $C_0>0$ independent of the index $j$ such that $10\zeta_0^{-}\leq z_{-}(\bm{x}_j) \leq C_0$.
\end{enumerate}

In Case (a), we have the convergence
\begin{align}
(\mathcal{M}, \tilde{g}_j, \bm{x}_j) \xrightarrow{GH} (\TT\times\dR, g_0, \bm{x}_{\infty}),
\end{align}
where $g_0$ is a flat  product metric on $\TT\times\dR$. 
To see this. we denote $\zeta_j\equiv z_{-}(\bm{x}_j)\to\infty$.
Let $\xi_j>0$ satisfy 
\begin{align}
\xi_j \to\infty, \ 
\frac{\xi_j}{\zeta_j}\to 0,
\end{align}
and we choose a unbounded domain
\begin{equation}
U_j^{-} \equiv U^{-}(\zeta_j -\xi_j , \zeta_j + \xi_j) = \{\bm{y}\in \mathcal{M} | \zeta_j - \xi_j \leq z_{-}(\bm{y}) \leq \zeta_j + \xi_j \}.
\end{equation}
We will show that
\begin{equation}
(U_j, \tilde{g}_j, \bm{x}_j) \xrightarrow{GH} (\TT\times\dR, g_0, \bm{x}_{\infty}), 
\end{equation}
where $g_0$ is a flat  product metric on $\TT\times\dR$. 
Applying the similar arguments as before,  we have 
 \begin{equation}
\diam_{\tilde{g}_j}(U_j) \to \infty  
  \end{equation}
and the limit space $U_{\infty}$ has two ends. It follows that $U_{\infty}$ is complete. 
In addition, we need to show that $\tilde{g}_j$ converges to a flat product metric on $\TT\times\dR$. In fact,
\begin{align}
\Big|\frac{V_{-}(\bm{y})}{(\underline{L}_{-}(\bm{x}_j))^2} - 1 \Big| = \frac{|V_{-}(\bm{y})-(\underline{L}_{-}(\bm{x}_j))^2|}{\frac{2\pi b_{-}}{A}\cdot \zeta_j } \leq \frac{C(\xi_j + e^{-\frac{\epsilon_0\zeta_j}{2}})}{\frac{2\pi b_{-}}{A}\cdot \zeta_j} \to 0.
\end{align} 
By Lemma \ref{l:curvature-estimates} (or Lemma~\ref{l:codim-1}), it follows that the sequence converges  with uniformly bounded curvature in any compact subset containing $\bm{x}_j$ of bounded diameter, and this finishes the analysis of Case (a).

In Case (b), since $d(q_{-}, \bm{x}_j) \leq C_0$, there is some constant $C_0'>0$ (depending only on the constant $C_0>0$ and the geometric data of $g_{b_-}$) such that
\begin{equation}
\frac{1}{C_0'} \leq \underline{L}_{-}(\bm{x}_j) \leq C_0'.
\end{equation}
Therefore, the limit space $(\mathcal{M}_{\infty},\tilde{g}_{\infty}, \bm{x}_{\infty})$ is a 
complete Ricci-flat Tian-Yau space which is a simple rescaling of $(X_{b_-}^4, g_{b-}, q_{-})$.
The convergence in this case is moreover smooth on compact subsets. 
\vspace{0.5cm}

\noindent
{\bf Region $\V_{+}$:} 

\vspace{0.5cm}

For every fixed reference point $\bm{x}_j$ in  Region $\V_{+}$, we choose the rescaling factor 
\begin{equation}
\lambda_j \equiv (\underline{L}_{+}(\bm{x}_j))^{-1}
\end{equation}
and the rescaled metric $\tilde{g}_j=\lambda_j^2g_j$. So the rescaling geometries are the same as those in Region $\V_{-}$.

\vspace{0.5cm}

\noindent
{\bf Region $\VI_{-}$:} 

\vspace{0.5cm}

We choose
$\lambda_j \equiv 1$ and the limit is $(X_{b_-}^4,g_{b_-}, q_-)$ which is a complete Tian-Yau space.

\vspace{0.5cm}

\noindent
{\bf Region $\VI_{+}$:} 

\vspace{0.5cm}

We choose
$\lambda_j \equiv 1$ and the limit is $(X_{b_+}^4,g_{b_+}, q_+)$ which is a complete Tian-Yau space.

\vspace{0.5cm}

The above arguments completely classify all the rescaled limit spaces.
We end this section by proving the following lemmas which will be used in the proof of Proposition \ref{p:injectivity-of-D} in Section \ref{s:existence}.
We will choose a convenient way to study the convergence of differential $1$-forms in the rescaled spaces. The lemma below shows that, in each part with a collapsing circle bundle structure, every differential $1$-form is equivalent to its $4$-tuple 
of coefficient functions. 

\begin{lemma}\label{l:representation-of-1-form} Let $(\mathcal{M}, g_j)$ be a sequence with gluing parameters $\beta_j \to \infty$. Let $\omega \in \Omega^1(\mathcal{M})$, then are $1$-forms $\theta_j^x$, $\theta_j^y$, $\theta_j^z$ and $\theta_j^c$ in each circle bundle part with 
\begin{align}
|\theta_j^x|_{\tilde{g}_j} = |\theta_j^y|_{\tilde{g}_j} = |\theta_j^z|_{\tilde{g}_j} \to 1,  \ 
|\theta_j^t|_{\tilde{g}_j} \to 0.
\end{align}
Moreover, every $1$-form $\omega\in\Omega^1(\mathcal{M})$ in the circle bundle part can be represented as 
\begin{equation}
\omega = f_x \theta_j^x +  f_y \theta_j^y +  f_z\theta_j^z + f_t \theta_j^t. \label{e:repre-omega}
\end{equation}

\end{lemma}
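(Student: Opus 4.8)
\textbf{Proof proposal for Lemma \ref{l:representation-of-1-form}.}

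The plan is to build the frame $\{\theta_j^x,\theta_j^y,\theta_j^z,\theta_j^t\}$ directly from the Gibbons--Hawking description of each collapsing region, and then observe that \eqref{e:repre-omega} is merely the statement that these four $1$-forms span $\Omega^1$ pointwise, with the stated asymptotics of their norms being an immediate consequence of the explicit metric formula. Concretely, in a region where the metric is (up to exponentially small errors) of Gibbons--Hawking type $g_\beta = V_\beta(g_{\TT}+dz^2) + V_\beta^{-1}\theta^2$ over a piece of the flat cylinder $\TT_{x,y}\times\dR_z$ with connection form $\theta$, I would set
\begin{align}
\theta_j^x \equiv V_\beta^{1/2}\,dx,\quad \theta_j^y \equiv V_\beta^{1/2}\,dy,\quad \theta_j^z \equiv V_\beta^{1/2}\,dz,\quad \theta_j^t \equiv V_\beta^{-1/2}\,\theta,
\end{align}
pulled back under the various diffeomorphisms $\Phi_\pm^{TY}$, $\Phi_\pm^N$ identifying each region with a model piece. (In the Tian-Yau ends one uses instead the potentials $V_\pm$ from Section \ref{s:approx-triple}; in Region $\I$ and $\II$ one uses the Taub-NUT potentials appearing in Lemma \ref{l:rescaled-Taub-NUT}.) By construction $\{dx,dy,dz,\theta\}$ is a global coframe on the total space of the circle bundle over the relevant open set, so $\{\theta_j^x,\theta_j^y,\theta_j^z,\theta_j^t\}$ is too, and any $\omega\in\Omega^1(\mathcal M)$ restricted to that region has a unique expansion \eqref{e:repre-omega} with smooth coefficient functions $f_x,f_y,f_z,f_t$.

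The norm asymptotics are then a one-line computation: with respect to $g_\beta$ the coframe $\{V_\beta^{1/2}dx, V_\beta^{1/2}dy, V_\beta^{1/2}dz, V_\beta^{-1/2}\theta\}$ is \emph{orthonormal}, so before rescaling $|\theta_j^x|_{g_\beta}=|\theta_j^y|_{g_\beta}=|\theta_j^z|_{g_\beta}=|\theta_j^t|_{g_\beta}=1$ exactly, up to the $O(e^{-\delta|z|})$ (resp.\ $O(e^{-\underline\delta z})$) error between $g_\beta$ and the true glued metric coming from Proposition \ref{p:gluing-definite-triple}, Corollary \ref{coro:tydiff}, and Proposition \ref{gaugep}. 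After rescaling by $\tilde g_j=\lambda_j^2 g_j$ the three "horizontal" forms all scale by the common factor $\lambda_j$ and the "vertical" form $\theta_j^t$ by the same factor, but the point is that in each region the rescaling factor $\lambda_j$ is chosen (as catalogued region-by-region in Section \ref{ss:rescaled-limits}) precisely so that $\lambda_j^2 V_\beta \to \text{const}>0$ along the sequence, hence $|\theta_j^x|_{\tilde g_j}=\lambda_j|\theta_j^x|_{g_j}\to 1$ after absorbing that constant into the normalization of $\theta_j^x$ (equivalently, one defines $\theta_j^x \equiv (\lambda_j^2 V_\beta)^{1/2}\,dx$ and similarly for $y,z$). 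For the vertical form one instead has $|\theta_j^t|_{\tilde g_j} = \lambda_j (V_\beta)^{-1/2} \lesssim \lambda_j \beta_j^{-1/2}\to 0$ in every region (this is exactly the collapsing of the $S^1$-fibers, cf.\ \eqref{s1diam} and the diameter bounds in Theorem \ref{t:codim-3}(1)), so $|\theta_j^t|_{\tilde g_j}\to 0$.

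Thus the only genuine content is bookkeeping: one must go through the nine regions of Section \ref{sss:subdivision}, in each case recall from Section \ref{ss:rescaled-limits} the chosen $\lambda_j$ and the local model potential, verify $\lambda_j^2 V_\beta \to$ (positive constant) and $\lambda_j^2 V_\beta^{-1}\to 0$, and note that the exponentially small discrepancy between the glued metric $g_j$ and the exact Gibbons--Hawking model does not affect the limits of the norms. The main (mild) obstacle is simply making this uniform across the transition/damage zones $DZ_\pm$ and the "gap" regions: there the metric is a cutoff interpolation, but Proposition \ref{p:gluing-definite-triple} gives $|\nabla^k\bm a^\pm|\le C_k e^{-\delta z_\pm}$, so the interpolated metric still agrees with a Gibbons--Hawking model to exponential order and Remark \ref{re:subdivision} lets us treat each gap region like its neighbor. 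Since the claim is purely pointwise and local (no PDE, no global analysis), no further input is needed, and the lemma follows.
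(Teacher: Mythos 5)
Your overall strategy is exactly the paper's: on each circle-bundle region one takes the explicit coframe $\{dx,dy,dz\}$ plus a fiber form coming from the Gibbons--Hawking/Calabi description, rescales it according to the region-by-region factors $\lambda_j$ of Section \ref{ss:rescaled-limits} (the paper writes $\theta_j^x=\gamma_j\,dx$, etc., with a region-dependent $\gamma_j$), observes that the exponentially small gluing errors from Proposition \ref{p:gluing-definite-triple}, Corollary \ref{coro:tydiff} and Proposition \ref{gaugep} do not affect the limits, and notes that the representation \eqref{e:repre-omega} is automatic because these four forms are a coframe on the circle-bundle part. So in substance you are reproducing the paper's proof, and the construction of the frame and the spanning statement are fine.

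Two computational points should be fixed, though. First, you invert the scaling law for norms of $1$-forms: under $\tilde g_j=\lambda_j^2 g_j$ one has $|\alpha|_{\tilde g_j}=\lambda_j^{-1}|\alpha|_{g_j}$, not $\lambda_j|\alpha|_{g_j}$. For your renormalized horizontal forms $\theta_j^x=(\lambda_j^2V_\beta)^{1/2}dx$ this is harmless, since the $V_\beta$-factors cancel and the correct rule still gives $|\theta_j^x|_{\tilde g_j}\to 1$; but for the fiber form your definition $\theta_j^t=V_\beta^{-1/2}\theta$ is $g_j$-orthonormal, so $|\theta_j^t|_{\tilde g_j}=\lambda_j^{-1}$, which is bounded below and in fact tends to infinity in Regions $\III$, $\IV_\pm$ and $\V_\pm$(a) --- it does not tend to $0$ as you claim (your displayed value $\lambda_jV_\beta^{-1/2}$ follows neither from your definition nor from the correct scaling rule). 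To obtain a fiber coframe element with the normalization asserted in the lemma one must scale the fiber direction differently from the horizontal directions; the paper takes $\theta_j^t$ proportional to $dt$ with the same factor $\gamma_j$ as the horizontal forms, and the geometric content of the decay is the collapse of the dual fiber vector field (equivalently, of the pairing of $\omega$ with $\partial_t$), not of the $g_j$-unit vertical coframe element. Second, your justification ``$\lambda_j$ is chosen so that $\lambda_j^2V_\beta\to\mathrm{const}>0$'' fails in Region $\II$, Case (b): there $\lambda_j=d_m(\bm{x}_j)^{-1}$ while $V_\beta\approx\beta_j$, so $\lambda_j^2V_\beta\approx\beta_j/d_m(\bm{x}_j)^2\to\infty$. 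Again this does not break your final frame (the norm computation cancels the $V_\beta$), but the stated reason is not the right one; in that region the correct normalization is the paper's $\gamma_j=\lambda_j\beta_j^{1/2}$, which happens to agree with yours because $V_\beta\approx\beta_j$ away from the poles.
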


\begin{proof}
Based on the above discussions, there is a circle bundle structure in each of the following rescaled regions: Case (b) and Case (c) of Region $\II$, Region $\III$, Region $\IV_{\pm}$
and Case (a) of Region $\V_{\pm}$.
In all the above cases, the collapsed rescaled limit of $\mathcal{M}$ is isometric to $\dR^3$ or $\TT\times\dR$.

First, the proof of Case (a) of Region $\V_{-}$ is the same as the proof of Case (b) of Region $\V_{+}$.
We only need to discuss Region $\V_{-}$. 
The original sequence $g_j$
is a fixed Tian-Yau metric and have the asymptotic behavior
\begin{equation}
g_j = V_{-}(g_{\TT} + dz_{-}^2) + V_{-}^{-1}\theta_{b_{-}}^2 + O(e^{-\underline{\delta}_1 z_-}).
\end{equation}
In this case, the reference points $\bm{x}_j$ satisfy $\zeta_j \equiv z_{-}(\bm{x}_j) \to \infty$. In the above discusssions, we choose the rescaling factor $\lambda_j \equiv \frac{1}{\underline{L}_{-}(\bm{x}_j)}$. So under the 
rescaled metric $\tilde{g}_j$, it holds that
\begin{align}
|dx|_{\tilde{g}_j} = |dy|_{\tilde{g}_j} = 
|d\tilde{z}|_{\tilde{g}_j} &\to 1, \ |\theta_{b_-}|_{\tilde{g}_j} \to 0,
\end{align}
where we choose the $z$-coordinate translation as $\tilde{z}(\bm{x})={z}(\bm{x})-z(\bm{x}_j)$. 
In the remaining cases, the proof is very similar. We can properly rescale the $1$-forms $dx$, $dy$ and $dz$ by
\begin{align}
\theta_j^x \equiv \gamma_j \cdot dx, \ 
\theta_j^y \equiv \gamma_j \cdot dy, \
\theta_j^z \equiv \gamma_j \cdot d\tilde{z}, \
\theta_j^t \equiv \gamma_j \cdot dt.
\end{align}

In Case (b) and Case (c) of Region $\II$, $\gamma_j$ is defined by
\begin{equation}
\gamma_j \equiv \lambda_j \cdot \beta_j^{\frac{1}{2}},
\end{equation}
where $\lambda_j \equiv (d_{p_m}(\bm{x}_j))^{-1}$, then by straightforward computations, 
\begin{align}
|\theta_j^x|_{\tilde{g}_j} = |\theta_j^y|_{\tilde{g}_j} = |\theta_j^z|_{\tilde{g}_j} \to 1, \ |\theta_j^t|_{\tilde{g}_j} \to 0.
\end{align}
In Region $\III$ and $\IV_{\pm}$, by the definition of the rescaled metrics, 
\begin{align}
\begin{split}
|dx|_{\tilde{g}_j} = |dy|_{\tilde{g}_j} = |dz|_{\tilde{g}_j} &\to 1, \ |\theta|_{\tilde{g}_j}  \to 0.
\end{split}
\end{align}
So the proof is done.
\end{proof}

The following Lemma will be used throughout the following sections, and its simple proof is left to the reader.  
\begin{lemma}\label{l:D-harmonic} Let $(M^4,g)$ be a Riemannian $4$-manifold and 
let $\omega\in\Omega^1(M^4)$ satisfy $\mathscr{D}_g\omega = 0$, then 
\begin{equation}
\Delta_H\omega = 0,
\end{equation}
where $\mathscr{D}_g \equiv d^+  + d^*$ and $\Delta_H$ is the Hodge Laplacian. 
\end{lemma}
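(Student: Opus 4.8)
\textbf{Proof proposal for Lemma \ref{l:D-harmonic}.}

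The plan is to show directly that $\mathscr{D}_g^2 = \Delta_H$ on $\Omega^1(M^4)$, so that $\mathscr{D}_g\omega = 0$ forces $\Delta_H\omega = 0$. First I would recall the decomposition $\Delta_H = dd^* + d^*d$ on $1$-forms. The strategy is to exploit the self-dual/anti-self-dual splitting $\Omega^2 = \Omega^2_+ \oplus \Omega^2_-$, under which $d\omega = d^+\omega + d^-\omega$. The key point is that for a $2$-form $\eta$ one has $*\eta = \eta$ on $\Omega^2_+$ and $*\eta = -\eta$ on $\Omega^2_-$, and that $d^* = -*d*$ on $2$-forms in dimension $4$. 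Consequently $d^*(d^+\omega) = -*d*(d^+\omega) = -*d(d^+\omega)$ and $d^*(d^-\omega) = +*d(d^-\omega)$, while $d(d\omega) = 0$ gives $d(d^+\omega) = -d(d^-\omega)$. Hence $d^*d\omega = d^*(d^+\omega) + d^*(d^-\omega) = -*d(d^+\omega) + *d(d^-\omega) = -2*d(d^+\omega)$, and also $= 2*d(d^-\omega)$.

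Next I would compute $\mathscr{D}_g^2\omega$ where $\mathscr{D}_g = d^+ + d^*$, acting $\Omega^1 \to \Omega^0 \oplus \Omega^2_+$ and then back $\Omega^0 \oplus \Omega^2_+ \to \Omega^1$ via $d \oplus d^*$ (note $d^*$ of a function is zero and $d$ of a function is automatically in $\Omega^1$, while $d^*$ of a self-dual $2$-form lands in $\Omega^1$). Explicitly, writing the composite carefully: $\mathscr{D}_g^2\omega = d(d^*\omega) + d^*(d^+\omega)$. Since $d^*(d^+\omega) = -2*d(d^+\omega)$ and, using $d^+\omega = \tfrac12(d\omega + *d\omega)$ together with $d d\omega = 0$, one finds $-2*d(d^+\omega) = -*d(*d\omega) = d^*d\omega$. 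Therefore $\mathscr{D}_g^2\omega = dd^*\omega + d^*d\omega = \Delta_H\omega$. Applying this identity to $\omega$ with $\mathscr{D}_g\omega = 0$ immediately yields $\Delta_H\omega = \mathscr{D}_g(\mathscr{D}_g\omega) = 0$.

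The only genuinely delicate point is bookkeeping the signs in the Hodge star conventions and in the formula $d^* = \pm * d *$ on forms of the relevant degree (the sign depends on degree and dimension), together with being careful that $\mathscr{D}_g\omega = 0$ really means both $d^+\omega = 0$ and $d^*\omega = 0$ separately, so that in fact $d^*d\omega = -2*d(d^+\omega) = 0$ and $dd^*\omega = 0$ each vanish on their own. I expect no real obstacle here — this is the standard Weitzenböck-free identity $\mathscr{D}^2 = \nabla^*\nabla + (\text{curvature}) = \Delta_H$ packaged without curvature terms because we only use $\mathscr{D}^2 = \Delta_H$ at the level of the exterior-derivative operators — so the proof is a short direct verification, which is why the paper leaves it to the reader.
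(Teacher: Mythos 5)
The paper gives no proof of this lemma (it is explicitly ``left to the reader''), so the only question is whether your argument is sound. Your closing paragraph is in fact a complete and correct proof: $\mathscr{D}_g\omega=0$ means $d^*\omega=0$ and $d^+\omega=0$ \emph{separately}; the first gives $dd^*\omega=0$, and the second gives $d^*d\omega=-2*d(d^+\omega)=0$ (equivalently, $d\omega$ is anti-self-dual, so $d^*d\omega=-*d*d\omega=*dd\omega=0$), hence $\Delta_H\omega=dd^*\omega+d^*d\omega=0$. That two-line observation is exactly the intended ``simple proof,'' so your proposal does reach the conclusion by a valid route.

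However, the headline identity you organize the argument around is false, and you should not let it stand. Writing the composite with the formal adjoint $d\oplus d^*$ (there is no literal $\mathscr{D}_g^2$, since domain and codomain differ), one has $\mathscr{D}_g^*\mathscr{D}_g\omega=dd^*\omega+d^*(d^+\omega)$. Your own earlier computation correctly gives $d^*(d^+\omega)=-*d(d^+\omega)$, while $d^*d\omega=-2*d(d^+\omega)$; combining these yields $d^*(d^+\omega)=\tfrac12\,d^*d\omega$, so in fact $\mathscr{D}_g^*\mathscr{D}_g=dd^*+\tfrac12\,d^*d\neq\Delta_H$ on $\Omega^1$. The slip is the sentence ``Since $d^*(d^+\omega)=-2*d(d^+\omega)$,'' which transfers the factor $2$ belonging to $d^*d\omega$ onto $d^*(d^+\omega)$ and thereby double-counts, contradicting your own line three sentences earlier. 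Because the hypothesis kills $dd^*\omega$ and $d^*(d^+\omega)$ individually, the spurious factor of $2$ is harmless for this particular lemma, but the clean write-up is to drop the claim $\mathscr{D}_g^*\mathscr{D}_g=\Delta_H$ altogether and lead with the argument of your final paragraph.
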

The following Lemma will also be very useful in the following sections. 
\begin{lemma}\label{l:2nd-fundamental-form}
In Case (b) of Region $\II$, 
the Gromov-Hausdorff map
\begin{equation}
F_j: (\mathcal{M},g_j, \bm{x}_j) \longrightarrow (\dR^3, g_0, \bm{x}_{\infty})
\end{equation}
can be given by the rescaled coordinate functions 
\begin{align}
x_j \equiv \gamma_j \cdot x, \
y_j \equiv \gamma_j \cdot y, \
z_j \equiv \gamma_j \cdot z,
\end{align}
where $\gamma_j>0$ is defined in the proof of Lemma \ref{l:representation-of-1-form}. Moreover, $F\equiv (x_j, y_j, z_j)$ satisfies
\begin{equation}
\Delta_{\tilde{g}_j} x_j = \Delta_{\tilde{g}_j} y_j
= \Delta_{\tilde{g}_j} z_j
=0
\end{equation}
 and satisfy 
 \begin{equation}
 |\nabla_{\tilde{g}_j} x_j|_{\tilde{g}_j} =  |\nabla_{\tilde{g}_j} y_j|_{\tilde{g}_j} =  |\nabla_{\tilde{g}_j} z_j|_{\tilde{g}_j}
\to 1
 \end{equation}
and away from the monopoles,
\begin{equation}
 |\nabla_{\tilde{g}_j}^2 x_j|_{\tilde{g}_j} =  |\nabla_{\tilde{g}_j}^2 y_j|_{\tilde{g}_j} =  |\nabla_{\tilde{g}_j}^2 z_j|_{\tilde{g}_j}
\to 0.
\end{equation}

\end{lemma}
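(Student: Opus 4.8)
\textbf{Proof plan for Lemma \ref{l:2nd-fundamental-form}.}

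The plan is to simply verify that the explicit rescaled coordinate functions that were already used in the proof of Lemma \ref{l:representation-of-1-form} (and in the Case (b) analysis of Region $\II$) do the job. Recall that in Case (b) of Region $\II$ we rescale $g_j$ by $\lambda_j = 1/d_m(\bm{x}_j)$ and then rescale the flat coordinates on $\TT\times\dR$ by $\gamma_j = \lambda_j\beta_j^{1/2}$, so that $x_j = \gamma_j x$, $y_j = \gamma_j y$, $z_j = \gamma_j z$; the hypotheses of Case (b) guarantee $d_m(\bm{x}_j)/\beta_j^{-1/2}\to\infty$ and $d_m(\bm{x}_j)/\beta_j^{1/2}\to 0$, which is exactly what makes the Gromov--Hausdorff limit the punctured flat $\dR^3\setminus\{0^3\}$ and makes these coordinate functions realize the limiting submersion $F_\infty:(\dR^3\setminus\{0^3\})\to\dR^3$. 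So the first step is just to record that $F_j = (x_j,y_j,z_j)$ is the Gromov--Hausdorff approximation, which follows verbatim from the Region $\II$ Case (b) discussion above.

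The second step is the harmonicity. In the neck region the metric is the Gibbons--Hawking metric $g_j = V_{\beta_j}(g_{\TT\times\dR}) + V_{\beta_j}^{-1}\theta^2$, and under this ansatz the Laplacian of a function pulled back from the base $\TT\times\dR$ is $\Delta_{g_j}(\pi^*u) = V_{\beta_j}^{-1}\pi^*(\Delta_{g_0}u)$. Since $x,y,z$ are affine (hence $g_0$-harmonic) functions on $\TT\times\dR$, each of $x_j,y_j,z_j$ is $g_j$-harmonic, hence $\tilde g_j$-harmonic (the Laplacian scales but its kernel does not). This is immediate and needs no estimates.

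The third step is the gradient and Hessian estimates. For $|\nabla_{\tilde g_j}x_j|_{\tilde g_j}\to 1$: in the Gibbons--Hawking metric $|dx|^2_{g_j} = V_{\beta_j}^{-1}$, so $|\nabla_{\tilde g_j}x_j|^2_{\tilde g_j} = \gamma_j^2\lambda_j^{-2}V_{\beta_j}^{-1} = \beta_j V_{\beta_j}^{-1}$, and on the relevant region $V_{\beta_j} = \beta_j + (\text{lower order, including the }\tfrac{1}{2r}\text{ pole terms and }O(z)\text{ linear terms})$; the Case (b) constraints force the ratio $V_{\beta_j}/\beta_j\to 1$ uniformly on the scale-$1$ ball around $\bm{x}_j$ away from the punctures, exactly as in the estimate \eqref{e:product} above, giving the claimed limit. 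For the Hessian, one computes $\nabla^2_{g_j}x$ in terms of the Christoffel symbols of the Gibbons--Hawking metric, which are built from first derivatives of $\log V_{\beta_j}$; away from the monopoles $|\nabla\log V_{\beta_j}| = |\nabla V_{\beta_j}|/V_{\beta_j}$ is $O(1/\beta_j)$ times a quantity controlled by the pole and linear terms, so after the rescaling $|\nabla^2_{\tilde g_j}x_j|_{\tilde g_j}\to 0$. The only mild subtlety — and the step I would be most careful about — is the uniformity of the statement ``away from the monopoles'': one must check that on the complement of fixed-radius $\tilde g_j$-balls around the lifted pole points $\tilde p_m$, the quantities $V_{\beta_j}/\beta_j$ and $\beta_j^{-1}|\nabla V_{\beta_j}|$ converge uniformly to $1$ and $0$ respectively, which again follows from the explicit expansion of $V_{\beta_j}$ near a pole together with the separation-of-scales hypotheses of Case (b), precisely as in the curvature computation in Lemma \ref{l:curvature-estimates}. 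Since the lemma's proof is entirely of this explicit computational nature, I would present it briefly and leave the routine Christoffel-symbol bookkeeping to the reader, exactly as the statement invites.
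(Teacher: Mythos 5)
Your proposal is correct, and your harmonicity and gradient steps are essentially the paper's own: the paper verifies $\Delta_{g_j}x=\Delta_{g_j}y=\Delta_{g_j}z=0$ by computing $*(dx),*(dy),*(dz)$ in the Gibbons--Hawking coframe (equivalent to your observation that pullbacks of $g_0$-harmonic functions are harmonic for the ansatz), and it gets $|\nabla_{\tilde g_j}x_j|\to1$ by citing Lemma \ref{l:representation-of-1-form}, which is the same content as your computation $|\nabla_{\tilde g_j}x_j|^2=\beta_jV_{\beta_j}^{-1}\to1$. Where you genuinely diverge is the Hessian estimate. The paper argues softly: since $x_j$ is harmonic and the neck metric is Ricci-flat, Bochner gives $\tfrac12\Delta_{\tilde g_j}|\nabla x_j|^2=|\nabla^2x_j|^2$; pairing this with a Cheeger--Colding cutoff and integrating by parts, the convergence $|\nabla x_j|^2\to1$ forces $\fint_{B_R}|\nabla^2x_j|^2\to0$, and the $L^2$ smallness is then upgraded to the pointwise statement away from the monopoles using the uniform curvature bounds of Lemma \ref{l:codim-1} and elliptic estimates. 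You instead compute directly: the connection coefficients of the Gibbons--Hawking metric are of size $V_{\beta_j}^{-3/2}|\nabla_{g_0}V_{\beta_j}|$, so $|\nabla^2_{g_j}x|_{g_j}\lesssim V_{\beta_j}^{-2}|\nabla_{g_0}V_{\beta_j}|$, and after multiplying by $\gamma_j\lambda_j^{-2}=d_m(\bm{x}_j)\beta_j^{1/2}$ and using $V_{\beta_j}\approx\beta_j$, $|\nabla_{g_0}V_{\beta_j}|\approx\tfrac12 r^{-2}+O(1)$ at Euclidean distance $r\approx r(\bm{x}_j)$, the rescaled Hessian is $O\big((\beta_jr(\bm{x}_j))^{-1}\big)\to0$ on unit $\tilde g_j$-balls away from the punctures, precisely because Case (b) forces $\beta_jr(\bm{x}_j)\to\infty$; this checks out. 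The trade-off is that your route is explicit and quantitative (it even yields a rate), but is tied to the exact Gibbons--Hawking form of $g_j$, whereas the paper's Bochner-plus-cutoff argument is metric-ansatz-free and would persist under perturbations of the metric; the uniformity issue you flag is the only delicate point in your version, and it is settled exactly as you say by the expansion $V_{\beta_j}=\tfrac{1}{2r}+h+\beta_j$ together with the Case (b) separation of scales.
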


\begin{remark} The explicitly Gromov-Hausdorff map $F_j$ in 
Lemma \ref{l:2nd-fundamental-form} in fact corresponds to Cheeger-Colding's quantitative splitting map (see \cite{ChC}).
\end{remark}

\begin{proof}
In terms of the original coframes $\{dx, dy, dz, \theta\}$, the volume form is given by 
\begin{equation}
\dvol_{g_j} = V_{\beta_j} dx \wedge dy \wedge dz \wedge \theta. 
\end{equation}
By definition,
\begin{align}
* (dx) = dy \wedge dz \wedge \theta, \
*(dy) = - dx \wedge dz \wedge \theta, \
*(dz) = dx \wedge dy \wedge \theta, 
\end{align}
which implies 
\begin{equation}
\Delta_{g_j} x = \Delta_{g_j} y = \Delta_{g_j} z = 0.
\end{equation}
After rescaling, we have that 
\begin{equation}
\Delta_{\tilde{g}_j} x_j = \Delta_{\tilde{g}_j} y_j
= \Delta_{\tilde{g}_j} z_j
=0.
\end{equation}
By Lemma \ref{l:representation-of-1-form}, the pointwise gradient estimate holds,
\begin{equation}
 |\nabla x_j|_{\tilde{g}_j} =  |\nabla y_j|_{\tilde{g}_j} =  |\nabla z_j|_{\tilde{g}_j}
\to 1.
\end{equation}

Now we estimate the Hessian of the harmonic functions $x_j$, $y_j$ and $z_j$. It suffices to check it for $x_j$.
First,
Bochner's formula gives that
\begin{equation}
\frac{1}{2}\Delta_{\tilde{g}_j}|\nabla x_j|_{\tilde{g}_j}^2 = |\nabla^2 x_j|_{\tilde{g}_j}^2.\label{e:bochner}
\end{equation}
Due to Cheeger-Colding (see \cite{ChC}), there exist cutoff functions $\varphi_j: \mathcal{M} \to [0,1]$ with
\begin{align}
\varphi_j(x)=
\begin{cases}
1, \  x \in B_{R}(p_j), \\
0, \ x\in\mathcal{M}\setminus B_{2R}(p_j) 
\end{cases}
\end{align}
and there exists an absolute constant $C_0>0$ such that
\begin{equation}
R|\nabla_{\tilde{g}_j}\varphi_j|_{\tilde{g}_j} + R^2 |\Delta_{\tilde{g}_j}\varphi_j| \leq C_0.
\end{equation}
Integrating \eqref{e:bochner} over $B_{4R}(p_j)$, 
\begin{align}
  \begin{split}
\fint_{B_{4R}(p_j)}  \varphi_j |\nabla^2x_j|_{\tilde{g}_j}^2 \dvol_{\tilde{g}_j}
& =\frac{1}{\Vol_{\tilde{g}_j}(B_{4R}(p_j))}\int_{B_{4R}(p_j)}\varphi_j |\nabla^2x_j|_{\tilde{g}_j}^2 \dvol_{\tilde{g}_j} \\
  &=\frac{1}{\Vol_{\tilde{g}_j}(B_{4R}(p_j))}\int_{B_{4R}(p_j)}\frac{1}{2}\varphi_j \Delta_{\tilde{g}_j} (|\nabla x_j|_{\tilde{g}_j}^2-1) \dvol_{\tilde{g}_j} \\
&=\frac{1}{2\Vol_{\tilde{g}_j}(B_{4R}(p_j))}\int_{B_{4R}(p_j)}(\Delta_{\tilde{g}_j}\varphi_j)\cdot  (|\nabla x_j|_{\tilde{g}_j}^2  -1)  \dvol_{\tilde{g}_j}  \rightarrow  0,
\end{split}
\end{align}
as $j \rightarrow \infty$.  Therefore, by volume comparison,
\begin{equation}
\fint_{B_{R}(p_j)}|\nabla^2 x_j|_{\tilde{g}_j}^2  \dvol_{\tilde{g}_j} \to 0,\label{e:L2-hessian-small}
\end{equation}
as $j \rightarrow \infty$. 
Let $p_{\infty}\in\dR^3\setminus\{0^3\}$ with $B_{2\bar{s}_0}(p_{\infty})\subset\dR^3\setminus\{0^3\}$ and we choose a sequence of geodesic balls $B_{2\bar{s}_0}(p_j)$ such that 
\begin{equation}
(B_{2\bar{s}_0}(p_j),\tilde{g}_j)\xrightarrow{GH}(B_{2\bar{s}_0}(p_{\infty}),g_0).
\end{equation}
By Lemma \ref{l:codim-1}, the curvatures on $B_{\bar{s}_0}(p_j)$
are uniformly bounded by $C\cdot\bar{s}_0^{-2}$ and $C>0$ is an absolute  constant. 
On the other hand, since $\Delta_{\tilde{g}_j}x_j=0$,  \eqref{e:L2-hessian-small} can be strengthened to 
\begin{equation}
\sup\limits_{B_{\bar{s}_0}(p_j)}|\nabla^2 x_j|_{\tilde{g}_j}^2 \to 0.
\end{equation}
The proof is done. 

\end{proof}

\section{Weighted Schauder estimate} 

\label{s:weight-analysis}

The main part of this section is to establish the appropriate weight analysis which is consistent with the different rescaled geometries of the manifold $(\mathcal{M},g_{\beta})$. The main result in this section is the weighted Schauder estimate in Proposition \ref{p:weighted-Schauder-estimate}.

To start with, we define the weight functions.
Let $\delta$, $\nu$ and $\mu$ be fixed constants to be determined later. Fix the gluing parameter $\beta>0$, 
the weight functions $\rho_{\delta,\nu,\mu}^{(k+\alpha)}$ with $k\in\dN$ and $0\leq \alpha<1$, are defined as follows:
\begin{align}
\rho_{\delta,\nu,\mu}^{(k+\alpha)}(\bm{x})\equiv
\begin{cases}
e^{ \delta \cdot (2T_-) } \cdot (\beta^{-\frac{1}{2}})^{2\mu+\nu+k+\alpha}, & \bm{x} \in \I 
\\
e^{ \delta \cdot (2T_-)}\cdot (\beta^{-\frac{\mu}{2}})\cdot d_{m}^{\mu+\nu+k+\alpha}(\bm{x}), &  \bm{x} \in\II 
\\
e^{ \delta \cdot (2T_-) }\cdot (\beta^{\frac{1}{2}})^{\nu+k+\alpha}, &  \bm{x} \in \III
\\
e^{\delta \cdot (z(\bm{x})+2T_-)}\cdot (L_-(\bm{x}))^{\nu+k+\alpha}  & \bm{x} \in \IV_{-}
\\ 
e^{\delta \cdot (z(\bm{x})+2T_-)}\cdot (L_+(\bm{x}))^{\nu+k+\alpha}   &  \bm{x} \in\IV_{+}
\\ 
e^{\delta \cdot (z_-(\bm{x}))}\cdot (\underline{L}_-(\bm{x}))^{\nu+k+\alpha}  , &  \bm{x} \in\V_{-}
\\
e^{\delta\cdot (-z_+(\bm{x})+2T_-+2T_+)}\cdot(\underline{L}_+(\bm{x}))^{\nu+k+\alpha}   , &  \bm{x} \in\V_{+}
\\
e^{\delta\cdot \zeta_0^-}\cdot \Big(\underline{L}_{-}(\zeta_0^{-})\Big)^{\nu+k+\alpha}   , & \bm{x} \in\VI_{-}
\\
e^{\delta(-\zeta_0^+ + 2T_- + 2T_+)} \cdot \Big(\underline{L}_{+}(\zeta_0^{+})\Big)^{\nu+k+\alpha}  , & \bm{x} \in\VI_{+}.
\end{cases}\label{def-weight-function}
\end{align}
Recall from Remark  \ref{re:subdivision} that these regions do not entirely cover $\mathcal{M}$, so 
we extend the weight functions to smooth functions on the entire manifold $\mathcal{M}$ by using appropriate cutoff functions in each gap region.  The geometry in each gap region is the same as the geometry of the adjacent regions, therefore these gap regions can be ignored in the following analysis.

\begin{definition}
[Weighted H\"older space] \label{whsdef}
The weighted H\"older space is defined by,
\begin{equation}
\|\omega\|_{C_{\delta,\nu, \mu}^{k,\alpha}(\mathcal{M})}
\equiv \sum\limits_{j=0}^k\Big\|\rho_{\delta,\nu,\mu}^{(j)} \cdot\nabla^j\omega\Big\|_{C^0(\mathcal{M})} + [\omega]_{C_{\delta,\nu,\mu}^{k,\alpha}(\mathcal{M})},
\end{equation}
where 
\begin{equation}
[\omega]_{C_{\delta,\nu,\mu}^{k,\alpha}(\mathcal{M})} \equiv \sup_{\substack{d_g(x,y)\leq r_0\\ x,y\in \mathcal{M}}} \Big\{\min\{\rho_{\delta,\nu,\mu}^{(k+\alpha)}(x),\rho_{\delta,\nu,\mu}^{(k+\alpha)}(y)\}\cdot\frac{|\nabla^k\omega(x)-\nabla^k\omega(y)|}{(d_g(x,y))^{\alpha}}\Big\},
\end{equation}
 $\omega\in T^{r,s}(\mathcal{M})$ is a tensor field of type $(r,s)$ and $r_0 \equiv \frac{1}{2}\InjRad_g(\mathcal{M})$.
 In the above definition, the difference of the two covariant derivatives is defined in terms of the parallel translation.
\end{definition}

\begin{remark} If $(X,g)$ is a $\delta$-aymptotically Calabi space for $n=2$, one can define a weight function
  \begin{align}
    \rho^{(k + \alpha)}_{\delta, \nu}(\bm{x}) = e^{ \delta z(\bm{x})}\cdot( z(\bm{x}))^{\frac{ \nu + k + \alpha}{2}},
\end{align}
and defined the weighted space $C^{k,\alpha}_{\delta, \nu}(X)$ exactly as in Definition \ref{whsdef}. On such a space the Laplace operator is a bounded linear mapping
\begin{align}
\Delta:   C^{k,\alpha}_{\delta, \nu}(X) \rightarrow C^{k-2,\alpha}_{\delta, \nu +2}(X).
\end{align}
It is expected that this operator is Fredholm if $\delta$ is sufficiently small and non-zero, and for arbitrary~$\nu$. The analysis in Section \ref{s:liouville-functions}  can likely be extended to prove this stronger result, but for the purposes of this paper we do not need this. Likewise, we expect that
\begin{align}
\mathscr{D}: C^{k,\alpha}_{\delta, \nu} (\Omega^1) \rightarrow
C^{k-1, \alpha}_{\delta, \nu + 1} ( \Omega^0 \oplus \Omega^2_+)
\end{align}
is Fredholm for $\delta$ sufficiently small, but this would require a much more elaborate separation of variables argument.
\end{remark}

\begin{proposition}[The Weighted Schauder Estimate]
\label{p:weighted-Schauder-estimate} Consider $(\mathcal{M},g_{\beta})$ with a sufficiently large gluing parameter $\beta>0$.
Then there exists a uniform constant $C>0$ (independent of $\beta$) such that 
for every $\omega\in\Omega^1(\mathcal{M})$, it holds that\begin{equation}
\|\omega\|_{C_{\delta,\nu,\mu}^{1,\alpha}(\mathcal{M})} \leq C\Big(\| \mathscr{D}_g \omega \|_{C_{\delta,\nu+1,\mu}^{0,\alpha}(\mathcal{M})} +\|\omega\|_{C_{\delta,\nu,\mu}^{0}(\mathcal{M})}\Big).
\end{equation}

\end{proposition}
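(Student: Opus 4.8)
The plan is to prove the weighted Schauder estimate by a standard \emph{rescaling and contradiction} argument, exploiting the fact that all of the rescaled Gromov-Hausdorff limits of $(\mathcal{M}, g_\beta)$ have been explicitly classified in Section \ref{ss:rescaled-limits}. Suppose the estimate fails. Then there exist a sequence of gluing parameters $\beta_j \to \infty$, $1$-forms $\omega_j \in \Omega^1(\mathcal{M})$, and points $\bm{x}_j \in \mathcal{M}$ at which (after normalizing) the weighted $C^{1,\alpha}$-norm is realized, with
\begin{equation}
\|\omega_j\|_{C^{1,\alpha}_{\delta,\nu,\mu}(\mathcal{M})} = 1, \qquad
\|\mathscr{D}_{g_{\beta_j}}\omega_j\|_{C^{0,\alpha}_{\delta,\nu+1,\mu}(\mathcal{M})} + \|\omega_j\|_{C^0_{\delta,\nu,\mu}(\mathcal{M})} \to 0.
\end{equation}
At each $\bm{x}_j$ we rescale the metric according to the scheme fixed in Section \ref{ss:rescaled-limits}: $\tilde{g}_j = \lambda_j^2 g_{\beta_j}$ with $\lambda_j$ the appropriate factor for the region containing $\bm{x}_j$ (e.g. $\lambda_j = \beta_j^{1/2}$ in Region $\I$, $\lambda_j = d_m(\bm{x}_j)^{-1}$ in Region $\II$, $\lambda_j = L_\pm(\bm{x}_j)^{-1}$ in Regions $\IV_\pm$, etc.). One then rescales $\omega_j$ by the value of the weight function at $\bm{x}_j$: set $\tilde{\omega}_j \equiv \big(\rho^{(1+\alpha)}_{\delta,\nu,\mu}(\bm{x}_j)\big)^{-1}\lambda_j^{-1}\,\omega_j$ (the power of $\lambda_j$ accounts for the conformal weight of a $1$-form; the exact normalization is bookkeeping). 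The normalization condition forces the rescaled norm of $\tilde{\omega}_j$ on a fixed-size ball around $\bm{x}_j$ to be comparable to $1$, while the right-hand side forces $\mathscr{D}_{\tilde{g}_j}\tilde{\omega}_j \to 0$ in $C^{0,\alpha}$ on that ball.

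\textbf{Passing to the limit.} The key point is that the weight function \eqref{def-weight-function} is designed so that, under these rescalings, it has \emph{bounded oscillation} on unit $\tilde{g}_j$-balls; concretely, one checks in each of the nine regions (and the gap regions) that $\rho^{(k+\alpha)}_{\delta,\nu,\mu}(\bm{y})/\rho^{(k+\alpha)}_{\delta,\nu,\mu}(\bm{x}_j)$ is bounded above and below on $B^{\tilde{g}_j}_{10}(\bm{x}_j)$ by constants independent of $j$ — this follows from the exponential factors changing by $O(1)$ over a $g_{\beta_j}$-distance $O(\lambda_j^{-1})$ and from the explicit form of $L_\pm$, $\underline{L}_\pm$, $d_m$. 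Hence the normalization $\|\omega_j\|_{C^{1,\alpha}_{\delta,\nu,\mu}} = 1$ genuinely gives uniform local $C^{1,\alpha}$ bounds on $\tilde{\omega}_j$. Combined with the uniform curvature bounds from Lemma \ref{l:curvature-estimates} (or Lemma \ref{l:codim-1}) away from the monopoles — which give $C^{1,\alpha}$ harmonic-radius control on the regular regions — and, near the monopole points, the smooth non-collapsed convergence to Taub-NUT established in Lemma \ref{l:rescaled-Taber-NUT}, we can extract a subsequential limit. Two cases arise. If the rescaled limit is non-collapsed (Region $\I$, Case (a) of Region $\II$, Regions $\V_\pm$, $\VI_\pm$), then $\tilde{\omega}_j$ converges in $C^{1,\alpha}_{\mathrm{loc}}$ to a $1$-form $\tilde\omega_\infty$ on the limit space ($\dR^4$ Taub-NUT, or a Tian-Yau space) satisfying $\mathscr{D}\tilde\omega_\infty = 0$. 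If the limit is collapsed (Cases (b),(c) of Region $\II$, Region $\III$, Regions $\IV_\pm$, Case (a) of Regions $\V_\pm$), we use Lemma \ref{l:representation-of-1-form} to write $\tilde\omega_j = f_x\theta_j^x + f_y\theta_j^y + f_z\theta_j^z + f_t\theta_j^t$ and pass to the limit component-wise; the limit is a $1$-form on $\dR^3$ or $\TT\times\dR$, or — after passing to the local universal cover and using Lemma \ref{l:2nd-fundamental-form} — on a $4$-dimensional flat or Taub-NUT model, again half-harmonic in the limit.

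\textbf{The Liouville step.} In every case, $\tilde\omega_\infty$ is a nonzero (by the normalization, since the norm $1$ is attained at $\bm{x}_j$ and oscillation is controlled, the limit does not vanish at the center) half-harmonic $1$-form on a complete limit space, with growth controlled by $1/\rho^{(1+\alpha)}_{\delta,\nu,\mu}$ rescaled — which, by the construction of the weights and the choice of $\delta, \nu, \mu$ small/appropriate, translates into a subexponential (in fact at most $O(e^{\delta_h r^{2/3}})$ on Tian-Yau, bounded or polynomial on the flat and Taub-NUT models) growth bound. On Tian-Yau spaces this is precisely the situation ruled out by Theorem \ref{t:liouville-1-form}: $\tilde\omega_\infty = 0$. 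On Taub-NUT ($\dR^4$) and on $\dR^3, \dR^4_{\mathrm{flat}}, \TT\times\dR$, half-harmonic $1$-forms with the relevant growth are also zero — on the non-collapsed models by Lemma \ref{l:D-harmonic} plus the Bochner argument of Lemma \ref{l:max} (Ricci $\geq 0$, and the growth is slow enough that $|\tilde\omega_\infty| \to 0$, or a capacity/Liouville argument for constants), and on the collapsed flat models by reducing each component $f_x, f_y, f_z, f_t$ to a harmonic function on $\dR^3$ or $\TT\times\dR$ of controlled growth and invoking the classical Liouville theorem together with the parabolicity of $\TT\times\dR$ (Cheng-Yau). This contradicts $\tilde\omega_\infty \neq 0$. \textbf{The main obstacle} is the bookkeeping that makes this work uniformly: one must verify region by region (including the gap regions of Remark \ref{re:subdivision}) that (i) the weight exponents $2\mu+\nu$, $\mu+\nu$, $\nu$ in the different pieces of \eqref{def-weight-function} are mutually consistent so that the rescaled weight really is $O(1)$-oscillating on unit balls and transitions continuously across region boundaries, and (ii) the resulting growth rate of $\tilde\omega_\infty$ on each limit model is strictly below the threshold of the applicable Liouville theorem — in particular that $\delta$ can be chosen smaller than the constant $\delta_h$ of Theorem \ref{t:liouville-1-form} and than the $\underline{\delta}$'s controlling the asymptotic-Calabi convergence, and that $\nu$ avoids the finitely many "indicial roots" where bounded non-decaying solutions exist on the cylindrical ends. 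Once these consistency conditions on $(\delta, \nu, \mu)$ are recorded, the contradiction is immediate and the proposition follows.
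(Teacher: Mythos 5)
There is a genuine gap: you have transplanted the blow-up-and-Liouville strategy that the paper reserves for the injectivity estimate (Proposition \ref{p:injectivity-of-D}) into a statement where it cannot work as you describe. In your contradiction hypothesis the right-hand side contains the weighted $C^0$ norm, so $\|\omega_j\|_{C^{0}_{\delta,\nu,\mu}(\mathcal{M})}\to 0$. Consequently any limit $\tilde\omega_\infty$ you extract in $C^{1,\alpha'}_{\mathrm{loc}}$ after rescaling (and, in the collapsed regions, lifting to local covers) is identically zero; your parenthetical claim that ``the limit does not vanish at the center'' is false, because the unit normalization is carried by the weighted $C^1$ part and the H\"older seminorm, not by the $C^0$ part. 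Hence there is no nonzero half-harmonic form to feed into Theorem \ref{t:liouville-1-form} or the flat/Taub-NUT Liouville theorems, and no contradiction is produced. Even if one repairs this by renormalizing so that $|\rho^{(1)}_{\delta,\nu,\mu}(\bm{x}_j)\nabla\omega_j(\bm{x}_j)|$ is bounded below (so a zero limit with nonvanishing gradient at the center would already be absurd), the case in which the unit norm is attained only in the seminorm $[\omega_j]_{C^{1,\alpha}_{\delta,\nu,\mu}}$, with the two comparison points collapsing together, is not captured by a fixed-scale compactness argument and is left unaddressed; handling it would require a further blow-up at the seminorm scale, whose limit model is Euclidean with $\mathscr{D}$-affine data, not the geometric models and Liouville theorems you list.

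The statement is really a local elliptic estimate and needs no Liouville input at all — that is exactly why the $C^0$ term sits on the right-hand side. The paper's proof goes as follows: for each $\bm{x}_j$ one rescales by the factor $\lambda_j$ attached to its region so that a ball of definite $\tilde g_j$-radius around $\bm{x}_j$ has uniformly bounded geometry (Lemma \ref{l:curvature-estimates}, Lemma \ref{l:codim-1}, and the classification of Section \ref{ss:rescaled-limits}); one applies the standard interior Schauder estimate for $\mathscr{D}$ on that ball, with a constant independent of $j$; and one converts back to the weighted statement on $B_{r_j}(\bm{x}_j)$ using precisely the bounded weight-ratio property you correctly identified, which then yields the global estimate immediately (the contradiction framing in the paper is only a thin wrapper around this local estimate). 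So your verification of bounded oscillation of the weights under the rescalings is the right ingredient, but it should be used to transfer the standard Schauder estimate, not to set up a Liouville argument; the Liouville theorems enter only later, in Proposition \ref{p:injectivity-of-D}, where the $C^0$ term is absent from the right-hand side.
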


\begin{proof}
We will prove by contradiction and suppose no such a uniform constant $C>0$ exists. That is, there exist the following sequences:
\begin{enumerate}
\item a sequence of numbers $\beta_j \to \infty$,

\item a sequence of gluing metrics $(\mathcal{M}, g_j)$ with weight functions $\rho_{j,\delta,\nu,\mu}^{(k+\alpha)}$ (for simplicity, we still denote by $\rho_{\delta,\nu,\mu}^{(k+\alpha)}$ because there is no ambiguity),

\item a sequence of differential $1$-forms $\omega_j \in \Omega^1(\mathcal{M})$ such that
\begin{align}
&\|\omega_j\|_{C_{\delta,\nu,\mu}^{1,\alpha}(\mathcal{M},g_j)} = 1, \\
&\| \mathscr{D}_{g_j}\omega_j \|_{C_{\delta,\nu+1,\mu}^{0,\alpha}(\mathcal{M},g_j)} + \|\omega_j\|_{C_{\delta,\nu,\mu}^0(\mathcal{M},g_j)} \to 0,
\label{e:contradicting-Schauder}
\end{align}
as $j \to \infty$.
\end{enumerate}

Our main goal is to prove a local version of the above weighted Schauder estimate.
Precisely, it suffices to show that, there is some uniform constant $C>0$ (independent of $j$) and for every $\omega\in\Omega^1(\mathcal{M})$ and for every $\bm{x}_j \in \mathcal{M}$, there is some $r_j>0$ (depending on the location of $\bm{x}_j$) such that the following estimate holds in $B_{2r_j}(\bm{x}_j)\subset(\mathcal{M},g_j)$,
\begin{equation}
\|\omega\|_{C_{\delta,\nu,\mu}^{1,\alpha}(B_{r_j}(\bm{x}_j))} \leq C\Big(\| \mathscr{D}_{g_j} \omega \|_{C_{\delta,\nu+1,\mu}^{0,\alpha}(B_{2r_j}(\bm{x}_j))} +\|\omega\|_{C_{\delta,\nu,\mu}^{0}(B_{2r_j}(\bm{x}_j))}\Big).\label{e:ball-weighted-schauder}
\end{equation}
Once \eqref{e:ball-weighted-schauder} is established, the contradiction immediately arises which completes the entire proof.
Indeed, \eqref{e:contradicting-Schauder} implies that either 
$\|\rho_{\delta,\nu,\mu}^{(1)}\cdot \nabla\omega_j\|_{C^0(\mathcal{M},g_j)} \geq \frac{1}{2}$ or 
$[\omega_j]_{C_{\delta,\nu,\mu}^{1,\alpha}(\mathcal{M},g_j)} \geq \frac{1}{2}$.
We can assume $\|\rho_{\delta,\nu,\mu}^{(1)}\cdot \nabla\omega_j\|_{C^0(\mathcal{M},g_j)}\geq \frac{1}{2}$ because the argument for the other case is exactly the same. Hence by definition, there exists some $\bm{x}_j \in \mathcal{M}_j$ with 
\begin{equation}|\rho_{\delta,\nu,\mu}^{(1)}(\bm{x}_j)\cdot\nabla\omega_j(\bm{x}_j)| \geq \frac{1}{2}.\label{e:lower-bound-C1}
\end{equation}
By \eqref{e:ball-weighted-schauder}, there is some $r_j>0$ which depends on $\bm{x}_j$ such that 
\begin{equation}
\|\omega_j\|_{C_{\delta,\nu,\mu}^{1,\alpha}(B_{r_j}(\bm{x}_j))} \leq C\Big(\| \mathscr{D}_{g_j} \omega_j \|_{C_{\delta,\nu+1,\mu}^{0,\alpha}(B_{2r_j}(\bm{x}_j))} +\|\omega_j\|_{C_{\delta,\nu,\mu}^{0}(B_{2r_j}(\bm{x}_j))}\Big) \longrightarrow 0.
\end{equation}
The above estimate implies
 that \begin{equation}
|\rho_{\delta,\nu,\mu}^{(1)}(\bm{x}_j)\cdot\nabla\omega_j(\bm{x}_j)|
\leq \|\omega_j\|_{C_{\delta,\nu,\mu}^{1}(B_{r_j}(\bm{x}_j))}  \leq \|\omega_j\|_{C_{\delta,\nu,\mu}^{1,\alpha}(B_{r_j}(\bm{x}_j))} \to 0.\label{e:small-weighted-C1}
\end{equation}
However, \eqref{e:small-weighted-C1} contradicts \eqref{e:lower-bound-C1}. The proof is done. 

So the main part of the proof of the proposition is to establish \eqref{e:ball-weighted-schauder}.
In our proof, the primary strategy is to rescale the metric $g_j$ and the differential $1$-form $\omega$. That is, we choose some correct rescaling factors $\lambda_j>0$, $\kappa_j>0$ and define 
\begin{align}
\tilde{g}_j = \lambda_j^2 g_j, \ 
\tilde{\omega} = \kappa_j \omega.
\end{align}
In the previous section, we showed that, for every reference point $\bm{x}_j\in\mathcal{M}$,
after appropriate rescaling, there is a subdomain $U_j$ which contains $\bm{x}_j$ and has uniformly bounded geometry away from at most finitely many singular points. 
Then the standard Schauder estimate in the rescaled spaces is available. That is, for every $\tilde{\omega}\in\Omega^1(\mathcal{M})$, \begin{equation}
\|\tilde{\omega}\|_{C^{1,\alpha}(B_{\tilde{r}_0}^{\tilde{g}_j}(\bm{x}_j))} \leq C\Big(\| \mathscr{D}_{\tilde{g}_j}\tilde{\omega} \|_{C^{0,\alpha}(B_{2\tilde{r}_0}^{\tilde{g}_j}(\bm{x}_j))} +\|\tilde{\omega}\|_{C^{0}(B_{2\tilde{r}_0}^{\tilde{g}_j}(\bm{x}_j))}\Big),\label{e:ball-standard-schauder}
\end{equation}
where $\tilde{r}_0>0$ is some uniform constant independent of the index $j$ such that the balls $B_{2\tilde{r}_0}^{\tilde{g}_j}(\bm{x}_j)$ converge to a smooth space and the convergence keeps curvatures uniformly bounded. 
Once we obtain \eqref{e:ball-standard-schauder}, we will get
the weighted estimate \eqref{e:ball-weighted-schauder} after an appropriate rescaling.

In the following arguments, for every fixed $\bm{x}_j\in\mathcal{M}$, we will choose the corresponding rescaled metrics $\tilde{g}_j = \lambda_j^2 g_j$ defined in Section \ref{ss:rescaled-limits}.

\vspace{0.5cm}

\noindent
{\bf Region $\I$:} 

\vspace{0.5cm}

We prove \eqref{e:ball-weighted-schauder} around the monopole $p_m\in\mathcal{P}_{m_0}$. Let $\lambda_j = \beta_j^{\frac{1}{2}}$ and we choose the rescaled metric $\tilde{g}_j \equiv \lambda_j^2 g_j$, then 
\begin{equation}
\Big(\mathcal{M},\tilde{g}_j,p_m\Big)\xrightarrow{C^{\infty}}\Big(\dR^4,\tilde{g}_{\infty},p_{m,\infty}\Big)\ \text{as}\ \beta_j\to\infty,\label{e:convergence-TN}
\end{equation}
where $\tilde{g}_{\infty}$ is the standard Taub-NUT metric such that the length of the $S^1$-fiber at infinity equals~$1$.
Since the above convergence is $C^{\infty}$,  the rescaled sequence $(\mathcal{M},\tilde{g}_j,p_m)$ has bounded geometry and thus the standard Schauder estimate holds in the geodesic ball $B_2^{\tilde{g}_j}(p_m)$ with respect to the 
rescaled metric $\tilde{g}_j$. Precisely, there is a uniform constant such that for every $\tilde{\omega}\in\Omega^1(\mathcal{M})$,
\begin{equation}
\|\tilde{\omega}\|_{C^{1,\alpha}(B_1^{\tilde{g}_j}(p_m))}\leq C\Big(\|\mathscr{D}_{\tilde{g}_j}\tilde{\omega}\|_{C^{0,\alpha}(B_2^{\tilde{g}_j}(p_m))} +\|\tilde{\omega}\|_{C^{0}(B_2^{\tilde{g}_j}(p_m))}\Big).
\end{equation}
Now we rescale back to the original metric $g_j$. First,
we choose
\begin{equation}\kappa_j=
e^{ \delta \cdot (2T_-)}\cdot(\beta_j^{-\frac{1}{2}})^{2\mu+\nu-1}
\end{equation}
and denote $\tilde{\omega}=\kappa_j \cdot \omega$.
With respect to the original metric, the above Schauder estimate is equivalent to the following 
\begin{align}
\begin{split}
\|\rho_{\delta,\nu,\mu}^{(1)}\cdot\nabla\omega\|_{C^{0}(B_{r_j}(p_m))} 
&+
\|\rho_{\delta,\nu,\mu}^{(1+\alpha)}\cdot\nabla\omega\|_{C^{\alpha}(B_{r_j}(p_m))}\\
&\leq 
C\Big(\|\rho_{\delta,\nu+1,\mu}^{(\alpha)}\cdot \mathscr{D}_{g_j}\omega\|_{C^{\alpha}(B_{2r_j}(p_m))}+\|\rho_{\delta,\nu,\mu}^{(0)} \cdot \omega\|_{C^0(B_{2r_j}(p_m))}\Big),
\end{split}
\end{align}
where $r_j = \lambda_j^{-1}$.
Therefore, by the definition of the weighted H\"older norm, 
\begin{align}
\|\omega\|_{C_{\delta,\nu,\mu}^{1,\alpha}(B_{r_j}(p_m))} 
\leq C\Big(\| \mathscr{D}_{g_j} \omega \|_{C_{\delta, \nu+1, \mu}^{0,\alpha}(B_{2r_j}(p_m))} +\|\omega\|_{C_{\delta, \nu, \mu}^{0}(B_{2r_j}(p_m))}\Big).
\end{align}
So the estimate \eqref{e:ball-weighted-schauder} has been proved in Region $\I$.

\vspace{0.5cm}

\noindent
{\bf Region $\II$:}

\vspace{0.5cm}

We will prove \eqref{e:ball-weighted-schauder} for every $\bm{x}_j$ in Region $\II$. As what is introduced in Section \ref{ss:rescaled-limits}, 
we break down this region in $3$ cases with different rescaling geometries: 
\begin{enumerate}
\item[(a)] There is a uniform constant $\sigma_0>0$ such that $2\beta_j^{-\frac{1}{2}} \leq d_{m}(\bm{x}_j) \leq \frac{1}{\sigma_0}\cdot \beta_j^{-\frac{1}{2}}$.

\item[(b)] The distance to a pole $d_m(\bm{x}_j)$ satisfies 
\begin{align}
\frac{d_m(\bm{x}_j)}{\beta_j^{-\frac{1}{2}}}\to \infty, \ \mbox{ and } 
\frac{d_m(\bm{x}_j)}{\beta_j^{\frac{1}{2}}} \to 0.
\end{align}

\item[(c)] There is some uniform constant $C_0>0$ such that
\begin{equation}
0< C_0 \cdot \beta_j^{\frac{1}{2}}\leq d_m(\bm{x}_j)\leq \frac{\iota_0'}{4}\cdot\beta_j^{\frac{1}{2}}.
\end{equation}

\end{enumerate}
 
In each of the above cases, the rescaled spaces $(\mathcal{M}, \tilde{g}_j, \bm{x}_j)$ have uniformly bounded curvatures and converge to a smooth limit space, which enables us to obtain the standard Schauder estimate in any ball of a definite radius the rescaled spaces.
Specifically, let $\bm{x}_j$ be a fixed point in Region~$\II$, then the standard Schauder estimate in $B_{1/6}^{\tilde{g}_j}(\bm{x}_j)$ states that for any $\tilde{\omega}\in\Omega^1(\mathcal{M})$,
\begin{equation}
\| \tilde{\omega} \|_{C^{1,\alpha}(B_{1/6}^{\tilde{g}_j}(\bm{x}_j))} \leq  C \Big(\|\mathscr{D}_{\tilde{g}_j}\tilde{\omega}\|_{C^{\alpha}(B_{1/3}^{\tilde{g}_j}(\bm{x}_j))} + \|\tilde{\omega}\|_{C^0(B_{1/3}^{\tilde{g}_j}(\bm{x}_j))} \Big).
\end{equation}
Now let 
\begin{equation}
\kappa_j \equiv e^{\delta \cdot 2T_-} \cdot (\beta_j)^{-\frac{\mu}{2}} \cdot (d_m(\bm{x}_j))^{2\mu+\nu-1},
\end{equation}
and denote $\omega_j = \tilde{\omega}_j /\kappa_j$, then rescaling to the original metrics $g_j$, we have 
\begin{align}
\begin{split}
\|\rho_{\delta,\nu,\mu}^{(1)}(\bm{x}_j)\cdot&\nabla\omega\|_{C^{0}(B_{r_j}(\bm{x}_j))} 
+
\|\rho_{\delta,\nu,\mu}^{(1+\alpha)}(\bm{x}_j)\cdot\nabla\omega\|_{C^{\alpha}(B_{r_j}(\bm{x}_j))}\\
&\leq 
C\Big(\|\rho_{\delta,\nu+1,\mu}^{(\alpha)}(\bm{x}_j)\cdot \mathscr{D}_{g_j}\omega\|_{C^{\alpha}(B_{2r_j}(\bm{x}_j))}+\|\rho_{\delta,\nu,\mu}^{(0)}(\bm{x}_j) \cdot \omega\|_{C^0(B_{2r_j}(\bm{x}_j))}\Big).
\end{split}
\end{align}
The above $r_j>0$ is defined as follows. If $\bm{x}_j$ is in Case (a) or (b), then
\begin{equation}
r_j \equiv \frac{1}{6}\lambda_j^{-1} = \frac{1}{6}d_m(\bm{x}_j).
\end{equation}
If $\bm{x}_j$ is in Case (c), then
\begin{equation}
r_j \equiv \frac{1}{6}d_j,
\end{equation}
where $d_j\equiv\min\limits_{1\leq m\leq m_0}d_{g_j}(p_m,\bm{x}_j)$. We need to show that the values $\rho_{\delta,\nu,\mu}^{(k+\alpha)}(\bm{y})$ for all $\bm{y}\in B_{r_j}(\bm{x}_j)$ are equivalent. Indeed, by the triangle inequality, we can see that for every $\bm{y}\in B_{r_j}(\bm{x}_j)$, 
\begin{equation}
(\frac{5}{6})^{\mu+\nu+k+\alpha} \leq \frac{\rho_{\delta,\nu,\mu}^{(k+\alpha)}(\bm{y})}{\rho_{\delta,\nu,\mu}^{(k+\alpha)}(\bm{x}_j)} \leq (\frac{7}{6})^{\mu+\nu+k+\alpha} .
\end{equation}
Therefore, by the definition of the weighted norm, the estimate \eqref{e:ball-weighted-schauder} immediately follows.

\vspace{0.5cm}

\noindent
{\bf Region $\III$:}

\vspace{0.5cm}

If we choose $\lambda_j \equiv \beta_j^{-\frac{1}{2}}$, then the remaining arguments coincide with those in Case (c) of Region~$\II$.

\vspace{0.5cm}
\noindent
{\bf Regions $\IV_{-}$ and $\IV_{+}$:}
\vspace{0.5cm}

\noindent
We only prove the estimate \eqref{e:ball-weighted-schauder}
for every fixed reference point $\bm{x}_j$ in Region $\IV_{-}$ and the proof for the other part is identical.

For fixed $\bm{x}_j$ in Region $\IV_{-}$, we define $\tilde{g}_j=\lambda_j^2 g_j$
and \begin{equation}
\lambda_j \equiv (L_{-}(\bm{x}_j))^{-1}.
\end{equation}
In Section \ref{ss:rescaled-limits}, the rescaled limits were separated in the following cases:\begin{enumerate}
\item[(a)]  There is a constant $C_0 > 10 T_0'$ independent of the index $j$ such that \begin{equation}5T_0' \leq d_{\tilde{g}_j}(p_m,\bm{x}_j) \equiv \lambda_j\cdot d_m(\bm{x}_j) \leq C_0\end{equation} for each $1\leq m\leq m_0$.

\item[(b)] The reference points $\bm{x}_j$ in Region $\IV_{-}$ satisfy
\begin{equation}d_{\tilde{g}_j}(p_m,\bm{x}_j) \equiv \lambda_j\cdot d_m(\bm{x}_j) \to \infty,\end{equation}

\end{enumerate}

We follow the notations in Section \ref{ss:rescaled-limits}.
Notice that, in each of the above cases, the rescaled spaces have uniformly bounded curvature and the standard Schauder estimate can be stated in the following way. 

In Case (a), by assumption, we can pick
some definite constant 
\begin{equation}\tilde{r}_0 \equiv \frac{T_0'}{2} \leq \frac{d_{\tilde{g}_j}(p_m,\bm{x}_j)}{10}\end{equation} 
such that for every $\tilde{\omega}\in\Omega^1(\mathcal{M})$,
 \begin{equation}
\| \tilde{\omega} \|_{C^{1,\alpha}(B_{\tilde{r}_0}^{\tilde{g}_j}(\bm{x}_j))} \leq  C \Big(\|\mathscr{D}_{\tilde{g}_j}\tilde{\omega}\|_{C^{\alpha}(B_{2\tilde{r}_0}^{\tilde{g}_j}(\bm{x}_j))} + \|\tilde{\omega}\|_{C^0(B_{2\tilde{r}_0}^{\tilde{g}_j}(\bm{x}_j))} \Big).
\end{equation}
In the above estimate, the constant $C>0$ depends only on $T_0'>0$ and the flat  product metric $g_0$ (particularly $C$ does not depend on $C_0$).

Now we rescale the $1$-form $\tilde{\omega}$ by choosing\begin{equation}
\kappa_j \equiv e^{\delta(z(\bm{x}_j) + 2T_-)} \cdot (L_{-}(\bm{x}_j))^{\nu-1} 
\end{equation}
and $\omega_j \equiv \tilde{\omega}_j/\kappa_j$.
First, we rescale the above estimate to the original metric $g_j$ and denote \begin{equation}r_j \equiv (L_{-}(\bm{x}_j)) \cdot \tilde{r}_0 > 0,\end{equation}
then
\begin{align}
\begin{split}
\|\rho_{\delta,\nu,\mu}^{(1)}(\bm{x}_j)\cdot&\nabla\omega\|_{C^{0}(B_{r_j}(\bm{x}_j))} 
+
\|\rho_{\delta,\nu,\mu}^{(1+\alpha)}(\bm{x}_j)\cdot\nabla\omega\|_{C^{\alpha}(B_{r_j}(\bm{x}_j))}\\
&\leq
C\Big(\|\rho_{\delta,\nu+1,\mu}^{(\alpha)}(\bm{x}_j)\cdot \mathscr{D}_{g_j}\omega\|_{C^{\alpha}(B_{2r_j}(\bm{x}_j))}+\|\rho_{\delta,\nu,\mu}^{(0)}(\bm{x}_j) \cdot \omega\|_{C^0(B_{2r_j}(\bm{x}_j))}\Big).
\end{split}
\end{align}
So the rest is to show the values of the weight function $\rho_{\delta,\nu,\mu}^{(k+\alpha)}$ are equivalent for every $\bm{y}\in B_{2r_j}(\bm{x}_j)$. Indeed, denote $\zeta_j \equiv z(\bm{x}_j)$,
by straightforward computations, there is a uniform constant $C_1>0$ such that for every $\bm{y}\in B_{2r_j}(\bm{x}_j)$,
\begin{equation}
|z(\bm{y})-\zeta_j|\leq C_1.
\end{equation}
Moreover, we can show that
\begin{equation}
\frac{\zeta_j}{\beta_j} \leq  C_2,
\end{equation}
for some uniform constant $C_2>0$.
By the definition of the weight function in Region $\IV_{-}$,
\begin{align}
\frac{\rho_{\delta,\nu,\mu}^{(k+\alpha)}(\bm{y})}{\rho_{\delta,\nu,\mu}^{(k+\alpha)}(\bm{x}_j)}  = e^{\delta \cdot (z(\bm{y})-\zeta_j)} \cdot \frac{(L_{-}(\bm{y}))^{\nu+k+\alpha}}{(L_{-}(\bm{x}_j))^{\nu+k+\alpha}}
\end{align}
The above estimates imply that there is a uniform constant $C_3>0$ such that
\begin{equation}
\frac{1}{C_3}\leq \Big|\frac{\rho_{\delta,\nu,\mu}^{(k+\alpha)}(\bm{y})}{\rho_{\delta,\nu,\mu}^{(k+\alpha)}(\bm{x}_j)}  \Big| \leq C_3.
\end{equation}
So the proof of \eqref{e:ball-weighted-schauder} in Case (a) is done.

The proof of the estimate in Case (b) is the same.

\vspace{0.5cm}

\noindent
{\bf Regions $\V_{-}$ and $\V_{+}$:}

\vspace{0.5cm}

\noindent
We only need to prove the estimate \eqref{e:ball-weighted-schauder} for the reference point $\bm{x}_j$ in Region $\V_{-}$ because the estimate in Region $\V_{+}$ is identical. 
As the discussion in Section \ref{ss:rescaled-limits},
there are the following two cases to be considered:
\begin{enumerate}
\item[(a)] Assume that a sequence of reference points $\bm{x}_j$ satisfy
 $z_{-}(\bm{x}_j)\to \infty$.
\item[(b)] Assume that there is a some constant $C_0>0$ such that a sequence of reference points $\bm{x}_j$ satisfy
$10\zeta_0^{-} \leq z_{-}(\bm{x}_j) \leq C_0$.
\end{enumerate}

First, we prove the weighted Schauder estimate \eqref{e:ball-weighted-schauder} in Case (a). 
Let $\lambda_j \equiv (\underline{L}(\bm{x}_j))^{-1}$ and $\tilde{g}_j \equiv \lambda_j^2 g_j$, then we have shown in Section \ref{ss:rescaled-limits} that
\begin{equation}
(\mathcal{M}, \tilde{g}_j, \bm{x}_j ) \xrightarrow{GH} (\TT \times \mathbb{R}, g_0, \bm{x}_{\infty}). 
\end{equation}
Moreover, the curvatures are uniformly bounded in the above convergence, which implies the standard Schauder estimate for every $\tilde{\omega} \in \Omega^1(\mathcal{M})$,
\begin{equation}
\| \tilde{\omega} \|_{C^{1,\alpha}(B_{1}^{\tilde{g}_j}(\bm{x}_j))} \leq  C \Big(\|\mathscr{D}_{\tilde{g}_j}\tilde{\omega}\|_{C^{\alpha}(B_{2}^{\tilde{g}_j}(\bm{x}_j))} + \|\tilde{\omega}\|_{C^0(B_{2}^{\tilde{g}_j}(\bm{x}_j))} \Big).
\end{equation}
To prove the weighted estimate \eqref{e:ball-weighted-schauder} in the original metrics $g_j$, we both rescale the metric $\tilde{g}_j$ and $\tilde{\omega}_j$ in the above estimate. First, let 
\begin{equation}\kappa_j \equiv e^{\delta z_{-}(\bm{x})}\cdot (\underline{L}_{-}(\bm{x}))^{\nu-1}\end{equation}
and denote $\omega_j \equiv \tilde{\omega}_j / \kappa_j$, 
then
\begin{align}
\begin{split}
\|\rho_{\delta,\nu,\mu}^{(1)}(\bm{x}_j)\cdot&\nabla\omega\|_{C^{0}(B_{r_j}(\bm{x}_j))} 
+
\|\rho_{\delta,\nu,\mu}^{(1+\alpha)}(\bm{x}_j)\cdot\nabla\omega\|_{C^{\alpha}(B_{r_j}(\bm{x}_j))}\\
&\leq 
C\Big(\|\rho_{\delta,\nu+1,\mu}^{(\alpha)}(\bm{x}_j)\cdot \mathscr{D}_{g_j}\omega\|_{C^{\alpha}(B_{2r_j}(\bm{x}_j))}+\|\rho_{\delta,\nu,\mu}^{(0)}(\bm{x}_j) \cdot \omega\|_{C^0(B_{2r_j}(\bm{x}_j))}\Big),
\end{split}
\end{align}
where 
\begin{equation}
r_j \equiv \underline{L}_{-}(\bm{x}_j).
\end{equation}

Now the last step is to show that all the values $ \rho_{\delta,\nu,\mu}^{(k+\alpha)}(\bm{y})$ are equivalent for every $\bm{y}\in B_{2r_j}(\bm{x}_j)$. Indeed, denote $\zeta_j \equiv z_{-}(\bm{x}_j)$ and $\xi_j \equiv z_{-}(\bm{y})$, then straightforward computations immediately imply that 
\begin{equation}
|\xi_j - \zeta_j| \leq C_0
\end{equation}
for some uniform constant $C_0>0$. By the definition of the weight function in Region $\V_{-}$,
\begin{align}
\frac{\rho_{\delta,\nu,\mu}^{(k+\alpha)}(\bm{y})}{\rho_{\delta,\nu,\mu}^{(k+\alpha)}(\bm{x}_j)}  = e^{\delta \cdot (\xi_j - \zeta_j)} \cdot \frac{(L_{-}(\bm{y}))^{\nu+k+\alpha}}{(L_{-}(\bm{x}_j))^{\nu+k+\alpha}}.
\end{align}
Therefore,
\begin{align}
\frac{1}{C_1} \leq \frac{\rho_{\delta,\nu,\mu}^{(k+\alpha)}(\bm{y})}{\rho_{\delta,\nu,\mu}^{(k+\alpha)}(\bm{x}_j)}  \leq C_1
\end{align}
and thus the proof of Case (a) is done.

Now we prove Case (b).
We showed in Section \ref{ss:rescaled-limits} that 
the limit space $(\mathcal{M}_{\infty}, \tilde{g}_{\infty}, \bm{x}_{\infty})$ is a finite rescale of $(X_{b_-}^4, g_{b-}, q_{-})$. 
Notice that, by the choice of the constant $C_0$ in Case (b), 
the geodesic ball 
 $B_2^{\tilde{g}_{\infty}}(\bm{x}_{\infty})$ is contained in the end part of $\mathcal{M}_{\infty}$ which has an $S^1$-fibration structure. It follows that $B_2^{\tilde{g}_{\infty}}(\bm{x}_{\infty})$ has uniformly bounded geometry. Then for every $\tilde{\omega}\in\Omega^1(\mathcal{M})$, the standard Schauder estimate holds and we have
\begin{equation}
\| \tilde{\omega} \|_{C^{1,\alpha}(B_{1}^{\tilde{g}_j}(\bm{x}_j))} \leq  C \Big(\|\mathscr{D}_{\tilde{g}_j}\tilde{\omega}\|_{C^{\alpha}(B_{2}^{\tilde{g}_j}(\bm{x}_j))} + \|\tilde{\omega}\|_{C^0(B_{2}^{\tilde{g}_j}(\bm{x}_j))} \Big),
\end{equation}
where $C>0$ is independent of the index $j$ and the constant $C_0$.
Now we rescale the above estimate to the original metric and we also rescale $\omega$ as in Case (a), which gives 
\begin{align}
\begin{split}
\|\rho_{\delta,\nu,\mu}^{(1)}(\bm{x}_j)\cdot&\nabla\omega\|_{C^{0}(B_{r_j}(\bm{x}_j))} 
+
\|\rho_{\delta,\nu,\mu}^{(1+\alpha)}(\bm{x}_j)\cdot\nabla\omega\|_{C^{\alpha}(B_{r_j}(\bm{x}_j))}\\
&\leq 
C\Big(\|\rho_{\delta,\nu+1,\mu}^{(\alpha)}(\bm{x}_j)\cdot \mathscr{D}_{g_j}\omega\|_{C^{\alpha}(B_{2r_j}(\bm{x}_j))}+\|\rho_{\delta,\nu,\mu}^{(0)}(\bm{x}_j) \cdot \omega\|_{C^0(B_{2r_j}(\bm{x}_j))}\Big),
\end{split}
\end{align}
where \begin{equation}
r_j \equiv \underline{L}_{-}(\bm{x}_j).
\end{equation}
It is similar to Case (a) that there is some constant $C_2>0$ which is independent of the index $j$ and the constant $C_0$, such that
\begin{equation}
\frac{1}{C_2} \leq \frac{\rho_{\delta,\nu,\mu}^{(k+\alpha)}(\bm{y})}{\rho_{\delta,\nu,\mu}^{(k+\alpha)}(\bm{x}_j)} \leq C_2.
\end{equation}
By definition, the weighted Schauder estimate \eqref{e:ball-weighted-schauder} immediately follows.

\vspace{0.5cm}

\noindent
{\bf Regions $\VI_{-}$ and $\VI_{+}$:} 

\vspace{0.5cm}

First, we consider the case that the fixed reference point $\bm{x}_j$ is in Region $\VI_{-}$. We choose a ball $B_1(\bm{x}_j)$ and the estimate \eqref{e:ball-weighted-schauder} 
is the standard Schauder estimate on $B_1(\bm{x}_j)$. Since the weight function in this region is uniformly bounded, the weighted Schauder estimate \eqref{e:ball-weighted-schauder} is equivalent to the standard one.

Next, if $\bm{x}_j$ is in Region $\VI_{+}$, then 
\begin{equation}
(\mathcal{M}, g_j, \bm{x}_j)\xrightarrow{C^{\infty}} (X_{b_+}^4, g_{b_+}, \bm{x}_{\infty})
\end{equation}
and the standard Schauder estimate states that there is a uniform constant $C>0$ such that for every $\tilde{\omega}\in\Omega^1(\mathcal{M})$,
\begin{equation}
\| \tilde{\omega} \|_{C^{1,\alpha}(B_1(\bm{x}_j))}
\leq C (\| \mathscr{D}_{g_j} \tilde{\omega} \|_{C^{\alpha}(B_1(\bm{x}_j))}
+ \| \tilde{\omega} \|_{C^{0}(B_1(\bm{x}_j))}
).\label{e:ball-standard-Schauder-ALH}
\end{equation}
So we just rescale the the $1$-form $\tilde{\omega}$ by letting 
$\kappa_j \equiv e^{\delta (2T_-+ 2T_+)}
$ and  $\omega=\tilde{\omega}/\kappa_j$, then \eqref{e:ball-standard-Schauder-ALH} is equivalent to
\begin{align}
\begin{split}
\|  e^{\delta (2T_-+ 2T_+)}
& \cdot\omega \|_{C^{1,\alpha}(B_1(\bm{x}_j))}\\
&\leq C \Big(\| e^{\delta (2T_-+ 2T_+)}
\cdot  \mathscr{D}_{g_j} \omega \|_{C^{\alpha}(B_1(\bm{x}_j))}
+ \|  e^{\delta (2T_-+ 2T_+)}\cdot 
\omega \|_{C^{0}(B_1(\bm{x}_j))}
\Big).
\end{split}
\end{align}
Up to some uniformly bounded constant, the above estimate implies the weighted Schauder estimate \eqref{e:ball-weighted-schauder}.

\end{proof}

\section{Perturbation to genuine hyperk\"ahler metrics} 
\label{s:existence}

In the previous sections, we have already defined the approximate metric $g_{\beta}$ and proved the elliptic regularity for the operator $\mathscr{D}_g$ on the manifold $\mathcal{M}$. 
This section is devided in $3$ subsections. In Section \ref{ss:injectivity}, we will prove the uniform injectivity of the linearized operator which is a crucial technical ingredient in proving the existence of a hyperk\"ahler triple. 
Theorem \ref{t:existence-hyperkaehler} is the main 
existence theorem of a hyperk\"ahler triple which will be proved in  
 Section \ref{ss:existence}. Specifically, we will  
set up the right Banach spaces and apply the implicit function theorem to Theorem \ref{t:existence-hyperkaehler}. Then in Section \ref{ss:completion} we will finish the main theorems introduced in Section \ref{ss:main-results}.

\subsection{The injectivity estimate for $\mathscr{D}_g$}
\label{ss:injectivity}

For the convenience of the arguments, we start with a standard fact concerning a Liouville theorem on a flat cylinder $\TT\times\dR$. 
\begin{lemma}\label{l:liouville-cylinder}
Let $(\TT\times\dR,g_0)$ be a cylinder with a flat  product metric $g_0$.  Denote by $\lambda_0>0$ the lowest eigenvalue of the torus $\TT$. If $u$ is a harmonic function on $\TT\times\dR$ with growth control
\begin{equation}|u(z)| = O(e^{\lambda z})\end{equation} for some $\lambda\in(0,\sqrt{\lambda_0})$, then
$u\equiv 0$.
\end{lemma}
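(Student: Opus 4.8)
The plan is to prove this by separation of variables along the torus fibers, exactly as in the proof of Theorem \ref{t:harmonic-function-cylinder}, together with the observation that the hypothesis $|u(z)| = O(e^{\lambda z})$ with $\lambda > 0$ is a \emph{growth} bound as $z \to +\infty$ but a \emph{decay} bound as $z \to -\infty$.

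First I would expand $u$ on the slices $\TT \times \{z\}$ in eigenfunctions of $-\Delta_{\TT}$. Write the spectrum as $\{0\} \cup \{\mu_k\}_{k \geq 1}$ with $\lambda_0 \leq \mu_1 \leq \mu_2 \leq \cdots$, let $\psi_0$ be the normalized constant function and $\{\psi_k\}_{k \geq 1}$ the $L^2$-normalized eigenfunctions for the positive eigenvalues, and set $u(z,\bm{y}) = \sum_{k \geq 0} u_k(z)\, \psi_k(\bm{y})$ with $u_k(z) = \int_{\TT \times \{z\}} u\, \psi_k$. Since $u$ is harmonic, hence smooth, the series converges in $C^\infty$ on compact subsets, each $u_k$ is smooth in $z$, and separation of variables gives the ODEs $u_0'' = 0$ and $u_k'' = \mu_k u_k$ for $k \geq 1$. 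Thus $u_0(z) = a + bz$ and $u_k(z) = c_k e^{\sqrt{\mu_k} z} + d_k e^{-\sqrt{\mu_k} z}$ for suitable constants.

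Next I would extract pointwise bounds on the modes: if $|u(z,\bm{y})| \leq C e^{\lambda z}$ for all $(z,\bm{y})$, then $|u_k(z)| \leq C\, \Area_{g_0}(\TT)^{1/2}\, e^{\lambda z}$ for every $k$. For the zero mode, letting $z \to -\infty$ in $|a + bz| \leq C' e^{\lambda z}$ forces first $b = 0$ and then $a = 0$, so $u_0 \equiv 0$. For $k \geq 1$, since $\sqrt{\mu_k} \geq \sqrt{\lambda_0} > \lambda$, the bound $|u_k(z)| \leq C' e^{\lambda z}$ as $z \to +\infty$ rules out the growing exponential, whence $c_k = 0$; then $|d_k|\, e^{-\sqrt{\mu_k} z} \leq C' e^{\lambda z}$, i.e.\ $|d_k| \leq C' e^{(\lambda + \sqrt{\mu_k}) z}$, and letting $z \to -\infty$ gives $d_k = 0$. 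Hence every $u_k$ vanishes identically and $u \equiv 0$.

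There is essentially no serious obstacle here; the only point needing care is that the two ends of the cylinder are used in different ways --- the $+\infty$ end removes the growing exponentials in the positive modes via the spectral gap $\sqrt{\mu_k} \geq \sqrt{\lambda_0} > \lambda$, while the $-\infty$ end, where $e^{\lambda z} \to 0$, is what forces the decaying exponentials and the affine zero mode to vanish. One could alternatively argue via a Phragm\'en--Lindel\"of (three-lines) estimate applied separately to the fiber average of $u$ and to its fiberwise-orthogonal part, but the Fourier argument above is the shortest route given the tools already in place.
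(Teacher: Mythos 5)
Your proof is correct. The paper states this lemma without proof, calling it a standard fact, and your separation-of-variables argument is exactly the expected one — it mirrors the expansion-in-torus-eigenfunctions argument the paper itself carries out in the proof of Theorem \ref{t:harmonic-function-cylinder}, and you correctly use the two ends of the cylinder asymmetrically (the spectral gap $\sqrt{\mu_k}\geq\sqrt{\lambda_0}>\lambda$ at $z\to+\infty$ to kill the growing exponentials, and the decay of $e^{\lambda z}$ at $z\to-\infty$ to kill the decaying exponentials and the affine zero mode).
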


Now we state a main technical result which gives the required effective estimate for the Dirac-type operator $\mathscr{D}_g$. 

\begin{proposition}[The Injectivity Estimate for $\mathscr{D}_g$]
\label{p:injectivity-of-D} 
Consider $(\mathcal{M},g_{\beta})$ with sufficiently large gluing parameter $\beta>0$. Assume that the 
parameters $\delta$, $\mu$ and $\nu$
satisfy 
\begin{enumerate}
\item $0<\delta<\frac{1}{10^3}\min\{\underline{\delta}_1, \underline{\delta}_2, \underline{\epsilon}_1,\underline{\epsilon}_2,\lambda_0, \delta_h,\delta_q\}$, 
\item  $\mu+\nu\in(0,1)$, 
\end{enumerate}
where $\underline{\delta}_1, \underline{\delta}_2, \underline{\epsilon}_1,\underline{\epsilon}_2$ are the fixed constants specified in Section \ref{ss:notations}, $\lambda_0>0$ is in Lemma \ref{l:liouville-cylinder}, $\delta_h>0$ is in Theorem \ref{t:liouville-1-form} and $\delta_q$ is in Corollary \ref{c:sutt}.
Then for every $\alpha\in(0,1)$,
 there exists a uniform constant $C=C(\alpha,\delta,\mu,\nu)>0$ which is independent of $\beta$ such that 
for every $\omega\in\Omega^1(\mathcal{M})$ it holds that
\begin{equation}
\|\omega\|_{C_{\delta,\nu,\mu}^{1,\alpha}(\mathcal{M})} \leq C \cdot \|\mathscr{D}_{g_{\beta}}\omega\|_{C_{\delta,\nu+1,\mu}^{0,\alpha}(\mathcal{M})}.
\end{equation}

\end{proposition}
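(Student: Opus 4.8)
The plan is to argue by contradiction, combining the weighted Schauder estimate of Proposition~\ref{p:weighted-Schauder-estimate} with a blow-up analysis along the decomposition of $\mathcal{M}$ into the regions $\I,\II,\III,\IV_{\pm},\V_{\pm},\VI_{\pm}$. Suppose no such $C$ exists. Then there are $\beta_j\to\infty$ and $\omega_j\in\Omega^1(\mathcal{M})$ with $\|\omega_j\|_{C^{1,\alpha}_{\delta,\nu,\mu}(\mathcal{M},g_j)}=1$ but $\|\mathscr{D}_{g_j}\omega_j\|_{C^{0,\alpha}_{\delta,\nu+1,\mu}(\mathcal{M},g_j)}\to 0$. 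By the weighted Schauder estimate, $\|\omega_j\|_{C^0_{\delta,\nu,\mu}(\mathcal{M},g_j)}\geq c_0>0$ for all large $j$, so there exist points $\bm{x}_j\in\mathcal{M}$ with $\rho^{(0)}_{\delta,\nu,\mu}(\bm{x}_j)|\omega_j(\bm{x}_j)|\geq c_0/2$. Passing to a subsequence, we may assume $\bm{x}_j$ lies in one fixed region (or gap region, which by Remark~\ref{re:subdivision} behaves like an adjacent one). First I would rescale both the metric $\tilde{g}_j=\lambda_j^2 g_j$ and the form, setting $\tilde{\omega}_j=\kappa_j\,\omega_j$ with $\lambda_j,\kappa_j$ exactly the rescaling factors from Section~\ref{ss:rescaled-limits} (so that $\rho^{(0)}_{\delta,\nu,\mu}(\bm{x}_j)=\kappa_j$ up to bounded factors). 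Then $|\tilde{\omega}_j(\bm{x}_j)|_{\tilde{g}_j}\geq c_0/2$, the weighted $C^{1,\alpha}$ bound gives a uniform local $C^{1,\alpha}$ bound for $\tilde\omega_j$ on balls of definite size in $(\mathcal{M},\tilde g_j)$, and $\mathscr{D}_{\tilde g_j}\tilde\omega_j\to 0$ in $C^{0,\alpha}_{\mathrm{loc}}$ away from the finitely many curvature-blowup points.

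\textbf{The limiting problem.} By Section~\ref{ss:rescaled-limits}, $(\mathcal{M},\tilde g_j,\bm{x}_j)$ converges (in $C^\infty$ on the non-collapsed parts, and with controlled collapse along circle/torus fibers elsewhere) to one of: a Taub-NUT space $\mathbb{R}^4$ (Region $\I$, Region~$\II$(a)), a punctured $\mathbb{R}^3$ or $\mathbb{T}^2\times\mathbb{R}$ with a flat product metric (Regions $\II$(b,c), $\III$, $\IV_\pm$(a,b), $\V_\pm$(a)), or a complete hyperk\"ahler Tian-Yau space $X_{b_\pm}$ (Regions $\V_\pm$(b), $\VI_\pm$). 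In the non-collapsed cases the limit $\omega_\infty$ is a nonzero bounded $1$-form with $\mathscr{D}_{g_\infty}\omega_\infty=0$; the weight exponents guarantee polynomial growth of $\omega_\infty$ slow enough to trigger the relevant Liouville theorem. On a Tian-Yau space one invokes Theorem~\ref{t:liouville-1-form} directly (here the hypothesis $\delta<\delta_h/10^3$ is used to keep the growth below the threshold). On $\mathbb{R}^4$ with Taub-NUT metric one uses that $\mathscr{D}_{g_\infty}\omega_\infty=0$ implies $\Delta_H\omega_\infty=0$ by Lemma~\ref{l:D-harmonic}, and since Taub-NUT has nonnegative Ricci curvature and $\omega_\infty$ has sub-quadratic (in fact bounded, by the choice $2\mu+\nu<2$) growth, the Bochner argument of Lemma~\ref{l:max} forces $\omega_\infty\equiv 0$, a contradiction. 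For the collapsed cases I would use Lemma~\ref{l:representation-of-1-form} to write $\omega_j=f_x\theta^x_j+f_y\theta^y_j+f_z\theta^z_j+f_t\theta^t_j$; the collapsing fiber directions force $f_t\to 0$, and the remaining coefficient functions, after passing to the limit, solve the harmonic/half-harmonic system on $\mathbb{R}^3$ or $\mathbb{T}^2\times\mathbb{R}$ pulled back from the base. On $\mathbb{R}^3$ the relevant Liouville statement is just the classical one for bounded (sub-linear) harmonic functions; on $\mathbb{T}^2\times\mathbb{R}$ it is Lemma~\ref{l:liouville-cylinder}, where the assumption $\delta<\lambda_0/10^3$ ensures the exponential growth rate stays below $\sqrt{\lambda_0}$. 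In the punctured cases one first uses removable-singularity/capacity arguments (the $\mu,\nu$ constraints make the point punctures have vanishing capacity) to extend the limit across the deleted points, then applies the same Liouville theorems. In all cases we conclude $\omega_\infty\equiv 0$, contradicting $|\omega_\infty(\bm{x}_\infty)|\geq c_0/2$.

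\textbf{Main obstacle.} The delicate point is the bookkeeping of the weight exponents in the collapsed and punctured regions: one must check that with $0<\delta\ll\min\{\ldots\}$ and $\mu+\nu\in(0,1)$, the rescaled form $\tilde\omega_j$ indeed has a uniform $C^{1,\alpha}$ bound on balls of \emph{definite} radius (this follows from Proposition~\ref{p:weighted-Schauder-estimate} applied to $\tilde g_j$ once one verifies the weight function is comparable to a constant on such balls, which was done region-by-region in the proof of the weighted Schauder estimate), and that the limiting growth rate of $\omega_\infty$ lands strictly inside the Liouville window of the corresponding model space. A second subtlety is near the monopole points in Region~$\I$ and Region~$\II$, where the limit is genuinely a complete non-collapsed Taub-NUT space: here one must make sure the convergence of $\tilde\omega_j$ is strong enough (it is, by elliptic regularity for $\mathscr{D}$ and the $C^\infty$ convergence of the metrics from Lemma~\ref{l:rescaled-Taub-NUT}) that $\mathscr{D}_{g_\infty}\omega_\infty=0$ exactly and $\omega_\infty$ is globally defined, so that Lemma~\ref{l:D-harmonic} and Lemma~\ref{l:max} apply without boundary contributions. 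Once all nine regions (plus the gap regions, handled by Remark~\ref{re:subdivision}) are treated, the contradiction is complete and the proposition follows; the constant $C$ depends only on $\alpha,\delta,\mu,\nu$ and the fixed geometric data, not on $\beta$.
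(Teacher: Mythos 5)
Your overall skeleton is the same as the paper's: reduce to a weighted $C^0$ bound via Proposition \ref{p:weighted-Schauder-estimate}, argue by contradiction, rescale region by region exactly as in Section \ref{ss:rescaled-limits}, and kill the limit with Lemma \ref{l:max} on Taub--NUT, Theorem \ref{t:liouville-1-form} on the Tian--Yau pieces, Lemma \ref{l:liouville-cylinder} on $\TT\times\dR$, and the classical Liouville theorem on $\dR^3$ after a removable-singularity step (the hypothesis $\mu+\nu\in(0,1)$ is used there, just as in the paper). One small correction along the way: Lemma \ref{l:max} requires $|\omega_\infty|\to 0$ at infinity, not mere boundedness; this is fine because the weight gives $|\omega_\infty|\lesssim d^{-\mu-\nu}$ with $\mu+\nu>0$, but your parenthetical ``in fact bounded'' is not the property you need.

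The genuine gap is in the collapsed regions, where you write that ``the collapsing fiber directions force $f_t\to 0$.'' This is not justified and is not how the degeneration works: since $|\theta^t_j|_{\tilde g_j}\to 0$, the weighted normalization at $\bm{x}_j$ only controls $|f_t||\theta^t_j|$, so the coefficient $f_t$ may be large, and worse, the nonvanishing $|\tilde\rho^{(0)}(\bm{x}_j)\,\tilde\omega_j(\bm{x}_j)|\geq c_0/2$ that drives the contradiction may be carried entirely by the fiber component $f_t\theta^t_j$. If you simply discard $f_t$, the limiting base $1$-form $\tilde\omega_\infty$ could be identically zero without any contradiction. The paper's proof keeps the full $4$-tuple $(f^x_j,f^y_j,f^z_j,f^t_j)$, passes to local universal covers where the convergence is non-collapsed (equivariant Gromov--Hausdorff convergence, with Lemma \ref{l:2nd-fundamental-form} ensuring the limit submersion is flat so that $\Delta_{\hat g_\infty}$ descends to $\Delta_{g_0}$ on the base), obtains \emph{four} limiting harmonic functions including $f^t_\infty$, records the lower bound in the form $|\tilde\rho^{(0)}(\bm{x}_\infty)\tilde\omega_\infty(\bm{x}_\infty)|+|\tilde\rho^{(0)}(\bm{x}_\infty)f^t_\infty(\bm{x}_\infty)|\geq 1/30$, and then applies the Liouville theorems to all four functions, including $f^t_\infty$. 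Your sketch also leaves implicit how the limit equations are derived at all in the collapsed setting (this covering-space mechanism is the technical core, since $\mathscr{D}_{\tilde g_j}\tilde\omega_j\to 0$ cannot be passed to a collapsed limit directly); to close the argument you need to incorporate both of these points, after which the proof coincides with the one in the paper.
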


\begin{proof}
By Proposition 8.2, it suffices to show that there exists a uniform constant $C>0$ such that 
\begin{equation}
\|\omega\|_{C_{\delta,\nu,\mu}^{0}(\mathcal{M})}\leq C\cdot\|\mathscr{D}_{g_{\beta}}\omega\|_{C_{\delta,\nu+1,\mu}^{0,\alpha}(\mathcal{M})}
\end{equation}
for all $\omega\in \Omega^1(\mathcal{M})$.
We argue by contradiction and suppose no such a uniform constant exists. Then we have  the following:
\begin{enumerate}\item a sequence of spaces $(\mathcal{M}_j,g_j)$ with the gluing parameter $\beta_j\to\infty$ such that
\begin{equation}
(\mathcal{M}_j, g_j, p_j)\xrightarrow{GH} (X_{\infty}, d_{\infty}, p_{\infty}).
\end{equation}

\item  a sequence of $1$-forms $\omega_j \in \Omega^1(\mathcal{M}_j)$ such that
\begin{align}
&\| \omega_j \|_{C_{\delta,\nu,\mu}^{0}(\mathcal{M}_j,g_j)} =  1
\\
&\|\mathscr{D}_{g_j} \omega_j \|_{C_{\delta,\nu+1,\mu}^{0,\alpha}(\mathcal{M}_j,g_j)} \to 0
\end{align}
as $j \rightarrow \infty$, 
\item 
a sequence of points $\bm{x}_j\in\mathcal{M}_j$ satisfying
\begin{equation}
|\rho_{j,\delta,\nu,\mu}^{(0)}(\bm{x}_j) \cdot  \omega_j(\bm{x}_j) |=1,
\end{equation}
where $\rho_{j,\delta,\nu,\mu}^{(0)}$ is a sequence of weight functions in $(\mathcal{M}_j, g_j)$.
\end{enumerate}

Now we are in a position to rescale the above contradicting sequences to produce a contradiction.
To start with, let $g_j$ be a sequence of contradicting metrics, and we denote the rescaling factors as follows:

\begin{enumerate}
\item Rescaling of the metrics: 

Let $\tilde{g}_j=\lambda_j^2\cdot g_j$, then with respect to the fixed reference point $\bm{x}_j\in\mathcal{M}_j$ picked as the above, we have the convergence,
\begin{equation}
(\mathcal{M}_j, \tilde{g}_j, \bm{x}_j) \xrightarrow{GH} (\mathcal{M}_{\infty}, \tilde{d}_{\infty}, \bm{x}_{\infty}). 
\end{equation}

\item Rescaling of the $1$-forms: 

Let $\kappa_j>0$ be a sequence of rescaling factors which will be determined later, such that \begin{equation}\tilde{\omega}_j \equiv \kappa_j \cdot \omega_j.\end{equation}

\item Rescaling of the weight functions: 

Since we need to distinguish between the weight functions on the sequence $\mathcal{M}_j$
and those on the limit spaces, we denote by $\rho_{j,\delta,\nu,\mu}^{(k+\alpha)}$ the weight functions on $\mathcal{M}_j$ and denote by $\rho_{\infty,\delta,\nu,\mu}^{(k+\alpha)}$ the weight functions on the limit spaces. Fix $k\in\dN$ and $\alpha\in(0,1)$, we rescale the weight function $\rho_{j,\delta,\nu,\mu}^{(k+\alpha)}$ by
\begin{equation}
\tilde{\rho}_{j,\delta,\nu,\mu}^{(k+\alpha)} = \tau_j^{(k+\alpha)} \cdot \rho_{j,\delta,\nu,\mu}^{(k+\alpha)}.
\end{equation}

\end{enumerate}
The above rescaling factors are chosen to satisfy the scale-invariance property of the weighted norm,
\begin{align}
1&\leq \tau_j^{(0)}\cdot \kappa_j \cdot \lambda_j^{-1} \leq 10  \\
1&\leq \tau_j^{(1)}\cdot \kappa_j \cdot \lambda_j^{-2} \leq 10 \\
1&\leq  \tau_j^{(1+\alpha)}\cdot \kappa_j \cdot \lambda_j^{-2-\alpha} \leq 10,
\end{align}
such that in this way we will obtain a sequence of $1$-forms $\tilde{\omega}_j \in \Omega^1(\mathcal{M}_j)$ with the property 
\begin{align}
\begin{split}
&\| \tilde{\omega}_j \|_{C_{\delta, \nu, \mu}^0(\mathcal{M}, \tilde{g}_j)} =  1 \\
&|\tilde{\rho}_{j,\delta,\nu,\mu}^{(0)}(\bm{x}_j) \cdot  \tilde{\omega}_j(\bm{x}_j) | =  1
\\
&\| \mathscr{D}_{\tilde{g}_j}\tilde{\omega}_j \|_{C_{\delta, \nu+1 , \mu}^{\alpha}(\mathcal{M}, \tilde{g}_j)} \to 0.
\end{split}
\end{align}

The basic strategy is to combine the compactness arguments and the Liouville theorems. That is, if $(\mathcal{M}_{\infty}, \tilde{g}_{\infty}, \bm{x}_{\infty})$ is non-collapsed, we apply Proposition \ref{p:weighted-Schauder-estimate} and 
 the $C^{1,\alpha}$-compactness to obtain a limiting $1$-form $\tilde{\omega}_{\infty}\in(\mathcal{M}_{\infty}, \tilde{g}_{\infty}, \bm{x}_{\infty})$ such that
\begin{align}
\begin{split}
&\| \tilde{\omega}_{\infty} \|_{C_{\delta, \nu, \mu}^0(\mathcal{M}_{\infty}, \tilde{g}_{\infty})} =  1 
\\
&|\tilde{\rho}_{\infty,\delta,\nu,\mu}^{(0)}(\bm{x}_{\infty}) \cdot  \tilde{\omega}_{\infty}(\bm{x}_{\infty}) | =  1
 \\
& \mathscr{D}_{\tilde{g}_{\infty}}\tilde{\omega}_{\infty}  \equiv 0. 
\end{split}\label{e:limiting-1-form}
\end{align}
We will apply the Liouville theorems to show that the above limiting $1$-form $\tilde{\omega}_{\infty}$ with controlled weighted norm is in fact 
vanishing on $\mathcal{M}_{\infty}$, which gives a contradiction.
Next, for a collapsed limit $(\mathcal{M}_{\infty}, \tilde{g}_{\infty}, \bm{x}_{\infty})$, to understand the limiting behavior of the operators $\mathscr{D}_{\tilde{g}_j}$ and the contradicting $1$-forms $\tilde{\omega}_j$, we will  lift everything to an appropriately chosen non-collapsed (local) normal cover such that the $C^{1,\alpha}$-compactness still applies on such a covering space.
On the other hand, by the representation lemma of the $1$-forms, see Lemma \ref{l:representation-of-1-form}, there are coefficient functions $f_{j}^x$, $f_j^y$, $f_j^z$ and $f_j^t$ such that  
\begin{equation}
\tilde{\omega}_j = f_{j}^x\theta_j^x + f_j^y\theta_j^x + f_j^z\theta_j^z + f_j^t\theta_j^t.
\end{equation}
 We will show that the $4$-tuples $(f_{j}^x,f_j^y,f_j^z,f_j^t)$ converge to a
 \begin{equation}(\tilde{\omega}_{\infty}, f_{\infty}^t)\equiv(f_{\infty}^x,f_{\infty}^y,f_{\infty}^z,f_{\infty}^t)
 \end{equation}
 which can be in effect viewed as the limits of the $1$-forms $\tilde{\omega}_j$. In addition, we will also show that at least one of $f_{\infty}^x$, $f_{\infty}^y$, $f_{\infty}^z$ and $f_{\infty}^t$ has a positive weighted H\"older norm at $\bm{x}_{\infty}$. 
Therefore, the desired contradiction just arises from various versions of Liouville theorems for harmonic functions in those different collapsed regions.

In accordance with the classification of the geometries of the rescaled limits in Section \ref{s:gluing-space}, we will proceed to produce the desired contradiction in each of the regions discussed in Section \ref{ss:rescaled-limits}. Precisely, we will correctly choose the rescaling factors such that the contradicting $1$-forms $\tilde{\omega}_j\in\Omega^1(\mathcal{M}_{j})$ will converge to some limit  which satisfies the norm control and satisfies the assumptions in the Liouville theorems in each region.

\vspace{0.5cm}

\noindent
{\bf Region $\I$:}

\vspace{0.5cm}

Assume that the reference point $\bm{x}_j$ is Region $\I$, then 
the rescaled limit is the standard Ricci-flat Taub-NUT space $(\mathcal{M}_{\infty}, \tilde{g}_{\infty}, \bm{x}_{\infty})$ with a limiting monopole $p_{m,\infty}$.
We choose the rescaling factors as follows, 
\begin{align}
\begin{split}
\lambda_j &= \beta_j^{\frac{1}{2}} 
\\
\tau_j^{(k+\alpha)} &= e^{ - \delta \cdot 2T_-} 
\cdot (\beta_j^{\frac{1}{2}})^{2\mu+\nu+k+\alpha}
 \\
\kappa_j &= e^{\delta \cdot 2T_-} 
\cdot (\beta_j^{-\frac{1}{2}})^{2\mu + \nu - 1}.
\end{split}
\end{align}
In the above way of rescaling, we have $d_{\tilde{g}_{\infty}}(p_{m,\infty},\bm{x}_{\infty})\leq C $ and the rescaled weight function in the limit space  is 
\begin{align}
\tilde{\rho}_{\infty,\delta,\nu,\mu}^{(k+\alpha)}(\bm{x})  = 
\begin{cases} 
1, & \bm{x}\in B_{1}(p_{m,\infty})
\\
 ( d_{\tilde{g}_{\infty}}(\bm{x}, p_{m,\infty}) )^{\mu + \nu + k + \alpha}, & \bm{x} \in \mathcal{M}_{\infty} \setminus B_{2}(p_{m,\infty}).
\end{cases}
\end{align}
Then the limiting $1$-form $\tilde{\omega}_{\infty}\in\Omega^1(\mathcal{M}_{\infty})$ satisfies that
\begin{align}
\begin{split}
&\mathscr{D}_{\tilde{g}_{\infty}} \tilde{\omega}_{\infty} \equiv 0 \\
&|\tilde{\rho}_{\infty,\delta,\nu,\mu}^{(0)}(\bm{x}_{\infty})\cdot \tilde{\omega}_{\infty}(\bm{x}_{\infty}) | = 1 \\
&\| \tilde{\omega}_{\infty} \|_{C_{\delta,\nu,\mu}^0(\mathcal{M}_{\infty})} =  1.\\
\end{split}
\end{align}
Notice that the above norm bound implies that for all $\bm{x}\in\mathcal{M}_{\infty}\setminus B_{2}(p_{m,\infty})$,
\begin{equation}
|\tilde{\omega}_{\infty}(\bm{x})| \leq  (d_{\tilde{g}_{\infty}}(\bm{x},p_{m,\infty}))^{-\mu-\nu}.
\end{equation}
Since $\tilde{\omega}_{\infty}$ is in the kernel of $\mathscr{D}_{\tilde{g}_{\infty}}$, immediately $\tilde{\omega}_{\infty}$ is harmonic with respect to the Taub-NUT metric $\tilde{g}_{\infty}$. 
Applying Lemma \ref{l:max}, we have $\tilde{\omega}_{\infty}\equiv 0$.

\vspace{0.5cm}

\noindent
{\bf Region $\II$:}

\vspace{0.5cm}

Now we discuss the case that the reference points $\bm{x}_j$ are in Region $\II$.
As what we discussed in Section \eqref{ss:rescaled-limits}, the rescaled geometries were separated in the following cases:
\begin{enumerate}
\item[(a)] There is a uniform constant $\sigma_0>0$ such that $2\beta_j^{-\frac{1}{2}} \leq d_{m}(\bm{x}_j) \leq \frac{1}{\sigma_0}\cdot \beta_j^{-\frac{1}{2}}$.

\item[(b)] The distance function $d_m(\bm{x}_j)$ to a pole $p_m$ satisfies 
\begin{align}
\frac{d_m(\bm{x}_j)}{\beta_j^{-\frac{1}{2}}}\to \infty, \ 
\frac{d_m(\bm{x}_j)}{\beta_j^{\frac{1}{2}}} \to 0.
\end{align}

\item[(c)] There is some uniform constant $C_0>0$ such that
\begin{equation}
0< C_0 \cdot \beta_j^{\frac{1}{2}}\leq d_m(\bm{x}_j)\leq \frac{\iota_0'}{4}\cdot\beta_j^{\frac{1}{2}}
\end{equation}
for all $1\leq m\leq m_0$.
\end{enumerate}

We start with our analysis in Case (a).
By Lemma \ref{l:rescaled-Taub-NUT}, the rescaled limit in Case (a) is a Ricci-flat Taub-NUT space $(\mathcal{M}_{\infty}, \tilde{g}_{\infty}, \bm{x}_{\infty})$ such that the $S^1$-fiber at infinity has length at least $\sigma_0>0$. The rescaling factors in this case are 
\begin{align}
\begin{split}
\lambda_j &= (d_m(\bm{x}_j))^{-1} 
\\
\tau_j^{(k+\alpha)} &= e^{ - \delta \cdot 2T_- } 
\cdot (\beta_j)^{\frac{\mu}{2}} \cdot (d_m(\bm{x}_j))^{- \mu -\nu - k - \alpha} 
\\
\kappa_j &= e^{\delta \cdot 2T_- } \cdot (\beta_j)^{-\frac{\mu}{2}}
\cdot  (d_m(\bm{x}_j))^{\mu+\nu-1}.
\end{split}
\end{align}
In the rescaled limit space, 
the limiting reference point $\bm{x}_{\infty}$ satisfies $d_{\tilde{g}_{\infty}}(\bm{x},p_{m,\infty})=1$. Moreover, the rescaled weight function in the limit space  is given by
\begin{equation}
\tilde{\rho}_{\infty,\delta,\nu,\mu}^{(k+\alpha)}(\bm{x}) = (d_{\tilde{g}_{\infty}}(p_{m,\infty}, \bm{x}))^{-\mu-\nu-k-\alpha},\ \bm{x}\in \mathcal{M}_{\infty}\setminus B_2(p_{m,\infty}),
\end{equation}
and the limiting $1$-form $\tilde{\omega}_{\infty}\in\Omega^1(\mathcal{M}_{\infty})$
satisfies 
\begin{align}
\begin{split}
&\mathscr{D}_{\tilde{g}_{\infty}}\tilde{\omega}_{\infty}\equiv0\\
&|\tilde{\rho}_{\infty,\delta,\nu,\mu}^{(0)}(\bm{x}_{\infty})\cdot \tilde{\omega}_{\infty}(\bm{x}_{\infty}) | = 1  \\
&\| \tilde{\omega}_{\infty} \|_{C_{\delta,\nu,\mu}^0(\mathcal{M}_{\infty})} \leq  1.\\
\end{split}
\end{align}
The above weighted norm bound implies that for every $\bm{x}\in\mathcal{M}_{\infty}\setminus B_2(p_{m,\infty})$, the limiting $1$-form $\tilde{\omega}_{\infty}$ satisfies the pointwise estimate 
\begin{equation}
|\tilde{\omega}_{\infty}(\bm{x})| \leq \Big(d_{\tilde{g}_{\infty}}(p_{m,\infty},\bm{x}) \Big)^{-\mu-\nu}.
\end{equation}
Applying Lemma \ref{l:max}, we have $\tilde{\omega}_{\infty}\equiv0$ on the rescaled limit $\mathcal{M}_{\infty}$, which completes the proof of Case (a).

The rescaled limit in Case (b) is the punctured Euclidean space $\dR^3\setminus\{0^3\}$. In this case, we choose the rescaling factors as follows,
\begin{align}
\begin{split}
\lambda_j &= (d_m(\bm{x}_j))^{-1} 
\\
\tau_j^{(k+\alpha)} &= e^{ - \delta \cdot 2T_- } \cdot (\beta_j)^{\frac{\mu}{2}}
\cdot  (d_m(\bm{x}_j))^{- \mu -\nu - k - \alpha} 
\\
\kappa_j &= e^{\delta \cdot 2T_- } \cdot (\beta_j)^{-\frac{\mu}{2}}
\cdot  (d_m(\bm{x}_j))^{\mu+\nu-1}.
\end{split}
\end{align}
In terms of the above rescaled metric, 
the reference point $\bm{x}_{\infty}$ satisfies $d_{g_0}(\bm{x},0^3)=1$. Moreover, the rescaled weight function in the limit space is given by
\begin{equation}
\tilde{\rho}_{\infty,\delta,\nu,\mu}^{(k+\alpha)}(\bm{x}) = (d_{g_0}(0^3, \bm{x}))^{-\mu-\nu-k-\alpha},\ \bm{x}\in \dR^3\setminus\{0^3\}.
\end{equation}

Mainly we will analyze the limiting behavior of the operator $\mathscr{D}_{\tilde{g}_j}$ under the collapsing sequence  $(\mathcal{M}, g_j, \bm{x}_j)$. Specifically, we will construct a globally defined $1$-form \begin{equation}\tilde{\omega}_{\infty}\in\Omega^1(\dR^3\setminus \{0^3\})\end{equation} and we will also show that the coefficient functions of $\tilde{\omega}_{\infty}$ are harmonic with respect to the Euclidean metric.
Our basic strategy is to apply Lemma \ref{l:representation-of-1-form} to reduce the convergence of the $1$-form $\tilde{\omega}_j$ to the convergence of the coefficient functions.
 Let 
\begin{equation}
\tilde{\omega}_j = f_{j}^x \cdot \theta_j^x + f_{j}^y \cdot \theta_j^y + f_{j}^z \cdot \theta_j^z + f_{j}^t \cdot \theta_j^t,
\end{equation}
then Lemma \ref{l:representation-of-1-form} and the circle bundle structure in this case guarantee the convergence of the frames $\{\theta_j^x,\theta_j^y, \theta_j^z,  \theta_j^t\}$.

Now we are in a position to construct the limits of the above coefficient functions.  
We start with the Gromov-Hausdorff convergence
\begin{equation}
(\mathcal{M}, \tilde{g}_j, \bm{x}_j)\xrightarrow{GH} (\dR^3, g_0, \bm{x}_{\infty})
\end{equation}
with $|\bm{x}_{\infty}|=1$.
For any fixed $R>10$,
let $A_{\frac{1}{R},R}^{g_0}(0^3)$ be an annulus in $\dR^3$ with respect to the Euclidean metric $g_0$. 
The first step is to obtain the limits of the coefficient functions $f_{j}^x$, $f_j^y$, $f_j^z$, $f_j^t$ with controlled weighted norms in the flat annulus $A_{\frac{1}{R},R}^{g_0}(0^3)$ under the above Gromov-Hausdorff convergence. Next, letting $R\to\infty$, we will apply Arzel\`a-Ascoli to obtain global limiting functions.

First, fix any $R>0$, we consider a Euclidean annulus $A_{\frac{1}{R},R}^{g_0}(0^3)\subset \dR^3$ and we claim that there are limiting functions $f_{\infty, R}^x$, $f_{\infty, R}^y$, $f_{\infty, R}^z$ and $f_{\infty, R}^t$ on $A_{\frac{1}{R},R}^{g_0}(0^3)\subset \dR^3$. 
For fixed $R>0$, there are $\bar{s}_0(R)>0$ and $N_0(R)>0$ such that 
$\{B_{2\bar{s}_0}(\bm{y}_{\infty,k})\}_{k=1}^{N}$ with $N\leq N_0$ is a finite collection of Euclidean balls which covers $A_{\frac{1}{R},R}^{g_0}(0^3)$ which satisfies
\begin{enumerate}
\item 
$A_{\frac{1}{R},R}^{g_0}(0^3) \subset \bigcup\limits_{s=1}^N B_{2\bar{s}_0}(\bm{y}_{\infty,k}) \subset A_{\frac{1}{3R},3R}^{g_0}(0^3)$

\item $\frac{\bar{s}_0}{3} \leq  d_{g_0}(\bm{y}_{\infty,k},\bm{y}_{\infty, k'}) \leq \bar{s}_0$ for all $1\leq k < k' \leq N$.
\end{enumerate}
We will verify that there exists a subsequence (still denoted by $j$) such that 
the above finite cover satisfy the following compatibility:
\begin{enumerate}
\item[(C1)]
$f_j^x$, $f_j^y$, $f_j^z$ and $f_j^t$ converge to harmonic functions $f_{\infty,k}^x$, $f_{\infty,k}^y$, $f_{\infty,k}^z$ and $f_{\infty,k}^t$ on 
every ball $B_{2\bar{s}_0}(\bm{y}_{\infty,k})$.

\item[(C2)] The above locally defined limiting functions can be patched together in the sense that if $B_{2\bar{s}_0}(\bm{y}_{\infty,k}) \cap B_{2\bar{s}_0}(\bm{y}_{\infty,k'})\neq\emptyset$, then 
\begin{align}
\begin{split}
f_{\infty,k}^{x}(\bm{y}_{\infty}) &= f_{\infty, k'}^{x}(\bm{y}_{\infty}), \
 f_{\infty,k}^{y}(\bm{y}_{\infty}) = f_{\infty, k'} ^{y}(\bm{y}_{\infty}), \\
  f_{\infty,k}^{z}(\bm{y}_{\infty}) &= f_{\infty, k'} ^{z}(\bm{y}_{\infty}), \ 
 f_{\infty,k}^{t}(\bm{y}_{\infty}) = f_{\infty, k'} ^{t}(\bm{y}_{\infty}) 
\end{split}
\end{align}
holds for all $\bm{y}_{\infty} \in B_{2\bar{s}_0}(\bm{y}_{\infty,k}) \cap B_{2\bar{s}_0}(\bm{y}_{\infty,k'})$.
\end{enumerate}
The above compatibility properties immediately imply that there are well-defined harmonic limiting functions $f_{\infty,R}^{x}$, $f_{\infty,R}^{y}$, $f_{\infty,R}^{z}$ and $f_{\infty,R}^{t}$
on $A_{\frac{1}{R},R}(0^3)$.

To show property (C1), by taking some subsequence, it suffices to show that for each ball $B_{2\bar{s}_0}(\bm{y}_{\infty, k})$ in the above finite cover,
there is some subsequence in the original sequence $\{j\}$ such that the coefficient functions $f_{j, k}^{x}$ converge to a harmonic function 
$f_{\infty,k}^{x}$. For this purpose, we need to locally unwrap the collapsed fibers and discuss the convergence of the coefficient functions $f_{j, k}^{x}$ on non-collapsed universal covers. 

Now we take a sequence of geodesic balls $B_{2\bar{s}_0}(\bm{y}_{j,k})$ with
\begin{equation}
(B_{2\bar{s}_0}(\bm{y}_{j,k}) , \tilde{g}_j) \xrightarrow{GH} (B_{2\bar{s}_0}(\bm{y}_{\infty,k}), g_0).
\end{equation}
Denote by $\ell_j$ ($\to 0$) the length of the collapsed $S^1$-fiber at $\bm{y}_j$ and define 
\begin{equation}
\Gamma_j = \Gamma_{\epsilon_j}(\bm{y}_{j,k}) \equiv \Image[\pi_1(B_{\epsilon_j}(\bm{y}_j)) \to \pi_1(B_{2\bar{s}_0}(\bm{y}_j))]
\end{equation}
where $\epsilon_j>0$ are chosen such that  $2\ell_j \leq \epsilon_j \leq  4\ell_j$. Immediately in our context, 
$\pi_1(B_{2\bar{s}_0}(\bm{y}_{j,k})) = \Gamma_j$ and $\Gamma_j$ is isomorphic to $\dZ$.
Now let 
\begin{equation}
\pr_j: (\widehat{B_{2\bar{s}_0}(\bm{y}_{j,k})}, \hat{g}_j, \hat{\bm{y}}_{j,k}) \longrightarrow (B_{2\bar{s}_0}(\bm{y}_{j,k}), \tilde{g}_j, \bm{y}_{j,k})
\end{equation}
be the universal covering map with $B_{2\bar{s}_0}(\bm{y}_{j,k}) = \widehat{B_{2\bar{s}_0}(\bm{y}_{j,k})} / \Gamma_j$. 
Now on the universal covers, we have the equivariant convergence and the following diagram,
\begin{equation}
\xymatrix{
\Big(\widehat{B_{2\bar{s}_0}(\bm{y}_{j,k})}, \hat{g}_j, \Gamma_j,\hat{\bm{y}}_{j,k}\Big)\ar[rr]^{eqGH}\ar[d]_{\pr_j} &   & \Big(\widehat{Y}_k, \hat{g}_{\infty}, \Gamma_{\infty}, \hat{\bm{y}}_{\infty,k}\Big)\ar [d]^{\pr_{\infty}}
\\
 \Big(B_{2\bar{s}_0}(\bm{y}_{j,k}), \tilde{g}_j, \bm{y}_{j,k}\Big)\ar[rr]^{GH} && \Big(B_{2\bar{s}_0}(\bm{y}_{\infty,k}), g_0, \bm{y}_{\infty,k}\Big)
}\label{e:equivariant-convergence}
\end{equation}
which satisfies the following properties:
\begin{enumerate}
\item[(e1)] the universal covers $(\widehat{B_{2\bar{s}_0}(\bm{y}_{j,k})}, \hat{g}_j, \hat{\bm{y}}_{j,k})$ are non-collapsed and have uniformly bounded curvatures,

\item[(e2)] the limiting Lie group $\Gamma_{\infty}$  is diffeomorphic to $\dR$ and acts isometrically on the limit space $(\widehat{Y}_k, \hat{g}_{\infty}, \hat{\bm{y}}_{\infty,k})$,

\item[(e3)] 
 the universal covering maps $\pr_j$ converge to a Riemannian submersion 
\begin{equation}
\pr_{\infty}: (\widehat{Y}_k, \hat{g}_{\infty}, \hat{\bm{y}}_{\infty,k})\longrightarrow(B_{2\bar{s}_0}(\bm{y}_{\infty,k}), g_0, \bm{y}_{\infty,k})
\end{equation}
with
$B_{2\bar{s}_0}(\bm{y}_{\infty,k}) = \widehat{Y}_k/\Gamma_{\infty}$,

\item[(e4)] for every $\hat{\bm{z}}_{\infty}\in\widehat{Y}_k$, the  orbit $\Gamma_{\infty}\cdot \hat{z}_{\infty}$ is a geodesic in $\widehat{Y}_k$ and isometric to $(\dR,dt^2)$. In particular, $(\widehat{Y}_k, \hat{g}_{\infty}, \hat{\bm{y}}_{\infty})$ is isometric to $B_{2\bar{s}_0}(0^3)\times\dR$ in the Euclidean space $\dR^4$.
\end{enumerate}
Indeed, property (e1) follows from Lemma \ref{l:codim-1}. Property (e2) and (e3)
follow from the definition of the equivariant convergence. Property (e4) immediately follows from Lemma \ref{l:2nd-fundamental-form}. Actually, by Lemma \ref{l:2nd-fundamental-form},
 the second fundamental form of each $\Gamma_{\infty}$-orbit is vanishing. In other words, each  $\Gamma_{\infty}$-orbit is a geodesic in $\widehat{Y}_k$. Combining with the facts that the limiting projection $\pr_{\infty}$ is a Riemannian submersion and $B_{2\bar{s}_0}(\bm{y}_{\infty, k})$ is a Euclidean ball, then 
  $\widehat{Y}_k\equiv B_{2\bar{s}_0}(0^3)\times \mathbb{R}$ and $\hat{g}_{\infty}$   is isometric to the Euclidean metric.

We will apply the above equivariant convergence to construct harmonic functions $f_{\infty,k}^x$,  $f_{\infty,k}^y$, $f_{\infty,k}^z$ and $f_{\infty,k}^t$ in $B_{2\bar{s}_0}(\bm{y}_{\infty,k})$. We only show the construction for $f_{\infty,k}^x$. 
Notice that Proposition \ref{p:weighted-Schauder-estimate} implies that the $\Gamma_j$-invariant lifted functions 
$\hat{f}_{j}^x$ satisfy the uniform weighted Schauder estimate 
\begin{equation}\|\hat{f}_{j}^x\|_{C_{\delta,\nu,\mu}^{1,\alpha}(\widehat{B_{2\bar{s}_0}(y_{j})})} 
\leq C\end{equation}
with respect to the lifted weight function.
Applying Arzel\`a-Ascoli, passing to a subsequence, there is a limiting function $\hat{f}_{\infty,k}^x$ with
\begin{equation}
\|\hat{f}_{\infty, k}^x\|_{C_{\delta,\nu,\mu}^{1,\alpha'}(\widehat{Y}_k)} 
\leq C
\end{equation}
with $0<\alpha'<\alpha<1$.
Combining with the above equivariant convergence, we obtain that the limit function $\hat{f}_{\infty,k}^x$ is $\Gamma_{\infty}$-invariant which descends to a function $f_{\infty,k}^x$ in $B_{2\bar{s}_0}(\bm{y}_{\infty})$. Now we prove that $f_{\infty,k}^x$ is a harmonic function on $B_{2\bar{s}_0}(\bm{y}_{\infty})$. 
In fact, the lifted $1$-forms $\hat{\omega}_j$ also satisfies
\begin{equation}
\|\hat{\omega}_j\|_{C_{\delta,\nu,\mu}^{1,\alpha}(\widehat{B_{2\bar{s}_0}(y_{j})})}  \leq C  
\end{equation}
and hence there is a limiting $1$-form $\hat{\omega}_{\infty,k}$ satisfying
\begin{equation}
\|\hat{\omega}_{\infty,k}\|_{C_{\delta,\nu,\mu}^{1,\alpha'}(\widehat{Y}_k)}  \leq C  
\end{equation}
for $0<\alpha'<\alpha<1$.
The contradiction assumption implies that $\hat{\omega}_{\infty,k}$
satisfies
\begin{align}
\mathscr{D}_{\hat{g}_{\infty}}\hat{\omega}_{\infty,k} \equiv 0\ \text{in}\ \widehat{Y}^k.
\end{align}
The standard elliptic regularity theory for $\mathscr{D}_{\hat{g}_{\infty}}$ shows that the $1$-form $\hat{\omega}_{\infty,k}$ is $C^{\infty}$. This implies that 
 $\hat{f}_{\infty, k}^x\in C^{\infty}(\widehat{Y}_k)$, then by Lemma \ref{l:D-harmonic} gives the equation
\begin{equation}\Delta_{\hat{g}_{\infty}}(\hat{f}_{\infty, k}^x)=0.\end{equation} Applying Property (e4),
\begin{equation}
\Delta_{g_{0}} (f_{\infty, k}^x) =\Delta_{\hat{g}_{\infty}}(\hat{f}_{\infty, k}^x) = 0.
\end{equation}
The construction of the harmonic limiting functions $f_{\infty, k}^{y}$, $f_{\infty, k}^{z}$ and $f_{\infty, k}^{t}$ is verbatim.

We are ready to prove the compatibility property (C2). 
To this end, we take the union
\begin{equation}
B_{\infty} \equiv B_{2\bar{s}_0}(y_{\infty,k}) \cup B_{2\bar{s}_0}(y_{\infty,k'})
\end{equation}
with
\begin{equation}
 B_{2\bar{s}_0}(\bm{y}_{\infty,k}) \cap B_{2\bar{s}_0}(\bm{y}_{\infty,k'})
\neq \emptyset.
\end{equation}
Let $B_{j} \equiv B_{2\bar{s}_0}(\bm{y}_{j,k}) \cup B_{2\bar{s}_0}(\bm{y}_{j,k'})
$, then 
\begin{equation}
(B_{j}, \tilde{g}_j) \xrightarrow{GH} (B_{\infty}, g_0).
\end{equation}
By the same arguments as the above, $B_{j}$ has uniformly bounded curvatures and the universal covering space $(\widetilde{B}_{j}, \hat{g}_j)$
is non-collapsed. Moreover, the equivariant convergence with property (e1)-(e5) as the above still holds in this case. By passing to some subsequence, 
the lifted coefficient functions $\hat{f}_{j}^{x}$  are $C^{1,\alpha'}$-converging to some invariant limiting function $\hat{f}_{\infty, k, k'}^{x}$ on $\widehat{B}_{\infty}$ such that
\begin{equation}
\hat{f}_{\infty, k, k'}^x|_{\widehat{Y}_k} = \hat{f}_{\infty,k}^{x} 
\end{equation}
 Therefore, $\hat{f}_{\infty, k, k'}^x$ descends to a function
$f_{\infty, k, k'}^x$ on $B_{\infty}$ such that
\begin{equation}
f_{\infty, k, k'}^x|_{B_{2\bar{s}_0}(\bm{y}_{\infty,k})} = f_{\infty,k}^x.
\end{equation}
In addition, $f_{\infty, k, k'}^x$ is harmonic on $B_{\infty}$, so we have managed to extend the local harmonic limiting function $f_{\infty,k}^x$ to the union $B_{\infty}$. Repeating the above arguments, we can extend the limiting functions to the whole annulus $A_{\frac{1}{R},R}^{g_0}(0^3)$. 

Consider the $4$-tuple of harmonic functions $(f_{\infty,R}^x, f_{\infty,R}^y, f_{\infty,R}^z, f_{\infty,R}^t)$ in the flat annulus $A_{\frac{1}{R},R}^{g_0}(0^3)$ obtained from the above construction, and we write
\begin{equation}
\tilde{\omega}_{\infty,R} = f_{\infty, R}^x dx + f_{\infty, R}^y dy + f_{\infty, R}^z dz.
\end{equation}
Immediately, we have the weighted norm control
\begin{align}
\begin{split}
|\tilde{\rho}_{\infty,\delta,\nu,\mu}^{(0)}(\bm{x}_{\infty})\cdot \tilde{\omega}_{\infty,R}({\bm{x}}_{\infty})|  + |\tilde{\rho}_{\infty,\delta,\nu,\mu}^{(0)}(\bm{x}_{\infty})\cdot f_{\infty,R}^t(\bm{x}_{\infty})|  &\geq \frac{1}{30}
\\
\| \tilde{\omega}_{\infty,R} \|_{C_{\delta,\nu,\mu}^{1,\alpha'}(A_{\frac{1}{R},R}^{g_0}(0^3))} + \| f_{\infty,R}^t\|_{C_{\delta,\nu,\mu}^{1,\alpha'}(A_{\frac{1}{R},R}^{g_0}(0^3))} &\leq C,
\label{e:norm-control-quotient}
\end{split}
\end{align}
where $0<\alpha'<\alpha<1$.
The above construction enables us to define a global harmonic $4$-tuple 
on the punctured Euclidean space $\dR^3\setminus\{0^3\}$ by applying the standard exhaustion arguments. 
Let $R\to +\infty$, by applying \eqref{e:norm-control-quotient} and Arzel\`a-Ascoli, there is a global $4$-tuple of harmonic functions $(f_{\infty}^x, f_{\infty}^y, f_{\infty}^z, f_{\infty}^t)$  in $\dR^3\setminus\{0^3\}$ 
and we denote 
\begin{equation}
\tilde{\omega}_{\infty} = f_{\infty}^x dx + f_{\infty}^y dy + f_{\infty}^z dz.
\end{equation} 
Then we have the weighted norm control, 
\begin{align}
\begin{split}
|\tilde{\rho}_{\infty,\delta,\nu,\mu}^{(0)}(\bm{x}_{\infty})\cdot  \tilde{\omega}_{\infty}(\bm{x}_{\infty}) | + |\tilde{\rho}_{\infty,\delta,\nu,\mu}^{(0)}(\bm{x}_{\infty})\cdot f_{\infty}^t(\bm{x}_{\infty})|  &\geq \frac{1}{30}
\\
\| \tilde{\omega}_{\infty} \|_{C_{\delta,\nu,\mu}^{1,\gamma}(\dR^3 \setminus \{0^3\})} + \| f_{\infty}^t \|_{C_{\delta,\nu,\mu}^{1,\gamma}}&\leq  C,
\end{split}
\end{align}
where $0<\gamma<\alpha'<\alpha<1$.
The weighted norm bound implies that the limiting functions have the following controlled behavior,
\begin{equation}
(|f_{\infty}^{x}| + |f_{\infty}^{y}| + |f_{\infty}^{z}| + |f_{\infty}^{t}|)(\bm{x})
\leq C\Big(d_{g_0}(\bm{x},0^3) \Big)^{-\mu-\nu},\ \forall \bm{x}\in\dR^3\setminus\{0^3\}.
\end{equation}
By the standard removable singularity theorem, the $4$-tuple of harmonic functions $(f_{\infty}^{x}, f_{\infty}^{y}, f_{\infty}^{z}, f_{\infty}^{t})$ extend to the entire Euclidean space $\dR^3$. 
 Applying the standard Liouville theorem for harmonic functions, we conclude that $\tilde{\omega}_{\infty} \equiv 0$ and $f_{\infty}^t \equiv 0$. So the contradiction arises, which completes the proof of Case (b).

Now we consider Case (c).
We have shown in Section \ref{ss:rescaled-limits} that the rescaled limit in Case (c) is a punctured flat cylinder $(\TT \times \mathbb{R})\setminus\mathcal{P}_{m_0}$. More precisely, we chose a sequence of punctured  unbounded domains $\mathring{U}_j$ containing $\bm{x}_j$ such that
\begin{equation}
(\mathring{U}_j, \tilde{g}_j, \bm{x}_j) \xrightarrow{GH} \Big((\TT \times \mathbb{R})\setminus\mathcal{P}_{m_0}, g_0, \bm{x}_{\infty}\Big).
\end{equation}
Moreover, the curvatures of the above rescaled spaces are uniformly bounded away from the singular points in $\mathcal{P}_{m_0}$.

Next we study the limiting weight functions.  For any fixed reference point $\bm{x}_j$ in this case, denote $d_j\equiv \min\limits_{1\leq m\leq m_0} \{d_m(p_m, \bm{x}_j)\}$ and we choose the following rescaling factors
\begin{align}
\begin{split}
\lambda_j &= d_j^{-1}\\
\tau_j^{(k+\alpha)} &= e^{ - \delta \cdot 2T_- }  \cdot (\beta_j)^{\frac{\mu}{2}}  
\cdot  (d_j^{-1})^{\mu + \nu + k + \alpha} 
\\
\kappa_j &= e^{\delta \cdot 2T_- }  \cdot (\beta_j)^{-\frac{\mu}{2}}
\cdot  (d_j)^{\mu+\nu-1}.
\end{split}
\end{align}
So the rescaled weight function in the limit space satisfies 
\begin{align}
\tilde{\rho}_{\infty,\delta,\nu,\mu}^{(k+\alpha)}(\bm{x}) 
=\begin{cases}
(d_{g_0}(p_m, \bm{x}))^{\mu+\nu+k+\alpha}, & \bm{x} \in B_{\iota_0''}^{g_0}(p_m)\ \text{for some} \ 1\leq m\leq m_0, 
\\
Q_{\nu,\mu,k,\alpha}, & \bm{x}\in  \bigcap\limits_{m=1}^{m_0}A_m^{g_0}(2\iota_0'',T_0''),
\\
 e^{\delta z(\bm{x})}, &  \bm{x}\in (\TT\times\dR)\setminus \bigcup\limits_{m=1}^{m_0} B_{T_0''}^{g_0}(p_m),
\end{cases}
\end{align}
where $\iota_0''\in[1, \frac{\iota_0'}{C_0}]$ is some definite constant and $Q_{\nu,\mu, k, \alpha}$ is some uniform constant depending on $\nu$, $\mu$, $k$ and $\alpha$.

Similar to Case (b), in order to apply the Liouville theorem in the collapsed limit, we need to construct a global defined $1$-form in the collapsed limit and deduce the corresponding equation.

Fix $R>0$, denote by $T_R(S)$ the $R$-tubular neighborhood of a compact set $S$, applying Lemma \ref{l:codim-1}, then 
we have the following curvature estimate on the sequence of annuli $T_{3R}^{\tilde{g}_j}(\mathcal{P}_{m_0})\setminus T_{\frac{1}{R}}^{\tilde{g}_j}(\mathcal{P}_{m_0})$:
\begin{equation}
\|\Rm_{\tilde{g}_j}\|_{L^{\infty}\Big(T_{3R}^{\tilde{g}_j}(\mathcal{P}_{m_0})\setminus T_{\frac{1}{R}}^{\tilde{g}_j}(\mathcal{P}_{m_0})\Big)} \leq K_0 \cdot R^2,\end{equation}
where $K_0>0$ is an absolute constant.  
Applying the same arguments as in Case (b), one can construct a limiting pair
\begin{equation}
(\tilde{\omega}_{\infty}, f_{\infty}^t)\in\Omega^1\Big((\TT\times\dR)\setminus\mathcal{P}_{m_0})\Big)\oplus\Omega^0\Big((\TT\times\dR)\setminus\mathcal{P}_{m_0})\Big)
\end{equation}
such that
\begin{align}
\begin{split}
|\tilde{\rho}_{\infty,\delta,\nu,\mu}^{(0)}(\bm{x}_{\infty})\cdot\tilde{\omega}_{\infty}({\bm{x}}_{\infty})| + |\tilde{\rho}_{\infty,\delta,\nu,\mu}^{(0)}(\bm{x}_{\infty})\cdot f_{\infty}^t(\bm{x}_{\infty})|  &\geq \frac{1}{30}
\\
\|\tilde{\omega}_{\infty}\|_{C_{\delta,\nu,\mu}^{1,\gamma}((\TT\times\dR)\setminus\mathcal{P}_{m_0})} &\leq C,\label{e:norm-control-quotient-case-c}
\end{split}
\end{align}
where $0<\gamma<\alpha<1$.
Let $\theta_{\infty}^x$, $\theta_{\infty}^y$ and $\theta_{\infty}^z$ be the canonical  parallel $1$-forms with unit length on $\TT\times\dR$, then 
\begin{equation}
\tilde{\omega}_{\infty}  = f_{\infty}^x \theta_{\infty}^x +
f_{\infty}^y \theta_{\infty}^y + f_{\infty}^z \theta_{\infty}^z.
\end{equation}
Moreover, it holds in the punctured cylinder $(\TT\times\dR)\setminus\mathcal{P}_{m_0}$ that \begin{equation}
\Delta_{g_0} f_{\infty}^x = \Delta_{g_0} f_{\infty}^y
= \Delta_{g_0} f_{\infty}^z = \Delta_{g_0} f_{\infty}^t
\equiv 0.
\end{equation}
Next, the weighted norm bound implies that the limiting $1$-form $\tilde{\omega}_{\infty}\in(\TT\times\dR)\setminus\mathcal{P}_{m_0} 
$ has the following controlled behavior,
\begin{align}
\begin{split}
|f_{\infty}^x(\bm{x})| + |f_{\infty}^y(\bm{x})| + |f_{\infty}^z(\bm{x})| + |f_{\infty}^t(\bm{x})| 
&\leq C\Big(d_{g_0}(\bm{x},p_m) \Big)^{-\mu-\nu}, \ \bm{x} \in B_{\iota_0''}^{g_0}(p_m),
\\ 
|f_{\infty}^x(\bm{x})| + |f_{\infty}^y(\bm{x})| + |f_{\infty}^z(\bm{x})| + |f_{\infty}^t(\bm{x})| 
&\leq C  e^{-\delta z(\bm{x})}, \ |z(\bm{x})| \geq Z_0,  \label{e:exp-growth-cyl}
\end{split}
\end{align}
for some sufficiently large $Z_0>0$.
Since we have required that
\begin{equation}
0<\mu + \nu <1,
\end{equation}
it is standard that
 the singularities in $\mathcal{P}_{m_0}$ are removable. 
It follows that the harmonic functions $f_{\infty}^x$, $f_{\infty}^y$, $f_{\infty}^z$ and $f_{\infty}^t$ extend to the entire flat cylinder $\TT\times\dR$ and they satisfy the above asymptotic behavior.
Applying Lemma \ref{l:liouville-cylinder} to the coefficient functions with the growth condition \eqref{e:exp-growth-cyl}, we conclude that $\tilde{\omega}_{\infty} \equiv 0$ and $f_{\infty}^t \equiv 0$. So the proof of Case (c) is complete.

\vspace{0.5cm}

\noindent
{\bf Region $\III$:}

\vspace{0.5cm}

The proof for Region $\III$ is identical to Case (c) of Region $\II$.

\vspace{0.5cm}

\noindent
{\bf Regions $\IV_{-}$ and Region $\IV_{+}$:}

\vspace{0.5cm}

We only focus on the case that the reference points $\bm{x}_j$ are located in Region $\IV_{-}$. The proof for Region $\IV_{+}$ is verbatim. Region $\IV_{-}$ has two different types of rescaling geometries (see Section \ref{ss:rescaled-limits}) which are given by the following two cases:

\begin{enumerate}
\item[(a)] There is a uniform constant $C_0>0$ independent of $j$ such that \begin{equation}5T_0' \leq d_{\tilde{g}_j}(p_m,\bm{x}_j) \equiv \lambda_j\cdot d_m(\bm{x}_j) \leq C_0\end{equation} for each $1\leq m\leq m_0$.

\item[(b)] The reference points $\bm{x}_j$ in Region $\IV_{-}$ satisfy
\begin{equation}d_{\tilde{g}_j}(p_m,\bm{x}_j) \equiv \lambda_j\cdot d_m(\bm{x}_j) \to \infty.\end{equation}
\end{enumerate}

For fixed reference points $\bm{x}_j$ satisfying Case (a), we have the following convergence 
\begin{equation}
(\mathring{U}_j, \tilde{g}_j, \bm{x}_j)\xrightarrow{GH} \Big((\dT^2\times\dR) \setminus \mathcal{P}_{m_0}, g_0, \bm{x}_{\infty}\Big),
\end{equation} 
where $g_0$ is a flat  product metric on $\TT\times\dR$.
We choose the rescaling factors as follows, 
\begin{align}
\begin{split}
\lambda_j &= (L_{-}(\bm{x}_j))^{-1}
\\
\tau_j^{(k+\alpha)} &= e^{-\delta(2T_-)}  \cdot  (L_{-}(\bm{x}_j))^{-\nu  - k - \alpha}  
\\
\kappa_j &= e^{\delta (2T_-)} \cdot (L_{-}(\bm{x}_j))^{\nu  - 1},
\end{split}
\end{align}
and the limiting weight function is 
\begin{align}\tilde{\rho}_{\infty,\delta,\nu,\mu}^{(k+\alpha)}=
\begin{cases}
(d_{g_0}(p_m, \bm{x}))^{\nu+\mu+k+\alpha}, &   \bm{x} \in B_{\iota_0''}^{g_0}(p_m)\ \text{for some} \ 1\leq m\leq m_0, 
\\ 
Q_{\nu,\mu,k,\alpha} , & \bm{x}\in  \bigcap\limits_{m=1}^{m_0}A_m^{g_0}(2\iota_0'',T_0''),
\\
e^{\delta z(\bm{x})}, &   \bm{x}\in (\TT\times\dR)\setminus \bigcup\limits_{m=1}^{m_0} B_{T_0''}^{g_0}(p_m),
\end{cases}
\end{align}
where $\iota_0''\in[1, \frac{\iota_0}{C_0}]$ is some definite constant and $Q_{\nu,\mu, k, \alpha}$ is some uniform constant depending on $\nu$, $\mu$, $k$, $\alpha$. So the rest of the proof is identical to the proof of Case (c) in Region $\II$.

Now we prove Case (b). We showed in Section \ref{ss:rescaled-limits} that in this case we have the convergence
\begin{equation}
(U_j, \tilde{g}_j, \bm{x}_j) \xrightarrow{GH} (\TT \times \mathbb{R}, g_0, \bm{x}_{\infty}),
\end{equation}
where $g_0$ is a flat  product metric on $\TT\times\dR$.
The rescaling factors are chosen as the following
\begin{align}
\begin{split}
\lambda_j &= (L_{-}(\bm{x}_j))^{-1}
\\
\tau_j^{(k+\alpha)} &= e^{-\delta(2T_--z_j)}  \cdot  (L_{-}(\bm{x}_j))^{-\nu  - k - \alpha}  
\\
\kappa_j &= e^{\delta (2T_-+z_j)} \cdot (L_{-}(\bm{x}_j))^{\nu  - 1},
\end{split}
\end{align}
where $z_j\equiv z(\bm{x}_j)$.
We also translate the $z$-coordinate by $\tilde{z}(\bm{x}) = z(\bm{x}) - z_j$.
It gives the limiting weight function 
\begin{equation}
\tilde{\rho}_{\infty,\delta,\nu,\mu}^{(k+\alpha)}(\bm{x})= e^{\delta \tilde{z}(\bm{x})}, \ \forall \bm{x}\in\TT\times\dR.
\end{equation}

The proof of the next stage is similar to the proof of Case (c) of Region $\III$. We follow all the notations there.
Applying exactly the same arguments,  we obtain the limiting pair $(\tilde{\omega}_{\infty}, f_{\infty}^t)\in\Omega^1(\TT\times\dR)\oplus C^{\infty}(\TT\times\dR)$
which satisfy 
\begin{align}
\begin{split}
&\Delta_{g_0}f_{\infty}^x = \Delta_{g_0}f_{\infty}^y = \Delta_{g_0}f_{\infty}^z=\Delta_{g_0}f_{\infty}^t \equiv0\\
&|\tilde{\rho}_{\infty,\delta,\nu,\mu}^{(0)}(\bm{x}_{\infty})\cdot \tilde{\omega}_{\infty}(\bm{x}_{\infty}) | + | \tilde{\rho}_{\infty,\delta,\nu,\mu}^{(0)}(\bm{x}_{\infty})\cdot f_{\infty}^t(\bm{x}_{\infty}) |  \geq  \frac{1}{30} \\
&\| \tilde{\omega}_{\infty} \|_{C_{\delta,\nu,\mu}^{1,\alpha'}(\TT\times\dR)} \leq  C,\ 0<\alpha'<\alpha<1,\\
\end{split}
\end{align}
which implies that
\begin{align}
|f_{\infty}^x(\bm{x})| + |f_{\infty}^y(\bm{x})| + | f_{\infty}^z(\bm{x})| + | f_{\infty}^t(\bm{x})| \leq C  e^{-\delta \tilde{z}(\bm{x})},\ \bm{x}\in \TT\times\dR.
\end{align}
Applying Lemma \ref{l:liouville-cylinder}, we conclude that $\tilde{\omega}_{\infty} \equiv 0$ and $f_{\infty}^t\equiv0$. So we complete the proof of Case (b).

\vspace{0.5cm}

\noindent
{\bf Regions $\V_{-}$ and $\V_{+}$:}

\vspace{0.5cm}

First, we assume that the reference points $\bm{x}_j$ are located in $\V_{-}$.
As what was discussed in Section \ref{ss:rescaled-limits}, it is natural to separate Region $\V_{-}$ in the following cases
\begin{enumerate}
\item[(a)] Assume 
 $z_{-}(\bm{x}_j)\to \infty$.
\item[(b)] Assume that there is some constant $C_0>0$ independent of the index $j$ such that $10\zeta_0^{-}\leq z_{-}(\bm{x}_j) \leq C_0$.
\end{enumerate}

The rescaled limit in Case (a) is the flat cylinder $\dT^2 \times \mathbb{R}$  and we have the convergence (see Section \ref{ss:rescaled-limits}), 
\begin{equation}
(U_j, \tilde{g}_j, \bm{x}_j) \xrightarrow{GH} (\TT\times\dR, g_0, \bm{x}_{\infty}).
\end{equation}
We choose the corresponding rescaling factors 
\begin{align}
\begin{split}
\lambda_j &= (\underline{L}_-(\bm{x}_j))^{-1} 
\\
\tau_j^{(k+\alpha)} &= e^{-\delta z_j}\cdot (\underline{L}_-(\bm{x}_j))^{-\nu-k-\alpha} \\
\kappa_j &= e^{\delta z_j}\cdot (\underline{L}_-(\bm{x}_j))^{\nu-1},
\end{split}
\end{align}
where $z_j\equiv z_-(\bm{x}_j)$.
Hence, under the $z$-coordinate translation $\tilde{z}_-(\bm{x})=z_-(\bm{x})-z_j$, the limiting weight function is
\begin{equation}
\tilde{\rho}_{\infty,\delta,\nu,\mu}^{(k+\alpha)}(\bm{x}) =  e^{\delta \cdot \tilde{z}_{-}(\bm{x})}.
\end{equation}
The remaining arguments are exactly the same as that in Case (b) of Region $\IV_{-}$, and  the proof of this case is complete.

Next, we prove Case (b) of Region $\V_{-}$. If the reference points satisfy $10\zeta_0^{-}\leq d(\bm{x}_j, q_-)\leq C_0$, we still choose the same rescaling factors
\begin{align}
\begin{split}
\lambda_j &= (\underline{L}_-(\bm{x}_j))^{-1} 
\\
\tau_j^{(k+\alpha)} &= (\underline{L}_-(\bm{x}_j))^{-\nu-k-\alpha} \\
\kappa_j &= (\underline{L}_-(\bm{x}_j))^{\nu-1}
\end{split}
\end{align}
and we have the convergence
\begin{equation}
(\mathcal{M}, \tilde{g}_j, \bm{x}_j) \xrightarrow{C^{\infty}}  (\mathcal{M}_{\infty}, \tilde{g}_{\infty}, \bm{x}_{\infty})\end{equation}
where  $(\mathcal{M}_{\infty}, \tilde{g}_{\infty}, \bm{x}_{\infty})$ is a finite rescaling of $(X_{b_{-}}^4, g_{b_{-}}, q_-)$.
 
So limiting weight function, up to some definite constant, has the form
\begin{equation}
\rho_{\infty,\delta, \nu, \mu}^{(k+\alpha)}(\bm{x}) = e^{\delta \cdot z_{-}(\bm{x})}\cdot \frac{(\underline{L}_{-}(\bm{x}))^{\nu+k+\alpha}}{(\underline{L}_{-}(\bm{x}_j))^{\nu+k+\alpha}}. 
\end{equation}
Moreover, the limiting $1$-form $\tilde{\omega}_{\infty}\in \Omega^1(X_{b_{-}}^4)$
such that
\begin{align}
\begin{split}
&\mathscr{D}_{g_{b_{-}}}\tilde{\omega}_{\infty}\equiv0\\
&|\tilde{\rho}_{\infty,\delta,\nu,\mu}^{(0)}(\bm{x}_{\infty})\cdot \tilde{\omega}_{\infty}(\bm{x}_{\infty}) | = 1 \\
&\| \tilde{\omega}_{\infty} \|_{C_{\delta,\nu,\mu}^0(X_{b_{-}}^4)} =  1,\\
\end{split}
\end{align}
which implies that for some constant $C_1>0$,
\begin{align}
|\tilde{\omega}_{\infty}(\bm{x})| \leq C_1 \cdot  e^{-\delta z_{-}(\bm{x})}\cdot (z_{-}(\bm{x}))^{-\frac{\nu}{2}}, \  \bm{x}\in X_{b_{-}}^4\setminus B_{2D_0^{-}}(q_{-}).
\end{align}
Since $\mathscr{D}_{g_{b_-}}\tilde{\omega}_{\infty}\equiv0$, by Lemma \ref{l:D-harmonic}, $\tilde{\omega}_{\infty}$ is harmonic with respect to the complete Tian-Yau metric $g_{b_-}$.
Applying Lemma \ref{l:max} to the harmonic $1$-form $\tilde{\omega}_{\infty}$, we conclude that 
$\tilde{\omega}_{\infty}\equiv 0$ on $X_{b_{-}}^4$.
So the proof of Case (b) is done.

Now we consider the case that the reference points $\bm{x}_j$ belong to Region $\V_{+}$. As the above, we still separate this region in two different pieces:
\begin{enumerate}
\item[(a)] Assume 
 $z_{+}(\bm{x}_j)\to \infty$.
\item[(b)] Assume that there is some constant $C_0>0$ independent of the index $j$ such that $10\zeta_0^{+}\leq z_{+}(\bm{x}_j) \leq C_0$.
\end{enumerate}

We skip the argument in Case (a) because it coincides with Case (a) in Region $V_{-}$.

So we start to prove Case (b) of Region $\V_{+}$.
If the reference points satisfy $10\zeta_0^{+}\leq z_{+}(\bm{x}_j) \leq C_0$, we choose the rescaling factors as follows,
\begin{align}
\begin{split}
\lambda_j &= (\underline{L}_+(\bm{x}_j))^{-1} 
\\
\tau_j^{(k+\alpha)} &= e^{-\delta(2T_- + 2T_+ + z_j)}\cdot (\underline{L}_+(\bm{x}_j))^{-\nu-k-\alpha} 
\\
\kappa_j &= e^{\delta(2T_- + 2T_+ +z_j)} \cdot (\underline{L}_+(\bm{x}_j))^{\nu-1},
\end{split}
\end{align}
where $z_j\equiv z_+(\bm{x}_j)$.
Then we have the convergence
\begin{equation}
(\mathcal{M}, \tilde{g}_j, \bm{x}_j) \xrightarrow{C^{\infty}}  (\mathcal{M}_{\infty}, \tilde{g}_{\infty}, \bm{x}_{\infty})\end{equation}
where  $(\mathcal{M}_{\infty}, \tilde{g}_{\infty}, \bm{x}_{\infty})$ is a finite rescaling of $(X_{b_{+}}^4, g_{b_{+}}, q_+)$.
Hence, under the $z$-coordinate translation $\tilde{z}_+(\bm{x})=z_+(\bm{x})-z_j$,
 the limiting weight function has the form
\begin{align}
\rho_{\infty,\delta, \nu, \mu}^{(k+\alpha)}(\bm{x}) =
e^{-\delta \cdot \tilde{z}_{+}(\bm{x})},\ \bm{x}\in X_{b_+}^4 \setminus B_{2D_0^{+}}(q_+).
\end{align}
On the other hand,  the limiting $1$-form $\tilde{\omega}_{\infty}\in \Omega^1(X_{b_{-}}^4)$ satisfies
\begin{align}
\begin{split}
&\mathscr{D}_{g_{b_{+}}}\tilde{\omega}_{\infty}\equiv0\\
&|\tilde{\rho}_{\infty,\delta,\nu,\mu}^{(0)}(\bm{x}_{\infty})\cdot \tilde{\omega}_{\infty}(\bm{x}_{\infty}) | = 1
\\
&\| \tilde{\omega}_{\infty} \|_{C_{\delta,\nu,\mu}^0(X_{b_{+}}^4)} =  1,\\
\end{split}
\end{align}
which implies which implies the $C^0$-estimate 
\begin{align}
|\tilde{\omega}_{\infty}(\bm{x})| \leq  C_2 \cdot e^{\delta \tilde{z}_+(\bm{x})}, \ \bm{x}\in X_{b_+}^4 \setminus B_{2D_0^{+}}(q_+).
\end{align}
Now we are in a position to apply the Liouville theorem for half-harmonic $1$-forms. If we choose $\delta\in(0,\delta_h)$, then Theorem \ref{t:liouville-1-form} shows that
\begin{equation}\tilde{\omega}_{\infty}\equiv 0 \ \text{on}\ X_{b_+}^4.\end{equation}

\vspace{0.5cm}

\noindent
{\bf Regions $\VI_{-}$ and $\VI_{+}$:}

\vspace{0.5cm}

If $\bm{x}_j$ are located in Region $\VI_{-}$, the proof is identical to  Case (b) of Region $\V_{-}$. If $\bm{x}_j$ are located in Region $\VI_{+}$, the proof is the same as Case (b) of Region $V_+$.

\vspace{0.5cm}

Combining all of the above regions, the proof of Proposition \ref{p:injectivity-of-D} is complete.

\end{proof}

\subsection{The existence of a hyperk\"ahler triple}
\label{ss:existence}

Now we are in a position to prove the existence of the hyperk\"ahler triple. 
For any sufficiently large gluing parameter $\beta\gg1$, denote by $\bm{\omega}_{\beta}^{\mathcal{M}}=(\omega_1,\omega_2,\omega_3)$ the approximate definite triple on $\mathcal{M}$ which was constructed in Section \ref{s:approx-triple}. 
To prove the existence of a hyperk\"ahler triple,  we  will solve the gauge-fixed elliptic system, 
\begin{align}
d^+ \bm{\eta} + \bm{\xi} = \mathfrak{F}_0\Big(\TF(-Q_{\beta}-S_{d^{-}\bm{\eta}})\Big), \
d^* \bm{\eta} = 0,
\label{e:hkt-system}
\end{align} 
where the renormalized coefficient matrix $Q_{\beta}=(Q_{ij})$ is defined by
\begin{equation}
\frac{1}{2}\omega_i\wedge\omega_j = Q_{ij} \dvol_{\bm{\omega}_{\beta}^{\mathcal{M}}}, 
\end{equation} 
see Section \ref{ss:outline} for more details about the setup.
A basic tool of solving the elliptic system \eqref{e:hkt-system} is the following version of the implicit function theorem, see for example \cite[theorem 4.4.2]{RollinSinger}.
\begin{lemma} \label{l:implicit-function}
Let $\mathscr{F}:\mathfrak{A} \to \mathfrak{B}$ be a $C^1$-map between two Banach spaces such that 
$\mathscr{F}(x)-\mathscr{F}(0)=\mathscr{L}(x)+\mathscr{N}(x)$, where the operator $\mathscr{L}:\mathfrak{A}\to\mathfrak{B}$ is linear and $\mathscr{N}(0)=0$. Assume that
\begin{enumerate}
\item $\mathscr{L}$ is an isomorphism with $\| \mathscr{L}^{-1} \| \leq C_1$,

\item there are constants $r>0$ and $C_2>0$ with $r<\frac{1}{3C_1 C_2}$ such that 
\begin{enumerate}\item  $\| \mathscr{N}(x) - \mathscr{N}(y) \|_{\mathfrak{B}} \leq C_2\cdot ( \|x\|_{\mathfrak{A}} + \|y\|_{\mathfrak{A}} ) \cdot  \| x - y \|_{\mathfrak{A}} $ for all $x,y\in B_r(0)\subset {\mathfrak{A}}$,

\item $\| \mathscr{F}(0) \|_{\mathfrak{B}} \leq \frac{r}{2C_1}$,
\end{enumerate}
\end{enumerate}
then there exists a unique solution to $\mathscr{F}(x)=0$ in $\mathfrak{A}$   such that
\begin{equation}
\|x\|_{\mathfrak{A}} \leq 2C_1 \cdot \|\mathscr{F}(0)\|_{\mathfrak{B}}.
\end{equation}

\end{lemma}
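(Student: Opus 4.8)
The final statement to prove is Lemma~\ref{l:implicit-function}, a quantitative implicit function theorem for $C^1$ maps between Banach spaces. The plan is to construct the solution by a contraction mapping argument applied to a fixed-point reformulation of the equation $\mathscr{F}(x)=0$.

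First I would rewrite the equation. Since $\mathscr{F}(x) = \mathscr{F}(0) + \mathscr{L}(x) + \mathscr{N}(x)$, the equation $\mathscr{F}(x)=0$ is equivalent, using that $\mathscr{L}$ is invertible, to $x = -\mathscr{L}^{-1}\big(\mathscr{F}(0) + \mathscr{N}(x)\big)$. Define the map $\mathscr{T}: \mathfrak{A}\to\mathfrak{A}$ by $\mathscr{T}(x) \equiv -\mathscr{L}^{-1}\big(\mathscr{F}(0) + \mathscr{N}(x)\big)$. A fixed point of $\mathscr{T}$ is exactly a solution of $\mathscr{F}(x)=0$. The goal is then to show $\mathscr{T}$ is a contraction on the closed ball $\overline{B_r(0)}\subset\mathfrak{A}$, so that Banach's fixed point theorem applies.

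Next I would verify the two hypotheses of the contraction mapping theorem on $\overline{B_r(0)}$. For the self-map property: for $x\in\overline{B_r(0)}$, using $\mathscr{N}(0)=0$ and hypothesis (2)(a) with $y=0$, we get $\|\mathscr{N}(x)\|_{\mathfrak{B}} \leq C_2\|x\|_{\mathfrak{A}}^2 \leq C_2 r^2$, hence
\begin{equation}
\|\mathscr{T}(x)\|_{\mathfrak{A}} \leq C_1\big(\|\mathscr{F}(0)\|_{\mathfrak{B}} + C_2 r^2\big) \leq C_1\cdot\frac{r}{2C_1} + C_1 C_2 r^2 = \frac{r}{2} + C_1 C_2 r^2.
\end{equation}
Since $r < \frac{1}{3C_1C_2}$, we have $C_1 C_2 r^2 < \frac{r}{3}$, so $\|\mathscr{T}(x)\|_{\mathfrak{A}} < \frac{r}{2}+\frac{r}{3} < r$, and $\mathscr{T}$ maps $\overline{B_r(0)}$ into itself. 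For the contraction property: for $x,y\in\overline{B_r(0)}$, hypothesis (2)(a) gives $\|\mathscr{N}(x)-\mathscr{N}(y)\|_{\mathfrak{B}} \leq C_2(\|x\|_{\mathfrak{A}}+\|y\|_{\mathfrak{A}})\|x-y\|_{\mathfrak{A}} \leq 2C_2 r\|x-y\|_{\mathfrak{A}}$, so
\begin{equation}
\|\mathscr{T}(x)-\mathscr{T}(y)\|_{\mathfrak{A}} = \|\mathscr{L}^{-1}(\mathscr{N}(x)-\mathscr{N}(y))\|_{\mathfrak{A}} \leq 2C_1 C_2 r\,\|x-y\|_{\mathfrak{A}},
\end{equation}
and $2C_1 C_2 r < \frac{2}{3} < 1$, so $\mathscr{T}$ is a contraction. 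By the Banach fixed point theorem there is a unique $x\in\overline{B_r(0)}$ with $\mathscr{T}(x)=x$, equivalently $\mathscr{F}(x)=0$.

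Finally I would derive the quantitative bound. From $x = \mathscr{T}(x) = -\mathscr{L}^{-1}(\mathscr{F}(0)+\mathscr{N}(x))$ and the estimate $\|\mathscr{N}(x)\|_{\mathfrak{B}}\leq C_2 r^2$ together with $C_1 C_2 r \leq \frac{1}{3}$, one gets $\|x\|_{\mathfrak{A}} \leq C_1\|\mathscr{F}(0)\|_{\mathfrak{B}} + C_1 C_2 r\|x\|_{\mathfrak{A}}$, and since $C_1 C_2 r \leq \frac13 < 1$ this can be rearranged. A slightly cleaner route giving exactly the claimed constant $2C_1$: estimate $\|x\|_{\mathfrak{A}} \leq C_1\|\mathscr{F}(0)\|_{\mathfrak{B}} + C_1\|\mathscr{N}(x)\|_{\mathfrak{B}}$ and bound $\|\mathscr{N}(x)\|_{\mathfrak{B}} \leq C_2\|x\|_{\mathfrak{A}}\cdot r \leq C_2 r \|x\|_{\mathfrak{A}}$, so $(1 - C_1 C_2 r)\|x\|_{\mathfrak{A}} \leq C_1\|\mathscr{F}(0)\|_{\mathfrak{B}}$; since $C_1 C_2 r < \frac12$ we conclude $\|x\|_{\mathfrak{A}} \leq 2C_1\|\mathscr{F}(0)\|_{\mathfrak{B}}$. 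Uniqueness within $B_r(0)$ follows from the contraction property. There is no real obstacle here — the lemma is a standard packaging of Banach's fixed point theorem — the only care needed is to track the numerical constants so that the threshold $r<\frac{1}{3C_1C_2}$ is exactly what forces both the self-map and contraction inequalities; I would state this as the one point deserving attention rather than a difficulty.
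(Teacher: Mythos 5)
Your proof is correct: the paper does not prove Lemma~\ref{l:implicit-function} itself but cites \cite[Theorem 4.4.2]{RollinSinger}, and your contraction-mapping argument (reformulating $\mathscr{F}(x)=0$ as a fixed point of $\mathscr{T}(x)=-\mathscr{L}^{-1}(\mathscr{F}(0)+\mathscr{N}(x))$, checking the self-map and contraction estimates from $r<\frac{1}{3C_1C_2}$, and rearranging $(1-C_1C_2r)\|x\|_{\mathfrak{A}}\leq C_1\|\mathscr{F}(0)\|_{\mathfrak{B}}$ to get the factor $2C_1$) is exactly the standard route intended there. The only cosmetic point is that hypothesis (2)(a) is stated on the open ball $B_r(0)$ while you contract on $\overline{B_r(0)}$; either pass to the closure by continuity of $\mathscr{N}$ or run the argument on a slightly smaller closed ball, and note that uniqueness is within $B_r(0)$ rather than all of $\mathfrak{A}$, which is the intended reading of the statement.
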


To apply the above implicit function theorem, we need to verify the above properties in our context. 
To start with, we define the following Banach spaces,
\begin{equation}
\mathfrak{A} \equiv \Big(C_{\delta,\nu,\mu}^{1,\alpha}(\mathring{\Omega}^1(\mathcal{M}))\oplus \mathcal{H}^+(\mathcal{M})\Big)\otimes\dR^3
\end{equation}
and
\begin{equation}
\mathfrak{B} \equiv\Big(C_{\delta,\nu+1,\mu}^{0,\alpha}(\Lambda^+ (\mathcal{M}))\Big)\otimes\dR^3,
\end{equation}
where $\mathcal{H}^+(\mathcal{M})$ is the space of self-dual $2$-forms on $\mathcal{M}$, $\Lambda^+ (\mathcal{M})$ is the space of self-dual $2$-forms on $\mathcal{M}$ and
$\mathring{\Omega}^1(\mathcal{M})\equiv\{\eta\in\Omega^1(\mathcal{M})|d^*\eta = 0\}
$. Notice that Proposition \ref{p:topological-invariants} implies that 
\begin{equation}\dim(\mathcal{H}^+(\mathcal{M}))=b_2^+(\mathcal{M})=3.\end{equation}
Now we give a basis of $\mathcal{H}^+(\mathcal{M})$. Let $\bm{\omega}_{\beta}^{\mathcal{M}}\equiv(\omega_1,\omega_2,\omega_3)$ be the gluing definite triple on $\mathcal{M}$ constructed in Section \ref{s:approx-triple} which induces a Riemannian metric $g$ such that the triple $\bm{\omega}^{\mathcal{M}}$ is self-dual with respect to $g$. Immediately, $d^*\omega_k = d\omega_k = 0$ and hence for every $1\leq k\leq 3$, $\omega_k$ is a self-dual harmonic $2$-form. Then by Corollary \ref{c:sutt}, 
$\{\omega_1,\omega_2,\omega_3\}$
is actually a basis of $\mathcal{H}^+(\mathcal{M})$.

Let $\mathfrak{A}$ and $\mathfrak{B}$ equipped with the following weighted H\"older norms: Let $(\bm{\eta}, \bm{\bar{\xi}}^+)\in \mathfrak{A}$ and $\bm{\xi}^+\in\mathfrak{B}$,
then
\begin{equation}
\|(\bm{\eta}, \bm{\bar{\xi}}^+)\|_{\mathfrak{A}} \equiv 
\| \bm{\eta}\|_{C_{\delta,\nu,\mu}^{1,\alpha}(\mathcal{M})}
 +
  \| \bm{\bar{\xi}}^+ \|_{L^2}
\end{equation}
and
 \begin{equation}
\|\bm{\xi}^+\|_{\mathfrak{B}} \equiv   \| \bm{\xi}^+ \|_{C_{\delta,\nu+1,\mu}^{0,\alpha}(\mathcal{M})},
\end{equation}
where the above $L^2$ norm is defined with respect to a fixed basis $\{\omega_1,\omega_2,\omega_3\}\subset\mathcal{H}^+(\mathcal{M})$.
The operator $\mathscr{F}:\mathfrak{A}\to\mathfrak{B}$ is defined by 
\begin{equation}
\mathscr{F}(\bm{\eta},\bar{\bm{\xi}}^+)\equiv d^+ \bm{\eta} + \bar{\bm{\xi}}^+ - \mathfrak{F}_0\Big(\TF(-Q_{\beta}-S_{d^{-}\bm{\eta}})\Big),
\end{equation}
which is given by the system \eqref{e:hkt-system}. The corresponding linearization is 
\begin{equation}
\mathscr{L} \equiv (d^+ \oplus \Id)\otimes\dR^3: \mathfrak{A}\longrightarrow \mathfrak{B}.
\end{equation}
So the nonlinear part is given by
\begin{equation}
\mathscr{N}(\bm{\eta},\bar{\bm{\xi}}^+) \equiv\mathfrak{F}_0\Big(\TF(-Q_{\beta})\Big)-\mathfrak{F}_0\Big(\TF(-Q_{\beta}- 
S_{d^{-}\bm{\eta}})\Big).
\end{equation}

First, we will check Property (1) in Lemma \ref{l:implicit-function} and we will prove that the linearized operator $\mathscr{L}_g$ is an isomorphism from
$\mathfrak{A}$
to $\mathfrak{B}$.

\begin{proposition}
\label{p:injectivity-for-L} For $(\mathcal{M},g_{\beta})$ with sufficiently large gluing parameter $\beta\gg1$, then there exists some constant $C>0$, independent of $\beta$, such that for every triple 
 \begin{equation}
\bm{\xi}^+ \equiv
 (\xi_1^+, \xi_2^+, \xi_3^+) \in\mathfrak{B},
 \end{equation}
 there exists a unique pair 
\begin{equation}(\bm{\eta},\bar{\bm{\xi}}^+)\equiv \Big((\eta_1,\eta_2,\eta_3), (\bar{\xi}_1^+,\bar{\xi}_2^+,\bar{\xi}_3^+)\Big)\in \mathfrak{A}\end{equation}
which satisfies
 \begin{equation}\mathscr{L}_g(\bm{\eta}, \bar{\bm{\xi}}^+) = \bm{\xi}^+
\end{equation} and
\begin{equation}
\|\bm{\eta}\|_{C_{\delta,\nu,\mu}^{1,\alpha}(\mathcal{M})}+\|\bar{\bm{\xi}}^+\|_{L^2}\leq 
 C e^{10 \delta \cdot\beta} \cdot  \|\bm{\xi}^+\|_{C_{\delta,\nu+1,\mu}^{0,\alpha}(\mathcal{M})},\label{e:L-uniform-estimate}
\end{equation}
where $\delta$, $\nu$ and $\mu$ are the constants in  Proposition \ref{p:injectivity-of-D}.\end{proposition}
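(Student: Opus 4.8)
\textbf{Proof proposal for Proposition \ref{p:injectivity-for-L}.} The plan is to reduce the mapping property of $\mathscr{L}_g = (d^+\oplus\Id)\otimes\dR^3$ to the injectivity estimate for the Dirac-type operator $\mathscr{D}_g = d^*+d^+$ already established in Proposition \ref{p:injectivity-of-D}, together with standard Hodge theory on the compact manifold $\mathcal{M}$. Given $\bm{\xi}^+\in\mathfrak{B}$, observe that solving $\mathscr{L}_g(\bm{\eta},\bar{\bm{\xi}}^+)=\bm{\xi}^+$ componentwise means finding, for each $i=1,2,3$, a coclosed $1$-form $\eta_i$ and a self-dual harmonic form $\bar{\xi}_i^+\in\mathcal{H}^+(\mathcal{M})$ with $d^+\eta_i + \bar{\xi}_i^+ = \xi_i^+$. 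Since $\mathcal{H}^+(\mathcal{M})$ is $3$-dimensional (Proposition \ref{p:topological-invariants}) with basis $\{\omega_1,\omega_2,\omega_3\}$, I would first define $\bar{\xi}_i^+$ to be the $L^2$-orthogonal projection of $\xi_i^+$ onto $\mathcal{H}^+(\mathcal{M})$; this is the unique choice making $\xi_i^+ - \bar{\xi}_i^+$ $L^2$-orthogonal to all harmonic self-dual forms, which is precisely the solvability condition for $d^+\eta_i = \xi_i^+ - \bar{\xi}_i^+$ on a compact $4$-manifold. It then remains to solve $\mathscr{D}_g\eta_i = (0, \xi_i^+ - \bar{\xi}_i^+)$ with $\eta_i$ coclosed: since the $0$-form component of the right-hand side vanishes, any solution of $\mathscr{D}_g\eta_i = (0,\xi_i^+-\bar\xi_i^+)$ automatically satisfies $d^*\eta_i=0$ after subtracting off an exact piece (using $b_1(\mathcal{M})=0$ so there are no harmonic $1$-forms to worry about), and $d^+\eta_i$ then equals $\xi_i^+-\bar\xi_i^+$ as required.

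The key analytic input is existence of a solution to $\mathscr{D}_g\eta_i = (0,\xi_i^+-\bar\xi_i^+)$ lying in $C^{1,\alpha}_{\delta,\nu,\mu}$. Here I would invoke Proposition \ref{p:injectivity-of-D}, which gives the uniform coercivity estimate $\|\eta\|_{C^{1,\alpha}_{\delta,\nu,\mu}}\leq C\|\mathscr{D}_g\eta\|_{C^{0,\alpha}_{\delta,\nu+1,\mu}}$, hence injectivity of $\mathscr{D}_g$ on the weighted space. For surjectivity onto the relevant subspace, note that $\mathscr{D}_g$ is elliptic and formally self-adjoint (as an operator $\Omega^1\to\Omega^0\oplus\Omega^2_+$, its $L^2$-adjoint is $d+d^{*,+}$, and on a compact manifold $\mathrm{index}(\mathscr{D}_g)=b_1-1-b_2^+$); since the domain and target weighted spaces are exchanged in a symmetric way by the index-zero nature of the problem in each $\dR^3$-factor, the coercivity estimate upgrades to an isomorphism onto the closed subspace $\{(0,\psi^+): \psi^+\perp_{L^2}\mathcal{H}^+\}$ of $\mathfrak{B}_{\text{one factor}}$. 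Concretely I would: (i) solve $\mathscr{D}_g\eta_i = (0,\xi_i^+-\bar\xi_i^+)$ weakly in $L^2$ using standard Fredholm theory for the self-adjoint elliptic operator $\mathscr{D}_g$ on the compact $\mathcal{M}$; (ii) bootstrap the solution to $C^{1,\alpha}$ by interior Schauder estimates; (iii) show the $C^{1,\alpha}$ solution actually lies in the \emph{weighted} space $C^{1,\alpha}_{\delta,\nu,\mu}$ by re-running the contradiction/rescaling argument or, more simply, by applying the weighted Schauder estimate of Proposition \ref{p:weighted-Schauder-estimate} to the already-constructed smooth solution, whose weighted $C^0$-norm is controlled a priori because $\mathcal{M}$ is compact and all weight functions are bounded above and below by $\beta$-dependent constants.

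The remaining point is to track the $\beta$-dependence and extract the factor $e^{10\delta\beta}$. The coercivity estimate of Proposition \ref{p:injectivity-of-D} has a constant $C$ \emph{independent} of $\beta$, so the $C^{1,\alpha}_{\delta,\nu,\mu}$-norm of $\eta_i$ is bounded by $C\|\xi_i^+-\bar\xi_i^+\|_{C^{0,\alpha}_{\delta,\nu+1,\mu}}\leq C\|\xi_i^+\|_{C^{0,\alpha}_{\delta,\nu+1,\mu}}+C\|\bar\xi_i^+\|_{C^{0,\alpha}_{\delta,\nu+1,\mu}}$. The loss of $e^{10\delta\beta}$ enters only through comparing the $L^2$-norm of the harmonic projection $\bar\xi_i^+$ with its weighted $C^{0,\alpha}$-norm: since $\bar\xi_i^+$ is a harmonic self-dual form and the weight function in the neck/end regions is of size $e^{\delta(z+2T_-)}$ with $|z|\leq\max(T_-,T_+)\lesssim\beta$ (by the parameter constraints \eqref{e:time-constraint}), the ratio between $\|\cdot\|_{C^{0,\alpha}_{\delta,\nu+1,\mu}}$ and $\|\cdot\|_{L^2}$ on $\mathcal{H}^+(\mathcal{M})$ is at most $C e^{C\delta\beta}\leq Ce^{10\delta\beta}$ for $\beta$ large. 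I expect \textbf{this weight-comparison step --- controlling the harmonic forms $\omega_1,\omega_2,\omega_3$ and hence the projection $\bar\xi_i^+$ in the weighted norms uniformly enough to get the stated exponential factor} --- to be the main obstacle, because it requires pointwise control of the basis of $\mathcal{H}^+(\mathcal{M})$ across all nine regions; this should follow from the explicit Gibbons--Hawking form of the approximate triple $\bm{\omega}^{\mathcal{M}}=(\omega_1,\omega_2,\omega_3)$ (equation \eqref{e:def-approx-triple}) together with Corollary \ref{c:sutt}, which shows $\omega_i$ is pointwise comparable to the metric volume scale, and a short computation of how the weight function grows relative to that scale in the collapsing regions.
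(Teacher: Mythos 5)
Your proposal is correct and follows essentially the same route as the paper: surjectivity of $\mathscr{L}_g$ comes from the Hodge decomposition $\Omega^2_+(\mathcal{M})=\mathcal{H}^+(\mathcal{M})\oplus d^+(\mathring{\Omega}^1(\mathcal{M}))$, the $\bm{\eta}$-part is controlled by Proposition \ref{p:injectivity-of-D} applied to coclosed forms, and the factor $e^{10\delta\beta}$ arises exactly as you say from comparing $L^2$ and weighted H\"older norms on $\mathcal{H}^+(\mathcal{M})$ (the paper uses $\|\omega_k\|_{C^{0,\alpha}_{\delta,\nu+1,\mu}}\leq Ce^{5\delta\beta}$, $\|\omega_k\|^2_{L^2}\sim\beta^2$, and $\int_{\mathcal{M}}(\rho^{(0+\alpha)}_{\delta,\nu+1,\mu})^{-2}\leq C$), the only difference being that the paper packages the quantitative step as a second contradiction/compactness argument whereas you estimate directly, which is equivalent. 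One small inaccuracy worth deleting: $\mathscr{D}=d^*+d^+:\Omega^1\to\Omega^0\oplus\Omega^2_+$ is not formally self-adjoint and its index is $b_1-b_0-b_2^+=-4$ here, so the "index-zero" remark is wrong, but nothing in your argument uses it since Hodge theory already supplies existence, and uniqueness follows either from $b_1(\mathcal{M})=0$ (as you note) or directly from the estimate itself.
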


\begin{proof}

First, we prove the surjectivity of the linear operator $\mathscr{L}_g$.
By standard Hodge theory, it holds that
\begin{align}
\Omega^2_+(\mathcal{M}) &=\mathcal{H}^+(\mathcal{M}) \oplus d^+(\Omega^1(\mathcal{M}))\\
 \Omega^1(\mathcal{M}) &= d ( \Omega^0(\mathcal{M})) \oplus \mathring{\Omega}^1(\mathcal{M}),
\end{align}
where $\mathring{\Omega}^1(\mathcal{M})$ denotes the space of divergence-free $1$-forms on $\mathcal{M}$, therefore 
\begin{equation}
\Omega^2_+(\mathcal{M}) = \mathcal{H}^+(\mathcal{M}) \oplus d^+(\mathring{\Omega}^1(\mathcal{M})).
\end{equation}
This clearly implies that
\begin{equation}
\mathscr{L}_{g} = (d^+\oplus \Id) \otimes\dR^3 : \mathfrak{A} \longrightarrow \mathfrak{B}.
\end{equation}
is surjective. 

The remainder of the proof is a contradiction argument. We will argue on the level of forms, and this will imply the result for triples. 
If \eqref{e:L-uniform-estimate} does not hold for a uniform constant, then there exists a 
sequence of gluing parameters $\beta_j \rightarrow \infty$ and 
$ \eta_j$, $\bar{\xi}^+_j$  with 
\begin{align}
\label{0limit}
e^{10 \delta\cdot \beta_j} \| d^+ \eta_j +\bar{\xi}^+_j \|_{C_{\delta,\nu+1,\mu}^{0,\alpha}(\mathcal{M})} &\rightarrow 0,\\
\|\eta_j \|_{C_{\delta,\nu,\mu}^{1,\alpha}(\mathcal{M})}+\|\bar{{\xi}}^+_j\|_{L^2(\mathcal{M})} &= 1,
\end{align}
as $j \to \infty$.  Pairing $d^+ \eta_j + \bar{\xi}^+_j$ with $\bar{\xi}^+_j$ and 
integrating, and using \eqref{0limit}, we obtain that 
\begin{align}
\begin{split}
\Vert \bar{\xi}^+_j \Vert_{L^2(\mathcal{M})}^2 &\leq \epsilon_j 
e^{- 10 \delta\cdot \beta_j} \int_{\mathcal{M}} | \bar{\xi}_j^+ | (\rho_{\delta,\nu+1,\mu}^{(0+\alpha)})^{-1} \dvol_{g_{\beta_j}}\\
&\leq \epsilon_j e^{- 10\delta\cdot \beta_j} \Vert \bar{\xi}^+_j \Vert_{L^2(\mathcal{M})}
\Big\{
\int_{\mathcal{M}}  (\rho_{\delta,\nu+1,\mu}^{(0+\alpha)})^{-2} \dvol_{g_{\beta_j}} 
\Big\}^{\frac{1}{2}},
\end{split}
\end{align}
where $\epsilon_j \to 0$ as $ j \to \infty$. It is easy to check that 
\begin{align}
\int_{\mathcal{M}}  (\rho_{\delta,\nu+1,\mu}^{(0+\alpha)})^{-2} \dvol_{g_{\beta_j}}  < C, 
\end{align}
where $C$ is independent of $\beta$, so this implies that 
\begin{equation}e^{ 10 \delta\cdot \beta_j} \Vert \bar{\xi}^+_j \Vert_{L^2(\mathcal{M})} \to 0\label{e:exp-L2}\end{equation} as $j \to \infty$.

Next, since the triple $\bm{\omega}^{\mathcal{M}}$ is harmonic and spans $\mathcal{H}_+(\mathcal{M})$ at every point, we can write 
\begin{equation}
\bar{\xi}^+ = \lambda_1 \omega_1 + \lambda_2 \omega_2 + \lambda_3 \omega_3.
\label{e:proj}\end{equation} 
Recall by the definition of the triple $\bm{\omega}_{\beta}^{\mathcal{M}}$, for every $1\leq p,q\leq 3$,
\begin{equation}
\frac{1}{2}\int_{\mathcal{M}}\omega_p\wedge\omega_q = \int_{\mathcal{M}} Q_{pq} \dvol_{\bm{\omega}_{\beta}^{\mathcal{M}}},\end{equation}
and so for any self-dual harmonic form $\bar{\xi}^+\in\mathcal{H}_+(\mathcal{M}) $, 
\begin{align}
\Vert \bar{\xi}^+ \Vert_{L^2(\mathcal{M})}^2 &= 2 \sum_{p,q=1}^3 \lambda_p \lambda_q \int_{\mathcal{M}} Q_{pq} \dvol_{\bm{\omega}_{\beta}^{\mathcal{M}}}, 
\end{align}
so applying the volume estimate 
\begin{equation}C^{-1}\beta^2\leq \Vol_g(\mathcal{M}) \leq C \beta^2,\end{equation}
and Proposition \ref{p:gluing-definite-triple},
we have the estimate 
\begin{align}
C^{-1} \beta_j^2 (\lambda_{1,j}^2 +\lambda_{2,j}^2 + \lambda_{3,j}^2)
\leq \Vert \bar{\xi}_j^+ \Vert_{L^2(\mathcal{M})}^2.
\end{align}
The above and \eqref{e:exp-L2}  imply that $\beta_j \lambda_{k,j} e^{10\delta\cdot \beta_j}\to 0$ as $j\to \infty$ for $k = 1,2, 3$.  
We then have
\begin{align}
\begin{split}
\Vert \bar{\xi}^+_j \Vert_{C_{\delta,\nu+1,\mu}^{0,\alpha}(\mathcal{M})}
&= \Vert \lambda_{1,j} \omega_1 +\lambda_{2,j} \omega_2 + \lambda_{3,j} \omega_3
\Vert_{C_{\delta,\nu+1,\mu}^{0,\alpha}(\mathcal{M})}\\
& \leq \lambda_{1,j}\Vert \omega_1\Vert_{C_{\delta,\nu+1,\mu}^{0,\alpha}(\mathcal{M})}
+ \lambda_{2,j}\Vert \omega_2\Vert_{C_{\delta,\nu+1,\mu}^{0,\alpha}(\mathcal{M})}
+ \lambda_{3,j}\Vert \omega_3\Vert_{C_{\delta,\nu+1,\mu}^{0,\alpha}(\mathcal{M})}.
\end{split}
\end{align}
Since 
\begin{align}
\Vert \omega_k\Vert_{C_{\delta,\nu+1,\mu}^{0,\alpha}(\mathcal{M})} \leq C e^{5  \delta \cdot \beta_j}, 
\end{align}
for $1 \leq k \leq 3$, the above implies that
\begin{align}
\Vert \bar{\xi}^+_j \Vert_{C_{\delta,\nu+1,\mu}^{0,\alpha}(\mathcal{M})} \leq C \epsilon_j \beta_j^{-1}e^{-5  \delta \cdot \beta_j}, 
\end{align}
for some sequence $\epsilon_j \to 0$ as $ j \to \infty$, so we have proved that
\begin{align}
\Vert \bar{\xi}^+_j \Vert_{C_{\delta,\nu+1,\mu}^{0,\alpha}(\mathcal{M})} \rightarrow 0,
\end{align}
as $j \to \infty$. Consequently, our sequence satisfies 
\begin{align}
\label{0limit2}
 \| d^+ \eta_j \|_{C_{\delta,\nu+1,\mu}^{0,\alpha}(\mathcal{M})} &\rightarrow 0, \\
\|\eta_j \|_{C_{\delta,\nu,\mu}^{1,\alpha}(\mathcal{M})} &\to 1,
\end{align}
as $j \to \infty$, which contradicts Proposition \ref{p:injectivity-of-D}.  
 \end{proof}

In the following proposition, we will prove the nonlinear error estimate which corresponds to Property (2) in Lemma \ref{l:implicit-function}.  
\begin{lemma}
[Nonlinear Errors]\label{l:nonlinear-errors} Consider $(\mathcal{M},g_{\beta})$ with sufficiently large gluing parameter $\beta\gg1$. Let $\delta$, $\nu$ and $\mu$ be the constants in Proposition 
\ref{p:injectivity-of-D}, then
there are constants $r_0 > 0$ and $C>0$ which are independent $\beta$, such that for every 
$\bm{v}_1\equiv(\bm{\eta}_1,\bar{\bm{\xi}}_1^+) \in B_r(0)\subset\mathfrak{A}$ and $\bm{v}_2\equiv(\bm{\eta}_2,\bar{\bm{\xi}}_2^+) \in B_r(0)\subset\mathfrak{A}$,
where $r < r_0$, we have
\begin{equation}
\| \mathscr{N}(\bm{v}_1) - \mathscr{N}(\bm{v}_2) \|_{\mathfrak{B}} \leq C (\|\bm{v}_1\|_{\mathfrak{A}} + \|\bm{v}_2\|_{\mathfrak{A}} ) \cdot \|\bm{v}_1 - \bm{v}_2\|_{\mathfrak{A}}.\label{e:nonlinear-error}
\end{equation}

\end{lemma}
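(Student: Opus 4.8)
The nonlinear term $\mathscr{N}(\bm{v})$ depends only on $\bm{\eta}$, since $\mathscr{N}(\bm{\eta},\bar{\bm{\xi}}^+) = \mathfrak{F}_0(\TF(-Q_{\beta})) - \mathfrak{F}_0(\TF(-Q_{\beta} - S_{d^-\bm{\eta}}))$; thus \eqref{e:nonlinear-error} reduces to an estimate involving only $\|\bm{\eta}_1\|_{C^{1,\alpha}_{\delta,\nu,\mu}(\mathcal{M})}$ and $\|\bm{\eta}_2\|_{C^{1,\alpha}_{\delta,\nu,\mu}(\mathcal{M})}$, which are controlled by $\|\bm{v}_i\|_{\mathfrak{A}}$. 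The map $\mathfrak{F}_0$ is the local inverse near $0$ of the smooth local diffeomorphism $\mathfrak{G}_0$ of the space $\mathscr{S}_0(\dR^3)$ of trace-free symmetric matrices; by Corollary \ref{c:sutt} the linearization $d\mathfrak{G}_0|_0$ is uniformly invertible for $\beta \gg 1$ (it is a small perturbation of the linearization at $Q_{\bm{\omega}} = \Id$), so $\mathfrak{F}_0$ is defined on a uniform ball around $0$ and is uniformly Lipschitz with uniformly Lipschitz derivative there. The first step is therefore to record that $S_{d^-\bm{\eta}}$ is a fiberwise quadratic form in $d^-\bm{\eta}$: writing $\frac12 \theta_i^- \wedge \theta_j^- = S_{ij}\,\dvol_{\bm{\omega}}$ with $\theta_i = d\eta_i - \xi_i$ and $\theta_i^-$ its anti-self-dual part, the entries $S_{ij}$ are pointwise products of components of $d^-\bm{\eta}$. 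Hence $S_{d^-\bm{\eta}_1} - S_{d^-\bm{\eta}_2}$ is pointwise bounded by $C(|d^-\bm{\eta}_1| + |d^-\bm{\eta}_2|)\cdot|d^-\bm{\eta}_1 - d^-\bm{\eta}_2|$, and similarly for the $C^{0,\alpha}$ seminorm via the product rule for H\"older seminorms.

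The second step is to track the weights. Since $|\nabla\bm{\eta}_i|$ is controlled by $\rho^{(1)}_{\delta,\nu,\mu}{}^{-1}\|\bm{\eta}_i\|_{C^{1,\alpha}_{\delta,\nu,\mu}}$, the quadratic quantity $S_{d^-\bm{\eta}_1} - S_{d^-\bm{\eta}_2}$, together with its H\"older seminorm at scale $r_0$, satisfies a pointwise bound of the form $\big(\rho^{(1+\alpha)}_{\delta,\nu,\mu}\big)^{-2}$ times the product of the three relevant norms, after absorbing the (uniformly bounded) constants relating $\rho^{(1)}$ and $\rho^{(1+\alpha)}$ on a ball of radius $r_0 = \tfrac12\InjRad_g(\mathcal{M})$. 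The key arithmetic point is the weight exponent identity
\begin{equation}
2\delta \le \delta, \quad 2(\nu + 1) \ge \nu + 1, \quad 2\mu \ge \mu
\end{equation}
which, read correctly, says: the product weight $\big(\rho^{(1+\alpha)}_{\delta,\nu,\mu}\big)^{-2}$ is pointwise bounded by $\big(\rho^{(0+\alpha)}_{\delta,\nu+1,\mu}\big)^{-1}$ up to a uniform constant, because the ``$\delta$'' part of the exponent only improves (the weight decays), the ``$\nu$'' part goes from $2(\nu+k+\alpha)$ at $k=1$ to $(\nu+1)+0+\alpha$ as needed in the target space $C^{0,\alpha}_{\delta,\nu+1,\mu}$, and the $\mu$-part is handled region by region using the explicit form \eqref{def-weight-function}. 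Concretely one checks this inequality of weight functions separately in each of the regions $\I,\II,\III,\IV_\pm,\V_\pm,\VI_\pm$; in every region the weight is a product of an exponential in a linear coordinate and a power of one of $L_\pm$, $\underline{L}_\pm$, $d_m$, or a constant, so the comparison is a one-line monomial estimate, using $\mu+\nu\in(0,1)$ and $\delta>0$. Finally $\mathfrak{F}_0$ applied to two nearby arguments is estimated by the mean value theorem: $|\mathfrak{F}_0(A_1) - \mathfrak{F}_0(A_2)| \le C|A_1 - A_2|$ and the H\"older seminorm of $\mathfrak{F}_0(A)$ is controlled by that of $A$ because $\mathfrak{F}_0 \in C^\infty$ with uniformly bounded derivatives on the relevant uniform neighborhood of $0$; this is where we need $r_0$ small enough that $\TF(-Q_\beta - S_{d^-\bm{\eta}_i})$ stays in that neighborhood for $\bm{v}_i \in B_{r_0}(0)$, which holds for $\beta \gg 1$ by Corollary \ref{c:sutt} (so $\TF(-Q_\beta)$ is exponentially small) together with $\|S_{d^-\bm{\eta}_i}\| \le C r_0^2$.

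The main obstacle is purely bookkeeping rather than conceptual: verifying the weight comparison $\big(\rho^{(1+\alpha)}_{\delta,\nu,\mu}\big)^2 \big(\rho^{(0+\alpha)}_{\delta,\nu+1,\mu}\big)^{-1} \ge c > 0$ uniformly in $\beta$ across all nine regions and their gap transitions, since in Regions $\I$ and $\II$ the weights carry the $2\mu$ exponent and the factors $e^{\delta(2T_-)}$ and $\beta^{-\mu/2}$, which must be shown to combine favorably; one must also be careful that the comparison constants relating $\rho^{(k)}$ at nearby points (needed to pass from the pointwise product bound to the H\"older seminorm bound at scale $r_0$) are uniform, which follows from the analysis in the proof of Proposition \ref{p:weighted-Schauder-estimate} showing $\rho^{(k+\alpha)}(\bm{y})/\rho^{(k+\alpha)}(\bm{x})$ is pinched on balls of the relevant radius. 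Once these region-by-region monomial inequalities are in hand, combining the Lipschitz bound for $\mathfrak{F}_0$, the quadratic structure of $S_{d^-\bm{\eta}}$, and the weight identity yields \eqref{e:nonlinear-error} with $r_0$ and $C$ independent of $\beta$.
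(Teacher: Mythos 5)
Your proposal is essentially the paper's own argument: write $\mathscr{N}(\bm{v}_1)-\mathscr{N}(\bm{v}_2)=\mathfrak{F}_0\big(\TF(-Q_{\beta}-S_{d^-\bm{\eta}_2})\big)-\mathfrak{F}_0\big(\TF(-Q_{\beta}-S_{d^-\bm{\eta}_1})\big)$, use the smoothness of $\mathfrak{F}_0$ near $0$ and the quadratic structure of $S_{d^-\bm{\eta}}$ to get the pointwise bound $C(|d^-\bm{\eta}_1|+|d^-\bm{\eta}_2|)\,|d^-(\bm{\eta}_1-\bm{\eta}_2)|$, and then multiply by the target weight; your extra attention to the domain of $\mathfrak{F}_0$ (via Corollary \ref{c:sutt} and the radius $r_0$) is a sensible addition, not a different route.

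Two bookkeeping statements should be corrected, though neither affects the viability of the argument. The displayed ``identity'' $2\delta\le\delta$ is false as written, and the ratios $\rho^{(1+\alpha)}_{\delta,\nu,\mu}/\rho^{(1)}_{\delta,\nu,\mu}$ are \emph{not} uniformly bounded in $\beta$: they equal the local length scale to the power $\alpha$, e.g. $\beta^{-\alpha/2}$ in Region $\I$ and $\beta^{\alpha/2}$ in Region $\III$, so one cannot freely trade $\rho^{(1)}$ for $\rho^{(1+\alpha)}$. What actually makes the weights work is visible directly from \eqref{def-weight-function}: the weight depends only on the sum $\nu+k+\alpha$ (the $\mu$-dependent factors being unchanged), so one has the exact identities $\rho^{(0)}_{\delta,\nu+1,\mu}=\rho^{(1)}_{\delta,\nu,\mu}$ and $\rho^{(0+\alpha)}_{\delta,\nu+1,\mu}=\rho^{(1+\alpha)}_{\delta,\nu,\mu}$, and all weights are bounded below by a positive constant independent of $\beta$ (the exponential factors $e^{2\delta T_-}$, $e^{\delta(z+2T_-)}$, etc.\ dominate every polynomial factor in $\beta$, while in Regions $\V_\pm$, $\VI_\pm$ the weight is bounded below by a $\beta$-independent constant). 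These give exactly the comparisons one needs, $\rho^{(0)}_{\delta,\nu+1,\mu}\le C\big(\rho^{(1)}_{\delta,\nu,\mu}\big)^2$ for the sup part and $\rho^{(0+\alpha)}_{\delta,\nu+1,\mu}\le C\,\rho^{(1)}_{\delta,\nu,\mu}\,\rho^{(1+\alpha)}_{\delta,\nu,\mu}$ for the H\"older part, which is how the paper's proof (implicitly) closes the estimate.
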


\begin{proof} 
By definition, for any $\bm{v}\equiv(\bm{\omega},\bar{\bm{\xi}}^+)$,
\begin{equation}
\mathscr{N}(\bm{v}) \equiv\mathfrak{F}_0\Big(\TF(-Q_{\beta})\Big)-\mathfrak{F}_0\Big(\TF(-Q_{\beta}- 
S_{d^{-}\bm{\eta}})\Big).
\end{equation}
and hence
\begin{equation}
\mathscr{N}(\bm{v}_1) - \mathscr{N}(\bm{v}_2) = \mathfrak{F}_0\Big(\TF(-Q_{\beta}- S_{d^{-}\bm{\eta}_2})\Big)-\mathfrak{F}_0\Big(\TF(-Q_{\beta}- S_{d^{-}\bm{\eta}_1})\Big).
\end{equation}
Since $\mathfrak{F}_0:\mathscr{S}_0(\dR^3)\to \mathscr{S}_0(\dR^3)$ is a smooth map on the space of trace-free symmetric $(3\times 3)$-matrices, there is some universal constant $C>0$ such that
\begin{align}
\begin{split}
 |\mathscr{N}(\bm{v}_1) - \mathscr{N}(\bm{v}_2) |
&\leq   C | 
 d^{-}\bm{\eta}_1 *  d^{-}\bm{\eta}_1   - d^{-}\bm{\eta}_2 *  d^{-}\bm{\eta}_2| \\
&\leq C ( |d^{-}\bm{\eta}_1 | + |d^{-}\bm{\eta}_2|) \cdot |d^{-}(\bm{\eta}_1 - \bm{\eta}_2)|.
\end{split}
\end{align}
Multiplying by the weight function, 
\begin{align}
\begin{split}
\rho_{\delta, \nu +1 , \mu}^{(0)}(x)\cdot | \mathscr{N}(v_1) &- \mathscr{N}(v_2)| 
\leq C\cdot \rho_{\delta, \nu +1 , \mu}^{(0)}(x) \cdot ( |d^{-}\bm{\eta}_1| + |d^{-}\bm{\eta}_2|) \cdot |d^{-}(\bm{\eta}_1 - \bm{\eta}_2)| \\
& \leq  C \Big(\rho_{\delta, \nu  , \mu}^{(1)}(x)\cdot ( |d^-\bm{\eta}_1| + |d^-\bm{\eta}_2|)\Big)\cdot \Big(
 \rho_{\delta, \nu  , \mu}^{(1)}(x)\cdot | d^- (\bm{\eta}_1 - \bm{\eta}_2)|\Big)
.
\end{split}
\end{align}
Taking sup norms,
\begin{align}
\Vert \mathscr{N}(\bm{v}_1) - \mathscr{N}(\bm{v}_2) \Vert_{ C^0_{\delta, \nu +1 , \mu}(\mathcal{M})}\leq C \Big(  \Vert \bm{v}_1 \Vert_{ C^1_{\delta, \nu , \mu}(\mathcal{M})}  +  
\Vert \bm{v}_2 \Vert_{ C^1_{\delta, \nu , \mu}(\mathcal{M})} \Big)\cdot \Big(
\Vert \bm{v}_1 - \bm{v}_2 \Vert_ { C^1_{\delta, \nu , \mu}(\mathcal{M})}\Big).
\end{align}
By similar computations, 
we also have the estimate for the H\"older seminorm 
\begin{align}
\Big[ \mathscr{N}(\bm{v}_1) - \mathscr{N}(\bm{v}_2) \Big]_{ C^{0,\alpha}_{\delta, \nu +1 , \mu}(\mathcal{M})} \leq C  \Big(  \Vert \bm{v}_1 \Vert_{ C^{1,\alpha}_{\delta, \nu , \mu}(\mathcal{M})}  +  
\Vert \bm{v}_2 \Vert_{ C^{1,\alpha}_{\delta, \nu , \mu}(\mathcal{M})} \Big)\cdot \Big(
\Vert \bm{v}_1 - \bm{v}_2 \Vert_ { C^{1,\alpha}_{\delta, \nu , \mu}(\mathcal{M})}\Big).
\end{align}
So we obtain the effective estimate \eqref{e:nonlinear-error} for the nonlinear errors. 
\end{proof}

\begin{proposition}
\label{p:small-error}  Consider $(\mathcal{M},g_{\beta})$ with sufficiently large gluing parameter $\beta\gg1$. Let $\delta$, $\nu$ and $\mu$ be the constants in Proposition 
\ref{p:injectivity-of-D}, then there exists some constant $C>0$ which is independent of 
$\beta$ such that
\begin{equation}
\|\mathscr{F}(0)\|_{\mathfrak{B}} \leq C e^{-\frac{\delta_q \beta}{2}},
\end{equation}
where $\delta_q>0$ is the constant in Corollary \ref{c:sutt}.
\end{proposition}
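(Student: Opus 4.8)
The goal is to bound the weighted $C^{0,\alpha}$-norm of $\mathscr{F}(0) = -\mathfrak{F}_0(\TF(-Q_\beta))$. The plan is to reduce the bound on $\mathscr{F}(0)$ to the pointwise closeness $\|Q_\beta - \Id\|$ supplied by Corollary \ref{c:sutt}, keeping careful track of how the weight function $\rho^{(\alpha)}_{\delta,\nu+1,\mu}$ interacts with the support of the error. First I would observe that $\bm{\omega}^{\mathcal{M}}_\beta$ is an \emph{exact} hyperk\"ahler triple on all of $\mathcal{M}$ \emph{except} in the two damage zones $DZ_\pm$, where the cutoff functions $\phi_\pm$ are nonconstant in \eqref{e:def-approx-triple}. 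On the complement of $DZ_- \cup DZ_+$ the triple $\bm{\omega}^{\mathcal{M}}_\beta$ coincides with one of $\bm{\omega}^-$, $\bm{\omega}^N$, $\bm{\omega}^+$, each of which is a genuine hyperk\"ahler triple, so there $Q_\beta = \Id$ exactly and $\mathscr{F}(0) = 0$. Hence the error is supported in $DZ_\pm$, which lies in region $\IV_\pm$ of the subdivision, where $z_\pm \sim 2T_\pm \sim \frac{A\beta}{2\pi b_\pm}$.

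\textbf{Key steps.} Step 1: localize the support of $\mathscr{F}(0)$ to $DZ_\pm$ as above. Step 2: on $DZ_\pm$, use that $\mathfrak{F}_0$ is a smooth map near $0$ with $\mathfrak{F}_0(0) = 0$ (it is the local inverse of $\mathfrak{G}_0$, which vanishes at $0$ and has invertible differential there), so there is a universal constant with $|\mathfrak{F}_0(\TF(-Q_\beta))| \leq C|Q_\beta - \Id|$ pointwise and similarly for the relevant $C^k$ and H\"older norms; here one invokes Corollary \ref{c:sutt}, which gives $\|Q_\beta - \Id\|_{C^k(\mathcal{M})} \leq C_k e^{-\delta_q \beta}$ measured with respect to $g_\beta$. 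Step 3: estimate the value of the weight function $\rho^{(\alpha)}_{\delta,\nu+1,\mu}$ on $DZ_\pm$. In region $\IV_-$ the weight is $e^{\delta(z(\bm{x}) + 2T_-)} (L_-(\bm{x}))^{\nu+1+\alpha}$, and on $DZ_-$ we have $z \approx 2T_- \approx \frac{A\beta}{2\pi b_-}$ while $L_- \approx \beta^{1/2}$, so $\rho^{(\alpha)}_{\delta,\nu+1,\mu} \leq C e^{2\delta \cdot 2T_-} \beta^{(\nu+1+\alpha)/2} \leq C e^{C'\delta\beta}$ for a constant $C'$ proportional to $\frac{A}{\pi b_-}$. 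Step 4: combine: $\|\mathscr{F}(0)\|_{\mathfrak{B}} = \|\rho^{(\alpha)}_{\delta,\nu+1,\mu} \cdot \mathscr{F}(0)\|_{C^{0,\alpha}} \leq C e^{C'\delta\beta} \cdot e^{-\delta_q \beta}$, and since the hypothesis on $\delta$ in Proposition \ref{p:injectivity-of-D} forces $\delta < \frac{\delta_q}{10^3}$ (and $C'\delta$ can be made $< \delta_q/2$ by this smallness once $C'$ is pinned down from the explicit constraints \eqref{e:time-constraint}), the product is $\leq C e^{-\delta_q\beta/2}$.

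\textbf{Main obstacle.} The routine part is the pointwise bound from smoothness of $\mathfrak{F}_0$ and the reduction to $Q_\beta - \Id$. The delicate bookkeeping — and the step I expect to require the most care — is Step 3: one must verify that the weight function's exponential growth rate $e^{\delta \cdot 2T_-}$ in the damage zone is genuinely beaten by the decay rate $e^{-\delta_q\beta}$ from Corollary \ref{c:sutt}. This hinges on the quantitative relation $T_\pm = \frac{A(\beta+\beta_\pm)}{4\pi b_\pm}$ from \eqref{e:time-constraint}, so that $2\delta T_\pm = \frac{A\delta(\beta + \beta_\pm)}{2\pi b_\pm}$; one needs $\delta$ small relative to $\frac{2\pi b_\pm}{A}\delta_q$, which is exactly the kind of smallness already absorbed into the hypothesis $\delta < 10^{-3}\min\{\ldots,\delta_q\}$ (the constants $b_\pm \in \{1,\ldots,9\}$ and $A \in [\tfrac12,1]$ are bounded, so the losing factor is bounded). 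I would also need to confirm that the H\"older seminorm of $\mathscr{F}(0)$ on the small-diameter balls contained in $DZ_\pm$ does not introduce additional negative powers of $\beta$ that could spoil the estimate; since $DZ_\pm$ has $g_\beta$-diameter comparable to $\beta^{-3/2}$ or larger and curvature bounded by $C\beta^{-3}$, the local geometry is controlled and the H\"older term behaves like the $C^0$ term up to universal constants.
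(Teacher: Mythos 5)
Your proposal is correct and follows essentially the same route as the paper: the paper's proof simply invokes smoothness of $\mathfrak{F}_0$ with $\mathfrak{F}_0(0)=0$ to reduce $\|\mathscr{F}(0)\|_{\mathfrak{B}}$ to $\|\TF(Q_\beta)\|_{\mathfrak{B}}$ and then cites Corollary \ref{c:sutt}, with the loss from the weight's growth $e^{O(\delta T_\pm)}$ against the decay $e^{-\delta_q\beta}$ absorbed into the exponent $\delta_q/2$ exactly as in your Steps 3--4 (your localization to $DZ_\pm$ is a harmless refinement the paper leaves implicit). Note only that on $DZ_-$ the neck coordinate satisfies $z\approx -T_-$ (equivalently $z_-\approx T_-$), not $z\approx 2T_-$, so the weight there is $\approx e^{\delta T_-}\cdot\beta^{(\nu+1+\alpha)/2}$; your figure $e^{2\delta\cdot 2T_-}$ is an overestimate, which still yields the stated bound since $\delta<10^{-3}\delta_q$.
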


\begin{proof}In our context, it holds that
\begin{equation}
\mathscr{F}(0)= - \mathfrak{F}_0\Big(\TF(-Q_{\beta})\Big).
\end{equation}
Since $\mathfrak{F}_0:\mathscr{S}_0(\dR^3)\to \mathscr{S}_0(\dR^3)$ is a smooth map on the space of trace-free symmetric $(3\times 3)$-matrices,  and $\mathfrak{F}_0(0)=0$, so we have 
\begin{equation}
\|\mathscr{F}(0)\|_{\mathfrak{B}}\leq C \|\TF(Q_{\beta})\|_{\mathfrak{B}}.
\end{equation}
The proof immediately follows from the estimate in Corollary \ref{c:sutt}. 
\end{proof}

Now we are ready to prove the existence of a hyperk\"ahler triple on $\mathcal{M}$ which implies that $\mathcal{M}$ is diffeomorphic to the $\K3$ surface.

\begin{theorem}\label{t:existence-hyperkaehler}
Consider $(\mathcal{M}, g_{\beta})$ with sufficiently large gluing parameter $\beta\gg1$. Denote by $\bm{\omega}_{\beta}^{\mathcal{M}}$ the gluing definite triple which is constructed by Proposition \ref{p:gluing-definite-triple}. Let $\delta$, $\nu$ and $\mu$ be the constants in Proposition \ref{p:injectivity-of-D}, then there exists a hyperk\"ahler triple 
$\bm{\omega}_{\beta}^{\HK}$ with the effective estimate
\begin{equation}
\|\bm{\omega}_{\beta}^{\mathcal{M}}-\bm{\omega}_{\beta}^{\HK}\|_{C_{\delta,\nu+1,\mu}^{0,\alpha}(\mathcal{M})}\leq Ce^{-\delta_0\beta}\label{e:hoelder-error}
\end{equation}
for some constants $C>0$ and $\delta_0>0$ independent of $\beta$.
In particular, $\mathcal{M}$ is diffeomorphic to the $\K3$ surface. 
\end{theorem}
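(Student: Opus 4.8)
The plan is to apply the implicit function theorem (Lemma~\ref{l:implicit-function}) to the operator $\mathscr{F}: \mathfrak{A} \to \mathfrak{B}$ built from the gauge-fixed system \eqref{e:hkt-system}. All of the ingredients have now been assembled: Proposition~\ref{p:injectivity-for-L} verifies hypothesis (1) with $\|\mathscr{L}^{-1}\| \leq C_1 \equiv C e^{10\delta\beta}$; Lemma~\ref{l:nonlinear-errors} verifies hypothesis (2a) with a constant $C_2$ \emph{independent} of $\beta$; and Proposition~\ref{p:small-error} gives $\|\mathscr{F}(0)\|_{\mathfrak{B}} \leq C e^{-\delta_q\beta/2}$. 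First I would choose the radius $r = r(\beta)$ in Lemma~\ref{l:implicit-function}; the constraint $r < \frac{1}{3C_1 C_2}$ forces $r \lesssim e^{-10\delta\beta}$, while hypothesis (2b) requires $\|\mathscr{F}(0)\|_{\mathfrak{B}} \leq \frac{r}{2C_1}$, i.e. $C e^{-\delta_q\beta/2} \leq \frac{r}{2} C^{-1} e^{-10\delta\beta}$, so we need $r \gtrsim e^{-\delta_q\beta/2 + 10\delta\beta}$. Both can be satisfied simultaneously precisely because $\delta$ was chosen in Proposition~\ref{p:injectivity-of-D} so small that $\delta \ll \delta_q$ (indeed $\delta < \frac{1}{10^3}\delta_q$), hence $-\tfrac{\delta_q}{2}\beta + 10\delta\beta < -10\delta\beta$ for $\beta$ large, which leaves room to pick, say, $r = e^{-20\delta\beta}$ (or any value in the nonempty window) for all sufficiently large $\beta$.

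With $r$ so chosen, Lemma~\ref{l:implicit-function} produces a unique $(\bm{\eta},\bar{\bm{\xi}}^+) \in B_r(0) \subset \mathfrak{A}$ solving $\mathscr{F}(\bm{\eta},\bar{\bm{\xi}}^+) = 0$, together with the bound $\|(\bm{\eta},\bar{\bm{\xi}}^+)\|_{\mathfrak{A}} \leq 2C_1 \|\mathscr{F}(0)\|_{\mathfrak{B}} \leq C e^{10\delta\beta} e^{-\delta_q\beta/2} \leq C e^{-\delta_0\beta}$ for some $\delta_0 > 0$ independent of $\beta$, again using $\delta \ll \delta_q$. Next I would unwind what a solution of \eqref{e:hkt-system} means: as recorded in the discussion around equations \eqref{e:lftf}--\eqref{e:elliptic-system} in the introduction, a solution of the gauge-fixed system is automatically a solution of \eqref{e:lftf}, and then, setting $\bm{\theta} = \bm{\theta}^+ + \bm{\theta}^-$ with $\bm{\theta}^+ = d^+\bm{\eta} + \bm{\xi}$ determined through the matrix $A$ and $\bm{\theta}^- = d^-\bm{\eta}$, the triple $\bm{\omega}_\beta^{\HK} \equiv \bm{\omega}_\beta^{\mathcal{M}} + \bm{\theta}$ satisfies the hyperk\"ahler equations \eqref{e:perturbation}. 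Closedness is immediate: $\bm{\omega}_\beta^{\mathcal{M}}$ is closed by Corollary~\ref{c:sutt}, and $\bm{\theta} = d\bm{\eta} + \bm{\xi}$ with $\bm{\xi} \in \mathcal{H}^+(\mathcal{M})$ harmonic, hence closed. Definiteness of $\bm{\omega}_\beta^{\HK}$ follows because $\|Q_{\bm{\omega}_\beta^{\HK}} - \Id\|$ is small (it is $\Id$ exactly once the equation is solved, but more to the point the perturbation $\bm{\theta}$ is $C^{1,\alpha}$-small relative to the scale, so positivity of $Q$ is preserved). The estimate \eqref{e:hoelder-error} is then just the $\mathfrak{A}$-norm bound on $(\bm{\eta},\bar{\bm{\xi}}^+)$ translated to a bound on $\bm{\theta} = \bm{\omega}_\beta^{\mathcal{M}} - \bm{\omega}_\beta^{\HK}$ in $C_{\delta,\nu+1,\mu}^{0,\alpha}$, using that $d^\pm$ and the algebraic map $A \mapsto \bm{\theta}^+$ are bounded between the relevant weighted spaces with constants independent of $\beta$.

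Finally, the hyperk\"ahler triple $\bm{\omega}_\beta^{\HK}$ defines a Riemannian metric $g_{\bm{\omega}}^{\HK}$ which is Ricci-flat K\"ahler, hence (being $4$-dimensional with holonomy in $\mathrm{SU}(2)$) hyperk\"ahler; in particular $(\mathcal{M}, g^{\HK}_\beta)$ is a compact simply-connected hyperk\"ahler $4$-manifold. By the classification of compact hyperk\"ahler surfaces — the only ones are $\K3$ surfaces and complex tori, distinguished by $b_1$ — and since Proposition~\ref{p:topological-invariants} gives $b_1(\mathcal{M}) = 0$ (indeed $\chi(\mathcal{M}) = 24$, $b_2^+ = 3$, $b_2^- = 19$), the underlying smooth $4$-manifold $\mathcal{M}$ must be diffeomorphic to the $\K3$ surface. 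I expect the only genuinely delicate point in writing this out carefully to be the bookkeeping of hypothesis (2b) of Lemma~\ref{l:implicit-function}: one must keep precise track of how the exponential factors $e^{10\delta\beta}$ (from the right inverse) and $e^{-\delta_q\beta/2}$ (from the initial error) interact, and confirm that the smallness condition on $\delta$ imposed back in Proposition~\ref{p:injectivity-of-D} is exactly what makes the window for $r$ nonempty; everything else is a direct citation of the preceding propositions.
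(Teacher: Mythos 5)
Your proposal is correct and follows essentially the same route as the paper: verify hypotheses (1), (2a), (2b) of Lemma~\ref{l:implicit-function} via Proposition~\ref{p:injectivity-for-L}, Lemma~\ref{l:nonlinear-errors}, and Proposition~\ref{p:small-error}, solve the gauge-fixed system \eqref{e:hkt-system}, and conclude $\mathcal{M}\cong\K3$ from Proposition~\ref{p:topological-invariants}. In fact you spell out the exponent bookkeeping (the window for $r$ between $e^{10\delta\beta-\delta_q\beta/2}$ and $e^{-10\delta\beta}$, made nonempty by $\delta\ll\delta_q$) more explicitly than the paper does, which simply cites these ingredients and states that the estimate follows.
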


\begin{proof}

It suffices to verify the conditions in Lemma \ref{l:implicit-function}.
In fact, Proposition \ref{p:injectivity-for-L},  Lemma \ref{l:nonlinear-errors} and Proposition \ref{p:small-error} verify Property (1), Property (2a)
and Property (2b) in Lemma \ref{l:implicit-function} respectively.
So applying the implicit function theorem given by Lemma \ref{l:implicit-function}, the existence of the hyperk\"ahler triple $\bm{\omega}_{\beta}^{\HK}$ just follows. 
The H\"older type error estimate \eqref{e:hoelder-error} follows directly from the implicit function theorem and the definition of the weight functions.

Since the hyperk\"ahler triple $\bm{\omega}_{\beta}^{\HK}$ determines a hyperk\"ahler metric on $\mathcal{M}$. By Proposition \ref{p:topological-invariants}, $\chi(\mathcal{M})=24$ and hence $\mathcal{M}$ is diffeomorphic to the $\K3$ surface.

\end{proof}

\subsection{Completion of main proofs}
\label{ss:completion}

In this subsection, we prove Theorems \ref{t:codim-3} and \ref{t:domain-wall-crossing}. 
\begin{proof}
[Proof of Theorem \ref{t:codim-3}]  
Recall that by Theorem \ref{t:existence-hyperkaehler}, $\mathcal{M}$ is diffeomorphic to the $\K3$ surface.

First, we consider the simpler case that there is only one cluster of monopoles, i.e., $m=1$. Without loss of generality, one can assume that all the monopoles in the neck region are located on the same torus fiber of $\TT \times \mathbb{R}$. 

We start the proof by describing the hyperk\"ahler metrics $\hat{h}_{\beta}$ and the continuous map $F_{\beta}:\K3\to[0,1]$.  
Given any sufficiently large parameter $\beta\gg1$, denote by 
$g_{\beta}$ the approximate metric which is almost Ricci-flat and determined by the approximate triple constructed in Section \ref{s:approx-triple} such that
\begin{equation}
C^{-1}\beta^{\frac{3}{2}}\leq 
\diam_{g_{\beta}}(\mathcal{M})\leq C\beta^{\frac{3}{2}}
\end{equation}
for some constant $C>0$ independent of $\beta$.
 By Theorem \ref{t:existence-hyperkaehler}, there is a hyperk\"ahler metric $\hat{g}_{\beta}$ such that 
\begin{equation}
\|\hat{g}_{\beta}-g_{\beta}\|_{C^{0,\alpha}(\mathcal{M})} \leq Ce^{-\delta_0\beta}
\end{equation}
for some $C>0$ and $\delta_0>0$ independent of $\beta$.
Let $\hat{h}_{\beta}$ be the rescaling of the hyperk\"ahler metric $\hat{g}_{\beta}$ with  
$\diam_{\hat{h}_{\beta}}(\mathcal{M})=1$. Denote by $h_{\beta}$ the rescaling of $g_{\beta}$ with $\diam_{h_{\beta}}(\mathcal{M})=1$, then
\begin{equation}
\|\hat{h}_{\beta}-h_{\beta}\|_{C^{0,\alpha}(\mathcal{M})} \leq Ce^{-\frac{\delta_0\beta}{2}}.
\end{equation}
Now we are ready to define the map $F_{\beta}:\mathcal{M}\to[0,1]$. First, recalling the notation in Section \ref{s:approx-triple}, we extend the function $z$ on the neck region to $\mathcal{M}$ as follows
\begin{align}
  \tilde{z}(\bm{x}) =
  \begin{cases}
    \zeta_0^- - 2 T_-    & \bm{x} \in X^4_{b_-} \setminus \{ z_- \geq \zeta_0^- \}\\
    z_-(\bm{x}) - 2T_-   &   \bm{x} \in  X^4_{b_-} \cap \{  \zeta_0^- \leq z_- \leq T_- \} \\
    z(\bm{x})   &    \bm{x} \in \mathcal{N}(T_-, T_+) \\
   2T_+  - z_+(\bm{x})  &   \bm{x} \in  X^4_{b_+} \cap \{  \zeta_0^+ \leq z_+ \leq T_+ \} \\   
   2 T_+ - \zeta_0^+    & \bm{x} \in X^4_{b_+} \setminus \{ z_+ \geq \zeta_0^+ \}\\
 \end{cases},
  \end{align}
and then define
  \begin{align}
  F_{\beta}(\bm{x}) = \frac{    \tilde{z}(\bm{x}) - \zeta_0^- + 2 T_-}{ 2 (T_+ + T_-) - \zeta_0^- -  \zeta_0^+}.
    \end{align}

Then it follows directly from the gluing construction that there is some point $t_1\in(0,1)$  such that $F_{\beta}^{-1}(t_1)$ 
is a singular $S^1$-bundle over $\TT$ with exactly $(b_-+b_+)$
vanishing circles. In fact, the vanishing circles occur at the monopoles of the neck region $\mathcal{N}_{m_0}^4$ constructed in Section \ref{ss:neck-region} which is a  Gibbons-Hawking space over $\TT\times \dR$.
 Moreover, for each $t\in(0,t_1)\cup(t_1,1)$, the fiber $F_{\beta}^{-1}(t)$ is diffeomorphic to a Heisenberg nilmanifold with
\begin{align}
\deg(F_{\beta}^{-1}(t))
=
\begin{cases}
b_-, & t\in (0,t_1),
 \\
b_+, & t\in (t_1,1).
\end{cases}
\end{align}
By the explicit construction in Section \ref{s:approx-triple}, there is some uniform constant $C_0>0$ such that for each regular fiber,
 \begin{align}
C_0^{-1} \beta^{-1}\leq \diam_{\hat{h}_{\beta}}(F_{\beta}^{-1}(t))\leq C_0 \beta^{-1}, \
C_0^{-1} \beta^{-2}\leq \diam_{\hat{h}_{\beta}}(S^1)\leq C_0 \beta^{-2}.
\end{align}
With these diameter estimates, we are ready to prove the uniform curvature estimates by 
applying theorem \ref{t:collapsed-eps-reg}.
Fix any $\epsilon\in(0,10^{-2})$, let $\beta>0$ sufficiently large such that 
\begin{equation}\diam_{\hat{h}_{\beta}}(F_{\beta}^{-1}(t))< \frac{\delta_0\cdot \epsilon}{10},\end{equation} where 
$\delta_0>0$ is the dimensional constant in theorem \ref{t:collapsed-eps-reg}.
Now for a ball around each regular point $B_{\epsilon}(x)\subset  F_{\beta}^{-1}([0,1]\setminus T_{2\epsilon}(\mathcal{S}))$ with $\mathcal{S}\equiv\{0,t_1,1\}$,
then  
\begin{equation}\Gamma_{\delta_0\epsilon}(x)\equiv \Image [\pi_1(B_{\delta_0\epsilon}(x))\to B_{\epsilon}(x)] \cong \pi_1(\Nil^3)\end{equation}
 and hence $\rank(\Gamma_{\delta_0\epsilon}(x))=3$.
Then by theorem \ref{t:collapsed-eps-reg},
\begin{equation}
\sup\limits_{B_{\epsilon/2}(x)}|\Rm_{\hat{h}_{\beta}}|\leq C_{0,\epsilon},
\end{equation}
 where $C_{0,\epsilon}>0$ depends only on $\epsilon$ and is independent of $\beta$. The higher order curvature estimates can be proved by considering a local universal cover and 
applying the standard regularity theory for non-collapsing Einstein metrics.  
This completes (1) of Theorem \ref{t:codim-3}.

 Now we proceed to prove (2).
 We still apply theorem \ref{t:collapsed-eps-reg} to prove curvatures blowing-up behavior around the singular fiber. In fact,  
if $x\in T_{\epsilon/2}(F_{\beta}^{-1}(t_1))$, it suffices to  show 
$\sup\limits_{B_{\epsilon/2}(x)}|\Rm_{\hat{h}_{\beta}}| \to \infty$ as $\beta\to\infty$. In fact, 
notice that
\begin{equation}
\Gamma_{\epsilon/2}(x)\equiv\Image[\pi_1(B_{\epsilon/2}(x))\to B_{1/10}(x)]\cong \dZ \oplus \dZ
\end{equation}
and hence
$\rank(\Gamma_{\epsilon/2}(x))
=2<3$. Therefore, theorem \ref{t:collapsed-eps-reg} implies that 
\begin{equation}\sup\limits_{B_{\epsilon/2}(x)}|\Rm_{\hat{h}_{\beta}}| \to \infty\end{equation} as 
$\epsilon\to0$. 

The next part  is to prove the classification of the bubble limits in (2) of statement of the theorem.
Fix the gluing parameter $\beta \gg 1$, we analyze the curvature behavior of the approximate metric $g_{\beta}$ in the gluing construction at the scale such that 
\begin{equation}C^{-1}\beta^{\frac{3}{2}}\leq \diam_{g_{\beta}}(\mathcal{M},g)\leq C \beta^{\frac{3}{2}}.\label{e:large-d-scale}\end{equation}
There are two cases to analyze. 

First, let the reference point $\bm{x}_{\beta}$ be a curvature maximum point of a Tian-Yau piece. 
It follows directly from the construction that, as $\beta\to+\infty$, the curvature $|\Rm_{g_{\beta}}|(\bm{x}_{\beta})$ is uniformly bounded but not going to $0$. 
So $(\mathcal{M}, g_{\beta}, \bm{x}_{\beta})$
converges to a complete hyperk\"ahler Tian-Yau space $(X^4,g_{TY},\bm{x}_{\infty})$ in the pointed $C^k$-topology for any $k\in\dZ_+$. We will show that $(\mathcal{M},\hat{g}_{\beta},\bm{x}_{\beta})$ also converges to the same Tian-Yau space $(X^4,g_{TY},\bm{x}_{\infty})$ in the pointed $C^k$-topology for any $k\in\dZ_+$.
In fact, by Theorem \ref{t:existence-hyperkaehler},
\begin{equation}
\|\hat{g}_{\beta}-g_{\beta}\|_{C^{0,\alpha}(\mathcal{M})} \leq Ce^{-\delta \beta},\label{e:error-est}
\end{equation}
which implies that 
$(\mathcal{M},\hat{g}_{\beta},x_{\beta})$ converges to the same Tian-Yau space $(X^4,g_{TY},\bm{x}_{\infty})$ in the pointed $C^{0,\alpha}$-topology. The stronger convergence follows from 
a regularity result for non-collapsed Einstein metrics
in \cite{AnCh}.
Since the rescaling factor $\beta^{\frac{3}{2}}$ is much smaller than exponential, so the bubble limit of $(\mathcal{M},\hat{h}_{\beta})$ around $\bm{x}_{\beta}$ is a complete hyperk\"ahler Tian-Yau space. 

Next, we consider the case in which the reference point $\bm{x}_{\beta}$ is very close to one of monopoles, i.e. $\bm{x}_{\beta} \in B_{\beta^{-\frac{1}{2}}}(p_m)$ in terms of the metric $\hat{h}_{\beta}$, where
\begin{equation}  p_m\in\mathcal{P}_{b_- + b_+}\equiv \{p_1, \ldots, p_{b_- + b_+}\}.
\end{equation}
Applying Lemma \ref{l:rescaled-Taub-NUT}, then 
\begin{equation}(\mathcal{M}, \beta\cdot g_{\beta}, \bm{x}_{\beta})\longrightarrow (\dR^4, g_{TN}, \bm{x}_{\infty}),\end{equation}
where $g_{TN}$ is the Taub-NUT metric and the convergence is with respect to the pointed $C^k$-topology for any $k\in\dZ_+$. Applying the error estimate \eqref{e:error-est} and the same arguments as the above, 
$(\mathcal{M}, \beta\cdot h_{\beta}, \bm{x}_{\beta})$ converges to  $(\dR^4, g_{TN}, \bm{x}_{\infty}),$ in the pointed $C^k$-topology for any $k\in\dZ_+$.
This implies that in terms of the hyperk\"ahler metric $\hat{h}_{\beta}$, we have the pointed $C^k$-convergence for any $k\in\dZ_+$,
\begin{equation}
(\mathcal{M}, \beta^{4}\cdot \hat{h}_{\beta},  \bm{x}_{\beta})  \longrightarrow (\dR^4, g_{TN}, \bm{x}_{\infty}).
\end{equation}
So the proof of (2) is done.

The above completes the proof in the case with 1 singular point of convergence in the interior of the interval. 
Next we are in a position to give  a  generalization of the gluing construction in Section~\ref{s:approx-triple} to produce multiple singular points of convergence in the interior of the interval.

First, we fix two hyperk\"ahler Tian-Yau spaces $(X_{b_-}^4, g_{b_-}, p_-)$ and $(X_{b_+}^4, g_{b_+}, p_+)$ with $b_-, b_+ \in \{1,\ldots, 9\}$. 
Let $\{w_j\}_{j=1}^m$ be positive integers satisfying 
\begin{equation}w_1 + \ldots + w_m =b_- + b_+.\end{equation} For each $1\leq j\leq m$, we choose the
neck region
$\mathcal{N}_{w_j}^4$ as a Gibbons-Hawking space over a finite flat cylinder $(\TT\times[-T_j,T_{j+1}], g_0)$ with $w_j$-monopoles. As in the construction of Section \ref{s:gluing-space}, each pair of monopoles in $\mathcal{N}_{w_j}^4$ has a definite and bounded distance. 
Now let $G_{j}:\TT\times\dR\to\dR$ be a global sign-changing Green's function which satisfies
\begin{equation}
-\Delta_{g_0} G_{j} = 2\pi\sum\limits_{s=1}^{w_j}\delta_{p_s}
\end{equation}
and there are constants $\beta_{j}^{-},\beta_{j}^{+}\in\dR$ and $k_{j}^{-} >0$, $k_{j}^{+} <0$ such that
\begin{align}
\begin{split}
&|\nabla_{g_0}^k( G_{j} - (k_{j}^{-}  z + \beta_{j}^{-}))|\leq C_k e^{\lambda_1 z}, \  z<-100\beta, \\
&|\nabla_{g_0}^k( G_{j} - (k_{j}^{+}  z + \beta_{j}^{+}))|\leq C_k e^{-\lambda_1 z}, \ z>100\beta, \\
&k_j^- =- k_j^+ = \frac{\pi w_j}{\Area(\TT)}.
\end{split}
\end{align}
Note that the first step of gluing is to modify the above Green's function by adding a linear function, i.e. let
\begin{equation}
V_{j} \equiv G_{j} + (\ell_j z + \beta_j) 
\end{equation}
such that two adjacent neck regions have compatible slopes, that is,
\begin{align}
\begin{split}
k_{j+1}^- + \ell_{j+1} &= k_j^+ + \ell_j
\\
k_1^{-} + \ell_1 &= \frac{2\pi b_-}{A}.
\end{split}
\end{align} Immediately, we have  $k_1^+ + \ell_1 = \frac{2\pi (b_- - w_1)}{A}$
where $A=\Area(\TT)$.
Eventually, one can check that at the right end of the last neck region $\mathcal{N}_{w_m}^4$,
\begin{equation}
k_m^+ + \ell_m = \frac{2\pi b_- - \sum\limits_{j=1}^m w_j}{A}=-\frac{2\pi b_+}{A}.
\end{equation}
Applying the construction in Section \ref{s:approx-triple}, we obtain a manifold
\begin{equation}
\mathcal{M} = X_{b_-}^4(T_1) \bigcup_{\Psi_1} \mathcal{N}_{w_1}^4(-T_1-1, T_2 )
\bigcup_{\Psi_2} \ldots
\bigcup_{\Psi_{m}}\mathcal{N}_{w_m}^4(-T_m-1, T_{m+1})
\bigcup_{\Psi_{m+1}} X_{b_+}^4(T_{m+1} + 1 ),\label{e:general-gluing}
\end{equation}
where the attaching maps $\Psi_1, \dots \Psi_m$ are chosen analogously to $\Psi_-$,
and $\Psi_{m+1}$ is chosen analogously to $\Psi_+$. Furthermore, there is
an approximate hyperk\"ahler triple $\bm{\omega}^{\mathcal{M}}$ on $\mathcal{M}$ which is hyperk\"ahler away from the damage zones, and satisfies the conclusions of Proposition \ref{p:gluing-definite-triple}.  The weight function on $\mathcal{M}$ is defined in an analogous way to \eqref{def-weight-function}, and the arguments in the previous sections are easily modified to prove the existence of a hyperk\"ahler metric $\hat{g}_{\beta}$, close to $g_{\beta}$. 

Next, choose the parameters so that $\beta_j = \beta$. The parameters $T_j$ are then all proportional to $\beta$, and the diameter of the neck region $\mathcal{N}^4_{w_j}(-T_j -1, T_{j+1})$ in the metric $\hat{g}_{\beta}$ is proportional to $\beta^{3/2}$. Therefore, for the sequence of unit diameter hyperk\"ahler metrics $\hat{h}_{\beta}$,  
these neck regions limit to nontrivial intervals, and thus there are exactly $m$ 
distinct singular points of convergence $t_j \in (0,1), j = 1 \dots m,$ in the interior of the interval.  The analysis of the regular collapsing regions and the bubbling regions is the same as above. 
\end{proof}
\begin{proof}[Proof of Theorem \ref{t:domain-wall-crossing}] This is a consequence of the above construction. To see this, let \begin{equation}\mathcal{N}_{w_1}^4(-T_1-1, T_2 ),\ldots,\mathcal{N}_{w_m}^4(-T_m-1, T_{m+1}) \end{equation} be 
the neck regions in \eqref{e:general-gluing}
such that  for each $1\leq j \leq m$, the neck region    
 $\mathcal{N}_{w_j}^4(-T_j-1, T_{j+1})$ has exactly $w_j$-monopoles which have the same $z$-coordinate.
 Notice that the degree of the nilmanifold fiber
is determined by the ending slope of the Green's function. Corollary \ref{coro:gf} implies that 
the degree of the nilpotent fibers will jump by $w_j$ when crossing a singular fiber in $\mathcal{N}_{w_j}^4(-T_j-1,T_{j+1})$. It is also easy to see from the construction that there are $w_j$ Taub-NUT bubbles at each singular point $t_j \in (0,1), j = 1 \dots m$. 
\end{proof}

\begin{remark}\label{rem:ALF}
If we take each collection of $w_j$ monopole points in $\mathcal{N}_{w_j}^4(-T_j-1, T_{j+1})$
to have distances exactly proportional to $\beta^{-1}$ (in the flat metric on $\TT \times \RR$) from each other, then the corresponding bubble limit will be a multi-Taub-NUT ALF-$A_{w_j-1}$ metric instead of having $w_j$ Taub-NUT bubbles. It is also possible to obtain nontrivial bubble-trees. For example, if the distances of the monopole points in a collection of monopole points from each other is proportional to $\beta^{-2}$, then there will be a first bubble which is a ALF orbifold with an orbifold point which is cyclic of order $w_j$, and the deepest bubble will then be an ALE-$A_{w_j-1}$ metric.
  \end{remark}

\bibliographystyle{amsalpha} 
\bibliography{HSVZ}
\end{document}